\newtheorem{theorem}{Theorem}[chapter]
\newtheorem{corollary}[theorem]{Corollary}
\newtheorem{lemma}[theorem]{Lemma}
\newtheorem{remark}[theorem]{Remark}
\newtheoremstyle{citing}{}{}{\itshape}{}{\bfseries}{.}{ }{\thmnote{#3}}
\theoremstyle{citing}
\newtheorem{cit}{}
\newcommand{\Z}{\ensuremath{\mathbb{Z}}}
\newcommand{\N}{\ensuremath{\mathbb{N}}}
\newcommand{\R}{\ensuremath{\mathbb{R}}}
\renewcommand{\a}{\alpha}
\newcommand{\g}{\gamma}
\newcommand{\G}{\Gamma}
\newcommand{\eps}{\varepsilon}
\newcommand{\D}{\Delta}
\renewcommand{\O}{\Omega}
\renewcommand{\P}{\mathbb{P}}
\newcommand{\Var}{{\rm Var}}
\newcommand{\EE}{\mathcal{E}}
\newcommand{\FF}{\mathcal{F}}
\renewcommand{\l}{\langle}
\renewcommand{\r}{\rangle}
\newcommand{\ind}{\scalebox{1.2}{\raisebox{-0.2mm}{$\mathds{1}$}}}
\newcommand{\Op}{\O_{\rm P}}
\newcommand{\Orc}{\O_{\rm RC}}
\newcommand{\Oj}{\O_{\rm J}}
\def\jointname{FKES}
\def\jointnamelong{Fortuin-Kasteleyn-Edwards-Sokal}
\newcommand{\Zt}{\raisebox{2pt}{ $\tilde{}$ }\mkern-12mu \Z}
\newcommand{\hb}{P_{\rm HB}}
\newcommand{\sw}{P_{\rm SW}}
\newcommand{\Pt}{\raisebox{2pt}{ $\tilde{}$ }\mkern-14mu P}
\newcommand{\tsw}{\Pt_{\rm SW}}
\newcommand{\thb}{\Pt_{\rm HB}}
\renewcommand{\sb}{\Pt_{\rm SB}}
\newcommand{\C}{\mathcal{C}}
\newcommand{\T}{\mathcal{T}}
\newcommand{\csw}{c_{\text{\rm\tiny SW}}}
\newcommand{\tcsw}{\widetilde{c}_{\text{\rm\tiny SW}}}
\newcommand{\Gt}{\,\raisebox{2pt}{ $\tilde{}$ }\mkern-15mu G}
\newcommand{\Vt}{\,\raisebox{2pt}{ $\tilde{}$ }\mkern-13mu V}
\newcommand{\Et}{\,\raisebox{2pt}{ $\tilde{}$ }\mkern-15mu E}
\newcommand{\Ztt}{\,\raisebox{2pt}{ $\tilde{}$ }\mkern-15mu Z}
\renewcommand{\S}{\ensuremath{\mathbb{S}}}
\newcommand{\genus}{\ensuremath{\mathbf{g}}}
\newcommand{\conn}[1]{\mathop{\stackrel{#1}{\leftrightarrow}}}
\newcommand{\xconn}[1]{\xleftrightarrow{#1}}
\newcommand{\xnconn}[1]{\longarrownot\xleftrightarrow{#1}}
\newcommand{\lconn}[1]{\mathop{\stackrel{#1}{\longleftrightarrow}}}
\newcommand{\nconn}[1]{\mathop{\hspace{1.3pt}\stackrel{#1}{\longarrownot\longleftrightarrow}}\hspace{1.3pt}}
\newcommand{\tvd}[1]{\left\Vert #1 \right\Vert_{\rm TV}}	
\newcommand{\abs}[1]{\left\vert #1 \right\vert}	
\newcommand{\norm}[1]{\left\Vert #1 \right\Vert}	
\renewcommand{\tilde}[1]{\widetilde{#1}}
\begin{document}


\keywords{Swendsen-Wang dynamics, rapid mixing, spectral gap, Ising model, 
					Potts model, random-cluster model, single-bond dynamics.}
\mathclass{Primary 60K35; Secondary 60J10, 82C20.}
\thanks{
	First and foremost, I would like to thank the advisor of my PhD thesis
	(the present paper is based on it), Erich Novak, 
	for his constant support and the many discussions we had, 
	not only about mathematics. 
	This has certainly led to the development of my present interest in and 
	fun with mathematics.

	I am also grateful to Aicke Hinrichs and Daniel Rudolf for 
	helpful comments and suggestions during the work on my 
	thesis, as well as for the valuable discussions we had about various 
	topics.

	Finally, I want to express my thanks to my colleagues and friends 
	Erich, Aicke, Daniel, 
	Henning Kempka, Lev Markhasin, Philipp Rudolph, 
	Winfried Sickel, Markus Weimar and Heidi Weyhausen 
	who were (more or less regular) members of our daily coffee break 
	that served with its friendly and relaxed atmosphere as a source of 
	extra energy every day.
}

\abbrevauthors{Mario Ullrich}
\abbrevtitle{Rapid mixing of Swendsen-Wang in two dimensions}

\title{Rapid mixing of Swendsen-Wang dynamics in two dimensions}

\author{Mario Ullrich}
\address{Mathematical Institute\\ Friedrich Schiller University Jena\\
Ernst-Abbe-Platz 2\\ 07743 Jena, Germany\\
E-mail: ullrich.mario@gmail.com}

\maketitledis

\tableofcontents

\begin{abstract}
We prove comparison results for the Swendsen--Wang (SW) dynamics, 
the heat-bath (HB) dynamics for the Potts model and the single-bond 
(SB) dynamics for the random-cluster model on arbitrary graphs.
In particular, we prove that rapid (i.e.~polynomial) mixing of HB 
implies rapid mixing 
of SW on graphs with bounded maximum degree 
and that rapid mixing of SW and rapid mixing of SB are equivalent.
Additionally, the spectral gap of SW and SB on planar 
graphs is bounded from above and from below by the spectral gap 
of these dynamics on the corresponding dual graph with suitably 
changed temperature. 

As a consequence we obtain rapid mixing of the 
Swendsen--Wang dynamics for the Potts model on the two-dimensional 
square lattice at all non-critical temperatures as well as 
rapid mixing for the two-dimensional Ising model at all temperatures. 
%
Furthermore, we obtain new results for general graphs 
at high or low enough temperatures.
\end{abstract}

\makeabstract


\chapter{Introduction and results}

We study the mixing properties of Markov chains 
for the \emph{$q$-state Potts model}. 
The Potts model with $q\in\N$ states at 
\emph{inverse temperature} $\beta\ge0$ consists 
of the set $\Op=\{1,\dots,q\}^V$ of (not necessarily proper) 
colorings of a graph 
$G=(V,E)$ together with the probability measure 
\[
\pi(\sigma) \;=\; \frac{1}{Z}\, e^{\beta \abs{E(\sigma)}},
	\qquad \sigma\in\O_{\rm P},
\] 
where $E(\sigma)$ denotes the 
set of edges of the graph $G$ with equally colored endvertices 
and $Z$ is the normalization constant.
This model, especially in the case $q=2$ where it is called the 
\emph{Ising model}, attracted much interest over the last decades 
not only in statistical physics but also in several branches 
of mathematics and computer science.
The goal is to sample from $\pi$, at least approximately, and 
since exact sampling is in general not feasible, 
Markov chains are typically used. 
There are a couple of (more or less efficient) Markov chains 
to attack this problem and, usually, 
it is an easy task to show that their distributions converge 
in the long-time limit towards the right distribution. 
However, it is much more delicate to prove bounds on the 
\emph{mixing time}, i.e.~the number of steps
a Markov chain has to run in order that its distribution is 
``close'' to its limit distribution. 
In the following, a chain is said to be \emph{rapidly mixing} 
for a family of graphs if, for each graph of this family, 
the Markov chain can be defined analogously on it 
and its mixing time is bounded by a polynomial in the 
number of vertices of the graph.

We will consider two Markov chains for the Potts model, 
namely the \emph{heat-bath $($HB\/$)$ dynamics} and the 
\emph{Swendsen--Wang $($SW\/$)$ dynamics}. 
The heat-bath dynamics is the most common chain for 
this purpose. It is a local Markov chain that can be 
described as follows. Suppose that the current configuration 
is $\sigma=\bigl(\sigma(v)\bigr)_{v\in V}\in\Op$. 
In each step, a vertex $v\in V$ of the underlying 
graph is chosen uniformly at random and a new color 
is assigned to $v$ with respect to the conditional 
probability given that the color of all other vertices is fixed, 
so that the new configuration $\tau$ satisfies 
$\tau(u)=\sigma(u)$ for all $u\neq v$. 
The heat-bath dynamics is proven to be rapidly mixing in 
several instances (see Section~\ref{sec:2_known-results}), 
but is typically slowly (i.e.~not rapidly) mixing 
if the inverse temperature $\beta$ is large. 
For example, if the underlying graph is the two-dimensional 
square lattice $\Z_L^2$ of side length $L$, it is known that 
there exists a \emph{critical inverse temperature} $\beta_c(q)$ 
such that the heat-bath dynamics is rapidly mixing if 
$\beta<\beta_c(q)$ and slowly mixing if $\beta>\beta_c(q)$ 
(the latter seems to be proven only for $q=2$ but it is at least 
expected to be true for all $q\ge2$). 
See Section~\ref{sec:2_known-results} for more known 
results and the specific bounds.

The second Markov chain under consideration is the 
Swendsen--Wang dynamics that changes the color of a large portion 
of the vertices in each step. 
One step of this chain, given the current state $\sigma\in\Op$, 
can be described by the following two-step procedure. 
First, generate a subset $A\subset E(\sigma)$ of the edges of 
the graph with equally colored endvertices, 
such that every edge of $E(\sigma)$ 
is included in $A$ with probability $1-e^{-\beta}$. 
In the second step, assign independently and uniformly at random 
a color from $\{1,\dots,q\}$ to each connected component 
of the subgraph $(V,A)$. This gives a new Potts configuration. 
It is widely believed that this Markov chain is rapidly mixing 
in much more cases than the heat-bath dynamics, 
e.g.~at low temperatures (large $\beta$), 
and is therefore the preferred algorithm in practice 
since its invention around 1987.
Nevertheless, results in this direction are rare. 
Besides some rapid mixing results concerning special classes of 
graphs like trees, cycles and the complete graph, 
or results for sufficiently high or low temperatures, 
there is no result that shows that it is generally advisable 
to use Swendsen--Wang instead of heat-bath dynamics.

Our first main result (see Chapter~\ref{chap:spin}) 
shows that rapid mixing of heat-bath dynamics 
implies rapid mixing of Swendsen--Wang, 
if the underlying graph has bounded maximum degree, 
which partially confirms the above-mentioned intuition.

In order to give precise statements  
we need some notation.
We define the spectral gap of a Markov chain with 
transition matrix $P$ as 1 minus the second largest eigenvalue 
of $P$ in absolute value and we denote it by $\lambda(P)$. 
It is well-known that polynomial (in~$\abs{V}$) bounds on the 
mixing time are equivalent to polynomial bounds on the inverse 
spectral gap (see Lemma~\ref{lemma:mixing-gap}), 
and since the spectral gap seems to be more convenient for 
comparison results, this is the quantity we are interested in. 
(Note that we classify Markov chains by their transition 
matrices, since the quantities of interest do not depend on 
anything else.)

For the following comparison results we consider the two 
above-mentioned Markov chains for the Potts model, 
as defined in Section~\ref{sec:2_dynamics}, 
and we use this notation 
for their transition matrices:
\begin{itemize}
	\item $\hb$ for the heat-bath dynamics for the Potts model 
					(see~\eqref{eq:P-HB}), \vspace{2mm}
	\item $\sw$ for the Swendsen--Wang dynamics for the Potts model 
					(see~\eqref{eq:SW-P}).
\end{itemize} 
The first result that we want to present here is a comparison 
between heat-bath and Swendsen--Wang dynamics for the Potts model. 

\vspace{2mm}

\begin{cit}[Theorem~\ref{th:main-spin}] 
Suppose that $\sw$ $($resp.~$\hb$$)$ is the transition matrix of the 
Swendsen--Wang $($resp.~heat-bath$)$ dynamics for the $q$-state Potts 
model at inverse temperature $\beta$ on a graph $G$ with 
maximum degree $\D$. 
Then
\[
\lambda(\sw) \;\ge\; \csw\,\lambda(\hb),
\]
where
\begin{equation}
\csw \;:=\; \csw(\D,\beta,q) 
\;=\; q^{-1}\left(q\,e^{2\beta}\right)^{-2\D}.
\end{equation}
\vspace{-3mm}
\end{cit}

This result implies rapid mixing of the Swendsen-Wang dynamics in 
some new instances. But, since the heat-bath dynamics is typically 
slow at low temperatures, it is not helpful for large values of 
$\beta$. 
For this reason we switch to dynamics for a closely related 
model on the edges of the underlying graph, i.e.~the 
\emph{random-cluster $($RC\/$)$ model}, 
where it was possible to deduce some lower bounds on the spectral gap 
at low temperatures 
from lower bounds at high 
temperatures. 
The random-cluster model consists of the set 
of subsets of the edges $\Orc=\{A:A\subset E\}$ and the 
probability measure 
\[
\mu(A) \;=\; \frac1{Z}\,
\left(\frac{p}{1-p}\right)^{\abs{A}}\,q^{c(A)},\qquad A\subset E, 
\]
where $c(A)$ denotes the number of connected components of the 
subgraph $(V,A)$ and $p:=1-e^{-\beta}$.
In fact, the Swendsen--Wang dynamics is based on the tight 
connection of Potts and random-cluster models. 
That is, there exists a clever coupling of $\pi$ and $\mu$, 
say $\nu$, such that the conditional probabilities, 
given either a Potts or a random-cluster configuration, 
are equal to the 
probability distributions that are involved in the first 
and the second step of the Swendsen--Wang dynamics, 
respectively (see Section~\ref{sec:2_models}). 

From this construction it is obvious that the Swendsen--Wang 
dynamics (with its two steps in reverse order) also defines a 
Markov chain for the random-cluster model. 
Additionally, we define the \emph{single-bond $($SB$)$ dynamics} 
which is a local Markov chain for the random-cluster model that 
chooses an edge $e\in E$ of the graph uniformly at random and 
includes or deletes $e$ from the current random-cluster 
configuration with a certain probability (see~\eqref{eq:SB}). 
The transition matrices of these two dynamics are denoted as follows:
\begin{itemize}
	\item $\tsw$ for the Swendsen--Wang dynamics for the 
					RC model (see~\eqref{eq:SW-RC}), \vspace{2mm}
	\item $\sb$ for the single-bond dynamics for the 
					RC model (see~\eqref{eq:SB}). 
\end{itemize}

As an easy corollary of the construction of the 
Swendsen--Wang dynamics we obtain $\lambda(\sw)=\lambda(\tsw)$ 
(see Lemma~\ref{lemma:SW_P-RC}). 
Thus, every result on $\tsw$ immediately yields a result on $\sw$.

It turns out that Swendsen--Wang and single-bond dynamics can be 
represented on the joint Potts/random-cluster model 
(i.e.~the model corresponding to the coupling $\nu$) using the same 
``building blocks'', which leads to the second main result of 
this paper. We prove that rapid mixing of 
Swendsen--Wang dynamics is equivalent to rapid mixing of single-bond dynamics.

\begin{cit}[Theorem~\ref{th:main-SB}]
Let $\tsw$ $($resp.~$\sb$$)$ be the transition matrix of the 
Swendsen--Wang $($resp.~single-bond$)$ dynamics for the 
random-cluster model on a graph with $m\!\ge\!3$ edges. Then 
\[
\lambda(\sb) \;\le\; \lambda(\tsw) \;\le\; 8 m \log m\; \lambda(\sb).
\]
\vspace{-1mm}
\end{cit}

Finally, in Chapter~\ref{chap:2d} we restrict ourself to a special 
class of graphs, namely \emph{planar graphs}, and, 
using the notion of \emph{dual graphs}, it will be possible 
to relate the mixing properties of the Swendsen--Wang dynamics 
on the original graph to the mixing properties on its dual with a 
suitable change of the temperature parameter. 
This is done by proving such a result for the single-bond dynamics 
and, using the comparison result above, translating it to the 
Swendsen--Wang dynamics. 

For this, assume that the underlying graph is planar, 
i.e.~can be drawn in the plane without intersecting edges.
Furthermore, given a random-cluster model on a planar graph $G$ 
with parameters $p$ and $q$, we call the random-cluster model 
on the dual graph $G^\dag$ with parameters 
$p^*=\frac{q(1-p)}{p+q(1-p)}$ and $q$ the \emph{dual model}.

\begin{cit}[Theorem~\ref{th:SW-SB_dual}]
Let $\tsw$ $($resp.~$\sb$$)$ be the transition matrix of the 
Swendsen--Wang $($resp.~single-bond$)$ dynamics for the 
random-cluster model on a planar graph $G$ with $m$~edges 
and let $\tsw^\dag$ $($resp.~$\sb^\dag$$)$ be the SW $($resp.~SB$)$ 
dynamics for the dual model.
Then 
\begin{align*}
\lambda(\sb) \;&\le\; q \; \lambda(\sb^\dag) \\[-5mm]
\intertext{\vspace*{-3mm} and}
\lambda(\tsw) \;&\le\; 8q\, m \log m\; \lambda(\tsw^\dag).
\end{align*}
\vspace{-2mm}
\end{cit}

Since the dual model of the dual model is the primal 
random-cluster model (if $G$ is connected), 
we get the bound in the other direction by applying 
Theorem~\ref{th:SW-SB_dual} twice. 
This theorem allows us to obtain some new results on 
rapid mixing at low temperatures directly 
from known results at high temperatures. 

Now we turn to applications of the theorems above. 
But first, note that we do not give a direct analysis of the 
Markov chains under consideration. 
Thus, all results on rapid mixing in specific 
settings rely ultimately on already known mixing results.  
In fact, the three results given below are based on 
lower bounds on the spectral gap of the heat-bath dynamics 
for the Potts model. 
We refer to Section~\ref{sec:2_known-results} for a 
collection of all previously known results that are 
necessary for the analysis in this paper.

The first application, which was the main reason 
for our study, deals with the two-dimensional 
square lattice $\Z_L^2$ of side length $L$. 
This is the graph $\Z^2_L=(V_{L,2},E_{L,2})$ 
with vertex set $V_{L,2}=\{1,\dots,L\}^2\subset\Z^2$ and edge set 
$E_{L,2}=\bigl\{\{u,v\}\subset V_{L,2}:\,\abs{u-v}=1\bigr\}$, 
where $\abs{\,\cdot\,}$ denotes the Euclidean norm.

\begin{cit}[Theorem~\ref{th:SW_square2}]
Let $\sw$ be the transition matrix of the 
Swendsen--Wang dynamics for the $q$-state Potts model 
on $\Z^2_L$ at inverse temperature $\beta$. Let $n=L^2$. 
Then there exist constants $c_\beta=c_\beta(q),c'>0$ and $C<\infty$ 
such that 
\begin{itemize}
\item\quad $\displaystyle \lambda(\sw) \;\ge\; \frac{c_\beta}{n}$ 
			\qquad\qquad\quad for $\beta < \beta_c(q)$, \vspace{2mm}
\item\quad $\displaystyle \lambda(\sw) \;\ge\; \frac{c_\beta}{n^2 \log n}$ 
			\qquad\; for $\beta > \beta_c(q)$, \vspace{2mm}
\item\quad $\displaystyle \lambda(\sw) \;\ge\; c' n^{-C}$ 
			\qquad\quad\, for $q=2$ and $\beta = \beta_c(2)$,
\end{itemize}
where $\beta_c(q)\,=\,\log(1+\sqrt{q})$.
\vspace{3mm}
\end{cit}

This result shows 
rapid mixing of the Swendsen--Wang dynamics for the Potts model 
on the 
two-dimensional square lattice $\Z_L^2$ at \emph{all} non-critical 
temperatures, i.e.~at all $\beta\neq\beta_c(q)$, as well as 
rapid mixing at all temperatures in the case $q=2$. 
Note that it was not even known that the SW dynamics mixes rapidly 
for $\beta<\beta_c$, as it is known for the heat-bath dynamics. 

As a byproduct we obtain the following result for the 
single-bond dynamics for the random-cluster model.

\begin{cit}[Theorem~\ref{th:SB_2d}]
Let $\sb$ be the transition matrix of the 
single-bond dynamics for the RC model 
on $\Z^2_L$ with parameters $p$ and $q$. Let $m=2L(L-1)=\abs{E_{L,2}}$. 
Then there exist constants $c_p=c_p(q),c'>0$ and $C<\infty$ such that 
\begin{itemize}
\item\quad $\displaystyle \lambda(\sb) \;\ge\; \frac{c_p}{m^2 \log m}$ 
			\qquad for $p \neq p_c(q)$,\vspace{2mm}
\item\quad $\displaystyle \lambda(\sb) \;\ge\; c' m^{-C}$ 
			\qquad\quad for $q=2$ and $p = p_c(2)$,
\end{itemize}
where $p_c(q)\,=\,\frac{\sqrt{q}}{1+\sqrt{q}}$.
\vspace{3mm}
\end{cit}

For the third application we consider 
the more general family of 
planar graphs of bounded maximum degree.

\begin{cit}[Corollary~\ref{coro:SW-planar}]
The Swendsen--Wang dynamics for the random-cluster model
with parameters $p$ and $q$
on a planar, simple and connected graph $G$ with $m$ edges 
and maximum degree $\D\ge6$ satisfies
\[
\lambda(\tsw)\;\ge\;\frac{c(1-\eps)}{m}, \qquad \text{ if }\; 
p \,\le\, \frac{\eps}{3\sqrt{\D-3}},
\vspace{-3mm}
\] 
and 
\[
\lambda(\tsw)\;\ge\;\frac{c(1-\eps)}{m^2 \log m}, \qquad 
\text{ if }\; 
p \,\ge\, 1- \frac{\eps}{q \D^\dag}, \;\;
\] 
for some $c=c(\D,p,q)>0$ and $\eps>0$, 
where $\D^\dag$ is the maximum degree of a dual graph of~$G$.
\vspace{1mm}
\end{cit}

Additionally, we present a similar result for arbitrary 
graphs of bounded maximum degree. 
In this case the Swendsen--Wang dynamics is proven to be rapidly 
mixing if $p\le\eps/\D$ (see Corollary~\ref{coro:SW_degree} 
and Lemma~\ref{lemma:SW_P-RC}).
These results enlarge the previously known set of temperatures 
where rapid mixing is known (see \cite{Hu} or 
Theorem~\ref{known-th:SW_degree}), but lead to a worse bound on 
the spectral gap.


\chapter{Detailed introduction}

In this chapter we provide the necessary definitions and notations. 
The experienced reader could 
skip this chapter and visit it when necessary.

First we give an elementary introduction to 
Markov chains on finite state spaces. 
See Levin, Peres and Wilmer \cite{LPW} for more details.
Then we define the measure of efficiency of 
Markov chains which we are concerned with, the spectral gap, 
and explain the relation of this quantity to the 
\emph{rate of convergence} of a Markov chain to its stationary 
distribution, as well as the relation to another quantity, 
the mixing time.
In the subsequent sections we focus on 
Markov chains for Potts and random-cluster 
models, and give precise definitions of the models and the  
Markov chains used.
Finally, in Section~\ref{sec:2_known-results}, 
we state several known results on their 
mixing properties.


\section{Markov chains} \label{sec:2_markov}

Let $\O$ be a finite set, $P$ be an $\O\times\O$-matrix with 
$P(x,y)\ge0$ and $\sum_{z\in\O} P(x,z)=1$ for all $x,y\in\O$, 
and $X=(X_t)_{t\in\N}$, 
$X_t\in\O$, be a sequence of $\O$-valued random variables
defined on a probability space $(\mathfrak{X},\mathcal{F},\P)$. 
We call $X$ a 
\emph{Markov chain on $\O$ with transition matrix $P$}
\index{Markov chain} 
if, for all $t\ge1$ and all $x_0,\dots,x_{t}\in\O$ such 
that $\P(X_0=x_0,\dots,X_{t-1}=x_{t-1})>0$, we have 
\[\begin{split}
\P\left(X_t=x_t \bigm| X_0=x_0,\dots, X_{t-1}=x_{t-1}\right) 
\,&=\, \P\left(X_t=x_t \bigm| X_{t-1}=x_{t-1}\right) \\
&=\, P(x_{t-1},x_t).
\end{split}\]
This is called the {\it Markov property}. 
Note that this implies that, for all $x,y\in\O$ and 
$s\in\N$ with $\P(X_s=x)>0$,
\[
P^t(x,y) \,=\, \sum_{z\in\O} P^{t-1}(x,z) P(z,y) 
	\,=\, \P\left(X_{s+t}=y \bigm| X_s=x\right) 
	\qquad\text{ for all } t\ge0,
\]
which defines the \emph{$t$-step transition probabilities}.
It is clear that the Markov property implies that the knowledge 
of $P$ and of the distribution of 
$X=(X_t)_{t\in\N}$ at any time, say~0, determines the distribution 
of the Markov chain at all future times:
Suppose that the distribution of the Markov chain at time $0$ is 
given by some probability mass function $\eta_0$, 
i.e. $\eta_0(x)=\P(X_0=x)$ for all $x\in\O$; then the 
distribution of the Markov chain at time $t\ge1$ satisfies
\begin{equation} \label{eq:P-X_t}
\P(X_t=x) \,=\, \sum_{z\in\O} \eta_0(z)\, P^t(z,x).
\end{equation}
Therefore, we can simulate one realization of the Markov chain $X$ 
on $\O$ with transition matrix $P$ and \emph{initial distribution}
\index{distribution!initial} 
$\eta_0$ 
by first generating a state $x_0$ with respect to the distribution 
$\eta_0$, and then generating successively the state $x_t$ with 
respect to $P(x_{t-1},\cdot)$, $t\ge1$.
In particular, if the initial distribution of the Markov 
chain is concentrated at a single state, i.e. 
$\eta_0(x)=\ind(x=x_0)$ for some $x_0\in\O$, the distributions 
of the Markov chain take the simple form 
$\P(X_t=x) = P^t(x_0,x)$.
Here, the 
\emph{indicator function}\index{indicator function ($\ind$)} 
$\ind$ is defined 
by $\ind(x=x_0)=1$ if $x=x_0$ and $\ind(x=x_0)=0$ otherwise. 
In general, $\ind$ equals 1 if the statement in the parentheses is true 
and equals 0 otherwise.
From now on we will identify probability mass functions with 
row-vectors, so that equality \eqref{eq:P-X_t} simplifies to 
$\P(X_t=x) = \eta_0 P^t(x)$, and 
for a probability mass function $\pi$ we define, for $A\subset\O$, 
the probability measure $\pi$ by
\[
\pi(A) \,:=\, \sum_{x\in A} \pi(x).
\]
(By convention, we will use $\pi$ interchangeably as a measure and a 
mass function.) \\
Markov chains are typically used to sample 
(approximately) from distributions for which direct simulation 
is not feasible. For this it is necessary that, at least in the 
limit, the distribution of the Markov chain reaches this 
distribution, say $\pi$, i.e.
\vspace{1mm}
\begin{equation} \label{eq:P-limit}
\pi(x)\,=\,\lim_{t\to\infty} \P(X_t=x)\qquad \text{ for all } x\in\O.
\end{equation}

\noindent It is in general not guaranteed that such limits exist, but we 
can present a sufficient condition for 
existence. 
We call a Markov chain 
\emph{irreducible} 
if for 
all $x,y\in\O$, there exists $t$ such that $P^t(x,y)>0$, and 
\emph{aperiodic} 
if 
for all $x\in\O$, ${\rm gcd}\left\{t\ge1:\,P^t(x,x)>0\right\}=1$, 
where ${\rm gcd}\{J\}$ denotes the greatest common divisor of all 
elements in $J\subset\N$. 
Clearly, for an irreducible Markov chain, $P(x,x)>0$ for any 
$x\in\O$ is a sufficient condition for aperiodicity.
All Markov chains in this paper will be irreducible and aperiodic, 
and we use \emph{ergodic}\index{ergodic} 
as an abbreviation for irreducible and aperiodic. 
It is well-known that the above limit \eqref{eq:P-limit} exists for 
aperiodic Markov chains $X$ and, 
if the Markov chain is also irreducible, 
the limit distribution $\pi$ is independent of the initial 
distribution $\eta_0$ and satisfies $\pi(x)>0$, $\forall x\in\O$.
Thus, $\pi P=\pi$, which follows obviously from \eqref{eq:P-limit}, 
is equivalent to \eqref{eq:P-limit} whenever $X$ is an ergodic 
Markov chain with transition matrix $P$.
We call a distribution $\pi$ with $\pi P=\pi$ a 
\emph{stationary distribution}\index{distribution!stationary} 
of the Markov chain with transition matrix~$P$. 

Another important concept is the reversibility of Markov chains. 
A Markov chain with transition matrix $P$ is 
\emph{reversible}\index{reversible} 
(or satisfies {\it detailed balance}) 
\emph{with respect to $\pi$} if for all $x,y\in\O$,
\[
\pi(x) P(x,y) \,=\, \pi(y) P(y,x).
\]
It is not hard to prove that, under this condition, $\pi$ is 
a stationary distribution of~$P$.
Finally, we call a Markov chain \emph{lazy}\index{lazy}, if its transition 
matrix $P$ satisfies $P(x,x)\ge\frac12$, $\forall x\in\O$. 
Obviously, lazy Markov chains are aperiodic. 
If $X$ is a Markov chain with transition matrix $P$, 
and $\tilde{X}$ is a Markov chain with transition matrix $Q$, 
such that $Q(x,y)=\frac12 P(x,y)$ for all $x\neq y$, 
$x,y\in\O$, and $\P(\tilde{X}_0=x)=\P(X_0=x)$, $\forall x\in\O$, 
then we say that $\tilde{X}$ is the \emph{lazy version of} $X$.
Necessarily, $Q(x,x)=\frac12+\frac12 P(x,x)$ for all $x\in\O$.

Throughout this paper we will refer to properties of a Markov chain 
with transition matrix $P$ as properties of $P$, unless they depend 
on more than $P$. For example we say for an ergodic Markov 
chain with transition matrix $P$ that is reversible with respect 
to~$\pi$, that $P$ is ergodic and reversible with 
respect to~$\pi$.


\section{Spectral gap and mixing time} \label{sec:2_gap}

In the following we want to estimate the efficiency of  
Markov chains for approximate sampling from their stationary 
distribution. 
For more details and aspects of the convergence 
of Markov chains to their stationary distribution see Levin, Peres 
and Wilmer~\cite{LPW}.

To quantify this efficiency we first define 
the \emph{total variation distance}\index{total variation distance} 
of two distributions $\nu$ and $\pi$ on $\O$ by
\[
\tvd{\nu-\pi} \,:=\, \frac12\sum_{x\in\O}\abs{\nu(x)-\pi(x)} 
	\,=\, \max_{A\subset\O} \abs{\nu(A)-\pi(A)}.
\]
See \cite[Prop.~4.2]{LPW} for the second equality.
Using this as a metric on the set of all probability measures 
on $\O$ it is natural to ask how fast the convergence 
in \eqref{eq:P-limit} takes place, i.e. how fast the distribution 
of a Markov chain for increasing time $t$ converges to its 
stationary distribution.
The next statement, which is called the \emph{Convergence Theorem}, 
provides a more quantitative version of \eqref{eq:P-limit} 
(see e.g. \cite[Theorem~4.9]{LPW}).

\begin{theorem}\label{th:convergence}
Let $P$ be ergodic with stationary distribution $\pi$. 
Then there exist constants $\a\in[0,1)$ and $C>0$ such that
\begin{equation} \label{eq:convergence}
\max_{x\in\O} \tvd{P^t(x,\cdot)-\pi} \,\le\, C \a^t.
\end{equation}
\end{theorem}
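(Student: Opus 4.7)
The strategy is to prove a strict total-variation contraction for some iterate $P^r$ and then iterate. The key input is that ergodicity of a finite Markov chain forces $P^r$ to have strictly positive entries for some $r$, which gives a Doeblin-type minorization and hence uniform contraction away from $\pi$.

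First I would establish the existence of $r\ge1$ with $P^r(x,y)>0$ for all $x,y\in\O$. For each $x$, aperiodicity says the return set $S_x:=\{t\ge1:P^t(x,x)>0\}$ has $\gcd\,1$; since $S_x$ is closed under addition, the standard fact that any additive submonoid of $\N$ with $\gcd 1$ is cofinite gives some $N_x$ with $\{N_x,N_x+1,\dots\}\subseteq S_x$. Combined with the times $t_{x,y}$ provided by irreducibility and with the finiteness of $\O$, this yields a single $r$ that works uniformly in $(x,y)$.

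Then I would set $\delta:=\abs{\O}\min_{x,y}P^r(x,y)\in(0,1]$ and let $\nu$ be uniform on $\O$, so $P^r(x,y)\ge\delta\,\nu(y)$ for all $x,y$. This gives the decomposition
\[
P^r \;=\; \delta\,\mathbf{1}\nu \,+\, (1-\delta)\,K,
\]
where $\mathbf{1}\nu$ is the rank-one matrix with every row equal to $\nu$ and $K$ is another stochastic matrix. For any signed measure $\mu$ on $\O$ with $\mu(\O)=0$, the Jordan decomposition $\mu=\mu^+-\mu^-$ satisfies $\mu^+(\O)=\mu^-(\O)=\tvd{\mu}$, so the contributions of $\delta\,\mathbf{1}\nu$ cancel and
\[
\mu P^r \;=\; (1-\delta)\,\mu K, \qquad\text{hence}\qquad \tvd{\mu P^r} \;\le\; (1-\delta)\,\tvd{\mu},
\]
since stochastic matrices contract total variation.

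Finally I would iterate: the signed measure $\mu_x := P^0(x,\cdot)-\pi$ has mass zero and $\tvd{\mu_x}\le 1$. For $t=kr+s$ with $k=\lfloor t/r\rfloor$ and $0\le s<r$, $\mu_x P^s$ still has mass zero and TV at most $1$, and applying the contraction $k$ times to $(\mu_x P^s)(P^r)^k$ yields
\[
\tvd{P^t(x,\cdot)-\pi} \;=\; \tvd{\mu_x P^t} \;\le\; (1-\delta)^k \;\le\; (1-\delta)^{-1}\bigl((1-\delta)^{1/r}\bigr)^t,
\]
so $\a:=(1-\delta)^{1/r}\in[0,1)$ and $C:=(1-\delta)^{-1}$ finish (the degenerate $\delta=1$ case is trivial since then $P^r$ already equals $\mathbf{1}\pi$). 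The main obstacle is Step~1: combining the per-state $\gcd$ statement of aperiodicity with the pairwise reachability from irreducibility into a single uniform power $r$. Steps~2--4 are then a clean linear-algebraic engine once Step~1 is in hand.
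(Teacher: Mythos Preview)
Your argument is correct: the Doeblin-type minorization you set up (all entries of $P^r$ strictly positive, hence $P^r\ge\delta\,\mathbf{1}\nu$, hence a $(1-\delta)$-contraction of total variation on mean-zero signed measures, then iterate) is the standard route and each step is sound. The only cosmetic point is that in the degenerate case $\delta=1$ your formula for $C=(1-\delta)^{-1}$ is undefined, but you already flagged this and the conclusion is trivial there.

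As for comparison with the paper: the paper does not give its own proof of this theorem at all; it simply quotes it as the Convergence Theorem and refers to \cite[Theorem~4.9]{LPW}. Your argument is essentially the proof presented in that reference, so there is no meaningful divergence to discuss.
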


This theorem shows that, for every $x\in\O$, the distribution of 
a Markov chain with transition matrix $P$ and initial distribution 
concentrated at $x$ converges exponentially fast (in total variation) 
to its stationary distribution.
In fact, 
the worst initial distribution, 
i.e. the distribution $\eta_0$ that maximizes 
$\tvd{\eta_0 P^t(\cdot)-\pi}$, is concentrated at a single state. 
To see this, note that for a distribution $\eta_0$ on $\O$,
\[\begin{split}
\tvd{\eta_0 P^t(\cdot)-\pi} 
\,&=\, \tvd{\sum_{x\in\O}\eta_0(x) P^t(x,\cdot)-\pi}
\,=\, \tvd{\sum_{x\in\O}\eta_0(x) \left(P^t(x,\cdot)-\pi\right)}\\
&\le\,\sum_{x\in\O}\eta_0(x)\tvd{P^t(x,\cdot)-\pi}
\,\le\,\max_{x\in\O}\tvd{P^t(x,\cdot)-\pi}.
\end{split}\]
Thus, it is enough to consider the maximum as in 
Theorem~\ref{th:convergence} to get a statement for arbitrary 
initial distributions.

For given $\eps>0$, we are interested in the minimal time $t$ such 
that the total variation distance between the distribution of 
our Markov chain at time $t$ and its stationary distribution 
is at most $\eps$, independent of the initial distribution. 
This $t$ is called the $\eps$-\emph{mixing time} and is defined by 
\[
t_{\rm mix}(P,\eps) \,:=\, \min\left\{t\ge0:\, 
	\max_{x\in\O} \tvd{P^t(x,\cdot)-\pi}\le\eps\right\}.
\]
Using the fact that $t_{\rm mix}(P,\eps)\le
\lceil\log(\eps^{-1})\rceil\, t_{\rm mix}(P,1/2e)$ 
(see e.g. \cite[p. 55]{LPW}), it is enough to consider the 
\emph{mixing time}\index{mixing time} 
$t_{\rm mix}(P):=t_{\rm mix}(P,1/2e)$.
Unless otherwise stated, $\log$ denotes the natural logarithm.
Obviously, bounds on $t_{\rm mix}$ imply (and follow from) 
bounds on the optimal, i.e. smallest, constants $C$ and $\a$ 
in Theorem~\ref{th:convergence}. 
To be precise, if we know some constants $C$ and $\a$ for which 
\eqref{eq:convergence} holds, we obtain
$t_{\rm mix}(P) \le \log(2e\, C) \cdot\log(1/\a)^{-1}$.
For the reverse direction note that 
\[
\max_{x\in\O} \tvd{P^t(x,\cdot)-\pi} 
\,\le\, e^{-\lfloor t/t_{\rm mix}\rfloor}
\,\le\, e^{1-t/t_{\rm mix}} 
\]
with $t_{\rm mix}=t_{\rm mix}(P)$ 
(see \cite[eq. (4.34)]{LPW}).

In this paper we are concerned with bounding the optimal 
constant $\a$ in \eqref{eq:convergence} and, in particular, 
comparing the optimal constants for different Markov chains. 
We will see (Lemma~\ref{lemma:mixing-gap}) that this also 
implies bounds on the mixing time.

For this we introduce the spectral gap of a Markov chain. 
Let $P$ be the transition matrix of a Markov chain on 
$\Omega$ that is ergodic and reversible with 
respect to $\pi$. 
We regard $P$ as an operator that maps functions 
$f:\O\to\R$ to functions by
\begin{equation} \label{eq:map}
Pf(x) \;:=\; \sum_{y\in\O}\,P(x,y)\,f(y).
\end{equation}
Such an operator is called the 
\emph{Markov operator}\index{Markov operator} 
that corresponds to~$P$ and we will use the same notation 
for the Markov operator and its corresponding transition matrix. 
Note that $Pf(x)$ is the expectation of $f$ with respect to the 
distribution $P(x,\cdot)$, i.e. the distribution of the Markov 
chain with transition matrix $P$ and initial distribution 
concentrated at $x\in\O$ after one step.
If we consider a function $f$ on $\O$ as an element of $\R^\O$ 
(i.e. a column vector), 
then $Pf$ is simply matrix multiplication. 
Additionally, we endow the function space  
$\R^\O$ with the inner product
\begin{equation} \label{eq:inner_product}
\l f, g\r_\pi \,:=\, \sum_{x\in\O} f(x)\, g(x)\, \pi(x), 
	\qquad f,g\in\R^\O,
\end{equation}
and denote by $L_2(\pi)$ the inner product space (or Hilbert space) 
that consists of $\R^\O$ with the inner product 
$\l\cdot,\cdot\r_\pi$. In particular, the norm in $L_2(\pi)$ 
is given by 
\[
\norm{f}_\pi^2 \,:=\, \l f, f\r_\pi
\,=\, \sum_{x\in\O}f(x)^2\,\pi(x), \qquad f\in\R^\O.
\]
Using the inner product \eqref{eq:inner_product} we define 
the \emph{adjoint operator}\index{adjoint operator} 
$P^*$ of $P$ as the (unique) 
operator that satisfies $\l f, Pg\r_{\pi} = \l P^*f, g\r_{\pi}$ 
for all $f,g\in L_2(\pi)$. Then $P^*$ is also a Markov 
operator and the corresponding transition matrix is 
\begin{equation} \label{eq:adjoint}
P^*(x,y) = \frac{\pi(y)}{\pi(x)} P(y,x), \qquad x,y\in\O.
\end{equation}
Since $P$ is reversible with respect to $\pi$, we obtain  
$P=P^*$. Hence, $P$ defines a self-adjoint operator.
This 
implies that 
$P$ has only real eigenvalues, i.e. real numbers $\xi$ with 
$Pf=\xi f$ for some $0\neq f\in\R^\O$  
(see e.g. \cite[Thm.~9.1-1]{Krey}). 
By the ergodicity of $P$, 
these eigenvalues $\{\xi_i\}$ satisfy $-1<\xi_i\le1$; 
additionally, $Pf=f$ if and only if $f$ is constant (see 
\cite[Lemma~12.1]{LPW}).

We define the (absolute) 
\emph{spectral gap}\index{spectral gap} 
of $P$ by 
\[
\lambda(P) \;:=\; 1 - \max\Bigl\{\abs{\xi}:\, \xi 
		\text{ is an eigenvalue of } P,\; \xi\neq1\Bigr\}.
\]
It is well-known that the spectral gap can be written in terms 
of norms of the Markov operator $P$. For this we define the 
spectral norm (or simply \emph{operator norm}\index{operator norm}) 
of an operator $P$ by
\begin{equation} \label{eq:norm}
\Vert P\Vert_\pi \;:=\; \Vert P\Vert_{L_2(\pi)\to L_2(\pi)} 
\;=\; \max_{\Vert f\Vert_\pi=1} \Vert Pf\Vert_\pi.
\end{equation}
We use $\Vert\cdot\Vert_\pi$ interchangeably for functions and 
operators, because it will be clear from the context which 
norm is used. 
For a self-adjoint $P$ the operator norm $\Vert P\Vert_\pi$ equals 
the largest eigenvalue of $P$ in absolute value.
To give a representation of the spectral gap we also need the 
operator $S_\pi$ that is defined by $S_\pi f=\l f, 1\r_\pi$. 
This (Markov) operator corresponds to the (transition) matrix 
$S_\pi(x,y)=\pi(y)$ for $x,y\in\O$. Obviously, $S_\pi$ has only the 
eigenvalues 1 and 0, and the eigenspace to eigenvalue 0 is 
$\{f\in L_2(\pi):\, \l f, 1\r_\pi=0\}$, which is also 
the union of all eigenspaces of $P$ for eigenvalues different 
from~1. Thus,
\vspace{1mm}
\begin{equation} \label{eq:gap-norm}
\lambda(P) \,=\, 1 - \norm{P-S_\pi}_\pi.
\end{equation}

As stated above, the spectral gap and the speed of convergence 
in Theorem~\ref{th:convergence} are closely related.
The next lemma (see \cite[Coro.~12.6]{LPW}) demonstrates this relation.

\begin{lemma}\label{lemma:TV-gap}
Let $P$ be ergodic and reversible with respect to 
$\pi$. Then 
\[
\lim_{t\to\infty}\left(\max_{x\in\O} 
	\tvd{P^t(x,\cdot)-\pi}\right)^{1/t} \,=\, 1-\lambda(P).
\]
\end{lemma}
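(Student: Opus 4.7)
The plan is to establish matching upper and lower bounds on $\left(\max_{x\in\O}\tvd{P^t(x,\cdot)-\pi}\right)^{1/t}$ and then let $t\to\infty$; the key tool on both sides is the operator identity $\lambda(P)=1-\norm{P-S_\pi}_\pi$ from \eqref{eq:gap-norm} together with the self-adjointness of $P$ that comes from reversibility.

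For the upper bound, I would convert the total variation distance into an $L_2(\pi)$ quantity. Fix $x\in\O$ and set $\phi_x(y):=\ind(y=x)/\pi(x)$; by reversibility one checks that $P^t\phi_x(y)=P^t(x,y)/\pi(y)$, and clearly $\l\phi_x,1\r_\pi=1$. Cauchy--Schwarz applied to $\sum_y \sqrt{\pi(y)}\cdot\abs{P^t(x,y)-\pi(y)}/\sqrt{\pi(y)}$ gives
\[
4\,\tvd{P^t(x,\cdot)-\pi}^2 \;\le\; \sum_{y\in\O}\frac{(P^t(x,y)-\pi(y))^2}{\pi(y)} \;=\; \norm{P^t(\phi_x-1)}_\pi^2.
\]
Since $\phi_x-1$ is $\pi$-orthogonal to the constants, $S_\pi(\phi_x-1)=0$; combined with $PS_\pi=S_\pi P=S_\pi^2=S_\pi$ this gives $P^t(\phi_x-1)=(P-S_\pi)^t(\phi_x-1)$. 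Submultiplicativity of the operator norm and \eqref{eq:gap-norm} then yield
\[
\tvd{P^t(x,\cdot)-\pi} \;\le\; \tfrac{1}{2}(1-\lambda(P))^t \norm{\phi_x-1}_\pi \;\le\; \frac{(1-\lambda(P))^t}{2\sqrt{\pi(x)}},
\]
using $\norm{\phi_x-1}_\pi^2=1/\pi(x)-1$, so $\limsup_{t\to\infty}\bigl(\max_x\tvd{P^t(x,\cdot)-\pi}\bigr)^{1/t}\le 1-\lambda(P)$.

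For the lower bound, let $f$ be an eigenfunction of $P$ with eigenvalue $\xi$ satisfying $\abs{\xi}=1-\lambda(P)$; since $\xi\neq1$ and $P$ is self-adjoint, $f$ is $\pi$-orthogonal to the constants, i.e.~$\sum_y\pi(y)f(y)=0$. Normalizing so that $\max_y\abs{f(y)}=1$ and using $P^tf(x)=\xi^t f(x)$, one has
\[
\abs{\xi}^t\abs{f(x)} \;=\; \Bigl|\sum_{y\in\O}\bigl(P^t(x,y)-\pi(y)\bigr)f(y)\Bigr| \;\le\; 2\,\tvd{P^t(x,\cdot)-\pi}.
\]
Choosing $x$ with $\abs{f(x)}=1$ shows $\max_x\tvd{P^t(x,\cdot)-\pi}\ge\tfrac12(1-\lambda(P))^t$, hence $\liminf_{t\to\infty}\bigl(\max_x\tvd{P^t(x,\cdot)-\pi}\bigr)^{1/t}\ge 1-\lambda(P)$; combining with the upper bound completes the proof.

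The main subtlety I anticipate is the algebraic identity $P^t(\phi_x-1)=(P-S_\pi)^t(\phi_x-1)$, which hinges on the projection-like relations $PS_\pi=S_\pi P=S_\pi^2=S_\pi$ and is exactly what lets \eqref{eq:gap-norm} convert the spectral gap into an operator-norm decay rate; once this is in place, the remaining steps reduce to Cauchy--Schwarz, the triangle inequality, and the spectral characterization of $\lambda(P)$.
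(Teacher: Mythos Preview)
Your proof is correct. The paper does not give its own proof of this lemma but simply cites \cite[Coro.~12.6]{LPW}; the argument you wrote out is precisely the standard one underlying that reference, establishing the non-asymptotic two-sided bound $\tfrac12(1-\lambda(P))^t\le \max_x\tvd{P^t(x,\cdot)-\pi}\le \tfrac{1}{2\sqrt{\pi_{\min}}}(1-\lambda(P))^t$ (the paper quotes these as \cite[(12.13)]{LPW} and \cite[(12.11)]{LPW} in the paragraph following the lemma) and then taking $t$-th roots.
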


This shows that $1-\lambda(P)$ is the optimal constant 
$\a$ in \eqref{eq:convergence}.
One may hope that this asymptotic equality holds, at least 
approximately, also non-asymptotically, i.e.~that there exist 
constants $c,C>0$ such that
\[
c \,\bigl(1-\lambda(P)\bigr)^t 
\;\le\; \max_{x\in\O}\tvd{P^t(x,\cdot)-\pi}
\;\le\; C \bigl(1-\lambda(P)\bigr)^t 
\]
for all $t\in\N$, and in fact, this inequality holds with 
constants $c=\frac12$ \cite[(12.13)]{LPW} and 
$C^{-1}=\min_{x\in\O}\pi(x)$ \cite[(12.11)]{LPW}.
Plugging this into the definition of the mixing time we 
deduce that 
mixing time and spectral gap of a Markov chain (on finite state spaces) 
satisfy the following inequality (see e.g. 
\cite[Theorem~12.3 \& 12.4]{LPW}).

\begin{lemma} \label{lemma:mixing-gap}
Let $P$ be the transition matrix of a reversible, ergodic Markov chain with 
state space $\O$ and stationary distribution $\pi$. Then
\[
\lambda(P)^{-1}-1 \;\le\; t_{\rm mix}(P) 
\;\le\; \log\left(\frac{2e}{\pi_{\rm min}}\right)\,\lambda(P)^{-1}, 
\]
where $\pi_{\rm min}:=\min_{x\in\O}\pi(x)$.
\end{lemma}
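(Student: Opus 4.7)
The plan is to derive both inequalities as direct consequences of the two-sided bound stated in the paragraph preceding the lemma, namely
\[
\frac{1}{2}\bigl(1-\lambda(P)\bigr)^t \;\le\; \max_{x\in\O}\tvd{P^t(x,\cdot)-\pi} \;\le\; \pi_{\min}^{-1}\bigl(1-\lambda(P)\bigr)^t,
\]
combined with the definition $t_{\rm mix}(P)=t_{\rm mix}(P,1/2e)$ and two elementary estimates for the logarithm.

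For the upper bound I would set the right-hand side above equal to $1/(2e)$ and solve for~$t$. This gives the condition
\[
t \;\ge\; \frac{\log(2e/\pi_{\min})}{\log\bigl(1/(1-\lambda(P))\bigr)}.
\]
Applying $\log(1/(1-x))\ge x$ for $x\in(0,1)$ with $x=\lambda(P)$, it suffices to take $t\ge \log(2e/\pi_{\min})\,\lambda(P)^{-1}$. Since $t_{\rm mix}(P)$ is the smallest $t$ meeting the mixing criterion, this immediately yields the upper estimate.

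For the lower bound I would evaluate the lower inequality in the two-sided bound at $t=t_{\rm mix}(P)$. By definition of the mixing time, the middle term is at most $1/(2e)$, whence
\[
\frac{1}{2}\bigl(1-\lambda(P)\bigr)^{t_{\rm mix}} \;\le\; \frac{1}{2e},
\]
so $t_{\rm mix}\,\log\bigl(1/(1-\lambda(P))\bigr)\ge 1$. Using now the complementary estimate $\log(1/(1-x))\le x/(1-x)$ for $x\in(0,1)$ with $x=\lambda(P)$ converts this into $t_{\rm mix}\cdot \lambda(P)/(1-\lambda(P))\ge 1$, i.e.\ $t_{\rm mix}\ge \lambda(P)^{-1}-1$.

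There is no real obstacle here: the entire content of the lemma has already been set up by the two-sided total variation bound cited from \cite{LPW}, and the proof reduces to the two logarithmic inequalities above and the definition of $t_{\rm mix}(P)$. The only care required is to apply the correct direction of the $\log(1/(1-x))$ estimate on each side, so that the weaker quantity $\lambda(P)^{-1}-1$ appears in the lower bound and the prefactor $\log(2e/\pi_{\min})$ appears in the upper bound.
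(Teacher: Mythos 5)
Your argument is correct and follows essentially the same route as the paper, which likewise obtains the lemma by plugging the two-sided bound $\tfrac12(1-\lambda(P))^t\le \max_x\tvd{P^t(x,\cdot)-\pi}\le \pi_{\min}^{-1}(1-\lambda(P))^t$ into the definition $t_{\rm mix}(P)=t_{\rm mix}(P,1/2e)$ (citing \cite[Thms.~12.3 \& 12.4]{LPW}); your use of $x\le\log(1/(1-x))\le x/(1-x)$ is exactly the standard way to pass from powers of $1-\lambda(P)$ to $\lambda(P)^{-1}$ on each side. The only cosmetic imprecision, inherited from the cited statement itself, is that the upper bound really gives $t_{\rm mix}(P)\le\lceil\log(2e/\pi_{\min})\,\lambda(P)^{-1}\rceil$ before the integrality of $t_{\rm mix}$ is ignored.
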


We are interested in the spectral gap of specific Markov chains and, 
in particular, in the dependence on the size of the state space 
if the Markov chain can be defined analogously on an unbounded 
family of state spaces.
Thus, if we consider an \emph{indexed family} of state spaces
$\{\O_n\}_{n\in\N}$ with a corresponding family of transition 
matrices $\{P_n\}_{n\in\N}$, we say that the Markov chain is 
\emph{rapidly mixing}\index{rapid mixing}
for the given family if 
$\lambda(P_n)^{-1} \le c\log(|\O_n|)^C$
for all $n\in\N$ and some $c,C < \infty$.

\vspace{3mm}
\begin{remark}
The main reason for sampling from 
a given probability distribution $\pi$ on $\O$ is 
probably the approximation of expectations 
$S_\pi f = \l f,1\r_\pi$ for certain functions $f\in\R^\O$. 
This is frequently done e.g.~in statistical shysics to 
deepen the understanding of the underlying model.
If exact sampling from $\pi$ is not feasible, then this often can be 
done by \emph{Markov chain Monte Carlo methods} that can be 
described as follows. Choose an initial state $x_0\in\O$ 
(deterministically or by some distribution), then simulate 
$k$ steps of the Markov chain with transition matrix $P$ 
to obtain a sequence $x_1,\dots,x_{k}\in\O$ and, finally, 
take the average $A_k(f):=\frac{1}{k+1}\sum_{i=0}^{k} f(x_i)$. 
Usually it is better to omit several states at the beginning of the 
sequence to improve the performance 
(see e.g.~Rudolf~\cite{Rud-diss}). The number of these omitted 
states is called \emph{burn-in} in the literature.
We know from the \emph{Ergodic Theorem} \cite[Thm.~4.16]{LPW} 
that $\lim_{k\to\infty}A_k(f)=S_\pi f$ almost surely, whenever 
$P$ is irreducible and has stationary distribution $\pi$. 
In this context the spectral gap plays an important role in 
bounding the number $k$ of steps of the Markov chain that are necessary 
to achieve a prescribed error $\eps>0$. 
For a bound on $k$ for the probabilistic error criterion that 
depends on the spectral gap and the $\eps$-mixing time see 
Levin et~al.~\cite[Sec.~12.6]{LPW}. 
Another bound that deals with the mean square error and 
depends only on the spectral gap is given in \cite{Rud09,Rud10};  
see also Novak and Wo{\'z}niakowski~\cite{NW2} for some 
context and results in a more general setting. 
Overall, one can say that the existence of a rapidly mixing 
Markov chain for a family of state spaces $\{\O_n\}_{n\in\N}$ 
with corresponding measures $\{\pi_n\}_{n\in\N}$ 
leads to an efficient algorithm for the approximation of 
expectations of functions (with respect to $\pi_n$) defined on $\O_n$, 
i.e.~an algorithm that needs time proportional to $\eps^{-2}$ 
times a polynomial in $\log(|\O_n|)$ (times the variance of the 
considered function).
\end{remark}
\vspace{3mm}

To finish this section we present a simple technique to compare 
the spectral gaps of two Markov chains on the same state space, 
but with possibly different stationary distributions. 
This result is well-known (see e.g.~\cite{DSC2} or \cite{LPW}), 
but since it is used several times in this paper
we present its proof here. 
See also Dyer, Goldberg, Jerrum and Martin~\cite{DGJM} 
for a survey on more general techniques for comparison of Markov chains.

\vspace{1mm}
\begin{lemma}\label{lemma:prelim_comparison}
Suppose $P_1$ $($resp.~$P_2$$)$ is an ergodic and 
reversible transition matrix 
with stationary distribution $\pi_1$ $($resp.~$\pi_2$$)$ on $\O$. 
If there exist constants $a,A>0$ such that
\[
\frac{\pi_1(x) P_1(x,y)}{\pi_2(x) P_2(x,y)} \le A 
\qquad\text{ and }\qquad
\frac{\pi_1(x)}{\pi_2(x)} \ge a
\]
for all $x,y\in\O$, then
\[
\lambda(P_1) \,\le\, \frac{A}{a}\, \lambda(P_2).
\]
If $P_2$ has only non-negative eigenvalues 
it is enough to verify the conditions for $x\neq y$.
\end{lemma}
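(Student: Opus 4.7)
The plan is to use the Rayleigh--Ritz variational characterization of the spectral gap. Since both $P_i$ are reversible and ergodic, they are self-adjoint on $L_2(\pi_i)$ with real eigenvalues in $(-1,1]$; letting $\xi(P)$ and $\eta(P)$ denote the second-largest and smallest eigenvalues, the absolute spectral gap decomposes as $\lambda(P)=\min(1-\xi(P),\,1+\eta(P))$, and each piece has the standard variational description
\[
1-\xi(P) \,=\, \min_{\Var_\pi f>0}\frac{\l(I-P)f,f\r_\pi}{\Var_\pi(f)},
\qquad
1+\eta(P) \,=\, \min_{f\neq 0}\frac{\l(I+P)f,f\r_\pi}{\l f,f\r_\pi}.
\]
A short computation using only $\sum_y P(x,y)=1$ and stationarity $\sum_x\pi(x)P(x,y)=\pi(y)$ yields the symmetric double-sum representations
\[
\l(I\mp P)f,f\r_\pi \,=\, \tfrac12\sum_{x,y\in\O}\pi(x)\,P(x,y)\,\bigl(f(x)\mp f(y)\bigr)^2,
\]
which are the shapes needed for termwise comparison.

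Next I would push the two hypotheses through these formulas. Inserting $\pi_1(x)P_1(x,y)\le A\,\pi_2(x)P_2(x,y)$ termwise into the double sums immediately gives $\l(I\mp P_1)f,f\r_{\pi_1}\le A\,\l(I\mp P_2)f,f\r_{\pi_2}$. For the denominators, $\pi_1(x)/\pi_2(x)\ge a$ implies $\sum_x\pi_1(x)g(x)\ge a\sum_x\pi_2(x)g(x)$ for every $g\ge 0$; applying this to $g=f^2$ produces $\l f,f\r_{\pi_1}\ge a\,\l f,f\r_{\pi_2}$, and applying it to $g=(f-c)^2$ with $c=\sum_x\pi_1(x)f(x)$ produces $\Var_{\pi_1}(f)\ge a\,\Var_{\pi_2}(f)$ (using that the variance minimizes $\sum_x\pi_2(x)(f(x)-c')^2$ over $c'$). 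Feeding these inequalities into the two Rayleigh quotients yields
\[
1-\xi(P_1)\,\le\,\tfrac{A}{a}\bigl(1-\xi(P_2)\bigr),
\qquad
1+\eta(P_1)\,\le\,\tfrac{A}{a}\bigl(1+\eta(P_2)\bigr),
\]
and taking the minimum of the two right-hand sides yields $\lambda(P_1)\le(A/a)\,\lambda(P_2)$.

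For the $x\neq y$ refinement: the diagonal terms contribute nothing to $\l(I-P)f,f\r_\pi$ because $(f(x)-f(x))^2=0$, so the hypothesis is only used off-diagonal in the $(I-P)$ comparison. The $(I+P)$ form, in contrast, picks up the self-loops through $(f(x)+f(y))^2=4f(x)^2$ when $x=y$, so the diagonal condition would genuinely be needed there. However, if $P_2$ has only non-negative eigenvalues then $\eta(P_2)\ge 0$, which forces $\lambda(P_2)=1-\xi(P_2)$ and makes the $(I+P)$ comparison superfluous; hence the off-diagonal hypothesis alone suffices. The only mildly delicate point in the whole argument is this splitting of the absolute spectral gap into two separate Rayleigh quotients to handle possibly negative eigenvalues; all remaining steps are elementary termwise comparisons.
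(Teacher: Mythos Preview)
Your proof is correct and follows essentially the same route as the paper: the variational characterization of $1-\xi_2$ and $1+\xi_{|\O|}$ via the Dirichlet forms $\EE(f)=\frac12\sum_{x,y}\pi(x)P(x,y)(f(x)-f(y))^2$ and $\FF(f)=\frac12\sum_{x,y}\pi(x)P(x,y)(f(x)+f(y))^2$, termwise comparison under the two hypotheses, and the observation that when $P_2$ is positive semidefinite only the $\EE$-comparison (which ignores the diagonal) is needed. The only cosmetic difference is in the variance step: the paper restricts to $f$ with $\l f,1\r_{\pi_1}=0$ and bounds $\Var_{\pi_2}(f)\le\l f,f\r_{\pi_2}\le a^{-1}\l f,f\r_{\pi_1}=a^{-1}\Var_{\pi_1}(f)$, whereas you shift by the $\pi_1$-mean and use that $\Var_{\pi_2}$ minimizes over shifts---the two arguments are equivalent.
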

\vspace{1mm}

\begin{proof}
For $f\in\R^\O$ define
\[
\EE_1(f) \,:=\, \l (I-P_1)f, f\r_{\pi_1} \qquad\text{ and }\qquad
\FF_1(f) \,:=\, \l (I+P_1)f, f\r_{\pi_1}
\]
with the identity $If:=f$. 
Equivalently we define $\EE_2$ and $\FF_2$ for $P_2$.
By reversibility and ergodicity, $P_1$ has only real eigenvalues 
\[
1=\xi_1(P_1)>\xi_2(P_1)\ge\dots\ge\xi_{\abs{\O}}(P_1)>-1, 
\] 
i.e. $\lambda(P_1)=\min\{1-\xi_2(P_1), 1+\xi_{\abs{\O}}(P_1)\}$.
Thus, 
using the ``min-max characterization'' of the eigenvalues 
\cite[Thms. 4.2.2 \& 4.2.11]{HJ-matrix} 
and the fact that $\xi_1(P_1)$ corresponds to the constant eigenfunction, 
we obtain
\[
1-\xi_2(P_1) \,=\, 1-\max_{\substack{f\neq0:\\ \l f,1\r_{\pi_1}=0}}
	\frac{\l Pf, f\r_{\pi_1}}{\l f, f\r_{\pi_1}} 
\,=\, \min_{\substack{f\neq0:\\ \l f,1\r_{\pi_1}=0}}
	\frac{\EE_1(f)}{\l f, f\r_{\pi_1}} 
\,=\, \min_{\substack{f\neq0:\\ \l f,1\r_{\pi_1}=0}} 
	\frac{\EE_1(f)}{\Var_{\pi_1}(f)}
\]
with $\Var_{\pi_1}(g)=\l g,g\r_{\pi_1}-\l g,1\r_{\pi_1}^2$ 
for $g\in\R^\O$. 
Noting that $\EE_1(f)=\EE_1(f+c)$ and 
$\Var_{\pi_1}(f)=\Var_{\pi_1}(f+c)$ for every $c\in\R$, 
we get
\[
1-\xi_2(P_1)
\,=\, \min_{\substack{f\in\R^\O:\\ \Var_{\pi_1}(f)\neq0}} 
	\frac{\EE_1(f)}{\Var_{\pi_1}(f)}, 
\]
where $\Var_{\pi_1}(f)\neq0$ iff $f$ is not constant.
Equivalently,
\[
1+\xi_{\abs{\O}}(P_1) \,=\, 1+\min_{f\neq0}\,
	\frac{\l Pf, f\r_{\pi_1}}{\l f, f\r_{\pi_1}} 
\,=\, \min_{f\neq0}\, 
	\frac{\FF_1(f)}{\l f, f\r_{\pi_1}}.
\]
It is easy to check (using reversibility) that 
\[
\EE_1(f) \,=\, \frac12\sum_{x,y\in\O}\bigl(f(x)-f(y)\bigr)^2 
	\pi_1(x) P_1(x,y)
\]
and
\[
\FF_1(f) \,=\, \frac12\sum_{x,y\in\O}\bigl(f(x)+f(y)\bigr)^2 
	\pi_1(x) P_1(x,y).
\]
It follows from the first assumption of this lemma 
that $\EE_1(f)\le A\,\EE_2(f)$ 
and $\FF_1(f)\le A\,\FF_2(f)$ for every $f\in\R^\O$. 
The second assumption implies 
$\l f,f\r_{\pi_1}\ge a \,\l f,f\r_{\pi_2}$ and hence that 
$1+\xi_{\abs{\O}}(P_1)\le\frac{A}{a}(1+\xi_{\abs{\O}}(P_2))$.
Additionally, for $f\in\R^\O$ with $\l f,1\r_{\pi_1}=0$, 
we obtain
\[
\Var_{\pi_2}(f) \,=\, \l f,f\r_{\pi_2} - \l f,1\r_{\pi_2}^2 
\,\le\, \l f,f\r_{\pi_2} 
\,\le\, \frac1a\, \l f,f\r_{\pi_1} 
\,=\, \frac1a\,\Var_{\pi_1}(f),
\]
which implies $\Var_{\pi_1}(f)\ge a\,\Var_{\pi_2}(f)$ for every 
$f\in\R^\O$ and therefore 
$1-\xi_2(P_1)\le\frac{A}{a}(1-\xi_2(P_2))$. 
Note that if $P_2$ has only non-negative eigenvalues 
it is enough to compare $\EE_1$ and $\EE_2$, 
since $\lambda(P_2)=1-\xi_2(P_2)$. Both do not 
depend on the diagonal elements of $P_1$ and $P_2$.
This proves the claim.
\end{proof}


\section{The models} \label{sec:2_models}

In this section we introduce the models that 
we study in this paper. 
Although we are mainly interested in sampling from the 
Potts model, we additionally need the closely related 
random-cluster and \jointnamelong\ (\jointname) models. 
Since all these models are defined on an underlying graph, 
we begin with providing some general graph terminology; 
see e.g.~Diestel~\cite{Diestel} or Mohar and Thomassen~\cite{Mohar} 
for a more comprehensive introduction to graph theory.

A 
\emph{graph}
\index{graph} 
$G$ is a pair $(V,E)$, where $V$ is the 
finite set of \emph{vertices} and $E$ is the set of $\emph{edges}$, 
together with a function $\varphi$ that assigns to each edge 
$e\in E$ a set of at most two vertices, i.e. $\varphi(e)=\{u,v\}$ 
for some $u,v\in V$, which are called its 
\emph{endvertices}\index{endvertices}. 
We denote the endvertices of an edge $e\in E$ by 
$e^{(1)}$ and $e^{(2)}$, i.e. $\varphi(e)=\{e^{(1)}, e^{(2)}\}$. 
Let $\varphi(E)=\{\varphi(e):\, e\in E \}$. 
We say that $u$ and $v$ are \emph{neighbors}\index{neighbors} in $G$, 
if $\{u,v\}\in \varphi(E)$.
Furthermore, $u$ and $v$ are called 
\emph{connected}\index{connected}, if there exist vertices 
$v_0,\dots,v_n\in V$ 
such that $v_0=u$, $v_n=v$ and $\{v_{i-1},v_i\}\in\varphi(E)$, 
$i=1,\dots,n$. We write $u\conn{} v$ if $u$ and $v$ 
are connected (in~$G$). 
Now suppose we have two graphs, $G=(V_G,E_G,\varphi_G)$ and 
$H=(V_H, E_H, \varphi_H)$. We say that $H$ is a 
\emph{subgraph}\index{subgraph} 
of $G$ if $V_H\subset V_G$, $E_H\subset E_G$ and 
$\varphi_H(e)=\varphi_G(e)$ for all $e\in E_H$. 
If additionally $V_H=V_G$, we say $H$ is a 
\emph{spanning subgraph}\index{subgraph!spanning} 
of $G$.
Let $A\subset E$ be a subset of the edges of the graph 
$G=(V,E,\varphi)$. 
Then $G_A:=(V,A,\varphi)$ is a spanning subgraph of $G$ 
and we write $u\xconn{A}v$ if $u,v\in V$ are connected 
in $G_{A}$, i.e. there exist vertices $v_0,\dots,v_n\in V$ 
such that $v_0=u$, $v_n=v$ and $\{v_{i-1},v_i\}\in\varphi(A)$, 
$i=1,\dots,n$.
Clearly, $\xconn{A}$ defines an 
equivalence relation on $V$ for every $A\subset E$; its 
equivalence classes are called 
\emph{connected components}\index{connected components} of $G_A$.
Two distinct edges $e_1, e_2\in E$ that have the same endvertices, 
i.e.~they satisfy $\varphi(e_1)=\varphi(e_2)$, are said to be  
\emph{parallel},\index{parallel edges} and edges with equal 
endvertices, i.e.~$e\in E$ with $e^{(1)}= e^{(2)}$ 
(or $\abs{\varphi(e)}=1$), 
are called \emph{loops}\index{loops}.

Most graphs of this paper are 
\emph{simple}\index{graph!simple}, i.e. 
graphs that contain no parallel edges.
(Some authors use the term ``graph'' for simple graphs, 
``multigraph'' for graphs as defined above.)
The advantage of simple graphs is that 
$\abs{\varphi(E)}=\abs{E}$ and thus $\varphi$ is a 
bijection between $E$ and $\varphi(E)$. 
In this case we identify $\varphi(E)$ with $E$ and 
write $G=(V,E)$ for the graph $(V,E,\varphi)$, as well as 
$e=\{u,v\}$ for $\varphi(e)=\{u,v\}$.
With a slight abuse of notation we omit $\varphi$ also if the 
graph is not simple, i.e. we write $G=(V,E)$ for $(V,E,\varphi)$, 
and make the convention that 
$\{u,v\}=e$ 
(resp.~$\{u,v\}\in E$) 
simply means 
$\{u,v\}=\varphi(e)$ 
(resp.~$\{u,v\}\in \varphi(E)$). 
(Note that we will use this notation only in one direction, 
i.e. given $e\in E$ we obtain a unique $\{u,v\}\subset V$ 
with $\{u,v\}=e$. 
The other way cannot be done uniquely if the graph contains 
parallel edges.)

%

As example graphs one can have in mind the two-dimensional 
\emph{square lattice}\index{square lattice} 
$\Z^2_L$ of side length $L$, i.e.~the graph $\Z^2_L=(V_{L,2},E_{L,2})$ 
with vertex set $V_{L,2}=\{1,\dots,L\}^2\subset\Z^2$ and 
edge set $E_{L,2}=\bigl\{\{u,v\}\subset V_{L,2}:\,\abs{u-v}=1\bigr\}$, 
where $\abs{\,\cdot\,}$ denotes the $\ell_2$ norm, 
see Figure~\ref{fig:graphs} (left), and a \emph{tree}\index{tree}, 
which is a graph $T=(V,E)$ with $\abs{E}=\abs{V}-1$ and exactly one 
connected component (see Figure~\ref{fig:graphs}, right).
Obviously, deletion of any edge of $T$ increases the number of 
connected components.



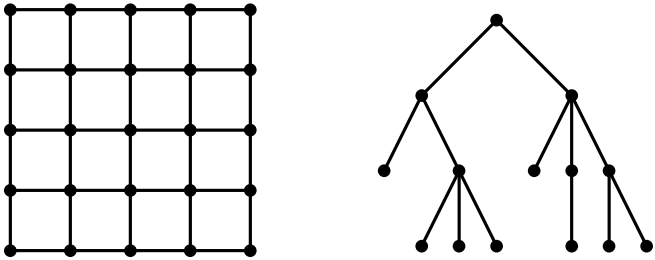
\begin{figure}[ht]
\begin{center}
\scalebox{.8}{
	\psset{xunit=1.0cm,yunit=1.0cm,algebraic=true,dotstyle=*,dotsize=6pt 0,linewidth=1.5pt}
	\begin{pspicture*}(1.67,0.7)(6.33,5.37)
	\psline(2,3)(3,3)
	\psline(3,3)(3,2)
	\psline(3,2)(3,1)
	\psline(3,1)(2,1)
	\psline(2,1)(2,2)
	\psline(2,2)(3,2)
	\psline(2,2)(2,3)
	\psline(2,5)(2,4)
	\psline(2,4)(2,3)
	\psline(2,5)(3,5)
	\psline(3,5)(3,4)
	\psline(3,4)(2,4)
	\psline(3,4)(3,3)
	\psline(4,5)(3,5)
	\psline(4,5)(4,4)
	\psline(4,4)(3,4)
	\psline(4,4)(4,3)
	\psline(4,3)(3,3)
	\psline(4,3)(4,2)
	\psline(4,2)(3,2)
	\psline(4,2)(4,1)
	\psline(4,1)(3,1)
	\psline(5,5)(4,5)
	\psline(5,5)(5,4)
	\psline(5,4)(4,4)
	\psline(5,4)(5,3)
	\psline(5,3)(4,3)
	\psline(5,3)(5,2)
	\psline(5,2)(4,2)
	\psline(5,2)(5,1)
	\psline(5,1)(4,1)
	\psline(6,5)(5,5)
	\psline(6,5)(6,4)
	\psline(6,4)(5,4)
	\psline(6,4)(6,3)
	\psline(6,3)(5,3)
	\psline(6,3)(6,2)
	\psline(6,2)(5,2)
	\psline(6,2)(6,1)
	\psline(6,1)(5,1)
	\psdots(2,3)
	\psdots(2,2)
	\psdots(2,1)
	\psdots(3,3)
	\psdots(3,2)
	\psdots(3,1)
	\psdots(2,4)
	\psdots(2,5)
	\psdots(3,4)
	\psdots(3,5)
	\psdots(4,1)
	\psdots(4,2)
	\psdots(4,3)
	\psdots(4,4)
	\psdots(4,5)
	\psdots(5,1)
	\psdots(5,2)
	\psdots(5,3)
	\psdots(5,4)
	\psdots(5,5)
	\psdots(6,1)
	\psdots(6,2)
	\psdots(6,3)
	\psdots(6,4)
	\psdots(6,5)
	\end{pspicture*}
}
\hspace{6mm}
\scalebox{.8}{
	\psset{xunit=2.5cm,yunit=2.5cm,algebraic=true,dotstyle=*,dotsize=6pt 0,linewidth=1.5pt}
	\begin{pspicture*}(0,-0.15)(2.25,1.75)
	\psline(1,1.5)(0.5,1)
	\psline(1,1.5)(1.5,1)
	\psline(1.5,1)(1.25,0.5)
	\psline(1.5,1)(1.5,0.5)
	\psline(1.5,1)(1.75,0.5)
	\psline(1.5,0.5)(1.5,0)
	\psline(1.75,0.5)(1.75,0)
	\psline(1.75,0.5)(2,0)
	\psline(0.75,0.5)(1,0)
	\psline(0.75,0.5)(0.75,0)
	\psline(0.75,0.5)(0.5,0)
	\psline(0.5,1)(0.75,0.5)
	\psline(0.5,1)(0.25,0.5)
	\psdots(1,1.5)
	\psdots(1.5,1)
	\psdots(0.5,1)
	\psdots(1.75,0.5)
	\psdots(1.5,0.5)
	\psdots(1.25,0.5)
	\psdots(0.75,0.5)
	\psdots(0.25,0.5)
	\psdots(1,0)
	\psdots(0.75,0)
	\psdots(0.5,0)
	\psdots(1.5,0)
	\psdots(1.75,0)
	\psdots(2,0)
	\end{pspicture*}
}
\end{center}
\vspace*{-2mm}
\caption[Example graphs]{The graph $\Z^2_5$ (left) and a tree (right)}
\label{fig:graphs}
\end{figure}

\newpage

For the further analysis we need the following graph 
quantities.
First of all, we will refer to the number of vertices of a graph, 
i.e. $\abs{V}$ for $G=(V,E)$, as the 
\emph{size} of the graph\index{graph!size}.
The \emph{degree}\index{degree} of a vertex $v$ in $G=(V,E)$, i.e. 
$\bigl\vert\{e\in E:\, v\in e\}\bigr\vert$, is denoted by 
$\deg_G(v)$, and we use $\D(G)$ for the 
\emph{maximum degree}\index{degree!maximum} of $G$, i.e.
\[
\D(G) \,=\, \max_{v\in V}\, \deg_G(v).
\]
For example, $\D(\Z_L^2)=4$ for every $L\ge3$.
Additionally, for a graph $G=(V,E)$ we denote by $c(G_A)$ the 
\emph{number of connected components}
\index{connected components!number of} 
of the graph $G_A=(V,A)$, $A\subset E$. Thus, $c(G_A)$ is the 
number of equivalence classes in $V$ with respect to $\xconn{A}$. 
If the graph is fixed we simply write $c(A)$ for $c(G_A)$.
Note that, if $T=(V,E)$ is a tree, $c(T_A)=\abs{V}-\abs{A}$ for all 
$A\subset E$.

Now we introduce the models under consideration.
For this, fix a graph $G=(V,E)$, a natural number $q\ge1$ and 
a real number $\beta\ge0$. Typically, $\beta$ is called the 
\emph{inverse temperature}\index{inverse temperature}.
The \emph{$q$-state Potts model}\index{Potts!model} on $G$ is 
defined as the set of possible \emph{configurations} 
$\Op:=\O_{\rm P}(G)=[q]^V$, where $[q]\,{:=}\,\{1,\dots,q\}$ 
is the set of \emph{colors} (or spins),
together with the probability measure
\vspace{1mm}
\begin{equation} \label{eq:Potts}
\qquad\qquad\pi(\sigma) \;:=\; \pi^{G}_{\beta,q}(\sigma) \;=\; 
	\frac1{Z(G,\beta,q)}\;	e^{\beta\,\abs{E(\sigma)}},
	\qquad \sigma\in\O_{\rm P},
\end{equation}
where 
\vspace{3mm}
\begin{equation} \label{eq:Esigma}
E(\sigma) \,:=\, \Bigl\{e\in E:\, \sigma(u)=\sigma(v) 
		\,\text{ for }\, \{u,v\}=e\Bigr\}
\end{equation}
is the set of edges with equally colored endvertices. 
The {\it normalization constant} 
(also called {\it partition function}) $Z$ is given by
\[
Z(G,\beta,q) \,:=\, \sum_{\sigma\in\Op(G)}\, 
	e^{\beta\,\abs{E(\sigma)}}. 
\] 
This measure is called the 
\emph{Potts measure}\index{Potts!measure} 
(or \emph{Boltzmann distribution}) and 
if $q=2$ we call the Potts model the 
\emph{Ising model}\index{Ising model}. 
The inverse temperature $\beta$ determines the 
interaction strength between neighboring vertices in the graph 
and since $\beta$ is non-negative, configurations with more 
equally colored neighbors have a larger weight in $\pi$. 
In particular, at infinite temperature, i.e. $\beta=0$, 
the measure $\pi$ is the uniform distribution on $\Op$ and the 
larger $\beta$ the more weight goes to the (almost) constant 
configurations.

A closely related model is the 
\emph{random-cluster model}\index{random-cluster!model} 
\index{RC|see{random-cluster}}
(also known as the \emph{FK-model}), that was introduced by
Fortuin and Kasteleyn~\cite{FK}. 
It is defined on the graph $G=(V,E)$ by its state space 
$\Orc=\{A: A\subseteq E\}$ and the 
\emph{random-cluster} ({\it RC}\/) \emph{measure}
\index{random-cluster!measure}
\vspace{1mm}
\begin{equation} \label{eq:RC}
\mu(A) \;:=\; \mu^G_{p,q}(A) \;=\; 
\frac1{Z(G,\log(\frac1{1-p}),q)}\,
\left(\frac{p}{1-p}\right)^{\abs{A}}\,q^{c(A)},\quad A\subset E,
\end{equation}
where $p\in[0,1]$, $c(A)$ is the number of connected components 
in the graph $(V,A)$
and $Z(\cdot,\cdot,\cdot)$ is the same normalization constant as 
for the Potts model~\cite[Thm.~1.10]{G1} (see 
\eqref{eq:FKSW-RC} and \eqref{eq:FKSW-P}). 
For a historical treatment and related topics see Grimmett~\cite{G1}.

To describe the connection of Potts and random-cluster models 
we state a coupling of the corresponding measures that is 
due to Edwards and Sokal~\cite{ES}. This coupling leads to 
the third model that will be considered, namely the 
{\it \jointnamelong}\ ({\it \jointname}) model (or simply the 
joint model). 
For this let $p=1-e^{-\beta}$, as we will assume henceforth.
The \emph{joint model}\index{FK@\jointname!model}
\index{joint model|see{\jointname\ model}}
is defined on $\O_{\rm J}:=\O_{\rm P}\times\O_{\rm RC}$ by the 
\emph{\jointname\ measure}\index{FK@\jointname!measure}
\vspace{1mm}
\begin{equation} \label{eq:joint}
\nu(\sigma,A)\;:=\;\nu_{p,q}^G(\sigma,A)
	\;=\;\frac{1}{Z(G,\log(\frac1{1-p}),q)}\,
	\left(\frac{p}{1-p}\right)^{\abs{A}}\,\ind(A\subset E(\sigma))
\end{equation}
for $(\sigma,A)\in\O_{\rm J}$. 
Again, the normalization constant $Z(\cdot,\cdot,\cdot)$ is the 
same as for the Potts measure~\cite[Thm.~1.10]{G1}.
To see this, note that 
\[
A\subset E(\sigma) \;\iff\; \forall 
\{u,v\}\in A:\, \sigma(u) = \sigma(v),
\]
which implies that $\sigma$ is constant on each 
connected component of $(V,A)$. Hence, for a given $A\subset E$ 
there are exactly $q^{c(A)}$ configurations $\sigma\in\O_{\rm P}$ 
with $\nu(\sigma,A)>0$. Using this, for $\sigma\in\O_{\rm P}$ 
and $A\subset E$ we obtain
\begin{equation}\label{eq:FKSW-RC}
\begin{split}
\sum_{\tau\in\O_{\rm P}} 
	\left(\frac{p}{1-p}\right)^{\abs{A}}\,&\ind(A\subset E(\tau)) 
\,=\, \left(\frac{p}{1-p}\right)^{\abs{A}}\,
	\abs{\bigl\{\tau\in\O_{\rm P}:\, A\subset E(\tau)\bigr\}} \\
&=\, \left(\frac{p}{1-p}\right)^{\abs{A}}\,	q^{c(A)}
\end{split}\end{equation}
and, since $p=1-e^{-\beta}$,
\begin{equation}\label{eq:FKSW-P}
\begin{split}
\sum_{B\in\O_{\rm RC}} 
	\left(\frac{p}{1-p}\right)^{\abs{B}}\,&\ind(B\subset E(\sigma)) 
\;=\; \sum_{k=0}^{\abs{E(\sigma)}} 
	\genfrac(){0pt}{}{\abs{E(\sigma)}}{k}
	\left(\frac{p}{1-p}\right)^{k} \hspace*{1.8cm} \\
&=\, \left(1+\frac{p}{1-p}\right)^{\abs{E(\sigma)}} 
\,=\, e^{\beta \abs{E(\sigma)}}.
\end{split}\end{equation}
Summing over $A\in \O_{\rm RC}$ in \eqref{eq:FKSW-RC} and over 
$\sigma\in\O_{\rm P}$ in \eqref{eq:FKSW-P} 
proves that the normalization constants of $\pi$, $\mu$ and 
$\nu$ are equal.
Another fact that can be deduced from \eqref{eq:FKSW-RC} and 
\eqref{eq:FKSW-P} is that the marginal distributions of 
$\nu$ are exactly $\pi$ and $\mu$, respectively 
(see \cite{ES} or \cite{G1}).
This means that for all $\sigma\in\O_{\rm P}$ and 
$A\subset E$, 
\[
\pi(\sigma) \,=\, \sum_{A\subset E} \nu(\sigma,A) 
\,=\, \nu(\sigma,\O_{\rm RC})
\]
and 
\[
\mu(A) \,=\, \sum_{\sigma\in\O_{\rm P}} \nu(\sigma,A) 
\,=\, \nu(\O_{\rm P},A).
\]

We define the conditional probability of $\sigma\in\O_{\rm P}$ 
with respect to $\nu$ given $A\subset E$ by
\begin{equation} \label{eq:joint-cond}
\nu(\sigma | A) \,:=\, \begin{cases}
\frac{\nu(\sigma,A)}{\nu(\O_{\rm P},A)}, 
	&\text{ if } (\sigma,A)\in\O_{\rm J}, \\
\quad 0,& \text{ otherwise.}
\end{cases}
\end{equation}
For $\nu(A \,|\, \sigma)$ replace $\nu(\Op,A)$ by 
$\nu(\sigma,\Orc)$. 
We obtain 
\[
\sum_{A\in\O_{\rm RC}} \mu(A)\, \nu(\sigma | A) 
\,=\, \pi(\sigma)
\]
and
\[
\sum_{\sigma\in\O_{\rm P}} \pi(\sigma)\, \nu(A \,|\, \sigma) 
\,=\, \mu(A).
\]
This can be interpreted in the following way.
Assume that we can simulate a random variable $X\in\O_{\rm RC}$ that is 
distributed with respect to~$\mu$. The random variable $Y\in\O_{\rm P}$ 
obtained by sampling from $\nu(\cdot |X)$ is then 
distributed with respect to $\pi$. 
In fact, the conditional probabilities take a rather simple form, 
namely $\nu(\sigma | A) = q^{-c(A)}\,\ind(A\subset E(\sigma))$ and 
$\nu(A \,|\, \sigma) = p^{\abs{A}} (1-p)^{\abs{E(\sigma)}-\abs{A}}\,
	\ind(A\subset E(\sigma))$.
Thus, given a RC configuration $X\sim\mu$, i.e.~one that is distributed with 
respect to $\mu$, we can generate a Potts configuration 
$Y\sim\pi$ by assigning a random color from $\{1,\dots,q\}$ 
independently to each connected component of the graph $(V,X)$. 
For the reverse way, given $Y\sim\pi$, include all edges 
$e=\{e^{(1)},e^{(2)}\}\in E$ with $Y(e^{(1)})=Y(e^{(2)})$ in $X$ with 
probability $p$. 
Hence, if the Potts and the RC model are 
defined on the same graph $G$ and $p=1-e^{-\beta}$, 
then the possibility of efficient sampling from 
either model enables also efficient sampling from the other.

We will see in the following section that the 
non-local Markov chain considered, namely the Swendsen--Wang dynamics, 
is based on this connection of the Potts and random-cluster models.


\section{The dynamics} \label{sec:2_dynamics}

This section is devoted to the definition of the 
Markov chains that will be used to sample approximately 
from the Potts and the random-cluster model. 
In Chapters~\ref{chap:spin} and~\ref{chap:bond} 
we will compare the spectral gaps of these Markov chains 
and, as a consequence, obtain new results on 
the mixing properties of the Swendsen--Wang and the single-bond 
dynamics. 
Recall that we have fixed a graph $G=(V,E)$, some $q\in\N$, 
a~real $\beta\ge0$ and $p=1-e^{-\beta}$.
We distinguish between dynamics for the Potts model 
and dynamics for the random-cluster model, 
and we will propose one \emph{local} and one 
\emph{non-local} Markov chain in both cases. 
``Local'' means that the Markov chain changes the current 
state in one step only locally. In other words:  
let $P$ (resp.~$\Pt$) be the 
transition matrix of a Markov chain on $\O_{\rm P}$ 
(resp.~$\O_{\rm RC}$) with stationary distribution $\pi$ 
(resp.~$\mu$); then we say that $P$ (resp.~$\Pt$), 
or its corresponding Markov chain, is \emph{local} 
if $\abs{\{v\in V:\, \sigma(v)\neq\tau(v)\}}\le1$ for all 
$\sigma,\tau\in\O_{\rm P}$ with $P(\sigma,\tau)>0$ 
(resp.~$\abs{(A\setminus B) \cup (B\setminus A)}\le1$ for all 
$A,B\in\O_{\rm RC}$ with $\Pt(A,B)>0$). 
Thus, local Markov chains update only one 
vertex (resp.~edge) of the current configuration per step. 

Another notion of locality is that the transition 
probabilities, e.g.~$P(\sigma,\tau)$ for $\sigma,\tau\in\O_{\rm P}$, 
depend only locally on the states $\sigma$ and $\tau$. 
This is called the \emph{finite range interaction property} of 
the transition probabilities (see e.g.~\cite{M}), and means 
(in the case of the Potts model) that for every 
$\sigma,\tau\in\O_{\rm P}$ with $P(\sigma,\tau)>0$ there exists a 
subset of the vertices $V_0\subset V$, 
with $\abs{V_0}$ 
``not too large'', 
such that $P(\sigma,\tau)=P(\sigma',\tau')$ for 
$\sigma',\tau'\in\O_{\rm P}$ with $\sigma(v)=\sigma'(v)$ and 
$\tau(v)=\tau'(v)$ for all $v\in V_0$. 
An analogous formulation can be found for Markov chains for the 
RC model.
This property is especially interesting for computational reasons, 
because it enables efficient computation of the transition 
probabilities and therefore efficient simulation of the 
Markov chain.
From the local dynamics of this section only the one for the 
Potts model satisfies the finite range interaction property.

\subsection
{Dynamics for the Potts model}

\paragraph{\bf The heat-bath dynamics} \index{dynamics!heat-bath} 
We begin with the definition of the local Markov chain for the 
Potts model. 
This Markov chain, namely the 
\emph{heat-bath $($HB$)$ dynamics}
(or \emph{Glauber dynamics}), 
was introduced by Glauber \cite{Glauber} in 1963 for the 
Ising model and, since then, has become the most studied Markov 
chain for the $q$-state Potts model (especially for $q=2$). 
The one-step transitions can be described as follows. 
Suppose that the current state of the Markov chain is 
$\sigma\in\O_{\rm P}$. Then:
\begin{itemize}\label{page:P-HB}
	\item[(HB1)] Choose a vertex $v\in V$ uniformly at random.
	\item[(HB2)] Let 	$\O_{\sigma,v}:=\{\tau\in\O_{\rm P}:\, 
											\tau(u)=\sigma(u), \forall u\neq v\}$
						and choose the next state of the Markov chain, 
						say $\tau$, with respect to 
						$\pi(\cdot\mid\O_{\sigma,v})$, that is, the conditional 
						probability (with respect to $\pi$) given that $\tau$ differs 
						from $\sigma$ only at $v$.
\end{itemize}
%
For $\sigma\in\O_{\rm P}$, $k\in[q]$ and $u,v\in V$, 
let
\begin{equation} \label{eq:sigma_v_k}
\sigma^{v,k}(u) \,:=\, \begin{cases}
\sigma(u), & \text{ if } u\neq v, \\
k, & \text{ if } u=v,
\end{cases} 
\end{equation}
be the configurations that differ from $\sigma$ at most at $v\in V$ 
and let 
\[
d_{v,k}(\sigma) 
\,:=\, \abs{\bigl\{e\in E:\, \sigma(u)=k \,\text{ for }\, \{u,v\}=e\bigr\}}
\]
be the number of edges in $G$ connecting $v$ to vertices with 
color $k$ in $\sigma$.
It is easy to see that the set $\O_{\sigma,v}$ from (HB2) 
can be written as $\O_{\sigma,v}=\bigcup_{l=1}^q\{\sigma^{v,l}\}$, 
and that the conditional probabilities 
satisfy
\begin{equation} \label{eq:HB-cond}
\pi(\sigma^{v,k}\mid\O_{\sigma,v}) \,=\, \frac{\pi(\sigma^{v,k})}
	{\sum_{l=1}^q\pi(\sigma^{v,l})}
\,=\, \frac{e^{\beta d_{v,k}(\sigma)}}{\sum_{l=1}^q e^{\beta d_{v,l}(\sigma)}}
\end{equation}
and $\pi(\tau\mid\O_{\sigma,v})=0$, whenever $\tau\neq\sigma^{v,k}$ 
for all $k$.
Therefore we can write the transition matrix of the 
heat-bath dynamics on $\O_{\rm P}$ as
\begin{equation}\label{eq:P-HB}
\hb(\sigma,\tau) 
\;:=\; P_{{\rm HB},\beta,q}^G(\sigma,\tau) \;=\;
\frac{1}{\abs{V}}\sum_{v\in V}\,\frac{\pi(\tau)}
	{\sum_{l=1}^q\pi(\sigma^{v,l})}\,
	\ind(\tau\in\O_{\sigma,v}).
\end{equation}
Clearly, $P_{\rm HB}$ is reversible with respect to $\pi$ and, 
as long as $\beta<\infty$, also ergodic. 

An interesting feature of this simple 
construction of a Markov chain is that the (temporal) mixing 
properties of the heat-bath dynamics are proven to be 
equivalent to some \emph{spatial mixing} 
properties of the associated Potts measure if the 
underlying graph is a rectangular subset of the (physically most relevant) 
$d$-dimensional integer lattice $\Z^d$; see 
e.g.~\cite{AizHol,DSVW,Holley,MO_finite-volume,MO1,MO2,MOS,SZ_equivalence}
or the survey \cite{M} by Martinelli. 
Similar results are known for trees \cite{MSW}.
It turns out that certain spatial properties of 
the model imply tight bounds on the spectral gap (or mixing time) 
of the heat-bath dynamics.
In Section~\ref{sec:2_known-results} we will state 
some of these results, in particular for the two-dimensional 
square lattice where we have an almost complete characterization 
of the mixing properties.

One drawback of the heat-bath dynamics is that it is 
typically slowly (i.e. not rapidly) mixing at low temperatures 
(large $\beta$); see e.g.~\cite{CGMS,CCS,LPW,Schonmann}. 

We refer again to Section~\ref{sec:2_known-results}.
In fact, at zero temperature ($\beta=\infty$) the heat-bath dynamics 
is not even irreducible, although this is the simplest case: 
uniform distribution on $q$ (constant) configurations.

In the next paragraph we introduce the Swendsen--Wang dynamics, 
which overcomes the slow mixing behavior at low temperatures.
This Markov chain is the primarily studied Markov chain of this paper.

\paragraph{\bf The Swendsen--Wang dynamics} \index{dynamics!Swendsen-Wang} 
Inspired by the representation of 
the Potts models of Fortuin and Kasteleyn~\cite{FK}, 
Swendsen and Wang~\cite{SW} invented around 1987 a Markov chain, 
that makes large 
steps through the state space
and is surprisingly simple to implement. 
We call this Markov chain, which is presently the best 
candidate for the most efficient dynamics to sample from the Potts 
model on general families of graphs, the 
Swendsen--Wang (SW) dynamics. 

As stated several times, this Markov chain 
is based on the connection of the random-cluster and Potts models 
that is given by the coupling of the corresponding measures 
of Edwards and Sokal~\cite{ES}. 

First, recall that this coupling is given by the \jointname\ measure 
\eqref{eq:joint}, i.e. 
\[
\nu(\sigma,A)\,=\,\frac{1}{Z}\,
	\left(\frac{p}{1-p}\right)^{\abs{A}}\,
	\ind\bigl(A\subset E(\sigma)\bigr), \qquad \sigma\in\Op,\, A\in\Orc, 
\]
with $E(\sigma)$ from \eqref{eq:Esigma} and some $p\in[0,1]$, 
and that the 
conditional probabilities (w.r.t.~$\nu$) of $\sigma$ given $A$ 
(or of $A$ given $\sigma$) satisfy
\begin{align} \label{eq:joint-cond-1}
\nu(\sigma \,|\, A) \,&=\, q^{-c(A)}\;
	\ind\bigl(A\subset E(\sigma)\bigr)\\[-2mm]
\shortintertext{and \vspace*{-2mm}} \label{eq:joint-cond-2} 
\nu(A \,|\, \sigma) \,&=\, p^{\abs{A}} 
	(1-p)^{\abs{E(\sigma)}-\abs{A}}\;
	\ind\bigl(A\subset E(\sigma)\bigr),
\end{align}
see \eqref{eq:joint-cond} and the discussion thereafter.

The \emph{Swendsen--Wang dynamics} 
(for the Potts model) 
makes use of these conditional probabilities in such a way that, 
given the current state $\sigma\in\Op$, the next state $\tau\in\Op$ 
is generated by first sampling an $A\subset E$ from 
$\nu(\,\cdot \,|\, \sigma)$, and then sampling $\tau$ from 
$\nu(\,\cdot\, | A)$. Thus, the transition matrix of the 
Swendsen--Wang dynamics for the $q$-state Potts model 
on $G$ at inverse temperature $\beta=-\ln(1-p)$ is given by
\begin{equation} \label{eq:SW-P}
\begin{split}
\sw(\sigma,\tau) \,&\coloneqq\, P_{{\rm SW},\beta,q}^G(\sigma,\tau) 
\,=\, \sum_{A\subset E}\, \nu(A \,|\, \sigma)\; \nu(\tau \,|\, A) \\
&=\,(1-p)^{\abs{E(\sigma)}}\,\sum_{A\subset E} 
	\,\left(\frac{p}{1-p}\right)^{\abs{A}}\, q^{-c(A)}\; 
	\ind\bigl(A\subset E(\sigma)\cap E(\tau)\bigr).
\end{split}\end{equation}
Using the same interpretation of this sampling with respect to the 
conditional probabilities that was given in 
Section~\ref{sec:2_models}, 
we can describe one step of the Swendsen--Wang dynamics as the 
following two-step procedure:

\begin{enumerate}
	\item[(SW1)] Given a Potts configuration $\sigma\in\Op$ on $G$, 
		delete each edge of $E(\sigma)$ independently with 
		probability $1-p = e^{-\beta}$. This gives $A\in\O_{\rm RC}$.
	\item[(SW2)] Assign a random color from $[q]$ independently to each 
		connected component of $(V,A)$. Vertices of the same component 
		get the same color. This gives $\tau\in\O_{\rm P}$.
\end{enumerate}
Ergodicity of this Markov chain is easy to check. 
For this, imagine 
that the generated edge set $A$ of the first step (SW1) is the empty 
set (this happens with probability $(1-p)^{\abs{E(\sigma)}}>0$ 
for $p<1$), and 
then, in step (SW2), every possible configuration can be generated 
with probability $q^{-\abs{V}}$. Hence, $\sw(\sigma,\tau)>0$ for all 
$\sigma,\tau\in\Op$, which implies ergodicity. 
If $p=1$ ($\beta=\infty$), then the only elements of $\Op$ 
with positive measure are the $q$ constant 
configurations, for which we have $\sw(\sigma,\tau)=q^{-1}>0$. 
For the reversibility of $\sw$ with respect to $\pi$ note that  
$\pi(\sigma)=\frac1Z (1-p)^{-\abs{E(\sigma)}}$, since $p=1-e^{-\beta}$. 
We deduce from \eqref{eq:SW-P} that
\[
\pi(\sigma)\,\sw(\sigma,\tau)\,=\, \frac1Z\,\sum_{A\subset E} \,p^{\abs{A}}\, 
	(1-p)^{-\abs{A}}\, q^{-c(A)}\; 
	\ind\bigl(A\subset E(\sigma)\cap E(\tau)\bigr),
\]
which is symmetric in $\sigma,\tau$.
This shows that the SW dynamics is reversible and ergodic for 
any $G$, $q$ and $\beta$, but, in contrast with the heat-bath 
dynamics, this Markov chain seems to be efficient also for 
large $\beta$. For results showing that this is true for large enough 
$\beta$ 
see e.g.~\cite{Hu} and \cite{M2}.
Additionally, there are plenty of numerical results suggesting 
that the Swendsen--Wang dynamics is rapidly mixing at all 
temperatures in various cases. 
These include the Ising model and the 3-state Potts model 
on the two-dimensional square 
lattice \cite{DuZhengWang,SalasSokal} and the Ising 
model on the three-dimensional cubic lattice \cite{OssolaSokal}.

However, rigorous proofs of such statements are rare. 
There are, as far as we know, presently only four cases where 
rapid mixing at all temperatures is proven. 
These are on trees, cycles \cite{CF, Hu, Long} and 
narrow grids \cite{CDFR} for all $q\in\N$ and 
on the complete graph for $q=2$ \cite{Long,LNP}. 
One of our goals is to add the Ising model ($q=2$) on 
the two-dimensional square lattice to this list.

There are also some rigorous results regarding slow mixing. 
We state three of them: Li and Sokal 
\cite{LiS} proved that the inverse spectral gap is larger 
than some constant times the \emph{specific heat} 
(that is proportional to the second derivative with respect to 
$\beta$ of the Potts normalization constant $Z$, 
see \eqref{eq:Potts}). 
This shows (at least) that the inverse spectral gap 
cannot be bounded independently of the size of the 
graph in many cases of interest. 
Furthermore, Gore and Jerrum \cite{GJ} showed slow mixing of the 
SW dynamics on the complete graph for $q\ge3$ for some value of 
the inverse temperature. 
Recently, Borgs, Chayes and Tetali \cite{BCT} 
gave tight upper and lower bounds on the spectral gap 
(or mixing time) for SW on rectangular subsets 
of $\Z^d$, $d\ge2$, with periodic boundary conditions 
at some (critical) temperature for $q$ large enough, 
which show that the spectral gap is exponentially small in 
the size of the graph.
Since we need some of the above results in the following 
we state them in more detail in 
Section~\ref{sec:2_known-results}.

Apart from the results given above, the Swendsen--Wang dynamics 
resisted a precise analysis in (physically) relevant cases, 
where the heat-bath dynamics is well-understood.
For instance, it was 
not proven that SW is rapidly mixing for the Ising model on the 
two-dimensional square lattice for all temperatures above 
the critical one, which has been known for the heat-bath dynamics 
for 20 years \cite{MO1,MO2}.

We will prove this statement in Chapter~\ref{chap:spin} by 
comparison with the heat-bath dynamics.
Furthermore we will obtain rapid mixing at all temperatures below 
the critical one.
 
The main challenge in proving rapid mixing of the Swendsen--Wang 
dynamics for the two-dimensional square lattice is the low 
temperature (large $\beta$) regime, where no Markov chain 
is proven so far to be rapid down to the critical temperature. 
Therefore, we need to consider dynamics for the corresponding 
random-cluster model that seems to be 
(and we will see that it indeed is) 
more convenient for relating spectral gaps at high and low 
temperatures.

\subsection
	{Dynamics for the random-cluster model}

We begin this subsection with the definition of a non-local 
Markov chain for the random-cluster model. 
This is again the Swendsen--Wang dynamics.  
Since it is defined by means of conditional 
probabilities with respect to the \jointname\ measure $\nu$ 
(see \eqref{eq:SW-P}), this Markov chain can be naturally defined also for 
the random-cluster model.

\paragraph{\bf The Swendsen--Wang dynamics}\index{dynamics!Swendsen-Wang}
The SW dynamics for the random-cluster model 
is based on the same connection that was given in the last 
subsection, but here we make the two-step procedure in 
reverse order:
\begin{enumerate}
	\item[($\tilde{\text{SW}}$1)] Given a random-cluster configuration 
		$A\in\O_{\rm RC}$ on $G$, assign a random color from $[q]$ 
		independently to each connected component of $(V,A)$. 
		Vertices of the same component get the same color. 
		This gives $\sigma\in\O_{\rm P}$.		
	\item[($\tilde{\text{SW}}$2)] Take $E(\sigma)$ and delete each edge 
		independently with probability $1-p$. 
		This gives the new RC configuration $B\in\O_{\rm RC}$.
\end{enumerate}
Thus, the transition matrix of the Swendsen--Wang dynamics 
for the random-cluster model on $G$ with parameters $p$ and $q$ 
is given by
\begin{equation} \label{eq:SW-RC}
\begin{split}
\tsw(A,B) \,&\coloneqq\, \Pt_{{\rm SW},p,q}^G(A,B) 
\,=\, \sum_{\sigma\in\Op}\, \nu(\sigma \,|\, A)\; \nu(B \,|\, \sigma) \\
&=\, q^{-c(A)}\,\left(\frac{p}{1-p}\right)^{\abs{B}}\,
	\sum_{\sigma\in\O_{\rm P}} \,(1-p)^{\abs{E(\sigma)}}\,
		\ind\bigl(A\cup B \subset E(\sigma)\bigr).
\end{split}
\end{equation}

Ergodicity and reversibility of this Markov chain can be shown by 
similar ideas to those above.
Although the Swendsen--Wang dynamics for the RC model appears 
quite often in the literature (especially its generalization to 
non-integer values of $q$; see 
e.g.~\cite{CMachta1,CMachta2,DGMOS,DengQianBloete})
we are not aware of any attempt to prove mixing properties 
of this Markov chain. 
The following lemma shows however that every result on  
the spectral gap of the SW dynamics for the Potts model 
is also valid for SW for the corresponding random-cluster model.

\begin{lemma} \label{lemma:SW_P-RC}
Let $\tsw$ $($resp.~$\sw$$)$ be the Swendsen--Wang dynamics 
for the random-cluster $($resp.~$q$-state Potts$)$ model with parameters 
$p$ and $q$ $($resp.~at inverse temperature $\beta$ with $p=1-e^{-\beta}$$)$. 
Then  
\[
\lambda(\sw) \;=\; \lambda(\tsw).
\]
\end{lemma}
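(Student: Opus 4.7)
The plan is to observe that both $\sw$ and $\tsw$ factor through the joint state space via the Edwards--Sokal conditional probabilities, and then invoke the standard linear-algebra fact that $MN$ and $NM$ share the same non-zero eigenvalues (with algebraic multiplicities). First I would introduce the rectangular stochastic matrices $T_1\in\R^{\Op\times\Orc}$ defined by $T_1(\sigma,A):=\nu(A\mid\sigma)$ and $T_2\in\R^{\Orc\times\Op}$ defined by $T_2(A,\sigma):=\nu(\sigma\mid A)$. Reading the formulas \eqref{eq:SW-P} and \eqref{eq:SW-RC} term by term one sees immediately that $\sw=T_1T_2$ and $\tsw=T_2T_1$.

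Next I would invoke the classical fact that for rectangular matrices of compatible sizes, $MN$ and $NM$ have the same non-zero spectrum with the same algebraic multiplicities. This can be deduced from Sylvester's determinant identity, or directly: if $(T_1T_2)f=\xi f$ with $\xi\neq0$ then $T_2f\neq0$ (since otherwise $\xi f=T_1(T_2f)=0$), and $(T_2T_1)(T_2f)=T_2(T_1T_2f)=\xi\,T_2f$, so $f\mapsto T_2f$ injects the $\xi$-eigenspace of $\sw$ into that of $\tsw$. The symmetric direction is given by $f\mapsto T_1 f$. Since both $\sw$ and $\tsw$ are ergodic and reversible (with respect to $\pi$ and $\mu$, respectively), the eigenvalue $1$ is simple for each, so the multisets of non-zero eigenvalues different from~$1$ coincide.

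To conclude, recall that $\lambda(P)=1-\max\{\abs{\xi}:\xi\text{ eigenvalue of }P,\,\xi\neq1\}$. The maximum on the right either equals the largest modulus of a non-zero eigenvalue different from $1$, or --- if no such eigenvalue exists --- equals $0$. By the previous step, whichever alternative holds for $\sw$ holds for $\tsw$ with the same numerical value, yielding $\lambda(\sw)=\lambda(\tsw)$.

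The only subtlety, and therefore the point to handle carefully, is that $\sw$ and $\tsw$ can have very different multiplicities of the eigenvalue~$0$ (this is expected whenever $\abs{\Op}\neq\abs{\Orc}$). As explained in the last step, this does not affect the gap: either some non-zero eigenvalue different from $1$ is present in both spectra --- in which case its common modulus dominates $0$ --- or there is none in either spectrum, in which case the gap is $1$ for both chains. So the main obstacle is not deep; it is simply bookkeeping around the factorizations and around the trivial kernel of~$0$.
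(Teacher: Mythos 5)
Your proposal is correct, and it reaches the conclusion by a genuinely different final step than the paper. The factorization itself is the same one the paper uses: your $T_1$ is exactly the operator $\C$ with $\C(\sigma,A)=\nu(A\,|\,\sigma)$ from the paper's proof, and your $T_2$ is its adjoint $\C^*$ (with matrix $\C^*(A,\sigma)=\nu(\sigma\,|\,A)$), so that $\sw=\C\C^*$ and $\tsw=\C^*\C$. Where you diverge is in how you extract equality of the gaps from this factorization. The paper centers the operator, writing $\sw-S_\pi=(\C-S_{(\pi,\mu)})(\C-S_{(\pi,\mu)})^*$ and $\tsw-S_\mu=(\C-S_{(\pi,\mu)})^*(\C-S_{(\pi,\mu)})$, and then invokes the Hilbert-space identity $\norm{AA^*}=\norm{A}^2=\norm{A^*A}$ together with the representation $\lambda(P)=1-\norm{P-S_\pi}_\pi$ from \eqref{eq:gap-norm}; this kills the eigenvalue $1$ at the outset and makes the eigenvalue-$0$ bookkeeping invisible, at the price of verifying the side identities $S_{(\pi,\mu)}\C^*=S_\pi=\C S_{(\pi,\mu)}^*$. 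You instead use the elementary fact that $T_1T_2$ and $T_2T_1$ share their non-zero spectrum, which requires no inner products or adjoints at all, gives the slightly stronger conclusion that the entire non-zero spectra of $\sw$ and $\tsw$ coincide, and only then needs the explicit discussion of simplicity of the eigenvalue $1$ (from ergodicity) and of the possibly mismatched multiplicities of $0$ — a discussion you handle correctly. One cosmetic remark: your injection $f\mapsto T_2f$ of eigenspaces establishes equality of geometric multiplicities, not algebraic ones as you state; since both chains are reversible and hence diagonalizable this is harmless, and for the spectral gap only the set of non-zero eigenvalues matters anyway.
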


\begin{proof}
Define the $\Op\times\Orc$-matrix $\C$ by
\[
\C(\sigma,A) \,:=\, \nu(A \,|\, \sigma)
\,=\, \frac{\nu(\sigma, A)}{\pi(\sigma)} , \qquad \sigma\in\Op, A\in\Orc.
\]
Regard $\C$ as an operator that maps from $L_2(\mu)$ to $L_2(\pi)$  
(cf. \eqref{eq:map}) by
\[
\C g(\sigma) \,=\, \sum_{A\in\Orc} \C(\sigma,A) g(A)
\]
for $g\in\R^{\Orc}$. 
By the usual definition of the adjoint of an operator between different 
Hilbert spaces (see e.g.~\cite[Def.~3.9-1]{Krey}), the adjoint operator 
$\C^*$ of $\C$ satisfies
\[
\l\C^* f, g\r_\mu \,=\, \l f, \C g\r_\pi
\]
for all $f\in\R^{\Op}$ and $g\in\R^{\Orc}$. Thus, the matrix 
corresponding to $\C^*$ is given by 
\[
\C^*(A,\sigma) \,=\, \frac{\pi(\sigma)}{\mu(A)}\, \C(\sigma,A)
\,=\, \frac{\nu(\sigma, A)}{\mu(A)} 
\,=\, \nu(\sigma \,|\, A).
\]
The definitions of the Swendsen--Wang dynamics in \eqref{eq:SW-P} and 
\eqref{eq:SW-RC} imply $\sw~=~\C \C^*$ and $\tsw=\C^* \C$. 
Additionally, we define $S_\pi(\sigma,\tau):=\pi(\tau)$, 
$S_\mu(A,B):=\mu(B)$ and 
$S_{(\pi,\mu)}(\sigma,A)=\mu(A)$ for all 
$\sigma,\tau\in\Op$ and $A,B\in\O_{\rm RC}$. 
It is straightforward to verify that 
$S^*_{(\pi,\mu)}(A,\sigma)=\pi(\sigma)$, 
$\sigma\in\Op$, $A\in\O_{\rm RC}$. Thus,  
$S_{(\pi,\mu)} S^*_{(\pi,\mu)} = S_{\pi}$ and 
$S^*_{(\pi,\mu)} S_{(\pi,\mu)} = S_{\mu}$.
We obtain $\C \C^*-S_{\pi}=(\C-S_{(\pi,\mu)})(\C-S_{(\pi,\mu)})^*$
since by definition 
$S_{(\pi,\mu)} \C^*=S_\pi=S_\pi^*=\C S^*_{(\pi,\mu)}$  
(see \eqref{eq:joint-cond} and below). 
Analogously we get 
$\C^* \C-S_{\mu}=(\C-S_{(\pi,\mu)})^*(\C-S_{(\pi,\mu)})$.
Recall the definition of the spectral gap from \eqref{eq:gap-norm}. 
It follows from simple properties of the norm of (adjoint) operators 
between Hilbert spaces (see e.g. \cite[Thm.~3.9-4]{Krey}) that
\[\begin{split}
1-\lambda(\sw) \,&=\, \norm{\sw-S_\pi}_\pi
\,=\, \norm{(\C-S_{(\pi,\mu)})(\C-S_{(\pi,\mu)})^*}_\pi \\
&=\, \norm{\C-S_{(\pi,\mu)}}_{L_2(\mu)\to L_2(\pi)}^2 
\,=\, \norm{(\C-S_{(\pi,\mu)})^*(\C-S_{(\pi,\mu)})}_\mu \\
&=\, \norm{\tsw-S_\mu}_\mu \,=\, 1-\lambda(\tsw),
\end{split}\]
which proves the statement.
\end{proof}

\vspace{1mm}

\paragraph{\bf The single-bond dynamics}

The last kind of Markov chains we want to consider are the 
local Markov chains for the random-cluster model.
In fact, we need more than one construction in this case.
This comes from the fact that the two dynamics, 
namely heat-bath and single-bond dynamics, both have 
properties that are needed for the further analysis.

First of all we introduce the 
\emph{heat-bath} (HB) \emph{dynamics}\index{dynamics!heat-bath}
for the random-cluster model. 
Similarly to the heat-bath dynamics for the Potts model, 
suppose that $A\in\Orc$ is the current state of the Markov chain. 
Then:
\begin{itemize}
	\item[($\tilde{\text{HB}}$1)] 
						Choose an edge $e\in E$ uniformly at random.
	\item[($\tilde{\text{HB}}$2)] 
						Choose the next state, 
						say $B\in\Orc$, with respect to 
						$\mu\bigl(\cdot\bigm|\{A\cup e, A\setminus e\}\bigr)$, 
						i.e. the conditional probability (w.r.t.~$\mu$) 
						given that $B$ differs from $A$ only by $e$.
\end{itemize}
We use $A\cup e$ instead of $A\cup\{e\}$ 
(similarly for $\cap,\setminus$).
Recall that, for $A\in\Orc$, 
we write $u\xconn{A} v$ if $u,v\in V$ are connected in 
the subgraph $(V,A)$ and that we denote the endpoints of 
$e\in E$ by $e^{(1)}$ and $e^{(2)}$.
Write $u\xnconn{A} v$ 
if $u$ and $v$ are 
not connected in $(V,A)$. 
If we additionally define $A^e:=A\ominus e$ with the symmetric 
difference~$\ominus$, i.e.
\begin{equation} \label{eq:RC-Ae}
A^e \;=\; \begin{cases}
A\cup e, & \text{ if } e\notin A, \\
A\setminus e, & \text{ if } e\in A,
\end{cases}
\end{equation}
we can write 
the transition probability matrix $\thb:=\Pt_{{\rm HB},p,q}^G$ of the 
heat-bath dynamics for the RC model as
\begin{equation}\label{eq:RC-HB-def}
\thb(A,B) \;=\; \frac1{\abs{E}}\,\sum_{e\in E}\,
\frac{\mu(B)}{\mu(A)+\mu(A^e)}\;\ind(B\setminus e = A\setminus e).
\end{equation}
Note that unless $A=B$, at most one summand in the above sum is not zero. 
Using the definition of $\mu$ from \eqref{eq:RC} we obtain 
\begin{equation} \label{eq:RC-quotient}
\frac{\mu(A)}{\mu(A^e)}\,=\,\left(\frac{p}{1-p}\right)^{\abs{A}-\abs{A^e}} 
	q^{c(A)-c(A^e)},
\end{equation}
where $c(A)=c(A^e)$ if and only if $e^{(1)}\lconn{A\setminus e}e^{(2)}$. 
Otherwise, $c(A)-c(A^e)=2 \,\ind(e\notin A)-1$.
Hence, $\thb$ satisfies
\begin{equation} \label{eq:RC-HB}
\thb(A,B) \;=\;  \frac1{\abs{E}}\sum_{e\in E}\,\begin{cases}
p, & \text{ if } B=A\cup e \text{ and } 
				e^{(1)}\stackrel{A\setminus e}{\longleftrightarrow}e^{(2)},\\
1-p, & \text{ if } B=A\setminus e \,\text{ and } 
				e^{(1)}\stackrel{A\setminus e}{\longleftrightarrow}e^{(2)},\\
\frac{p}{p+q(1-p)}, & \text{ if } B=A\cup e \text{ and } 
				e^{(1)}\nconn{A\setminus e}e^{(2)},\\
\frac{q(1-p)}{p+q(1-p)}, & \text{ if } B=A\setminus e \,\text{ and } 
				e^{(1)}\nconn{A\setminus e}e^{(2)},
\end{cases}
\end{equation}
for $A,B\in\Orc$. 
We immediately obtain from this equation that $\thb$ does not 
have the finite range interaction property, since we have to 
check connectivity of two vertices in each step, which is 
in general 
a non-local property. 
In addition to the need of this Markov chain in an 
intermediate step of our proof of rapid mixing for the Swendsen--Wang 
dynamics, the heat-bath dynamics is interesting in its own right. 
One reason is that it also provides a Markov chain for the 
random-cluster model with non-integer values of $q>0$. 
It would be interesting to prove a similar  
relation between temporal mixing properties of 
$\thb$ and some ``spatial mixing'' property in the RC model, 
as it is known for single-spin dynamics for the Potts model. 
Another reason is that the heat-bath dynamics was proposed by 
Propp and Wilson \cite{PW} to produce exact samples from $\mu$ 
(and so also from the Potts model) using their famous 
\emph{coupling from the past} procedure. 
Therefore, every mixing time bound on $\thb$ leads to a bound 
on the (expected) cost of their procedure.

To the best of our knowledge, there is presently only one result 
on mixing of local dynamics for the RC model. 
This is the upper bound on the mixing time of Ge and 
\v{S}tefankovi\v{c}~\cite{GeS} that 
shows rapid mixing of a Metropolis-type version of $\thb$ 
for every $p\in(0,1)$ and $q>0$ 
if the underlying graph has bounded \emph{tree-width}. 
We will state this result later (see 
Theorem~\ref{th:SB_linear-width}).

The advantage of the heat-bath dynamics for our purposes is that 
its spectral gap, if the underlying graph $G$ is \emph{planar}, 
can be shown to be the same as the spectral gap of 
the heat-bath dynamics for the RC model on the \emph{dual graph} 
of $G$ with suitable values of the parameters $p$ and $q$ 
(see Chapter~\ref{chap:2d}). 
This provides us with a relation of the mixing properties 
of the Swendsen--Wang dynamics at high 
and low temperatures  (see Theorem~\ref{th:SW-SB_dual}).

The second local Markov chain that we consider is inspired by 
the local behavior of the Swendsen--Wang dynamics and we will 
see (Theorem~\ref{th:main-SB}) that it enables us to give 
lower and upper bounds on the spectral gap of the SW dynamics 
in terms of the spectral gap of this Markov chain.
The aforementioned local behavior can be demonstrated by the 
following example. 
Fix $p\in(0,1)$, $q\in\N$ and let the graph be given 
by $G_1=\bigl(\{u,v\},\bigl\{\{u,v\}\bigr\}\bigr)$, 
i.e.~the graph with two vertices and one edge between them. 
Let $e=\{u,v\}$. Then it is easy to verify that the transition 
probabilities of the SW dynamics satisfy
$\tsw^{G_1}(\varnothing,e)=p/q$ and 
$\tsw^{G_1}(e,\varnothing)=1-p$. 
For this note that, 
following the steps ($\tilde{\text{SW}}$1) and 
($\tilde{\text{SW}}$2), if the current configuration is 
$\varnothing\subset \{e\}$ we assign independently a random color 
from $[q]$ to each of the vertices, and if their colors agree 
(which happens with probability $1/q$), we 
put an edge between them with probability $p$. 
Hence, $\tsw^{G_1}(\varnothing,e)=p/q$.
The second equality can be shown analogously. 
We use these transition probabilities to construct a 
local Markov chain for the RC model on arbitrary graphs, 
which we call the \emph{single-bond dynamics}, as follows. 
Let the current state be $A\subset E$, 
choose an edge $e\in E$ uniformly at random and decide if $e$ 
shall be included in the new configuration or not depending on 
the connectivity of the endvertices of $e$ in $A$. 
If the endvertices are not connected in $A$ include $e$ with 
probability $\tsw^{G_1}(\varnothing,e)$, otherwise include $e$ with 
probability $\tsw^{G_1}(e,e)$.
Note that in the heat-bath dynamics, we check 
connectivity in $A\setminus e$ instead of connectivity in~$A$.

The transition matrix of the \emph{single-bond} (SB) \emph{dynamics} 
\index{dynamics!single-bond} 
is given by
\begin{equation} \label{eq:SB}
\sb(A,B) \;=\;  \frac1{\abs{E}}\sum_{e\in E}\,\begin{cases}
p, & \text{ if } B=A\cup e \text{ and } 
				e^{(1)}\stackrel{A}{\longleftrightarrow}e^{(2)},\\
1-p, & \text{ if } B=A\setminus e \,\text{ and } 
				e^{(1)}\stackrel{A}{\longleftrightarrow}e^{(2)},\\
\frac{p}{q}, & \text{ if } B=A\cup e \text{ and } 
				e^{(1)}\nconn{A}e^{(2)},\\
1-\frac{p}{q}, & \text{ if } B=A\setminus e \,\text{ and } 
				e^{(1)}\nconn{A}e^{(2)}.
\end{cases}
\end{equation}
Ergodicity of $\sb$ is clear. 
For the reversibility with respect to $\mu$ it is enough 
to prove
\[
\frac{\sb(A\setminus e,A\cup e)}{\sb(A\cup e, A\setminus e)} 
\,=\, \frac{\mu(A\cup e)}{\mu(A\setminus e)} 
\,\stackrel{\eqref{eq:RC-quotient}}{=}\, 
\begin{cases}
\frac{p}{1-p}, & \text{ if } e^{(1)}\lconn{A\setminus e}e^{(2)}, \\
\frac{p}{q(1-p)}, & \text{ if } e^{(1)}\nconn{A\setminus e}e^{(2)}, \\
\end{cases}
\]
for every $A\in\Orc$ and $e\in E$.
First note that $e^{(1)}$ and $e^{(2)}$ are always 
connected in $A\cup e$, hence we divide by 
$\sb(A\cup e, A\setminus e)=1-p$ independently of  
$A$ and $e$ in the above equation. 
Additionally, the numerator of the above fraction is $p$ or 
$p/q$ depending on connectivity in $A\setminus e$, as desired.

As for the heat-bath dynamics for the RC model, the single-bond 
dynamics is also a valid local Markov chain for non-integer 
values of $q$, but here we have to assume $q\ge1$ in order to 
ensure that $\sb$ is well-defined.

In Section~\ref{sec:4_repr} we will present the usefulness of this 
dynamics by providing a common representation of this and 
the Swendsen--Wang dynamics on the joint (\jointname) model, 
using the same ``building blocks''. 
We finish this section with an inequality between 
the spectral gaps of $\sb$ and $\thb$.

\begin{lemma} \label{lemma:SB-HB}
For $\thb$ and $\sb$ for the random-cluster model with 
parameters $p$ and $q$ we have
\[
\left(1-p \left(1-\frac1q\right)\right)\, \lambda(\thb) 
\;\le\; \lambda(\sb) 
	\;\le\; \lambda(\thb). 
\]
\end{lemma}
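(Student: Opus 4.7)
Both $\thb$ and $\sb$ are ergodic and reversible with respect to $\mu$, so the plan is to apply Lemma~\ref{lemma:prelim_comparison} in each direction with $\pi_1 = \pi_2 = \mu$ (so that the constant $a$ in the lemma is $1$). The key observation is locality: for $A \neq B$, both $\thb(A,B)$ and $\sb(A,B)$ vanish unless $B = A \ominus e$ for a unique edge $e \in E$, and the only difference between the two rules in that event is that $\thb$ tests connectivity of $e^{(1)}, e^{(2)}$ in $A \setminus e$, whereas $\sb$ tests it in $A$.

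First I would carry out the case analysis for $A \neq B = A \ominus e$. If $e \in A$, the endpoints are automatically connected in $A$ by $e$ itself, so $\sb(A, A \setminus e) = (1-p)/\abs{E}$; comparing with \eqref{eq:RC-HB} gives $\sb(A,B) = \thb(A,B)$ when $e^{(1)} \lconn{A \setminus e} e^{(2)}$ via some other path, and otherwise $\sb(A,B)/\thb(A,B) = (p + q(1-p))/q$. The case $e \notin A$ gives the same dichotomy by the analogous comparison using the other two lines of \eqref{eq:RC-HB}. Since $(p + q(1-p))/q = 1 - p(1 - 1/q)$, this yields the uniform bound
\[
\bigl(1 - p(1 - 1/q)\bigr)\, \thb(A,B) \;\le\; \sb(A,B) \;\le\; \thb(A,B), \qquad A \neq B.
\]

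To invoke the simplified form of Lemma~\ref{lemma:prelim_comparison} (which requires only off-diagonal comparison), I need both $\thb$ and $\sb$ to have non-negative spectrum on $L_2(\mu)$. For $\thb$, the decomposition $\thb = \abs{E}^{-1} \sum_{e \in E} P_e^{\rm HB}$ writes it as an average of operators, each of which is the conditional expectation with respect to the partition $\bigl\{\{A, A^e\} : A \in \Orc\bigr\}$, hence an orthogonal self-adjoint projection in $L_2(\mu)$; so $\thb$ is positive semi-definite. For $\sb$, the analogous decomposition $\sb = \abs{E}^{-1} \sum_{e \in E} P_e^{\rm SB}$ leaves each $2$-element orbit $\{A, A^e\}$ invariant, each block is self-adjoint by the detailed-balance computation \eqref{eq:RC-quotient}, and a direct $2 \times 2$ diagonalization shows that its non-trivial eigenvalue is $0$ (in the non-bridge case) or $p(1-1/q)$ (in the bridge case), both non-negative.

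Applying Lemma~\ref{lemma:prelim_comparison} with $(P_1, P_2) = (\sb, \thb)$ and $A = a = 1$ then yields $\lambda(\sb) \le \lambda(\thb)$, while applying it with $(P_1, P_2) = (\thb, \sb)$, $A = 1/(1 - p(1-1/q))$, $a = 1$ yields $(1 - p(1 - 1/q))\,\lambda(\thb) \le \lambda(\sb)$, which is the required lower bound. The only real obstacle is the positive semi-definiteness of $\sb$: it is not apparent from the definition and forces the explicit $2 \times 2$ block diagonalization, whereas the PSD property of $\thb$ is immediate because each single-edge update is already a projection.
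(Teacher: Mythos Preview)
Your proof is correct and follows essentially the same route as the paper: both establish positive semidefiniteness of $\thb$ and $\sb$, then compare off-diagonal transition probabilities and invoke Lemma~\ref{lemma:prelim_comparison} with $a=1$. The only notable differences are that the paper defers the PSD argument for $\sb$ to the later representation $\sb = M\bigl(\abs{E}^{-1}\sum_e T_e\bigr)M^*$ (Remark~\ref{remark:SB-SW_pos-def}) rather than your self-contained $2\times2$ block computation, and it shortens the off-diagonal case analysis via the reversibility identity $\thb(A\setminus e,A\cup e)/\sb(A\setminus e,A\cup e)=\thb(A\cup e,A\setminus e)/\sb(A\cup e,A\setminus e)=\frac{\abs{E}}{1-p}\,\thb(A\cup e,A\setminus e)$ to avoid splitting on whether $e\in A$.
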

\vspace{4mm}

\begin{proof}
First we show that $\thb$ and $\sb$ have only non-negative 
eigenvalues. For this write 
$\thb=\frac{1}{\abs{E}}\sum_{e\in E}\Pt_e$ with 
\[
\Pt_e(A,B) \,:=\, 
	\frac{\mu(B)}{\mu(A)+\mu(A^e)}\;\ind(B\setminus e = A\setminus e), 
	\qquad A,B\in\Orc 
\]
(see \eqref{eq:RC-HB-def}).
Obviously, $\Pt_e$, $e\in E$, is reversible with respect to $\mu$ 
and satisfies $\Pt_e^2=\Pt_e$, since the distributions 
$\Pt_e(A\setminus e,\cdot)$ and $\Pt_e(A\cup e,\cdot)$ are equal. 
This shows that all $\Pt_e$, $e\in E$, are projections (see 
\cite[Thm.~9.5-1]{Krey}), and thus positive, 
i.e.~$\l\Pt_e g, g\r_\mu\ge0$ for all $g\in L_2(\mu)$ 
(see \cite[Thm.~9.5-2]{Krey}). 
Using the fact that the sum of positive operators is positive we 
obtain positivity of $\thb$. It follows that $\thb$ has only 
non-negative eigenvalues (see \cite[Obs.~7.1.4]{HJ-matrix}).
Similar arguments lead to the same statement for $\sb$ 
(see Remark~\ref{remark:SB-SW_pos-def}).
By Lemma~\ref{lemma:prelim_comparison}, and since $\thb$ and $\sb$ 
have the same stationary distribution, it 
is sufficient to show
\[
\left(1-p \left(1-\frac1q\right)\right)\, \thb(A,B) 
\;\le\; \sb(A,B) 
\;\le\; \thb(A,B)
\]
for $A\neq B\in\Orc$, i.e.~for $B=A^e$ for 
some $e\in E$ (otherwise $\thb(A,B)=\sb(A,B)=0$). 
By reversibility we obtain 
\[
\frac{\thb(A\setminus e,A\cup e)}{\sb(A\setminus e,A\cup e)} 
\,=\, \frac{\thb(A\cup e,A\setminus e)}{\sb(A\cup e,A\setminus e)} 
\,\stackrel{\eqref{eq:SB}}{=}\, 
\frac{\abs{E}}{1-p}\; \thb(A\cup e,A\setminus e).
\]
Since 
$\frac{1-p}{\abs{E}} \le \thb(A\cup e,A\setminus e) 
\le \frac{1}{\abs{E}}\,\frac{q(1-p)}{p+q(1-p)}$ 
for $q\ge1$ it follows 
\begin{equation}\label{eq:proof_lemma:SB-HB}
1 \,\le\, \frac{\thb(A,A^e)}{\sb(A,A^e)} \,\le\, \frac{q}{p+q(1-p)} 
\,=\, \frac{1}{1-p(1-q^{-1})}
\end{equation}
for all $A\in\Orc$ and $e\in E$. 
\end{proof}

\vspace{2mm}


\section{Known results} \label{sec:2_known-results}

In this section we present a selection of known results 
on the mixing properties for the above introduced algorithms. 
In fact, for the heat-bath dynamics for the Potts model, 
we will state only results 
that are needed for the further analysis. 
For the Swendsen--Wang dynamics and the local Markov chains 
for the random-cluster model we try to give a 
complete overview of the known results.

Since, in the original papers, some results are given in terms of 
spectral gap and some with mixing times, we first state a 
corollary to Lemma~\ref{lemma:mixing-gap} that we need for 
translation.

\begin{corollary} \label{coro:mixing-gap}
Let $P$ $($resp.~$\Pt$$)$ be the transition matrix of a reversible, 
ergodic Markov chain for the $q$-state Potts 
$($resp.~random-cluster$)$ model on a graph $G=(V,E)$ at inverse 
temperature $\beta$ $($resp.~with parameters $p$ and $q$$)$. 
Then
\[
\lambda(P)^{-1}-1 \;\le\; t_{\rm mix}(P) 
\;\le\; \Bigl(2 + \beta\abs{E} + \abs{V} \log q\Bigr)\,
			\lambda(P)^{-1} 
\]
and
\[
\lambda(\Pt)^{-1}-1 \;\le\; t_{\rm mix}(\Pt) 
\;\le\; \left(2 + \abs{E} \log\frac{1}{p(1-p)} + \abs{V} \log q\right)\,
			\lambda(\Pt)^{-1}. 
\]
\end{corollary}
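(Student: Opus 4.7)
The plan is to apply Lemma~\ref{lemma:mixing-gap} directly: the lower bound $\lambda(P)^{-1}-1 \le t_{\rm mix}(P)$ is already there verbatim, and the upper bound reduces to showing
\[
\log\!\left(\frac{2e}{\pi_{\rm min}}\right) \;\le\; 2 + \beta\abs{E} + \abs{V}\log q
\qquad\text{and}\qquad
\log\!\left(\frac{2e}{\mu_{\rm min}}\right) \;\le\; 2 + \abs{E}\log\frac{1}{p(1-p)} + \abs{V}\log q.
\]
So the whole job is to produce usable lower bounds on $\pi_{\rm min}$ and $\mu_{\rm min}$, and then to absorb the constant $\log(2e)=1+\log 2 < 2$.

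For the Potts model this is immediate. From \eqref{eq:Potts} any configuration has $\pi(\sigma)\ge Z^{-1}$, and the crude bound $Z = \sum_{\sigma} e^{\beta\abs{E(\sigma)}} \le q^{\abs{V}} e^{\beta\abs{E}}$ gives $\pi_{\rm min}\ge q^{-\abs{V}}e^{-\beta\abs{E}}$. Taking logarithms and using $\log(2e)<2$ yields the first inequality.

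For the random-cluster model I would first rewrite \eqref{eq:RC} in its product-measure form by multiplying numerator and denominator by $(1-p)^{\abs{E}}$:
\[
\mu(A) \;=\; \frac{1}{Z^*}\, p^{\abs{A}}(1-p)^{\abs{E}-\abs{A}}\, q^{c(A)},
\qquad Z^* \;=\; \sum_{B\subset E} p^{\abs{B}}(1-p)^{\abs{E}-\abs{B}}\, q^{c(B)}.
\]
Since $c(B)\le\abs{V}$ and $\sum_B p^{\abs{B}}(1-p)^{\abs{E}-\abs{B}}=1$, one obtains $Z^*\le q^{\abs{V}}$. On the other hand, for every $A$ we have $q^{c(A)}\ge1$ and (using the elementary inequality $\min(p,1-p)\ge p(1-p)$, valid on $[0,1]$) $p^{\abs{A}}(1-p)^{\abs{E}-\abs{A}}\ge\min(p,1-p)^{\abs{E}}\ge(p(1-p))^{\abs{E}}$. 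Putting these together gives $\mu_{\rm min}\ge q^{-\abs{V}}(p(1-p))^{\abs{E}}$, and again $\log(2e)<2$ produces the second inequality.

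No step is truly delicate; the only place one has to be a little careful is the rewriting of $\mu$ into the Bernoulli-weighted form and the choice of bounds (upper on $Z^*$ via $c(A)\le\abs{V}$, lower on the Bernoulli factor via $\min(p,1-p)\ge p(1-p)$), so that the two $|V|\log q$ and $|E|\log\frac{1}{p(1-p)}$ terms come out cleanly and the additive constant stays $\le 2$.
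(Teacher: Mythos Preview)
Your proof is correct and is exactly the natural argument the paper has in mind: the corollary is stated in the paper without proof (it is presented simply as ``a corollary to Lemma~\ref{lemma:mixing-gap}''), and your computation of $\pi_{\rm min}$ and $\mu_{\rm min}$ via the crude bounds $Z\le q^{\abs{V}}e^{\beta\abs{E}}$ and $Z^*\le q^{\abs{V}}$, together with $\min(p,1-p)\ge p(1-p)$ and $\log(2e)<2$, is precisely how one fills in the details.
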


In particular, this shows that every result of this paper can 
also be written in terms of the mixing time, 
loosing the same factor as in Corollary~\ref{coro:mixing-gap}. 

We begin with the probably most studied instance: 
the heat-bath dynamics for the $q$-state Potts model 
on the two-dimensional square lattice.
For $d\ge1$, define the $d$-dimensional {\it hypercubic lattice} $\Z^d_L$ 
of side length $L$ as the graph $\Z^d_L=(V_{L,d},E_{L,d})$ with 
vertex set $V_{L,d}=\{1,\dots,L\}^d\subset\Z^d$ and 
edge set $E_{L,d}=\bigl\{\{u,v\}\subset V_{L,d}:\,\abs{u-v}=1\bigr\}$, 
where $\abs{\,\cdot\,}$ denotes the Euclidean norm. 
For $d=2$ we call $\Z_L^2$ the two-dimensional square lattice 
of side length $L$.
In this case ($d=2$) there is an almost complete characterization 
of the spectral gap, that was established over the last decades, 
in particular for the Ising model ($q=2$). 
Beginning with the work of Holley \cite{Holley}, 
Aizenman and Holley \cite{AizHol} and Stroock and Zegarli{\'n}ski 
\cite{SZ_lattice}, who showed rapid mixing of the heat-bath dynamics 
given some spatial mixing property, which is called 
\emph{complete analyticity} (or {\it Dobrushin--Shlosman mixing condition}), 
see \cite{DobrShlos_description}, it was finally proven by 
Martinelli and Olivieri \cite{MO1,MO2} that the heat-bath dynamics 
for the Ising model is rapidly mixing up to the \emph{critical 
temperature}, i.e.~if the inverse temperature $\beta$ 
satisfies $\beta<\beta_c(2)=\log(1+\sqrt{2})$.
Using results of Cesi, Chayes, Chayes, Guadagni, Martinelli and 
Schonmann \cite{CCS,CGMS,Schonmann} it is known that this is (almost) 
best possible in the sense that the spectral gap of the HB dynamics 
on $\Z_L^2$ at inverse temperature $\beta$ is smaller than 
$\exp(-c L)$, for some $c>0$, if $\beta>\beta_c(2)$.
Only recently has rapid mixing at the critical temperature $\beta_c$ 
been proven by Lubetzky and Sly \cite{LS}.
For the proof of rapid mixing of the heat-bath dynamics 
for the Potts model at all $\beta<\log(1+\sqrt{q})$ we need 
\emph{exponential decay of connectivities} in the RC model 
(see Beffara and Duminil-Copin~\cite{BDC}).  
This implies \emph{weak mixing} in the Potts model 
(see Alexander~\cite{A}), and thus 
rapid mixing of the heat-bath dynamics 
(see Martinelli, Olivieri and Schonmann~\cite{MOS}).

Before we summarize these results in Theorem~\ref{th:P-HB_2d}, we 
introduce a variant of the Potts measure \eqref{eq:Potts} and 
the heat-bath dynamics \eqref{eq:P-HB} with (constant) 
boundary condition. 
For this define the \emph{boundary}\index{boundary ($\Z_L^d$)} 
of $V_{L,d}$ by 
\[
\partial V_{L,d} \,:=\, \Bigl\{v=(v_1,\dots,v_d)\in V_{L,d}:\, 
v_i\in\{1,L\} \text{ for some } i\in[d]\Bigr\}
\]
and let $d_{L,d}^{+}(v)$ be the number of neighbors of $v$ in 
$\Z^d\setminus V_{L,d}$, i.e., 
\[
d_{L,d}^{+}(v) \,:=\, 
\abs{\bigl\{u\in\Z^d\setminus V_{L,d}:\ \abs{v-u}=1\bigr\}}.
\]
The
Potts measure on $\Z_L^d$ with \emph{$1$-boundary condition} 
is defined by 
\begin{equation}\label{eq:Potts_boundary}
\pi^{\Z_L^d,1}_{\beta,q}(\sigma) 
\,:=\, \bar{Z}^{-1}\,\pi^{\Z_L^d}_{\beta,q}(\sigma)\,
\prod_{v\in\partial V_{L,d}} \exp\bigl(\beta\, 
d_{L,d}^{+}(v)\,\ind(\sigma(v)=1)
\bigr),
\quad \sigma\in\Op(\Z_L^d), 
\end{equation}
where 
$\bar{Z}$ is the proper 
normalization constant and $\pi^{\Z_L^d}_{\beta,q}$ is defined as 
in \eqref{eq:Potts}.
This measure can be interpreted as the conditional distribution 
of the configurations on $V_{L,d}$ given that all vertices of 
$\Z^d\setminus V_{L,d}$ have color~1. 

\begin{remark}\label{remark:critical}
The critical inverse temperature $\beta_c(d,q)$ 
for the $q$-state Potts model on~$\Z^d$, 
that was cited above for $d=2$, is generally defined by
\begin{equation}\label{eq:critical}
\beta_c(d,q) \,:=\, \inf\{\beta:\, M_{d,q}(\beta)>0\},
\end{equation}
where 
\[
M_{d,q}(\beta) \,:=\, \lim_{L\to\infty}\, 
\frac{1}{\abs{V_{L,d}}}\,\sum_{v\in V_{L,d}}
\left(\pi^{\Z_L^d,1}_{\beta,q}
\Bigl(\{\sigma:\,\sigma(v)=1\}\Bigr)-\frac1q\right).
\]
(It is well-known that these limits exist; see e.g.~Grimmett~\cite{G1}.)
We write $\beta_c(q)$ for $\beta_c(2,q)$. 
A closed formula for $\beta_c(q)$ was first established by 
Onsager \cite{Onsager} in the case $q=2$ by giving 
an explicit formula for $M_{2,2}(\beta)$. A proof of the equality 
$\beta_c(q)=\log(1+\sqrt{q})$ for all $q\ge2$, 
which was expected to be true, has been given only 
recently by Beffara and Duminil-Copin \cite{BDC}. 
For $d\ge3$ it is still a challenging open problem to give 
an explicit formula for the critical inverse temperature.
However, it is known (see Laanait~et~al.~\cite{LaanaitMMRS}) that 
\[
\beta_c(d,q) \,=\, \frac{1}{d} \log q + \mathcal{O}(q^{-1/d}), 
\]
for $q$ large enough.
\end{remark}

Let $\pi^1:=\pi^{\Z_L^2,1}_{\beta,q}$. 
Similarly to \eqref{eq:P-HB} we define the transition matrix of the 
heat-bath dynamics for the Potts 
model on $\Z_L^2$ with 1-boundary condition by
\begin{equation}\label{eq:P-HB_boundary}
P_{{\rm HB},1}(\sigma,\tau) 
\;:=\; P_{{\rm HB},1,\beta,q}^{\Z_L^2}(\sigma,\tau) \;=\;
\frac{1}{\abs{V_{L,2}}}\sum_{v\in V_{L,2}}\,\frac{\pi^1(\tau)}
	{\sum_{l=1}^q\pi^1(\sigma^{v,l})}\,
	\ind(\tau\in\O_{\sigma,v})
\end{equation}
for all $\sigma,\tau\in\Op(\Z_L^2)$.
We summarize the rapid mixing results stated above in the following 
theorem.

\begin{theorem}\label{th:P-HB_2d}
Let $\hb$ be the transition matrix of the heat-bath dynamics for 
the \mbox{$q$-state} Potts model on $\Z_L^2$ at inverse temperature $\beta$. 
Let $n=L^2=\abs{V_{L,2}}$. Then there exist constants 
$c_{\beta}=c_\beta(q)>0$ and $C<\infty$ such that
\begin{align*}
\lambda(\hb) \;&\ge\; \frac{c_\beta}{n} \qquad\;\;\, 
	\text{ for } \beta < \beta_c(q) \hspace*{2cm}\\[-3mm]
\intertext{\vspace*{-1mm} and}
\lambda(\hb) \;&\ge\; n^{-C} \qquad 	
	\text{ for } q=2 \text{ and } \beta = \beta_c(2),
\end{align*}
where $\beta_c(q)\,=\,\log(1+\sqrt{q})$. 
These bounds hold also if we replace $\hb$ by $P_{{\rm HB},1}$.
\end{theorem}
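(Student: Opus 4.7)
The plan is to assemble this theorem from results already in the literature, since it is stated as a summary of known facts. I would split the proof along the two regimes and then address the boundary-condition version.

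For the subcritical case $\beta < \beta_c(q)$, I would proceed in three steps. First, invoke Beffara and Duminil-Copin~\cite{BDC}, which establishes $p_c(q)=\sqrt{q}/(1+\sqrt{q})$ as the threshold for exponential decay of connection probabilities in the planar random-cluster model with parameters $p=1-e^{-\beta}$ and $q$; in particular, for every $\beta<\beta_c(q)$ the two-point function $\mu^{\Z^2}_{p,q}(u\conn{}v)$ decays exponentially in $|u-v|$, uniformly in any boundary condition on an enclosing box. Second, convert this exponential decay in the RC representation into the weak spatial mixing property of the Potts Gibbs measure using the Edwards--Sokal coupling as exploited by Alexander~\cite{A}: correlations between Potts spins are dominated by RC connections, so exponential RC decay yields exponential decay of spin-spin correlations under the associated boundary conditions. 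Third, apply Martinelli, Olivieri and Schonmann~\cite{MOS}, whose theorem says that in two dimensions weak spatial mixing already implies strong spatial mixing, and strong mixing for the finite-volume Potts measures $\pi^{\Z_L^2}_{\beta,q}$ on all rectangles (with arbitrary boundary conditions) implies $\lambda(\hb)\ge c_\beta/n$ via the standard block-dynamics / canonical-paths machinery.

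For the critical planar Ising case $q=2$, $\beta=\beta_c(2)$, I would cite Lubetzky and Sly~\cite{LS} directly: they prove that the mixing time of the heat-bath dynamics on $\Z_L^2$ at $\beta_c(2)$ is bounded by a polynomial $n^{C'}$ in $n=L^2$. Combining this with Lemma~\ref{lemma:mixing-gap}, which gives $\lambda(\hb)^{-1}\le t_{\mathrm{mix}}(\hb)+1$, yields $\lambda(\hb)\ge n^{-C}$ for a suitable $C<\infty$.

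To obtain the same conclusions for $P_{{\rm HB},1}$, I would observe that each of the ingredients above is proven uniformly over the boundary condition: Beffara--Duminil-Copin's sharpness of the phase transition gives exponential decay of connectivities in the RC model with free, wired, or any other boundary; Martinelli--Olivieri--Schonmann's implication from strong spatial mixing to spectral gap is proven for the heat-bath dynamics with arbitrary fixed boundary; and Lubetzky--Sly's critical mixing-time bound is established uniformly in the boundary condition. Hence the bounds transfer verbatim to the 1-boundary version defined in \eqref{eq:P-HB_boundary}.

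The main obstacle is not mathematical invention but ensuring that the chain of cited implications matches exactly the formulation used here (planar lattice, all $q\ge 2$ in the subcritical case, uniformity over boundary conditions). In particular, one should check that \cite{BDC} indeed provides exponential, not merely non-percolative, decay at every $\beta<\beta_c(q)$ for all integer $q\ge 2$, and that the conversion via \cite{A} and \cite{MOS} covers the full range $q\ge 2$ (for $q$ large the standard two-dimensional strong spatial mixing arguments still apply since the transition is sharp by \cite{BDC}). Everything else is a bookkeeping exercise.
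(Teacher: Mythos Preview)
Your proposal is correct and follows essentially the same chain of references as the paper: Beffara--Duminil-Copin~\cite{BDC} for exponential decay of RC connectivities, Alexander~\cite{A} to convert this into weak mixing of the Potts measure, Martinelli--Olivieri--Schonmann~\cite{MOS} to obtain the gap bound, and Lubetzky--Sly~\cite{LS} at criticality, with uniformity over boundary conditions handling $P_{{\rm HB},1}$. The one point the paper makes explicit that you gloss over is the translation from the continuous-time gap used in \cite{MOS,LS} to the discrete spectral gap $\lambda(\hb)$: the paper verifies that $\hb=\frac{1}{|V|}\sum_v P_v$ is an average of projections and hence has only non-negative eigenvalues, so that $\lambda(\hb)=1-\xi_2=\mathrm{gap}(\Z_L^2)/n$.
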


\begin{proof}
The results, as originally given in \cite{LS,MO1}, refer to a 
continuous-time Markov process for the Potts model. 
See e.g.~\cite{M} for an introduction to the 
``graphical construction'' of the continuous-time heat-bath 
dynamics for the Ising model. 
In fact, these papers present lower bounds on 
${\rm gap}(\Z^2_L)$, which is defined by 
\[
{\rm gap}(\Z^2_L) \,:=\, 
\inf_{\substack{f\in L_2(\pi):\\ \Var_\pi(f)=1}}\,
\frac12 \sum_{\sigma\in\Op}\sum_{v\in V_{L,2}}\sum_{k=1}^q \,
\pi(\sigma)\; 
\frac{\pi(\sigma^{v,k})}{\sum_{l=1}^q \pi(\sigma^{v,l})}\,
\left(f(\sigma^{v,k})-f(\sigma)\right)^2 
\]
(cf.~\cite[Sec.~3]{MOS}). By the variational characterization 
of the eigenvalues of reversible transition matrices 
(see e.g.~\cite{DS} or the proof of 
Lemma~\ref{lemma:prelim_comparison}) we can write 
\[
1-\xi_2 \,=\, \inf_{\substack{f\in L_2(\pi):\\ \Var_\pi(f)=1}}\,
\frac12 \sum_{\sigma\in\Op}\sum_{\tau\in\Op}\,
\pi(\sigma)\, \hb(\sigma,\tau)\,
\left(f(\tau)-f(\sigma)\right)^2,
\]
where $\xi_2$ is the second largest eigenvalue of $\hb$. 
By definition, ${\rm gap}(\Z^2_L)=n (1-\xi_2)$ (see \eqref{eq:P-HB}). 
But $\hb$ has in general 
only non-negative eigenvalues, and thus $\lambda(\hb)=1-\xi_2$. 
To see this, write $\hb=\frac{1}{\abs{V}}\sum_{v\in V} P_v$ with 
$P_v(\sigma,\tau)=\pi(\tau\mid\O_{\sigma,v})$ (cf.~\eqref{eq:HB-cond}), 
and note that $P_v^2=P_v$ since $\O_{\sigma,v}=\O_{\tau,v}$ for 
all $\sigma,\tau$ with $P_v(\sigma,\tau)>0$. 
That is, $P_v$ is a projection (see \cite[Thm.~9.5-1]{Krey}).
It follows that all $P_v$, $v\in V$, and thus $\hb$, 
have only non-negative eigenvalues by \cite[9.5-2]{Krey}.
Similar ideas were used in the proof of Lemma~\ref{lemma:SB-HB}. 

Hence, it is enough to show that there exist constants 
$\tilde{c}_\beta>0$ and $\tilde{C}<\infty$ such that 
\begin{itemize}
\item\quad ${\rm gap}(\Z^2_L) \;\ge\; \tilde{c}_\beta$ 
			\qquad\, for $\beta < \beta_c(q)$,\vspace{1mm}
\item\quad ${\rm gap}(\Z^2_L) \;\ge\; n^{-\tilde{C}}$ 
			\quad\; for $q=2$ and $\beta = \beta_c(2)$.
\end{itemize}
The second inequality is proven in \cite[Thm.~4.2]{LS}. 
By \cite[Thm.~3.2]{MOS} the first inequality is equivalent to 
a weak mixing property of the Potts measure 
(see \cite[eq.~(1.11)]{MOS}). 
This weak mixing property is shown \cite[Thm.~3.6]{A} to hold 
whenever correlations 
decay exponentially or, equivalently, we have exponential decay 
of connectivities in the corresponding infinite-volume 
random-cluster model, i.e., for all $u,v\in \Z^2$ we have 
\[
\lim_{L\to\infty}\,\mu_{p,q}^{\Z_L^2}
	\bigl(\{A\subset E_{L,2}:\, u\conn{A}v\}\bigr) 
\,\le\, \a_1\, e^{-\a_2 \abs{u-v}}
\]
with some $0<\a_1(p,q),\a_2(p,q)<\infty$ 
and Euclidean norm $\abs{\,\cdot\,}$.
This was proven by Beffara and Duminil-Copin~\cite[Thm.~2]{BDC} 
for all $q\ge1$ and $p<p_c(q):=1-e^{-\beta_c(q)}$. 
Furthermore, the statements of this theorem hold true if we consider 
the case of \mbox{1-boundary} condition. For this see 
\cite[Thm.~1]{LS} and note that the result of \cite[Thm.~3.2]{MOS} 
holds for arbitrary boundary conditions. This proves the theorem.
\end{proof}


\begin{remark}
As stated above it is known that the heat-bath dynamics 
for the Ising model (without boundary conditions) is slow mixing 
if $\beta>\beta_c(2)$. Additionally, we are only aware of a result 
that shows an exponentially small upper bound on the spectral gap 
if $\beta>\beta_c(q)$ and $q$ is large enough for some specific 
(periodic) boundary condition; 
see \cite[Thm.~1.2]{BCT} or Theorem~\ref{th:P-SW-HB_torus} below. 
However, it is reasonable to believe that the HB dynamics is slowly 
mixing for all $q\ge2$ and $\beta>\beta_c(q)$ on $\Z_L^2$ 
without (or with periodic) boundary condition.
\end{remark}

We now turn to another class of underlying graphs, namely to 
rectangular subsets of the hypercubic lattice $\Z^d$. 
In fact, we consider only the case of \emph{periodic} 
boundary condition. For this consider the 
\emph{cycle}\index{cycle} 
$C_L$ of length $L$, that is the 
graph $C_L=(\{1,\dots,L\},\tilde{E}_L)$ with 
$\tilde{E}_L:=\bigl\{\{v,v+1\}:\,v\in\{1,\dots,L-1\}\bigr\}
\cup\{1,L\}$ and define, for two graphs $G_1=(V_1,E_1)$ and 
$G_2=(V_2,E_2)$, the 
\emph{graph product}\index{graph!product}
of $G_1$ and $G_2$, 
written $G_1\times G_2$, as the graph with vertex set 
$V_1\times V_2$ and $(u_1,u_2),(v_1,v_2)\in V_1\times V_2$ 
are neighbors in $G_1\times G_2$ iff either $u_1$ and $v_1$ 
are neighbors in $G_1$ and $u_2=v_2$ or $u_2$ and $v_2$ are 
neighbors in $G_2$ and $u_1=v_1$.
We then define the \emph{$d$-dimensional torus}\index{torus} 
$\Zt_L^d$ 
of side length $L$ by the $d$-fold graph product
\begin{equation}\label{eq:def-torus}
\Zt_L^d \,:=\, C_L^d = C_L\times \dots \times C_L.
\end{equation}
From Borgs, Chayes and Tetali \cite{BCT} we obtain the following 
theorem.

\begin{theorem}\label{th:P-SW-HB_torus}
Let $\hb$ $($resp.~$\sw$$)$ be the transition matrix of the heat-bath 
$($resp. Swendsen--Wang$)$ dynamics for the $q$-state Potts model 
on $\Zt_L^d$, $d\ge2$, at inverse temperature $\beta$. 
Then there exist constants $k_1,k_2<\infty$ 
and a constant $k_3>0$ 
$($all depending on $d$, $\beta$ and $q$$)$
such that, for $q$ and $L$ large enough, 
\begin{align*}
e^{-(k_1 + k_2 \beta) L^{d-1}} \,&\le\, \lambda(\hb) 
	\,\le\, e^{-k_3 \beta L^{d-1}} \qquad 
	\text{ for all } \beta\ge\beta_c(d,q) \\[-3mm]
\intertext{\vspace*{-1mm} and}
e^{-(k_1 + k_2 \beta) L^{d-1}} \,&\le\, \lambda(\sw) 
	\,\le\, e^{-k_3 \beta L^{d-1}} \qquad 	
	\text{ for } \beta=\beta_c(d,q)
\end{align*}
with $\beta_c(d,q)$ from \eqref{eq:critical}. 
In fact, the lower bounds hold for all $\beta$, $q$ and $L$. 
\end{theorem}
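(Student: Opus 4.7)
The plan is to prove the upper bounds by a conductance (Cheeger-type) argument, and to obtain the universal lower bounds from a crude canonical-paths bound. Recall that for a reversible ergodic $P$ with stationary distribution $\pi$, $\lambda(P)\le 2\Phi_*(P)$, where
\[
\Phi_*(P)\,=\,\min_{S:\,\pi(S)\le 1/2}\,\frac{1}{\pi(S)}\sum_{x\in S,\,y\notin S}\pi(x)\, P(x,y).
\]
So for each exponential upper bound it suffices to exhibit a set $S$ of stationary mass bounded away from $0$ and $1$ whose boundary flux is at most $e^{-k_3\beta L^{d-1}}$.

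For $\hb$ at $\beta\ge\beta_c(d,q)$ and $q,L$ large, phase coexistence decomposes $\pi$ on $\Zt_L^d$ into $q$ ``ordered'' regions (plus possibly a disordered one) of mass bounded below, each concentrated on configurations having only small Peierls contours of a given type; take $S$ to be one of these regions, so $\pi(S)\gtrsim 1/(q+1)$. Any $\sigma\in S$ some of whose single-spin neighbours lies outside $S$ must contain a contour separating two distinct phases of $(d-1)$-dimensional area at least $cL^{d-1}$, and a Peierls / Pirogov--Sinai estimate (or a direct cluster expansion for $q$ large enough) bounds the total $\pi$-mass of such configurations by $e^{-k_3\beta L^{d-1}}$. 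Combining with $\hb(\sigma,\tau)\le 1/|V|$ yields the conductance bound.

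For $\sw$ at $\beta=\beta_c(d,q)$ it is cleaner to work on the random-cluster side, since $\lambda(\sw)=\lambda(\tsw)$ by Lemma~\ref{lemma:SW_P-RC}. At $p=p_c(d,q)$ with $q$ large, the random-cluster measure undergoes a first-order transition between an ordered phase (a unique cluster of size $\Theta(L^d)$) and a disordered phase (all clusters of size $O(\log L)$), each of $\mu$-mass bounded away from zero. Let $S\subset\Orc$ be the set of configurations whose largest cluster has size $\ge cL^d$. A transition $A\in S\to B\notin S$ under $\tsw$ requires that, after colouring the components of $(V,A)$, the resampled edge set $B$ disconnects the giant component; a Peierls-type surface-energy estimate on the joint (\jointname) model then bounds the total $\mu$-contribution to the boundary flux by $e^{-k_3\beta L^{d-1}}$.

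The universal lower bounds follow from Lemma~\ref{lemma:mixing-gap}: it is enough to show $t_{\rm mix}(\hb)\le\exp((k_1+k_2\beta)L^{d-1})$, which one obtains by a canonical-paths (Jerrum--Sinclair) argument routing configurations through paths that rewrite one codimension-$1$ hyperplane at a time, so that each step incurs an energy cost of at most $O(\beta L^{d-1})$; the analogous bound for $\sw$ then follows from Theorem~\ref{th:main-spin} since the comparison constant $\csw(\D,\beta,q)$ depends only on $d,\beta,q$ on $\Zt_L^d$. The main obstacle is the Peierls estimate underlying the upper bounds, and in particular the $\sw$ case: because SW moves are non-local, one has to control the joint distribution of the cluster-colouring and edge-resampling substeps and show that $\tsw$ cannot jump across the phase interface in a single step with more than exponentially small probability.
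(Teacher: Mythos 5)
In the paper this statement is not proved at all: Theorem~\ref{th:P-SW-HB_torus} appears in the section of \emph{known results} and is imported verbatim from Borgs, Chayes and Tetali~\cite{BCT} (``From Borgs, Chayes and Tetali we obtain the following theorem''). So there is no in-paper argument to compare against; what you have written is an attempted reconstruction of the proof in \cite{BCT}. Your outline does follow the broad strategy of that paper — a Cheeger/conductance upper bound exploiting the bottleneck between coexisting phases at and below the transition point for large $q$, and a surface-order energy-barrier lower bound — and your route to the SW lower bound (canonical paths for $\hb$ rewriting one hyperplane at a time, then transferring to $\sw$ via Theorem~\ref{th:main-spin}, whose constant $\csw(2d,\beta,q)$ is independent of $L$) is a legitimate and arguably cleaner shortcut than proving the SW lower bound directly.

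However, as a proof the proposal has genuine gaps, concentrated exactly where the theorem's content lies. First, the assertion that $\pi$ decomposes into $q$ ordered regions (plus a disordered one) of mass bounded below, and that the inner boundary of such a region has $\pi$-mass at most $e^{-k_3\beta L^{d-1}}$, is the large-$q$ first-order phase transition on the torus; it is established in \cite{BCT} via Pirogov--Sinai theory / cluster expansions in the Fortuin--Kasteleyn representation (building on Laanait et al.~\cite{LaanaitMMRS}), and cannot be waved in with ``a Peierls estimate.'' Note also that the step ``any $\sigma\in S$ with a single-spin neighbour outside $S$ must contain a contour of area $\ge cL^{d-1}$'' is false for naive choices of $S$ (e.g.\ a majority-colour set); the whole difficulty is to choose $S$ via a contour- or order-parameter-based definition for which the flux set really is confined to interface configurations of exponentially small measure. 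Second, for $\tsw$ the conductance numerator is $\sum_{A\in S,B\notin S}\mu(A)\tsw(A,B)$, and since one SW step is non-local you must show that from a typical ordered configuration the colour-then-resample step lands outside $S$ with probability at most $e^{-k_3\beta L^{d-1}}$; this does not reduce to a statement about the stationary measure of a boundary set and requires a separate argument (this is the main technical point of \cite{BCT}). You flag both issues yourself, which is to your credit, but as it stands the proposal is an outline of the known strategy rather than a proof.
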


This theorem shows (at least for large $q$) that the 
heat-bath dynamics is slowly mixing at and below the critical 
temperature and, additionally, that also the Swendsen--Wang dynamics 
has an exponentially small spectral gap at the 
critical temperature if $q$ is large enough.
We will see in Chapter~\ref{chap:bond} that an analogous 
result holds for the single-bond dynamics for the 
random-cluster model.

Before we discuss other results for the Swendsen--Wang 
dynamics, we state a result for the HB dynamics on 
a more general class of graphs. 
For this, fix a graph $G=(V,E)$ and define the 
\emph{adjacency matrix}\index{adjacency matrix} $A_G$ 
of $G$ by $A_G(u,v):=\ind(\{u,v\}\in E)$, $u,v\in V$.
Additionally, write $\norm{A_G}$ for its (unweighted) 
operator norm, i.e.~$\norm{A_G}=\max_{\abs{x}=1}\abs{A_G x}$, 
where the maximum is taken over $x\in\R^{\abs{V}}$ and 
$\abs{\,\cdot\,}$ is the Euclidean norm.
In the literature $\norm{A_G}$ is called the 
\emph{principal eigenvalue} of the graph~$G$.
The following theorem is based on a result of Hayes~\cite{Ha}.

\begin{theorem}\label{th:HB_degree}
The heat-bath dynamics for the $q$-state Potts model  
at inverse temperature $\beta$
on a graph $G$ with $n$ vertices 
satisfies
\[
\lambda(\hb) \,\ge\, \frac{1-\eps}{n}
\]
whenever $\beta\,\le\,2\eps\norm{A_G}^{-1}$.
\end{theorem}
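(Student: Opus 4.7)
The plan is to bound the Dobrushin-type influence matrix $R\in\R_{\ge0}^{V\times V}$ of the heat-bath dynamics and then apply the general principle underlying Hayes' result: if $\norm{R}<1$, then $\lambda(\hb)\ge(1-\norm{R})/n$. Here I define
\[
R(u,v)\;:=\;\max_{\sigma,\sigma'}\,\tvd{\pi(\cdot\mid\O_{\sigma,v})-\pi(\cdot\mid\O_{\sigma',v})},
\]
the maximum being taken over pairs $\sigma,\sigma'\in\Op$ agreeing off $u$, and $\norm{\cdot}$ denoting the spectral $L_2$-norm. The first and main step is the entrywise bound $R(u,v)\le\tfrac{\beta}{2}\,A_G(u,v)$. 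When $u\not\sim v$, the counts $d_{v,k}$ in \eqref{eq:HB-cond} do not depend on $\sigma(u)$, so $R(u,v)=0$. When $u\sim v$, changing $\sigma(u)$ from color $a$ to color $b$ rescales only the Boltzmann weights at colors $a$ and $b$ by $e^{-\beta}$ and $e^{\beta}$; a direct computation shows that the worst case occurs when $v$ has exactly one neighbor, giving the Ising-type value $\tanh(\beta/2)\le\beta/2$. Combined with the hypothesis $\beta\le 2\eps\,\norm{A_G}^{-1}$, this yields $\norm{R}\le\tfrac{\beta}{2}\norm{A_G}\le\eps<1$.

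Next, to convert this into a spectral-gap bound, I use path coupling with a weighted Hamming metric. Let $w\in\R_{\ge0}^V$ be a Perron--Frobenius eigenvector of $R$, so that $Rw\le\rho(R)w\le\norm{R}w$ entrywise, and set $d_w(\sigma,\tau):=\sum_{v\in V}w_v\,\ind(\sigma(v)\ne\tau(v))$. For any $\sigma,\tau\in\Op$ differing at a single vertex $u$, I couple one step of $\hb$ by selecting the same vertex uniformly and using the optimal coupling of the two conditional distributions. Picking $u$ itself (probability $1/n$) reduces $d_w$ by $w_u$, while picking any other vertex $v$ creates a new disagreement at $v$ with probability at most $R(u,v)$. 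Hence
\[
\mathbb{E}\bigl[d_w(\sigma',\tau')\bigr]\;\le\;w_u\Bigl(1-\tfrac1n\Bigr)+\tfrac1n\sum_{v\ne u} w_v R(u,v)\;\le\;w_u\Bigl(1-\tfrac{1-\norm{R}}{n}\Bigr).
\]
The path-coupling principle of Bubley and Dyer extends this contraction to arbitrary pairs, so the grand coupling is a Wasserstein contraction at rate $(1-\norm{R})/n\ge(1-\eps)/n$ in the metric $d_w$.

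Finally, a Wasserstein contraction of rate $\gamma$ for a reversible Markov chain yields $\lambda(\hb)\ge\gamma$ by the standard Ollivier discrete-Ricci-curvature argument: any $d_w$-Lipschitz function $f$ has $\hb f$ which is $(1-\gamma)$-Lipschitz with respect to $d_w$, and iterating this together with the Poincaré-type bound relating $\Var_\pi(f)$ to the squared $d_w$-Lipschitz constant of $f$ gives the claimed gap bound. The main technical obstacle is the sharp constant $\beta/2$ in the influence bound; a cruder estimate such as $1-e^{-\beta}$ or $\beta$ would still prove rapid mixing but with a weaker constant in front of $\eps$, whereas the factor $2$ in the hypothesis reflects the sharp $\tanh(\beta/2)$ value achieved in the single-neighbor Ising configuration.
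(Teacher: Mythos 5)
Your overall architecture is sound and in fact coincides with the paper's: everything reduces to the entrywise influence bound $\rho_{u,v}\le\tanh(\beta/2)\,A_G(u,v)$, after which the spectral-norm (Dobrushin--Hayes) condition $\norm{R}\le\tfrac{\beta}{2}\norm{A_G}\le\eps$ yields the gap. Your second half --- path coupling in the weighted Hamming metric built from a Perron eigenvector, followed by the standard fact that a Wasserstein contraction at rate $\gamma$ forces every non-unit eigenvalue of a reversible chain to have modulus at most $1-\gamma$ --- is a legitimate self-contained substitute for the paper's route (citing Hayes' Theorem~6 for a total-variation bound and then converting via Lemma~\ref{lemma:TV-gap}); if anything it is cleaner. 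Two small repairs there: take $w$ to be the Perron vector of $A_G$ (strictly positive when $G$ is connected) rather than of the possibly reducible $R$, so that $d_w$ is a genuine metric, and use $Rw\le\tfrac{\beta}{2}A_Gw=\tfrac{\beta}{2}\norm{A_G}\,w\le\eps\,w$ entrywise.

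The genuine gap is in your first step. The inequality $R(u,v)\le\tanh(\beta/2)$ for a pair of neighbors is exactly Hayes' Observation~11, which he proves only for $q=2$; establishing it for general $q$ is the entire content of the paper's proof, carried out via an explicit formula $N_q(r)/D_q(r)$ for the relevant total variation distance and an induction on $q$. Your one-line justification --- ``the worst case occurs when $v$ has exactly one neighbor'' --- is not a proof, and as a description of the extremizer it is wrong for $q\ge3$: with a single neighbor the two conditionals at $v$ are proportional to $(e^{\beta},1,\dots,1)$ and a transposition of it, giving distance $(e^{\beta}-1)/(e^{\beta}+q-1)$, which is strictly smaller than $\tanh(\beta/2)=(e^{\beta}-1)/(e^{\beta}+1)$ once $q\ge3$; the supremum $\tanh(\beta/2)$ is approached only in the limit where two colors carry essentially all of the conditional weight at $v$. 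So the asserted bound is true but requires the induction (or some comparable reduction of the $q$-color case to the two-color one); without it, the constant $2$ in the hypothesis $\beta\le2\eps\norm{A_G}^{-1}$ is unjustified.
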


\begin{proof}
Note first that one key ingredient for the result is Observation~11 
of \cite{Ha}. The result is stated only for the Ising model ($q=2$), 
but it can be generalized quite easily by induction on $q$.
We state the proof here for completeness.
For this, we have to show, for all $u,v\in V$, that 
\[
\rho_{u,v} \,\le\, \tanh\left(\frac{\beta}{2}\right)\,A_G(u,v),
\]
where $\rho_{u,v}$, i.e. the \emph{influence of $v$ on $u$}, 
is defined by 
\[\begin{split}
\rho_{u,v} \,&:=\, 
\max_{\substack{\sigma\in\Op\\ \tau\in\O_{\sigma,v}}}\,
\frac12 \sum_{k=1}^q \abs{
	\frac{\pi(\sigma^{u,k})}{\sum_{l=1}^q\pi(\sigma^{u,l})} -
	\frac{\pi(\tau^{u,k})}{\sum_{l=1}^q\pi(\tau^{u,l})}} \\
&=\, \max_{\substack{\sigma\in\Op\\ \tau\in\O_{\sigma,v}}}\,
\frac12 \sum_{k=1}^q \abs{
	\frac{e^{\beta d_{u,k}(\sigma)}}
		{\sum_{l=1}^q e^{\beta d_{u,l}(\sigma)}} -
	\frac{e^{\beta d_{u,k}(\tau)}}
		{\sum_{l=1}^q e^{\beta d_{u,l}(\tau)}}},
\end{split}\]
see \cite[Def.~4]{Ha}.  
As before, $\O_{\sigma,v}$ is the set of configurations that differ from 
$\sigma$ only at $v$ (cf.~(HB2) on page~\pageref{page:P-HB}), 
and $d_{u,k}(\sigma)$ is the number 
of neighbors of $u$ in $G$ with color $k$ in~$\sigma$.
Note that in \cite{Ha} the above bound is stated with $\beta$ 
in place of $\beta/2$. This comes from the difference 
in the normalization of the measure. 

Obviously, $\rho_{u,v}=0$ if $\{u,v\}\notin E$ since the term 
inside the absolute value depends only on the colors of the 
neighbors of $u$, which are equal if $v$ is neither of them.
Now fix some neighbors $u,v\in V$ and configurations 
$\sigma,\tau\in\Op$ with $\tau\in\O_{\sigma,v}$.
Let $r_k:=d_{u,k}(\sigma)$. Since $\sigma$ and $\tau$ differ only 
at one neighbor of $u$, there exist $i,j\in[q]$ such that 
$d_{u,i}(\tau)=r_i+1$, $d_{u,j}(\tau)=r_j-1$ and 
$d_{u,k}(\tau)=r_k$ for all $k\neq i,j$. 
Assume without loss of generality that $i=1$ and $j=2$. Then some simple 
calculations show that
\[
\frac12 \sum_{k=1}^q \abs{
	\frac{e^{\beta d_{u,k}(\sigma)}}
		{\sum_{l=1}^q e^{\beta d_{u,l}(\sigma)}} -
	\frac{e^{\beta d_{u,k}(\tau)}}
		{\sum_{l=1}^q e^{\beta d_{u,l}(\tau)}}}
\,=\, \frac{N_q(r)}{D_q(r)}, \qquad r=(r_1,\dots,r_q),
\]
where
\[
N_q(r) \,:=\, (e^\beta-e^{-\beta}) e^{\beta(r_1+r_2)} + 
	\max\Bigl\{(e^\beta-1) e^{\beta r_1},(1-e^{-\beta}) 
	e^{\beta r_2}\Bigr\} \sum_{k=3}^q e^{\beta r_k}
\]
and
\[
D_q(r) \,:=\, \left(\sum_{k=1}^q e^{\beta r_k}\right)\,
	\left(e^{\beta(r_1+1)}+e^{\beta(r_2-1)}
	+\sum_{l=3}^q e^{\beta r_l}\right).
\]
We will prove that $N_q(r)/D_q(r)\le\tanh(\beta/2)$ 
for all $r\in\R^q$ and $q\in\N$ by induction. 
Let us first recall the $q=2$ case from Observation~11 of \cite{Ha}. 
In this case the last sum in the definition of $N_2$ and $D_2$ 
disappears. Thus, for $r\in\R^2$, 
\[\begin{split}
\frac{N_2(r)}{D_2(r)} \,&=\, 
\frac{(e^\beta-e^{-\beta}) e^{\beta(r_1+r_2)}}
	{\left(e^{\beta r_1}+e^{\beta r_2}\right)
	\left(e^{\beta(r_1+1)}+e^{\beta(r_2-1)}\right)}
\,=\, \frac{e^\beta-e^{-\beta}}
	{e^\beta+e^{-\beta} + e^{\beta(r_1-r_2+1)}+e^{-\beta(r_1-r_2+1)}}\\
&\le\, \frac{e^\beta-e^{-\beta}}
	{e^\beta+e^{-\beta} + 2} 
\,=\, \tanh\left(\frac{\beta}{2}\right),
\end{split}\]
where the last inequality comes from $e^x+e^{-x}\ge2$, $x\in\R$.
Now assume that the statement holds for $q-1$ and 
all $s=(r_1,\dots,r_{q-1})\in\R^{q-1}$ and let 
$r=(r_1,\dots,r_{q-1},r_q)$ for 
some $r_q\in\R$. We obtain
\[\begin{split}
\frac{N_q(r)}{D_q(r)} \,&=\, 
\frac{N_{q-1}(s) + e^{\beta r_q} 
		\max\bigl\{(e^\beta-1) e^{\beta r_1}, 
		(1-e^{-\beta})	e^{\beta r_2}\bigr\}}
	{D_{q-1}(s) + \left(e^{\beta}+1\right) e^{\beta (r_1+r_q)}
		+\left(1+e^{-\beta}\right) e^{\beta (r_2+r_q)} 
		+ e^{2\beta r_q} + 2e^{\beta r_q}
			\sum_{k=3}^{q-1} e^{\beta r_k}}\\
\,&\le\, \frac{N_{q-1}(s) + e^{\beta r_q} 
		\max\bigl\{(e^\beta-1) e^{\beta r_1}, 
		(1-e^{-\beta})	e^{\beta r_2}\bigr\}}
	{D_{q-1}(s) + e^{\beta r_q} [\left(e^{\beta}+1\right) e^{\beta r_1}
		+\left(1+e^{-\beta}\right) e^{\beta r_2}]} 
\,\le\, \tanh\left(\frac{\beta}{2}\right),
\end{split}\]
since $a/b\le t$ and $c/d\le t$ imply $(a+c)/(b+d)\le t$ for 
$a,b,c,d,t\ge0$. This proves Observation~11 of \cite{Ha} 
for all $q\in\N$ and thus, under the assumptions of this theorem, 
we find by \cite[Thm.~6]{Ha} that
\[
\max_{\sigma\in\Op(G)}\tvd{\hb^t(\sigma,\cdot)-\pi} 
\,\le\, n\left(1-\frac{1-\eps}{n}\right)^t.
\]
Using Lemma~\ref{lemma:TV-gap} we obtain the result. 
\end{proof}

\vspace{3mm}

In particular we have the following corollary (see~\cite{Ha}).

\begin{corollary}\label{coro:P-HB_degree}
The heat-bath dynamics for the $q$-state Potts model  
at inverse temperature $\beta$
on a graph $G$ with $n$ vertices and maximum degree $\D$
satisfies
\[
\lambda(\hb) \,\ge\, \frac{1-\eps}{n}
\]
if $\beta\,\le\,\frac{2\eps}{\D}$.
\end{corollary}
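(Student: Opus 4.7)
The plan is to deduce Corollary~\ref{coro:P-HB_degree} directly from Theorem~\ref{th:HB_degree}, the only new ingredient being the classical inequality $\norm{A_G}\le\D$ for any graph $G$ with maximum degree $\D$. Granted this bound, the hypothesis $\beta\le 2\eps/\D$ implies $\beta\le 2\eps\,\norm{A_G}^{-1}$, and the desired spectral gap estimate $\lambda(\hb)\ge(1-\eps)/n$ then follows verbatim from Theorem~\ref{th:HB_degree}.

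For the bound $\norm{A_G}\le\D$, since $A_G$ is a real symmetric $0/1$-matrix, its operator norm equals the maximum modulus of its eigenvalues. Given an eigenpair $A_G x = \xi x$ with $x\in\R^{\abs{V}}$ nonzero, I would pick a coordinate $u\in V$ that maximizes $|x_u|$. Reading the eigenvalue equation at $u$ and applying the triangle inequality yields
\[
|\xi|\,|x_u| \;=\; \Bigl|\sum_{v:\,\{u,v\}\in E} x_v\Bigr|
\;\le\; \deg_G(u)\,|x_u| \;\le\; \D\,|x_u|,
\]
and dividing through by $|x_u|>0$ gives $|\xi|\le\D$. (Edge cases are harmless: if $E=\varnothing$, then $\D=0$ and the hypothesis on $\beta$ is vacuously the full real line, while $\norm{A_G}=0$ so Theorem~\ref{th:HB_degree} applies with no constraint on $\beta$.)

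I do not expect a genuine obstacle here: all of the analytic content, namely the inductive proof of the influence bound $\rho_{u,v}\le\tanh(\beta/2)\,A_G(u,v)$ together with its conversion into a spectral gap estimate via the path-coupling style argument of~\cite[Thm.~6]{Ha} and Lemma~\ref{lemma:TV-gap}, has already been carried out inside Theorem~\ref{th:HB_degree}. The corollary is pure bookkeeping: one trades the spectral quantity $\norm{A_G}$ for the combinatorial upper bound $\D$, which strengthens the hypothesis on $\beta$ but leaves the conclusion unchanged.
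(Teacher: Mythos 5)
Your proposal is correct and matches the paper's proof: both deduce the corollary from Theorem~\ref{th:HB_degree} via the bound $\norm{A_G}\le\D$, the only difference being that the paper cites \cite[Thm.~8.1.22]{HJ-matrix} for this inequality while you supply the standard maximal-coordinate eigenvector argument inline. That argument is valid, so this is just a matter of citing versus reproving a classical fact.
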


\begin{proof}
Using \cite[Thm.~8.1.22]{HJ-matrix} we obtain
$\norm{A_G} \le \max_{u\in V} \sum_{v\in V} A_G(u,v) = \D(G)$.
Thus, the result follows from Theorem~\ref{th:HB_degree}.
\end{proof}

In \cite{Ha} one can find also an improvement of this 
corollary if we restrict to the class of \emph{planar} graphs. 
We will use this in Chapter~\ref{chap:2d} to obtain a result 
for the Swendsen--Wang dynamics on planar graphs (see 
Corollary~\ref{coro:SW-planar}).

\begin{remark}
There are, of course, a lot of other results concerning 
mixing properties of the heat-bath dynamics for the Potts model. 
These include, e.g., rapid mixing for heat-bath dynamics on trees 
at all temperatures \cite{BKMP}. 
Additionally, only recently was the complete picture of rapid mixing 
or lack thereof established for the HB dynamics on the 
\emph{complete graph} \cite{CDLLPS,LLP}.
\end{remark}

Now we want to state some 
known results on the mixing properties of the Swendsen--Wang 
dynamics. We try to give an overview of all known results, but 
we omit results that involve boundary conditions 
(or external magnetic field) (see e.g.~\cite{M2,MOScopp1,MOScopp2}), 
and results where the underlying graph is random (see \cite{CF}).

We start with the known results on the complete graph, where 
at least for $q=2$ the mixing behavior is completely known.
Denote by $K_n$ the 
\emph{complete graph}\index{graph!complete} 
on $n$ vertices, 
i.e.~$K_n=\bigl([n],\genfrac(){0pt}{1}{[n]}{2}\bigr)$,
where $\genfrac(){0pt}{1}{[n]}{2}$ is the set of all two-element 
subsets of $[n]$.
Note that, for two non-negative functions $f,g: \N\to\R$, we write 
$f(n)=\Theta(g(n))$ iff $0<\lim_{n\to\infty}\frac{f(n)}{g(n)}<\infty$.
The result below is due to Long, Nachmias and Peres \cite{LNP,Long} 
in the case $q=2$ (see also \cite{CDFR}). For $q\ge3$ the results are 
adopted from Gore and Jerrum~\cite{GJ} and Huber \cite{Hu}.

\vspace{1mm}
\begin{theorem} \label{th:SW_complete}
Let $\sw$ be the transition matrix of the Swendsen--Wang dynamics for 
the $q$-state Potts model on the complete graph $K_n$ at inverse 
temperature $\beta=\log(\frac{n}{n-c})$, $c\ge0$ 
(or $p=1-e^{-\beta}=\frac{c}{n}$).
Then, with $t_{\rm mix}:=t_{\rm mix}(\sw)$, 
\begin{enumerate}
	\item[$(i)$] for $q=2$ and 
		\vspace*{-1mm}
		\begin{itemize}
			\item $c<2$:\; $t_{\rm mix} \,=\, \Theta(1)$. \vspace{1mm}
			\item $c=2$:\; $t_{\rm mix}\,=\, \Theta(n^{1/4})$. \vspace{1mm}
			\item $c>2$:\; $t_{\rm mix} \,=\, \Theta(\log(n))$.
		\end{itemize}
	\item[$(ii)$] for $q\ge3$ and 
		\vspace*{-1mm}
		\begin{itemize}
			\item $c=\frac{2(q-1)\log(q-1)}{q-2}$:\;\; 
					$t_{\rm mix} \,\ge\, e^{\eps \sqrt{n}}, \qquad
						\text{ for some } \eps>0$. \vspace{2mm}
			\item \quad $c<\frac13$:\qquad\qquad\quad
					$t_{\rm mix}\,\le\, C \log(n), \;
						\text{ for some } C<\infty$. 
			\item \;\;$c>2q\log(3q n)$:\quad\quad 
					$t_{\rm mix} \,\le\, C q  n, \qquad
						\text{ for some } C<\infty$.
		\end{itemize}
\end{enumerate}
\end{theorem}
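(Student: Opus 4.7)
The plan is to tackle the eight sub-cases by recognizing that each regime corresponds to a well-understood random-graph phenomenon inside the bond phase of one Swendsen--Wang step. Given a Potts configuration $\sigma$ on $K_n$, the generated edge set $A$ restricted to each color class of size $k$ is the Erd\H{o}s--R\'enyi graph $G(k,p)$ with $p=c/n$; hence on the (overwhelmingly likely) nearly balanced color classes of size $\sim n/q$ the expected degree in the bond phase is $c/q$. The thresholds in the theorem will then be read off from the classical phase transitions of $G(n/q,\,c/n)$, together with separate arguments at the first-order transition when $q\ge 3$.

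For $q=2$ I would follow the Long--Nachmias--Peres strategy. When $c<2$, the bond graph is subcritical and all components have size $O(\log n)$ with high probability; a single joint re-sampling of component colors in two coupled chains collapses the disagreements, giving $t_{\rm mix}=\Theta(1)$. When $c>2$, a unique giant component of size $\Theta(n)$ appears in the bond graph, so one SW step forces the magnetization to take a uniform color; a coupling that tracks only the residual small components yields the $\Theta(\log n)$ bound, with a matching lower bound coming from the $\Theta(1/n)$ probability that the giant receives the ``wrong'' color. The delicate case is $c=2$: here the Aldous scaling $\bigl(n^{-2/3}|\mathcal{C}_i|\bigr)_{i\ge 1}$ of the critical components drives the dynamics onto a scaling limit in which one tracks the rescaled component sizes as a multiplicative coalescent evolving by random color merges, and the mixing time of this limit is $\Theta(n^{1/4})$; both directions use the scaling limit plus precise random-graph tail estimates to transfer between the discrete and continuous chains.

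For $q\ge 3$ the phase structure is first order, so the three sub-cases split. The slow-mixing lower bound at $c=2(q-1)\log(q-1)/(q-2)$ is the Gore--Jerrum bottleneck: one exhibits a set $S\subset\Op$ of ``ordered'' configurations (one dominant color) whose stationary mass is $\Theta(1)$ but whose boundary has $\pi$-mass $e^{-\Omega(\sqrt n)}$, and Cheeger's inequality applied to $\sw$ gives $t_{\rm mix}\ge e^{\eps\sqrt n}$. The rapid-mixing upper bounds follow Huber. For $c<1/3$ one uses path coupling on $\Op$: with $p=c/n$ the bond graph is dominated by isolated edges, so in one SW step a pair of configurations differing at a single vertex contracts in expected Hamming distance by a factor $\le 3c<1$, giving $t_{\rm mix}=O(\log n)$. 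For $c>2q\log(3qn)$ the bond graph within any large color class is so dense that its giant component covers all but a negligible vertex set with probability $1-o(1/n)$; after one step the chain is within total variation $o(1/n)$ of the restriction of $\pi$ to almost-constant configurations, and a $q$-fold symmetry argument plus a coupling-from-the-past reading give $t_{\rm mix}=O(qn)$.

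The main obstacle is the critical $q=2$, $c=2$ case: neither pure coupling (used when $c\neq 2$) nor a simple conductance bound suffices, and one must match the discrete SW chain to Aldous's multiplicative coalescent and analyze a color-merge process on a random partition of $[0,1]$ with power-law tails to pin down the $n^{1/4}$ exponent in both directions. Technically second hardest is the Gore--Jerrum step, which needs a quantitative free-energy estimate of width $\sqrt n$ separating the metastable disordered and ordered phases of the Potts model on $K_n$ at the first-order point; this in turn rests on sharp large-deviation asymptotics of the partition function $Z(K_n,\beta,q)$.
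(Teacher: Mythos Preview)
The paper does not prove this theorem at all: it is stated in Section~\ref{sec:2_known-results} as a compilation of known results, with the $q=2$ cases attributed to Long, Nachmias and Peres~\cite{LNP,Long} (see also~\cite{CDFR}) and the $q\ge 3$ cases to Gore and Jerrum~\cite{GJ} and Huber~\cite{Hu}. There is nothing to compare your proposal against in the paper itself.

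That said, your sketch is broadly faithful to what those sources actually do, with two caveats. First, for $q=2$, $c=2$, the Long--Nachmias--Peres argument does not proceed via Aldous's multiplicative coalescent in the way you describe; it works by projecting onto the one-dimensional magnetization chain and combining near-critical random-graph estimates (component sizes in the scaling window) with a drift analysis of that chain, from which the $n^{1/4}$ exponent emerges. Second, for the large-$c$ upper bound when $q\ge 3$, Huber's argument is a bounding-chain construction (a monotone coupling in the coupling-from-the-past sense) rather than a one-step total-variation estimate; your description captures the intuition but not the mechanism. The remaining cases --- subcritical/supercritical coupling for $q=2$, the Gore--Jerrum conductance bottleneck, and Huber's path-coupling for small $c$ --- are described accurately.
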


As far as we know, Theorems~\ref{th:P-SW-HB_torus} and~\ref{th:SW_complete}(ii) 
contain the only presently known 
slow mixing results for the Swendsen--Wang dynamics 
(except for results on random graphs). 
Other classes of graphs where rapid mixing of SW is known at 
all temperatures, but now for every $q\in\N$, are 
trees and cycles (see Cooper and Frieze \cite{CF} and 
Long \cite{Long}).

\begin{theorem} \label{th:SW_tree-cycle}
Let $T$ be a tree on $n$ vertices and $C_n$ be a cycle of length $n$.
Then
\begin{itemize}
	\item $t_{\rm mix}(P_{{\rm SW},\beta,q}^T) \,=\, \Theta(\log(n))$ 
					\; and \vspace{2mm}
	\item $t_{\rm mix}(P_{{\rm SW},\beta,q}^{C_n})\,\le\,c\, n\log(n)$ 
					\quad for some $c<\infty$. \vspace{1mm}
\end{itemize}
In fact, we have 
$\lambda(P_{{\rm SW},\beta,q}^T)=1-p(1-\frac1q)$.
\end{theorem}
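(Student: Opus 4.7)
The plan is to exploit the structural simplicity of $c(A)$ on trees. For a tree $T=(V,E)$ with $\abs{V}=n$, every $A\subset E$ satisfies $c(A)=n-\abs{A}$, so the random-cluster measure $\mu_{p,q}^T$ reduces to the product measure in which each edge is independently present with probability $p/(p+q(1-p))$. I would then prove the stronger structural statement that the Swendsen--Wang chain $\tsw^T$ itself factorises over edges: given current state $A$ and any edge $e=\{u,v\}$, the new state $B$ satisfies $\P(e\in B\mid A)=p$ if $e\in A$ and $\P(e\in B\mid A)=p/q$ if $e\notin A$, and these events are jointly independent across $e$. The first case is immediate, since both endpoints share a component (hence a colour) and the edge is retained with probability~$p$. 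The second holds because on a tree the endpoints of $e\notin A$ must lie in different components of $(V,A)$, so they receive independent uniform colours and agree with probability~$1/q$, after which $e$ is retained with probability~$p$. Joint independence follows from the observation that contracting the components of $(V,A)$ yields a tree on the quotient vertices, and on any tree with i.i.d.\ uniform vertex colours the monochromatic-edge indicators $\ind(X_u=X_v)$ are jointly independent $\text{Bernoulli}(1/q)$ random variables, by a one-line induction peeling off leaves.

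Consequently $\tsw^T$ is a product of $n-1$ identical copies of the two-state chain with transition matrix $\bigl(\begin{smallmatrix} p & 1-p \\ p/q & 1-p/q \end{smallmatrix}\bigr)$, whose non-trivial eigenvalue is $p(1-\frac1q)$. Since the spectral gap of a product of identical reversible chains equals the single-coordinate gap, $\lambda(\tsw^T)=1-p(1-\frac1q)$, and Lemma~\ref{lemma:SW_P-RC} transfers this to $\sw^T$. The mixing-time statement $t_{\rm mix}(P_{{\rm SW},\beta,q}^T)=\Theta(\log n)$ then follows from the standard analysis of products of two-state chains: for the upper bound I would use a coordinatewise coupling (each edge equilibrates in $O(1)$ TV-steps, and a union bound over the $n-1$ coordinates contributes the $\log n$ factor); for the matching lower bound, use the distinguishing statistic $f(A)=\abs{A}$, whose expectation from any fixed start decays geometrically at rate $p(1-\frac1q)$ while its stationary variance is $\Theta(n)$, so that at least $\Omega(\log n)$ steps are needed to bring the two expectations within one standard deviation of each other.

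The cycle case is the main obstacle: on $C_n$ the factorisation fails, because the endpoints of an edge $e\notin A$ need not be disconnected in $(V,A)$ (the complementary arc may still connect them), and $\mu_{p,q}^{C_n}$ is no longer a product measure. My plan is a direct coupling argument. Distinguish a ``loop-closing'' edge $e_0$ so that $C_n\setminus e_0$ is a Hamiltonian path, and couple two runs of $\tsw^{C_n}$ started from arbitrary $A,A'$ by using common randomness in both the colouring step and the retention step on the path edges. Conditional on the connectivity status of the endpoints of $e_0$ in the rest of the configuration, the restriction of the chain to the path edges behaves essentially as in the tree analysis and coalesces in $O(\log n)$ steps; the single extra ``wrap-around'' bit then equilibrates in $O(n)$ steps via a one-dimensional random-walk-type argument. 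Propagating the per-edge contraction across the $O(n)$ realisations of the wrap-around status yields the claimed coupling time $O(n\log n)$. An alternative, more indirect route would be to invoke Theorem~\ref{th:main-SB} together with a lower bound on $\lambda(\sb^{C_n})$ obtained by one-dimensional transfer-matrix techniques, at the cost of an extra polynomial factor in $n$ in the resulting mixing-time estimate.
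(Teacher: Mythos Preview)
The paper does not prove this theorem; it records it as a known result due to Cooper--Frieze~\cite{CF} and Long~\cite{Long}, adding only the remark that the spectral-gap identity on trees follows from the product-chain construction in \cite[Chap.~7]{Long} together with Exercise~12.7 of~\cite{LPW}. Your tree argument is exactly that route and is correct: the factorisation of $\tsw^T$ into $n-1$ independent two-state chains (via the observation that contracting the components of $(V,A)$ again yields a tree, on which the equal-colour indicators across edges are jointly i.i.d.\ Bernoulli$(1/q)$) gives $\lambda(\tsw^T)=1-p(1-\tfrac1q)$, and the $\Theta(\log n)$ mixing time is then the standard product-chain bound.

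The cycle part of your proposal, however, is not a proof. The two load-bearing assertions---that conditional on the connectivity status of the distinguished edge $e_0$ the remaining path edges ``behave essentially as in the tree analysis'', and that the ``wrap-around bit equilibrates in $O(n)$ steps via a one-dimensional random-walk-type argument''---are neither made precise nor justified, and the first is problematic as stated: whether the endpoints of $e_0\notin A$ lie in the same component of $(V,A)$ is the event that \emph{every} other edge is present, so conditioning on it correlates all path edges at once and destroys the per-edge contraction you want to invoke. Your fallback through Theorem~\ref{th:main-SB} (or directly Corollary~\ref{coro:SW-width}, using that the linear width of $C_n$ equals $2$) does give a rigorous polynomial bound, but only $\lambda(\tsw)\ge c\,n^{-2}$ and hence $t_{\rm mix}=O(n^3)$, not the claimed $O(n\log n)$; for the latter one really needs the sharper coupling arguments of~\cite{CF,Long}, which the paper simply cites.
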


For the statement on the spectral gap consider the construction of 
\cite[Chap.~7]{Long} and Exercise~12.7 of \cite{LPW}.
The last result that we want to present here for the SW dynamics is, 
similarly to Corollary~\ref{coro:P-HB_degree}, a result on 
graphs of bounded maximum degree from \cite{Hu} (see also \cite{CF}).

\begin{theorem}\label{known-th:SW_degree}
The Swendsen--Wang dynamics for the $q$-state Potts model  
at inverse temperature $\beta$ on a graph $G$ with $n$ vertices 
and maximum degree $\D$ satisfies
\[
t_{\rm mix}(\sw) \,\le\, C \log(n), \quad \text{ for some } C<\infty.
\]
whenever $\beta\,\le\,\frac{1}{3(\D-1)}$.
\end{theorem}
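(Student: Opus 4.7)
The natural approach is a path-coupling argument in the sense of Bubley--Dyer. The plan is to construct a coupling of two copies of the SW dynamics $(\sigma_t)_{t\ge 0}$ and $(\tau_t)_{t\ge 0}$ started at configurations that differ at a single vertex, show that the expected Hamming distance after one step is strictly contractive, and then invoke the standard path-coupling theorem (see e.g.\ \cite{LPW}) to conclude $t_{\rm mix}(\sw) \le C\log n$.

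First I would set up the one-step coupling when $\sigma$ and $\tau$ agree everywhere except at a vertex $v$, say $\sigma(v)=a$ and $\tau(v)=b$ with $a\neq b$. In step (SW1) the two chains generate edge sets $A_\sigma \subset E(\sigma)$ and $A_\tau \subset E(\tau)$. For edges $e\in E(\sigma)\cap E(\tau)$ I would use the identity coupling (the same Bernoulli$(p)$ coin); the only edges where the choices may differ are those incident to $v$ whose other endpoint has color $a$ or $b$ in $\sigma$ (equivalently $\tau$)---call these the \emph{boundary edges}, of which there are at most $\Delta$. Each boundary edge is included independently with probability $p=1-e^{-\beta}$ in one chain but absent in the other.

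Next, in step (SW2) I would couple the recolorings by assigning the same random color to components of $(V,A_\sigma)$ and $(V,A_\tau)$ that coincide, and use independent colors for the components that differ. The disagreement between the new configurations $\sigma_1$ and $\tau_1$ is then contained in the union of components of $(V,A_\sigma\cup A_\tau\cup\{v\})$ reachable from $v$ by a sequence of open boundary edges. The crucial estimate is to dominate the expected number of disagreeing vertices by the total progeny of a Galton--Watson branching process rooted at $v$, where each node has offspring distribution stochastically bounded by Binomial$(\Delta-1, p')$ for $p'$ related to $p$. The main obstacle is to carry out this domination carefully: the offspring are \emph{boundary} neighbors, which includes both a factor $p$ for the edge being open and a factor $1$ for the color already matching; a direct bookkeeping should yield expected offspring at most $2(\Delta-1)p$ (or similar), and the condition $\beta \le \frac{1}{3(\Delta-1)}$ makes this strictly less than $1$ after using $p\le \beta$.

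Once the branching process is subcritical with rate $\rho<1$, its expected total progeny is $1/(1-\rho)$ but restricted to one generation (one SW step, which recolors only the first level), so I would obtain $\mathbb{E}[d_H(\sigma_1,\tau_1)] \le \rho\, d_H(\sigma,\tau)$ for adjacent states. The path-coupling theorem then extends the contraction to arbitrary initial configurations and yields $t_{\rm mix}(\sw)\le \frac{\log n}{1-\rho}$. Ergodicity and reversibility of $\sw$ have already been established in Section~\ref{sec:2_dynamics}, so all hypotheses are in place. The hard part throughout is the combinatorial/probabilistic control of the disagreement spread, in particular matching the constant $\frac{1}{3(\Delta-1)}$ stated in the theorem rather than a weaker bound like $\frac{1}{C\Delta}$ with larger $C$.
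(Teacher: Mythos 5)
First, a point of reference: the paper does not prove this statement at all --- Theorem~\ref{known-th:SW_degree} is quoted from Huber~\cite{Hu} (see also \cite{CF}), and Huber's argument runs through a \emph{bounding chain} for SW rather than path coupling, so there is no in-paper proof to compare against. Your path-coupling strategy is nonetheless a legitimate alternative route (essentially the Cooper--Frieze approach), and its architecture is sound: identity-couple the percolation coins on the common candidate edges, isolate the at most $\Delta$ one-sided candidate edges at the discrepancy vertex $v$, couple the recolouring so that clusters which coincide as sets receive the same colour, and bound the Hamming distance by the size of the set $D$ of vertices whose cluster differs between the two chains.

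The gap is in the quantitative heart of the argument, which you explicitly defer. Two concrete problems. (i) Your remark that the disagreement is ``restricted to one generation'' is wrong: a single discrepantly open edge $\{v,u\}$ drags the \emph{entire} percolation cluster of $u$ (grown through the common open edges) into $D$ within one SW step, so you need the full total-progeny bound $\mathbb{E}\abs{C(u)}\le \bigl(1-(\Delta-1)p\bigr)^{-1}$ of the subcritical Galton--Watson process, not a first-generation count. (ii) Even with that bound, the natural accounting gives
\[
\mathbb{E}\, d_H(\sigma_1,\tau_1)\;\le\;\mathbb{E}\abs{D}\;\le\;\Delta p\left(1+\frac{1}{1-(\Delta-1)p}\right),
\]
and under $p\le\beta\le\frac{1}{3(\Delta-1)}$ this is at most $\frac{5}{2}\Delta p\le\frac{5\Delta}{6(\Delta-1)}$, which is strictly less than $1$ only for $\Delta\ge 7$; for $\Delta\le 6$ this bookkeeping does not contract, so the stated threshold $\frac{1}{3(\Delta-1)}$ is not reached along the route you sketch. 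Closing the theorem this way would require a sharper estimate (exploiting $p=1-e^{-\beta}<\beta$ more carefully, a weighted path metric, or a finer coupling of the recolouring of the differing clusters), or a genuinely different device such as Huber's bounding chain. As written, the proposal correctly identifies the crux but does not overcome it, and the one structural claim it does make about the spread of disagreement is incorrect.
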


Now we turn to a result for the single-bond (or heat-bath) 
dynamics for the random-cluster model. 
In fact, this is the only result on the mixing 
properties of this Markov chain that we are aware of.

For this define the 
\emph{linear width}\index{linear width} 
of a graph $G=(V,E)$ as the 
smallest number $\ell$ such that there exists an ordering 
$e_1,\dots,e_{\abs{E}}$ of the edges with the property that for 
every $i\in[\abs{E}]$ there are at most $\ell$ vertices that are 
an endvertex of both an edge in $\{e_1,\dots e_i\}$ and an edge in 
$\{e_{i+1},\dots e_{\abs{E}}\}$. See \cite{GeS} for bounds on the 
linear width of paths, cycles, trees and a bound in terms of a 
related quantity, the tree width.

The following result is due to Ge and {\v{S}}tefankovi{\v{c}} 
\cite{GeS} (we only state the $q\ge1$ case).

\begin{theorem} \label{th:SB_linear-width}
Let $\sb$ be the transition matrix
of the single-bond dynamics for the random-cluster model 
with parameters $p$ and $q\ge1$ 
on a graph $G=(V,E)$ with linear width bounded by $\ell$. 
Let $m:=\abs{E}$. Then
\begin{equation} \label{eq:th-width}
\lambda(\sb)
\;\ge\; \frac{1}{2\, q^{\ell+1}}\ \frac{1}{m^2}.
\vspace{2mm}
\end{equation}
\end{theorem}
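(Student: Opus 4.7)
The plan is to use the method of canonical paths (à la Sinclair/Jerrum) in the transition graph of $\sb$, choosing the paths so as to exploit the edge ordering that witnesses linear width $\ell$.

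Fix an ordering $e_1,\dots,e_m$ of the edges realising the linear width, and let $S_j\subset V$ denote the set of vertices that are incident to both an edge in $\{e_1,\dots,e_j\}$ and an edge in $\{e_{j+1},\dots,e_m\}$, so that $|S_j|\le \ell$ for every $j$. For each ordered pair $(A,B)\in\Orc\times\Orc$ I would define the canonical path $\gamma_{A,B}$ by sweeping through the edges in order: set $X_0=A$, and for $j=1,\dots,m$ put $X_j=X_{j-1}^{e_j}$ if $A$ and $B$ disagree on $e_j$, and $X_j=X_{j-1}$ otherwise; then $X_m=B$ and each non-trivial step has positive $\sb$-probability. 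The path length is at most $m$, and on any intermediate state $X_j$ the restriction to $\{e_1,\dots,e_j\}$ agrees with $B$ while the restriction to $\{e_{j+1},\dots,e_m\}$ agrees with $A$. Sinclair's bound then gives
\[
\lambda(\sb)\;\ge\;\frac{1}{\rho},\qquad
\rho\;=\;\max_{X,Y:\sb(X,Y)>0}\frac{1}{\pi(X)\,\sb(X,Y)}\sum_{\gamma_{A,B}\ni(X,Y)}\!\!\pi(A)\,\pi(B)\,|\gamma_{A,B}|.
\]

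The factor $\sb(X,X^{e_j})\ge c(p,q)/m$ from \eqref{eq:SB} and the path length $|\gamma_{A,B}|\le m$ already account for the $1/m^2$ in \eqref{eq:th-width}. The real work is to bound the weighted multiplicity
\[
W(X,X^{e_j})\;:=\;\frac{1}{\pi(X)}\sum_{\gamma_{A,B}\ni(X,X^{e_j})}\pi(A)\,\pi(B).
\]
The key structural observation is that by the construction of the canonical path, fixing the transition $(X,X^{e_j})$ pins $A$ on $\{e_j,\dots,e_m\}$ (to match $X$) and pins $B$ on $\{e_1,\dots,e_j\}$ (to match $X^{e_j}$), while $A|_{\{e_1,\dots,e_{j-1}\}}$ and $B|_{\{e_{j+1},\dots,e_m\}}$ range freely. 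To handle the $q^{c(\cdot)}$ factors I would pass to the \jointname\ representation \eqref{eq:joint}: write $q^{c(A)}=|\{\sigma:A\subset E(\sigma)\}|$, so that the sum over $A$ with a fixed right half becomes a sum over Potts colourings compatible with that half. The separator $S_j$ of size at most $\ell$ decouples the left and right parts of a Potts configuration once the colouring on $S_j$ is fixed, which introduces at most $q^{|S_j|}\le q^\ell$ possible boundary colourings. A careful bookkeeping of this factorisation, combined with an extra factor $q$ coming from the flipped edge $e_j$ itself (which links $q^{c(X)}$ to $q^{c(X^{e_j})}$), yields $W(X,X^{e_j})\le q^{\ell+1}$ (the additional $\tfrac12$ in \eqref{eq:th-width} absorbs the minimum between the two possible transition probabilities in \eqref{eq:SB}).

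The main obstacle will be precisely the last step: expressing $\pi(A)\pi(B)/\pi(X)$ in a form that factors cleanly across the separator $S_j$. Directly manipulating $q^{c(A)+c(B)-c(X)}$ is awkward because $c(\cdot)$ depends non-locally on the edge configuration, but switching to the Potts side via \eqref{eq:FKSW-RC} turns the component count into a counting problem where conditioning on the colouring of $S_j$ makes the two halves independent. Once that identity is in place, summing the free halves reproduces $\pi(X)$ up to the boundary correction $q^{|S_j|}$, giving the announced bound.
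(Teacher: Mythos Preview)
Your approach is viable but takes a genuinely different route from the paper's proof. The paper does \emph{not} rederive the canonical-paths congestion bound; it quotes Ge--\v{S}tefankovi\v{c} \cite[Lemma~16]{GeS}, who prove $\varrho(\Pt_{\rm M},\G)\le 2m^2q^{\ell}$ for the \emph{lazy Metropolis} chain $\Pt_{\rm M}$ (using exactly the sweep-through paths you describe, with a direct encoding rather than the Potts representation). The paper then observes $\Pt_{\rm M}(A,B)\le q\,\sb(A,B)$ for $A\neq B$ and invokes Lemma~\ref{lemma:prelim_comparison}; the extra factor $q$ in $q^{\ell+1}$ enters through this comparison, not through the congestion argument itself. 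So the paper's proof is essentially two lines plus a citation.

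Your direct approach---bounding congestion for $\sb$ itself via the \jointname\ identity $q^{c(A)}=|\{\sigma:A\subset E(\sigma)\}|$ and conditioning on the colouring of the separator $S_j$---is sound in outline and has the advantage of being self-contained. Two points to watch: first, the Potts-side factorisation is not as immediate as you suggest, since a colouring compatible with $A$ must respect connectivity constraints that straddle $S_j$; the standard encoding $(A,B)\mapsto A\oplus B\oplus X$ together with a direct bound on $|c(A)+c(B)-c(X)-c(D)|$ in terms of $|S_j|$ is cleaner and is what \cite{GeS} actually does. Second, the constant: in the Metropolis chain one has $\mu(X)\Pt_{\rm M}(X,Y)=\tfrac{1}{2m}\min(\mu(X),\mu(Y))$, which is what makes the final bound $p$-independent; for $\sb$ the quantity $\mu(X)\sb(X,X^{e_j})$ carries explicit factors of $p$ and $1-p$, so a direct argument will not produce the clean $\tfrac{1}{2}$ of \eqref{eq:th-width} without an extra step.
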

\begin{proof}
In \cite{GeS} the authors consider the (lazy) Metropolis version 
of the single-bond dynamics. This Markov chain has transition 
probabilities 
\[
\Pt_{\rm M}(A,A^e) \;=\;	\frac1{2\abs{E}}
	\min\left\{1,\,q^{c(A^e)-c(A)}
	\left(\frac{p}{1-p}\right)^{\abs{A^e}-\abs{A}}\right\}, 
	\quad A\subset E,
\]
with $\Pt_{\rm M}(A,A)$ such that $\Pt_{\rm M}$ is a stochastic 
matrix and $A^e$ from \eqref{eq:RC-Ae}. 
For this Markov chain they prove a lower bound on the congestion, 
which is defined as follows. Let $\G:=\{\g_{AB}:\, A,B\subset E\}$, 
where $\g_{AB}$ are paths from $A$ to $B$ in the 
graph $\mathcal{H}=(\O_{\rm RC},\mathcal{E})$ with 
$\mathcal{E}=\left\{(A,B):\,\Pt_{\rm M}(A,B)>0\right\}$. Then we 
define the \emph{congestion} of $\Pt_{\rm M}$ (with respect to~$\G$) by
\[
\varrho(\Pt_{\rm M},\G) \;:=\; 
	\max_{(B_1,B_2)\in\mathcal{E}}\,
	\frac{1}{\mu(B_1)\, \Pt_{\rm M}(B_1,B_2)}\,
	\sum_{A,C: (B_1,B_2)\in\g_{AC}} \abs{\g_{AC}} \mu(A)\, \mu(C),
\]
where $\abs{\g_{AC}}$ denotes the length of the path. 
The bound of \cite[Lemma~16]{GeS} is 
$
\varrho(\Pt_{\rm M},\G) \le 2\abs{E}^2 q^\ell\; 
$
for a suitable choice of $\G$ and so, by 
\cite[Prop. 1']{DS} (note that $\Pt_{\rm M}$ is lazy) that
\[
\lambda(\Pt_{\rm M})^{-1} \;\le\; 2\abs{E}^2\,q^\ell.
\]
But since it is easy to show that 
$\Pt_{\rm M}(A,B)\le q \,\sb(A,B)$ for all 
$A\neq B\subset E$ and 
that $\sb$ has only non-negative eigenvalues 
(see Remark~\ref{remark:SB-SW_pos-def}), 
we can conclude from Lemma~\ref{lemma:prelim_comparison} 
that
\[
\lambda(P_{\rm SB})^{-1} 
\;\le\; q\, \lambda(P_{\rm M})^{-1} 
\;\le\; 2\abs{E}^2\,q^{\ell+1}.
\]
\end{proof}

Finally, we want to mention an algorithm that allows approximate 
sampling in polynomial time from the Ising model on arbitrary graphs 
and at all temperatures. 
This algorithm is due to Randall and Wilson~\cite{RaWi99} 
and is based on the seminal work of Jerrum and Sinclair~\cite{JS}, 
which shows the first (and presently the only) polynomial-time algorithm 
to approximate the partition function of an arbitrary (ferromagnetic) Ising 
system. We are not aware of an explicit bound on the expected 
running time of this sampling procedure, but
there is a bound for the algorithm for the partition function in \cite{JS}. 
Maybe due to its simplicity, the Swendsen--Wang dynamics is, however, 
still the preferred algorithm in 
practice and, needless to say, it would be amazing to obtain rapid mixing 
of this Markov chain in the same regime.


\chapter{Comparison with single-spin dynamics}\label{chap:spin}



This chapter is based on \cite{U2}. 
We prove by comparison that the spectral gap 
of the Swendsen--Wang dynamics (SW) is bounded from below by 
some constant times the spectral gap of the heat-bath chain (HB). 
This result leads to rapid mixing of SW on graphs of bounded 
degree whenever HB mixes rapidly.

We will prove the following theorem, 
which is a minor improvement of \cite[Thm.~1]{U2}.

\begin{theorem} \label{th:main-spin}
Suppose that $\sw$ $($resp.~$\hb$$)$ is the transition matrix of the 
Swendsen--Wang $($resp.~heat-bath$)$ dynamics for the $q$-state Potts 
model at inverse temperature $\beta$ on a graph $G$ with 
maximum degree $\D$. 
Then
\[
\lambda(\sw) \;\ge\; \csw\,\lambda(\hb),
\]
where
\begin{equation}\label{eq:csw}
\csw \;=\; \csw(\D,\beta,q) 
\;:=\; q^{-1}\left(q\,e^{2\beta}\right)^{-2\D}.
\end{equation}
\vspace{-3mm} 
\end{theorem}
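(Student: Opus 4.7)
The plan is to apply Lemma~\ref{lemma:prelim_comparison} with $P_1=\hb$, $P_2=\sw$ and $\pi_1=\pi_2=\pi$; this trivially supplies $a=1$, so only the kernel inequality is needed. Moreover, from the factorisation $\sw=\C\C^*$ established in the proof of Lemma~\ref{lemma:SW_P-RC}, $\sw$ is self-adjoint and positive semi-definite on $L_2(\pi)$, hence all its eigenvalues are non-negative. By the last sentence of Lemma~\ref{lemma:prelim_comparison}, the whole theorem will follow once one verifies
\[
\hb(\sigma,\tau)\;\le\;\csw^{-1}\,\sw(\sigma,\tau) \qquad\text{for every }\sigma\neq\tau.
\]
Since $\hb(\sigma,\tau)=0$ unless $\sigma$ and $\tau$ differ at exactly one vertex, the only pairs to consider are $\tau=\sigma^{v,k}$ with $k\ne\sigma(v)$, and for these \eqref{eq:HB-cond} together with $\sum_l e^{\beta d_{v,l}(\sigma)}\ge e^{\beta d_{v,\sigma(v)}(\sigma)}$ and $d_{v,k}(\sigma)\le\D$ yields $\hb(\sigma,\sigma^{v,k})\le e^{\beta\D}/|V|$.

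The bulk of the argument is to bound $\sw(\sigma,\sigma^{v,k})$ from below by a \emph{local} quantity. The key observation is that for $k\ne\sigma(v)$ no edge incident to $v$ lies in both $E(\sigma)$ and $E(\sigma^{v,k})$, hence $E(\sigma)\cap E(\sigma^{v,k})=E(\sigma)\setminus E_v$, and \eqref{eq:SW-P} gives
\[
\sw(\sigma,\sigma^{v,k})
\;=\;(1-p)^{|E_v(\sigma)|}\!\sum_{A\subseteq E(\sigma)\setminus E_v}
p^{|A|}(1-p)^{|E(\sigma)\setminus E_v|-|A|}\,q^{-c(A)}.
\]
The prefactor $(1-p)^{|E_v(\sigma)|}\ge e^{-\beta\D}$ is immediate, but the sum contains $q^{-c(A)}$, with $c(A)$ possibly as large as $|V|-1$. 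This is the main obstacle: a pointwise bound on $q^{-c(A)}$ would cost a prohibitive factor $q^{-|V|}$.

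To circumvent this, I plan to multiply through by $\pi(\sigma)$ and rewrite both sides using the joint model. Using $\mu(A)q^{-c(A)}=(e^\beta-1)^{|A|}/Z$ (i.e.\ the identity $q^{-c(A)}=\nu(\sigma|A)$ for $A\subseteq E(\sigma)$), one gets
\[
\pi(\sigma)\sw(\sigma,\sigma^{v,k})\;=\;\frac{1}{Z}\sum_{A\subseteq E(\sigma)\setminus E_v}(e^\beta-1)^{|A|}\,q^{-c(A)},
\]
while direct computation yields
\[
\pi(\sigma)\hb(\sigma,\sigma^{v,k})\;=\;\frac{e^{\beta(d_{v,\sigma(v)}(\sigma)+d_{v,k}(\sigma))}}{|V|\,Z\sum_l e^{\beta d_{v,l}(\sigma)}}\cdot e^{\beta|E(\sigma)\setminus E_v|}.
\]
Via the binomial identity $e^{\beta|E(\sigma)\setminus E_v|}=\sum_{A\subseteq E(\sigma)\setminus E_v}(e^\beta-1)^{|A|}$, the partition function $Z$ cancels and the required bound reduces to controlling the ratio of the same weighted sum with and without the factor $q^{-c(A)}$.

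The hard part is this last comparison: one must show that the ``global'' effect of $q^{-c(A)}$ can be absorbed into purely local factors around $v$. My plan is to select, inside the $q^{-c(A)}$-weighted sum, the sub-family of $A$'s that span each monochromatic component of $\sigma|_{V\setminus\{v\}}$ touching $N(v)$ and leave the remaining monochromatic blocks intact; for such $A$ the number of components affecting the local structure is at most $\D+1$, so that the non-local contribution of $q^{-c(A)}$ exactly matches that in the $\hb$-side expansion and only a local factor survives. Bookkeeping of $q^{-1}$ for the isolated $\{v\}$-component, $q^{-2\D}$ from the at most $2\D$ components touching $v$ and its neighbours, and the $(1-p)$-powers $(1-p)^{|E_v(\sigma)|}(1-p)^{|E_v(\sigma^{v,k})|}\ge e^{-4\beta\D}$ will then assemble to exactly $\csw=q^{-1}(qe^{2\beta})^{-2\D}$, completing the proof.
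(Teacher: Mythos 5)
Your overall strategy---applying Lemma~\ref{lemma:prelim_comparison} with $P_1=\hb$, $P_2=\sw$ and the same stationary measure, so that everything reduces to the entrywise bound $\hb(\sigma,\tau)\le\csw^{-1}\,\sw(\sigma,\tau)$ for $\sigma\neq\tau$---cannot work, because that entrywise bound is simply false with any constant independent of $\abs{V}$. Take $\beta=0$ (or $\beta$ small): then $p=0$, the only admissible $A$ in \eqref{eq:SW-P} is $A=\emptyset$, and $\sw(\sigma,\tau)=q^{-\abs{V}}$ for \emph{all} pairs, while $\hb(\sigma,\sigma^{v,k})=\frac{1}{q\abs{V}}$. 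The required constant would have to be at least $q^{\abs{V}-1}/\abs{V}$, which is exponentially large in $\abs{V}$, whereas $\csw^{-1}=q(q e^{2\beta})^{2\D}$ is a constant. Your own reduction exhibits the same obstruction in cleaner form: after cancelling $Z$ you must bound the ratio $\sum_{A}(e^\beta-1)^{\abs{A}}\big/\sum_{A}(e^\beta-1)^{\abs{A}}q^{-c(A)}$ (sums over $A\subseteq E(\sigma)\setminus E_v$) by roughly $\csw^{-1}\abs{V}$, but $c(A)\ge c(E(\sigma)\setminus E_v)$ for every $A$, and the number of monochromatic components of a typical $\sigma$ is of order $\abs{V}$, so this ratio is at least $q^{c(E(\sigma)\setminus E_v)}$, again exponential in $\abs{V}$. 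The closing ``plan'' to absorb $q^{-c(A)}$ into local factors around $v$ cannot succeed: the heat-bath side carries no factor $q^{-c(A)}$ at all, so there is nothing non-local for it to ``match''; the discrepancy between the two sums is a genuinely global, extensive quantity, not a boundary effect near $v$.

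This is precisely why the paper does not compare $\hb$ with $\sw$ directly. It introduces the auxiliary chain $Q=\hb\,\sw\,\hb$, proves $\lambda(Q)\ge\lambda(\hb)$ by a one-line operator-norm estimate ($\norm{Q-S_\pi}_\pi\le\norm{\hb-S_\pi}_\pi^2\norm{\sw}_\pi$), and then compares $Q$ with $\sw$ via Lemma~\ref{lemma:prelim_comparison}. The point is that $Q(\sigma,\tau)/\sw(\sigma,\tau)$ involves only \emph{ratios} $\sw(\sigma',\tau')/\sw(\sigma,\tau)$ between configurations differing at a single vertex; the huge factors $q^{-c(A)}$ appear in both numerator and denominator and cancel up to a local correction, which is what Lemmas~\ref{lemma:G0} and~\ref{lemma:P} quantify (each single-vertex recoloring costs at most $(q e^{2\beta})^{\D}$, and the extra factor $q$ comes from $\sum_{\sigma_2\sim\sigma_1}\hb(\sigma_1,\sigma_2)\le 1$ together with the $q$ choices at the target side). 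If you want to salvage your write-up, the local ingredients you computed (the identity $E(\sigma)\cap E(\sigma^{v,k})=E(\sigma)\setminus E_v$ and the bounds on the $(1-p)$-prefactors) are essentially the content of Lemma~\ref{lemma:P}, but they must be deployed to compare two SW kernels with each other, not to bound an SW kernel from below in absolute terms.
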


\begin{remark}
The inequality of Theorem~\ref{th:main-spin} is probably 
off by a factor of $\abs{V(G)}$,  
because we compare the SW dynamics with a Markov chain that changes 
only the color of one vertex of the graph per step. 
We conjecture that a bound of the form 
$\lambda(\sw) \;\ge\; c \abs{V(G)}\lambda(\hb)$, 
for some constant $c>0$, holds and that, in particular, 
this constant $c$ has a ``better'' dependence on the parameters involved. 
Unfortunately, this does not seem to be possible to show with our 
techniques.
\end{remark}

Before we prove Theorem~\ref{th:main-spin} we state some corollaries 
that can be deduced directly from the known results for the 
heat-bath dynamics (see Section~\ref{sec:2_known-results}).

The first corollary deals with the class of graphs with bounded 
maximum degree. It relies on a slight generalization of a result 
of Hayes \cite{Ha}, who gives a simple condition on~$\beta$, 
depending on the maximum degree, for rapid mixing 
of the heat-bath dynamics for the Ising model (see 
Theorem~\ref{th:HB_degree} and Corollary~\ref{coro:P-HB_degree}). 

\begin{corollary} \label{coro:SW_degree}
The Swendsen--Wang dynamics for the $q$-state Potts model  
at inverse temperature $\beta$ on a graph $G$ with $n$ vertices 
and maximum degree $\D$ satisfies
\[
\lambda(\sw)\;\ge\;\frac{\csw(1-\eps)}{n}
\] 
with $\csw=\csw(\D,\beta,q)$ from \eqref{eq:csw} and $\eps>0$, if
\[
\beta \,\le\, \frac{2\eps}{\D}.
\vspace{2mm}
\]
\end{corollary}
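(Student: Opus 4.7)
The plan is very short: this corollary should follow by chaining together two results that are already available in the paper. Theorem~\ref{th:main-spin} gives the inequality
\[
\lambda(\sw)\;\ge\;\csw(\D,\beta,q)\,\lambda(\hb)
\]
on any graph with maximum degree $\D$, with no further hypothesis on $\beta$. So it suffices to produce a suitable lower bound on $\lambda(\hb)$ in the regime $\beta\le 2\eps/\D$, and then multiply.

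For that lower bound I would appeal directly to Corollary~\ref{coro:P-HB_degree}, which asserts exactly that $\lambda(\hb)\ge(1-\eps)/n$ on a graph with $n$ vertices and maximum degree $\D$ whenever $\beta\le 2\eps/\D$. (This in turn is built on Theorem~\ref{th:HB_degree} and the estimate $\|A_G\|\le\D$.) Substituting this bound into the comparison inequality above gives
\[
\lambda(\sw)\;\ge\;\csw(\D,\beta,q)\,\lambda(\hb)\;\ge\;\csw(\D,\beta,q)\,\frac{1-\eps}{n},
\]
which is precisely the claim.

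There is essentially no obstacle here: the only thing to verify is that the hypothesis $\beta\le 2\eps/\D$ used to invoke Corollary~\ref{coro:P-HB_degree} is the same one appearing in the statement we want to prove, and that the constant $\csw(\D,\beta,q)$ appearing in Theorem~\ref{th:main-spin} is unchanged when we restrict to this regime. Both are immediate from the definitions, so the proof reduces to the two-line computation above.
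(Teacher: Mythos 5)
Your proof is correct and follows exactly the route the paper intends: Corollary~\ref{coro:SW_degree} is obtained by combining the comparison inequality of Theorem~\ref{th:main-spin} with the heat-bath bound $\lambda(\hb)\ge(1-\eps)/n$ from Corollary~\ref{coro:P-HB_degree} (itself a consequence of Theorem~\ref{th:HB_degree} and $\norm{A_G}\le\D$). Nothing is missing.
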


This result improves that of Huber~\cite{Hu} 
(see Theorem~\ref{known-th:SW_degree}) in the range of 
applicability, which was $\beta\le1/(3(\D-1))$ before, but while 
the result of \cite{Hu} is a logarithmic (in $n$) upper bound on 
the mixing time, Corollary~\ref{coro:SW_degree} together with 
Corollary~\ref{coro:mixing-gap} leads only to a quadratic bound.
%
We will see a further 
improvement of the above result in Chapter~\ref{chap:2d} 
if we consider only \emph{planar} graphs  
(see Corollary~\ref{coro:SW-planar}).

The second corollary, which we call a theorem because of its 
importance for the rest of this work, 
gives a bound on the spectral gap for 
the Swendsen--Wang dynamics on the square lattice. 
For this recall that the two-dimensional square lattice 
of side length~$L$ is the graph $\Z^2_L=(V_{L,2},E_{L,2})$ 
with vertex set $V_{L,2}=\{1,\dots,L\}^2\subset\Z^2$ and 
edge set $E_{L,2}=\bigl\{\{u,v\}\subset V_{L,2}:\,\abs{u-v}=1\bigr\}$  
(see Figure~\ref{fig:graphs}).

\vspace{2mm}
\begin{theorem}\label{th:P-SW_2d_high}
Let $\sw$ be the transition matrix of the Swendsen--Wang dynamics for 
the $q$-state Potts model on $\Z_L^2$ at inverse temperature $\beta$. 
Let $n=L^2=\abs{V_{L,2}}$. Then there exist constants 
$c_{\beta}=c_\beta(q)>0$ and $C<\infty$ 
such that
\begin{align*}
\lambda(\sw) \;&\ge\; \frac{c_\beta}{n} \qquad\qquad\; 
	\text{ for } \beta < \beta_c(q) \\[-5mm]
\intertext{\vspace*{-3mm} and}
\lambda(\sw) \;&\ge\; \csw\,n^{-C} \qquad 	
	\text{ for } q=2 \text{ and } \beta = \beta_c(2),
\end{align*}
with $\csw=\csw(4,\beta_c(2),2)$ from \eqref{eq:csw} 
and $\beta_c(q)\,=\,\log(1+\sqrt{q})$. 
\end{theorem}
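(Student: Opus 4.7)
The proof will be essentially a direct combination of two ingredients that are already at our disposal: the comparison inequality $\lambda(\sw)\ge\csw\,\lambda(\hb)$ from Theorem~\ref{th:main-spin} and the known lower bounds on $\lambda(\hb)$ on the two-dimensional square lattice from Theorem~\ref{th:P-HB_2d}. The plan is to specialize both to $G=\Z_L^2$, observe that the maximum degree is bounded by a universal constant, and simply chain the inequalities.

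First I would note that for every $L\ge 3$ we have $\D(\Z_L^2)=4$, so the quantity
\[
\csw\;=\;\csw(4,\beta,q)\;=\;q^{-1}\bigl(q\,e^{2\beta}\bigr)^{-8}
\]
depends only on $\beta$ and $q$ and is in particular a positive constant (independent of $L$ and hence of $n=L^2$) once $\beta$ and $q$ are fixed. This is the point where restriction to the lattice is crucial: the factor $\csw$ would shrink exponentially in $\D$ on general families of graphs, but here it is harmless.

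Next I would apply Theorem~\ref{th:main-spin} to get
\[
\lambda(\sw)\;\ge\;\csw(4,\beta,q)\,\lambda(\hb)
\]
for $\hb$ the heat-bath dynamics for the $q$-state Potts model on $\Z_L^2$ at inverse temperature $\beta$, and then insert the two bounds from Theorem~\ref{th:P-HB_2d}. In the high-temperature case $\beta<\beta_c(q)$ this gives
\[
\lambda(\sw)\;\ge\;\csw(4,\beta,q)\cdot\frac{c_\beta(q)}{n}\;=\;\frac{c'_\beta}{n},
\]
where the new constant $c'_\beta=c'_\beta(q):=\csw(4,\beta,q)\,c_\beta(q)>0$ absorbs $\csw$, which is independent of $n$. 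In the critical two-state case $q=2$, $\beta=\beta_c(2)$ the same chaining yields directly
\[
\lambda(\sw)\;\ge\;\csw(4,\beta_c(2),2)\,n^{-C},
\]
which is the second bound claimed, with $\csw$ of the stated form.

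There is essentially no hard step left in this argument: the work is entirely packaged in the two theorems being combined. The only small subtleties worth flagging are $(i)$ verifying that the constant $\csw$ does not depend on $n$, which is why the uniform maximum-degree bound $\D(\Z_L^2)=4$ matters, and $(ii)$ making sure one is comparing the same objects; i.e.\ both $\sw$ and $\hb$ refer to the unconditioned (free boundary) dynamics on $\Z_L^2$ with the same $q$ and $\beta$, which is the version of Theorem~\ref{th:P-HB_2d} that we use. The genuine difficulty---namely proving rapid mixing of $\hb$ up to and at the critical temperature, and proving the general SW-versus-HB comparison---lies entirely upstream of this theorem.
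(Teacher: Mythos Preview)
Your proposal is correct and matches the paper's approach exactly: the paper states in one sentence that the result is a consequence of Theorem~\ref{th:main-spin} combined with Theorem~\ref{th:P-HB_2d}, and you have spelled out precisely that chaining, including the key observation that $\D(\Z_L^2)=4$ makes $\csw$ independent of~$n$.
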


This result is a consequence of Theorem~\ref{th:main-spin} and the 
corresponding result for the heat-bath dynamics (see 
Theorem~\ref{th:P-HB_2d} as well as the references that are given 
in its proof).

\begin{remark}
Note that only the presence of $q$ under the exponent in the 
definition of $\csw$ from \eqref{eq:csw} 
prevents us from the application of Theorem~\ref{th:main-spin} 
to the complete graph $K_n$.
This comes from $\D(K_n)=n-1$ and the usual normalization of 
the inverse temperature to $\beta=\frac{c}{n}$ for some $c>0$ 
(cf.~Theorem~\ref{th:SW_complete}), which 
would lead (without the $q$) to a lower bound on 
$\csw(K_n,\frac{c}{n},q)$ independent of $n$.
\end{remark}

\section[Proof of Theorem \ref*{th:main-spin}]
{Proof of Theorem \ref{th:main-spin}} \label{sec:3_proof}

The proof is based on standard techniques 
for the comparison of Markov chains (see 
Lemma~\ref{lemma:prelim_comparison}), together with an appropriate 
choice of an auxiliary Markov chain that can be compared to 
both Swendsen--Wang and heat-bath dynamics.
For the remainder of this section fix a graph $G=(V,E)$, some $\beta\ge0$ and 
$q\in\N$, and recall that we denote by $\pi$ the measure for 
the $q$-state Potts model on $G$ at inverse temperature $\beta$ 
(see \eqref{eq:Potts}).
We will analyze the auxiliary Markov chain with transition 
probability matrix 
\begin{equation}\label{eq:Q}
Q \,=\, \hb \sw \hb
\end{equation}
where $\hb$ is from \eqref{eq:P-HB} and $\sw$ from \eqref{eq:SW-P}. 
Since $\hb$ and $\sw$ are reversible with respect to $\pi$, we see that 
$Q$ is also reversible. 

The first lemma shows that a Markov chain with 
transition matrix $Q$ has a larger spectral gap than 
the heat-bath dynamics.

\begin{lemma}	\label{lemma:Q}
With the definitions above we get 
\[
\lambda(Q) 
\;\ge\; \lambda(\hb).
\]
\end{lemma}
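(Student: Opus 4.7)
The approach is to treat $\hb$, $\sw$, and $Q = \hb\sw\hb$ as self-adjoint operators on $L_2(\pi)$ and run an operator-monotonicity argument on the orthogonal complement of constants. Two ingredients that are already established in the paper are essential here: both $\hb$ and $\sw$ are positive semidefinite on $L_2(\pi)$. For $\hb$ this was observed in the proof of Theorem~\ref{th:P-HB_2d}, since $\hb = \abs{V}^{-1}\sum_{v\in V} P_v$ is an average of the orthogonal projections $P_v$ onto the single-site conditional distributions; for $\sw$ it is the factorization $\sw = \mathcal{C}\mathcal{C}^*$ used in the proof of Lemma~\ref{lemma:SW_P-RC}. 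In particular, each of these operators has spectrum in $[0,1]$ and, being a Markov operator, satisfies $\norm{\hb}_\pi,\norm{\sw}_\pi \le 1$.

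Next I would reduce everything to $1^\perp := \{f\in L_2(\pi) : \l f,1\r_\pi = 0\}$. Reversibility with respect to $\pi$ implies $\hb S_\pi = S_\pi\hb = S_\pi$ and the analogous identity for $\sw$, so $S_\pi$ commutes with both operators (and hence with $Q$). Writing $M := I - S_\pi$ and $\hat{P} := MPM = P - S_\pi$ for $P\in\{\hb,\sw\}$, a short computation gives
\[
Q - S_\pi \;=\; \hat{\hb}\,\hat{\sw}\,\hat{\hb},\qquad \hb - S_\pi \;=\; \hat{\hb}.
\]
By the representation \eqref{eq:gap-norm} of the spectral gap, the claim $\lambda(Q) \ge \lambda(\hb)$ reduces to the operator-norm bound $\norm{\hat{\hb}\,\hat{\sw}\,\hat{\hb}}_\pi \le \norm{\hat{\hb}}_\pi$.

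The final step is an operator inequality: $0 \le \hat{\sw} \le M$ on $L_2(\pi)$. The lower bound is positive semidefiniteness of $\sw$ carried through $M$, and the upper bound is equivalent to $M(I-\sw)M \ge 0$, which follows from $0 \le \sw \le I$ (i.e.~positive semidefiniteness of $\sw$ together with $\norm{\sw}_\pi \le 1$). Sandwiching with the positive semidefinite operator $\hat{\hb}$ and using $\hat{\hb}M = \hat{\hb}$ (the range of $\hat{\hb}$ lies in $1^\perp$) yields
\[
0 \;\le\; \hat{\hb}\,\hat{\sw}\,\hat{\hb} \;\le\; \hat{\hb}^{\,2}.
\]
Operator norm is monotone on the positive cone, $\hat{\hb}$ is self-adjoint, and $\norm{\hat{\hb}}_\pi \le \norm{\hb}_\pi \le 1$, so
\[
\norm{\hat{\hb}\,\hat{\sw}\,\hat{\hb}}_\pi \;\le\; \norm{\hat{\hb}^{\,2}}_\pi \;=\; \norm{\hat{\hb}}_\pi^{\,2} \;\le\; \norm{\hat{\hb}}_\pi,
\]
giving the lemma. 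The one place where care is needed is that the sandwich estimate genuinely uses positive semidefiniteness of $\sw$, not merely the norm bound $\norm{\sw}_\pi \le 1$; this is precisely what the factorization $\sw = \mathcal{C}\mathcal{C}^*$ from Lemma~\ref{lemma:SW_P-RC} supplies, and it is the only nontrivial fact beyond standard spectral manipulations.
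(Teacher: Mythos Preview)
Your argument is correct and rests on the same key identity as the paper, namely $Q-S_\pi=(\hb-S_\pi)\,\sw\,(\hb-S_\pi)$ (your version with $\hat{\sw}=\sw-S_\pi$ in the middle is the same thing, since the flanking factors annihilate $S_\pi$). The difference is only in how the final norm bound is extracted. The paper simply applies submultiplicativity:
\[
\norm{Q-S_\pi}_\pi \;\le\; \norm{\hb-S_\pi}_\pi^2\,\norm{\sw}_\pi \;\le\; \norm{\hb-S_\pi}_\pi,
\]
using nothing about $\sw$ beyond $\norm{\sw}_\pi\le 1$. Your route via the operator inequality $0\le\hat{\hb}\,\hat{\sw}\,\hat{\hb}\le\hat{\hb}^{\,2}$ is valid but longer, and your closing remark overstates the role of positive semidefiniteness of $\sw$: it is needed for \emph{your} sandwich argument, but the lemma itself does not require it, as the paper's two-line proof shows.
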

\begin{proof}
With $S_{\pi}(\sigma,\tau)=\pi(\tau)$ for 
$\sigma,\tau\in\Op$, we have 
$Q-S_{\pi}=(\hb-S_{\pi}) \sw (\hb-S_{\pi})$, 
which is self-adjoint. Hence we can write the spectral gap 
(see \eqref{eq:gap-norm}) as
\[\begin{split}
1-\lambda(Q) \;&=\; \norm{Q-S_{\pi}}_{\pi} 
\;=\; \norm{(\hb-S_{\pi}) \sw 
			(\hb-S_{\pi})}_{\pi} \\
&\le\; \norm{\hb-S_{\pi}}_{\pi}^2 \,\norm{\sw} _\pi
\;\le\; \norm{\hb-S_{\pi}}_{\pi} \\
&=\; 1-\lambda(\hb), 
\end{split}\]
where we use submultiplicativity of the spectral norm as well as 
$\norm{\sw}_{\pi}\le1$ and $\norm{\hb-S_{\pi}}_{\pi}\le1$.
\end{proof}

To prove a lower bound on the spectral gap of $\sw$ it remains to 
prove $\lambda(\sw)\ge c\lambda(Q)$ for some $c>0$.
For this we need an estimate of the transition probabilities of the 
Swendsen--Wang dynamics on $G$ with respect to some subgraph of $G$. 
Therefore we denote the transition matrix of the Swendsen--Wang 
dynamics for the $q$-state Potts model on a graph $G$ at inverse 
temperature $\beta$ throughout this section by $P_G$, i.e. 
\begin{equation}\label{eq:SW-subgraph}
P_G \,:=\, P_{{\rm SW},\beta,q}^G
\end{equation}
(cf.~\eqref{eq:SW-P}).
We prove the following lemma.

\goodbreak

\begin{lemma} \label{lemma:G0}
Let $G=(V,E)$ be a graph and $G_0=(V,E_0)$ be a spanning 
subgraph of $G$ with $E_0\subset E$. 
Then
\[
a_1^{\abs{E\setminus E_0}}\,P_{G_0}(\sigma,\tau) 
\;\le\; P_G(\sigma,\tau) 
\;\le\; a_2^{\abs{E\setminus E_0}}\,P_{G_0}(\sigma,\tau)
\]
for all $\sigma,\tau\in\O_{\rm P}$, where
\[
a_1 \;=\; a_1(\beta) \;:=\; e^{-\beta}
\vspace{-2mm}
\]
and
\vspace{-2mm}
\[
a_2 \;=\; a_2(\beta,q) \;:=\; 1+q\, (e^{\beta}-1).
\vspace{2mm}
\]
\end{lemma}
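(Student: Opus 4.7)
The plan is to work directly from the explicit formula \eqref{eq:SW-P} for $P_G(\sigma,\tau)$ and to decompose every edge set $A$ appearing in the sum according to the splitting $E = E_0 \sqcup (E\setminus E_0)$: write $A = A_0 \sqcup A_1$ with $A_0 \subset E_0$ and $A_1 \subset E\setminus E_0$. Setting $\Delta(\eta) := E(\eta)\setminus E_0$ for $\eta\in\{\sigma,\tau\}$, the condition $A\subset E(\sigma)\cap E(\tau)$ becomes $A_0\subset E_0(\sigma)\cap E_0(\tau)$ together with $A_1\subset\Delta(\sigma)\cap\Delta(\tau)$. Since $\abs{E(\sigma)} = \abs{E_0(\sigma)} + \abs{\Delta(\sigma)}$ and the component count $c(A)$ depends only on $V$ and $A$ as a set (not on the ambient graph), this decomposition will recast $P_G(\sigma,\tau)$ as a sum over $A_0$ that is essentially $P_{G_0}(\sigma,\tau)$, multiplied by a correction factor coming from the inner sum over~$A_1$.

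For the lower bound I would simply restrict the inner sum to $A_1=\emptyset$. This keeps exactly the expression for $P_{G_0}(\sigma,\tau)$, multiplied by the prefactor $(1-p)^{\abs{\Delta(\sigma)}}$ that accounts for the extra vanishing edges in $E(\sigma)\setminus E_0(\sigma)$; since $\abs{\Delta(\sigma)}\le\abs{E\setminus E_0}$ and $1-p\le1$, this prefactor is bounded below by $(1-p)^{\abs{E\setminus E_0}}=e^{-\beta\abs{E\setminus E_0}}=a_1^{\abs{E\setminus E_0}}$.

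The upper bound is where the real work is. The key input is the elementary estimate $c(A_0\cup A_1)\ge c(A_0)-\abs{A_1}$ (each added edge decreases the number of components by at most one), yielding $q^{-c(A_0\cup A_1)}\le q^{\abs{A_1}}\,q^{-c(A_0)}$. Plugging this in decouples the double sum: the $A_0$-sum reassembles into $P_{G_0}(\sigma,\tau)$, while the $A_1$-sum evaluates via the binomial identity
\[
\sum_{A_1\subset \Delta(\sigma)\cap\Delta(\tau)} \left(\frac{pq}{1-p}\right)^{\abs{A_1}} \;=\; \left(1+\frac{pq}{1-p}\right)^{\abs{\Delta(\sigma)\cap\Delta(\tau)}}.
\]
Combining this with the leftover prefactor $(1-p)^{\abs{\Delta(\sigma)}}$ and using the algebraic identity $(1-p)\bigl(1+q(e^\beta-1)\bigr) = 1-p+pq$ (which follows from $1-p=e^{-\beta}$) rewrites the correction as $a_2^{\abs{\Delta(\sigma)\cap\Delta(\tau)}}\,(1-p)^{\abs{\Delta(\sigma)}-\abs{\Delta(\sigma)\cap\Delta(\tau)}}$. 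The trivial bounds $\abs{\Delta(\sigma)\cap\Delta(\tau)}\le\abs{E\setminus E_0}$, $1-p\le1$, and $a_2\ge1$ then collapse this to $a_2^{\abs{E\setminus E_0}}$.

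The main obstacle is bookkeeping rather than depth: the whole proof rests on the one combinatorial inequality $c(A_0\cup A_1)\ge c(A_0)-\abs{A_1}$, and the only delicate point is tracking the prefactor $(1-p)^{\abs{\Delta(\sigma)}}$ so that, after the binomial collapse and the identity $(1-p)(1+q(e^\beta-1))=1-p+pq$, the final constant matches precisely the clean form $a_2=1+q(e^\beta-1)$ stated in the lemma.
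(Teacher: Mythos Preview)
Your argument is correct and follows essentially the same route as the paper: the lower bound is identical (restrict to $A_1=\emptyset$), and the upper bound rests on the same combinatorial inequality $c(A_0\cup A_1)\ge c(A_0)-\abs{A_1}$. The only difference is organizational: the paper peels off one edge $e\in E\setminus E_0$ at a time (using $c(A'\cup e)\ge c(A')-1$) and iterates, while you handle all of $E\setminus E_0$ in one shot and collapse the inner sum via the binomial identity---the net effect and the final constant $a_2=1+q(e^\beta-1)$ are the same.
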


\begin{proof}
The first inequality is already known from the proof of Lemma 3.3 in 
\cite{BCT}, but we state it here for completeness. 
Let $p=1-e^{-\beta}$ and note that $E_0(\sigma)\subset E(\sigma)$ for 
all $\sigma\in\Op$. We deduce by \eqref{eq:SW-P} that
\[\begin{split}
P_G(\sigma,\tau) \;&=\; \sum_{A\subset E} \,p^{\abs{A}}\, 
	(1-p)^{\abs{E(\sigma)}-\abs{A}}\, q^{-c(A)}\, 
	\ind\bigl(A\subset E(\sigma)\cap E(\tau)\bigr)\\
&\ge\; \sum_{A\subset E_0} \,p^{\abs{A}}\, 
	(1-p)^{\abs{E(\sigma)}-\abs{A}}\, q^{-c(A)}\, 
	\ind(A\subset E_0(\sigma)\cap E_0(\tau))\\
&=\; (1-p)^{\abs{E(\sigma)}-\abs{E_0(\sigma)}}\,P_{G_0}(\sigma,\tau)
\;\ge\; (1-p)^{\abs{E\setminus E_0}}\,P_{G_0}(\sigma,\tau).
\end{split}\]
For the second inequality suppose for now $E_0=E\setminus e$ 
with some $e\in E$ and note that $c(A\cup e)\ge c(A)-1$. We get
\[\begin{split}
P_G(\sigma,\tau) \;&=\; \sum_{A\subset E(\sigma)\cap E(\tau)} 
	\,p^{\abs{A}}\, (1-p)^{\abs{E(\sigma)}-\abs{A}}\, q^{-c(A)} \\
&=\; \sum_{\substack{A\subset E(\sigma)\cap E(\tau):\\ e\in A}} 
	\,p^{\abs{A}}\, (1-p)^{\abs{E(\sigma)}-\abs{A}}\, q^{-c(A)} \\
&\qquad\quad + \sum_{\substack{A\subset E(\sigma)\cap E(\tau):\\ 
		e\notin A}} 
	\,p^{\abs{A}}\, (1-p)^{\abs{E(\sigma)}-\abs{A}}\, q^{-c(A)} \\
&=\; \sum_{A'\subset E_0(\sigma)\cap E_0(\tau)} 
	\,p^{\abs{A'\cup e}}\, 
	(1-p)^{\abs{E(\sigma)}-\abs{A'\cup e}}\, q^{-c(A'\cup e)} \\
&\qquad\quad + \sum_{A'\subset E_0(\sigma)\cap E_0(\tau)} 
	\,p^{\abs{A'}}\, (1-p)^{\abs{E(\sigma)}-\abs{A'}}\, q^{-c(A')} \\
&\le\; \frac{q\,p}{1-p}\,	\sum_{A'\subset E_0(\sigma)\cap E_0(\tau)} 
	\,p^{\abs{A'}}\, (1-p)^{\abs{E_0(\sigma)}-\abs{A'}}\, q^{-c(A')} 
	\,+\, P_{G_0}(\sigma,\tau) \\
&=\; \left(\frac{q\,p}{1-p}\, \,+\, 1\right)\,P_{G_0}(\sigma,\tau)
\;=\; \bigl(1 \,+\, q\, (e^{\beta}-1)\bigr)\,P_{G_0}(\sigma,\tau).
\end{split}\]
For $\abs{E\setminus E_0}>1$ one can iterate this procedure 
$\abs{E\setminus E_0}$ times. 
\end{proof}

We use this lemma to prove that the transition probability from 
$\sigma$ to $\tau$ is similar to the probability of going from 
a neighbor of $\sigma$ to a neighbor of $\tau$. 
Recall that $\sigma^{v,k}$ is defined by $\sigma^{v,k}(v)=k\in[q]$ and 
$\sigma^{v,k}(u)=\sigma(u)$, $u\neq v$.

\begin{lemma} \label{lemma:P}
Let $\sigma,\tau\in\O_{\rm P}$, $v\in V$ and $k,l\in[q]$.
Then
\[
P_G(\sigma^{v,k},\tau^{v,l}) \;\le\; 
a_3^{\deg_G(v)}\;
P_G(\sigma,\tau)
\]
with
\[
a_3 \;=\; a_3(\beta,q) \;:=\; q\,e^{2\beta} - (q-1)\,e^\beta,
\]
where $\deg_G(v)$ denotes the degree of the vertex $v$ in $G$.
\end{lemma}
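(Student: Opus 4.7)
The plan is to reduce the claim to Lemma~\ref{lemma:G0} applied to a carefully chosen spanning subgraph. Let $G_0 = (V, E_0)$ where $E_0 := E \setminus \{e \in E : v \in e\}$ consists of all edges of $G$ not incident to $v$. Then $\abs{E \setminus E_0} = \deg_G(v)$ and, crucially, $v$ is an isolated vertex of $G_0$.

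The first step is to observe that on $G_0$ the SW transition probability is insensitive to the color at $v$:
\[
P_{G_0}(\sigma^{v,k}, \tau^{v,l}) \;=\; P_{G_0}(\sigma, \tau) \qquad \text{for all } k,l \in [q].
\]
Indeed, since no edge of $E_0$ has $v$ as an endvertex, the set $E_0(\rho)$ of monochromatic edges depends on a configuration $\rho$ only through its restriction to $V\setminus\{v\}$; inspecting the formula \eqref{eq:SW-P} defining $P_{G_0}$, every appearance of $\sigma$ or $\tau$ is through $E_0(\sigma)$ or $E_0(\tau)$, and $c(A)$ depends only on $A$. Hence the identity.

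The second step is to sandwich $P_G$ between two evaluations of $P_{G_0}$ using both directions of Lemma~\ref{lemma:G0}. Applying the upper bound at $(\sigma^{v,k}, \tau^{v,l})$ and the lower bound at $(\sigma, \tau)$, one obtains
\[
P_G(\sigma^{v,k}, \tau^{v,l}) \;\le\; a_2^{\deg_G(v)}\, P_{G_0}(\sigma^{v,k}, \tau^{v,l})
\;=\; a_2^{\deg_G(v)}\, P_{G_0}(\sigma, \tau)
\;\le\; \left(\frac{a_2}{a_1}\right)^{\deg_G(v)}\, P_G(\sigma, \tau),
\]
so the claim reduces to verifying $a_2/a_1 = a_3$. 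This is the direct algebraic check
\[
\frac{a_2}{a_1} \;=\; \frac{1 + q(e^\beta - 1)}{e^{-\beta}} \;=\; e^\beta + q(e^{2\beta} - e^\beta) \;=\; q\, e^{2\beta} - (q-1)\, e^\beta \;=\; a_3.
\]

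I do not anticipate any real obstacle; the argument is essentially two applications of Lemma~\ref{lemma:G0} wrapped around the observation that isolating $v$ decouples the color at $v$ from the transition probability. The only point requiring care is the definition of $G_0$: one must remove \emph{all} edges incident to $v$ (and not, say, only the edges on which $\sigma$ and $\sigma^{v,k}$ happen to affect monochromaticity) so that $v$ is genuinely isolated in $G_0$ and the invariance identity in the first step holds without any case analysis. The factor $a_3^{\deg_G(v)}$ is then exactly the price paid for deleting those $\deg_G(v)$ edges via Lemma~\ref{lemma:G0}.
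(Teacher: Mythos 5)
Your proof is correct and follows essentially the same route as the paper: isolate $v$ by deleting all $\deg_G(v)$ edges incident to it, use the invariance of $P_{G_0}$ under recoloring $v$, and apply both inequalities of Lemma~\ref{lemma:G0}, with the algebraic check $a_2/a_1=a_3$ verified correctly.
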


\begin{proof}
Define $E_v:=\{e\in E: v\in e\}$ and $G_v:=(V,E\setminus E_v)$. 
Then $v\in V$ is an isolated vertex in $G_v$. 
By the definition of the Swendsen--Wang dynamics we get  
\[
P_{G_v}(\sigma^{v,k},\tau^{v,l}) 
\;=\; P_{G_v}(\sigma,\tau).
\]
If we set $E_0=E\setminus E_v$ we get 
$\abs{E\setminus E_0}=\deg_G(v)$. 
We deduce by Lemma \ref{lemma:G0} that
\[\begin{split}
P_{G}(\sigma^{v,k},\tau^{v,l}) 
\;&\le\; a_2^{\deg_G(v)}\,P_{G_v}(\sigma^{v,k},\tau^{v,l})
\;=\; a_2^{\deg_G(v)}\,P_{G_v}(\sigma,\tau) \\
&\le\;\left(\frac{a_2}{a_1}\right)^{\deg_G(v)}\,P_{G}(\sigma,\tau)\\
\end{split}\]
with $a_1$ and $a_2$ from Lemma \ref{lemma:G0}.
\end{proof}

Now we are able to prove the main result of this chapter.

\begin{proof}[Proof of Theorem \ref{th:main-spin}]
Because of Lemma \ref{lemma:Q} we only have to prove 
$\lambda(\sw)\ge \csw\,\lambda(Q)$.
Let
\begin{equation}\label{eq:proof_main-spin}
c \;:=\; \max_{\substack{\sigma_1,\sigma_2,\tau_1,\tau_2\in\O_{\rm P}\\ 
\sigma_1\sim \sigma_2,\,\tau_1\sim \tau_2}}\;
\frac{\sw(\sigma_1,\tau_1)}{\sw(\sigma_2,\tau_2)},
\end{equation}
where $\sigma\sim\tau:\Leftrightarrow 
\sum_{v\in V}\ind(\sigma(v)\neq\tau(v))\le1$, i.e.~$\sigma$ and $\tau$ 
differ in at most one vertex. 
Note that $\hb(\sigma,\tau)\neq0$ if and only if 
$\sigma\sim\tau$.
We find for $\sigma_1,\tau_1\in\Op$ that
\[\begin{split}
Q(\sigma_1,\tau_1) 
\;&=\; \sum_{\sigma_2,\tau_2\in\O_{\rm P}}\,
\hb(\sigma_1,\sigma_2)\,\sw(\sigma_2,\tau_2)\,
	\hb(\tau_2,\tau_1) \\
&\le\; c\,\sw(\sigma_1,\tau_1) \;\sum_{\sigma_2\sim\sigma_1} 
			\hb(\sigma_1,\sigma_2)\,
	\sum_{\tau_2\sim\tau_1} 
			\hb(\tau_2,\tau_1) \\
&\le\; q\,c\,\sw(\sigma_1,\tau_1).
\end{split}\]
Using Lemma~\ref{lemma:prelim_comparison} we obtain
\[
\lambda(Q) \;\le\; q\,c\,\lambda(\sw).
\]
It remains to bound $c$. 
Recall that $\deg_G(v)\le\D$ for all $v\in V(G)$. 
With $a_3$ from Lemma~\ref{lemma:P} we get, for 
$\sigma_1,\sigma_2,\tau_1, \tau_2\in\Op$ with 
$\sigma_1\sim \sigma_2$ and $\tau_1\sim \tau_2$,
\[
\frac{\sw(\sigma_1,\tau_1)}{\sw(\sigma_2,\tau_2)}
\;\le\; a_3^{\Delta}\,\frac{\sw(\sigma_2,\tau_1)}{\sw(\sigma_2,\tau_2)}
\;\le\; a_3^{2 \Delta}\,\frac{\sw(\sigma_2,\tau_2)}{\sw(\sigma_2,\tau_2)} 
\;=\; a_3^{2 \Delta}.
\]
Finally,
\[
\lambda(Q) \;\le\; q\,c\,\lambda(\sw)
\;\le\; q\,a_3^{2 \Delta}\,\lambda(\sw)
\;\le\; q\,(q\,e^{2\beta})^{2 \Delta}\,\lambda(\sw).
\]
This completes the proof.
\end{proof}


\section{A slight generalization} \label{sec:3_general}

In this section we present a generalization of 
Theorem~\ref{th:main-spin} 
that is necessary to handle also graphs with a single 
vertex of maximal degree.
The idea behind this modification is that the transition 
probabilities of the Swendsen--Wang dynamics are invariant 
under global flips of the color of all vertices, i.e. 
\begin{equation}\label{eq:SW-flip}
\sw(\sigma,\tau) \,=\, \sw(\sigma+k,\tau+l),
\qquad\sigma,\tau\in\Op,\; k,l\in[q], 
\end{equation}
where 
$(\sigma+k)(v):=(\sigma(v)+k-1\mod q)+1$ for all $v\in V$.
To see this, note that $\sw(\sigma,\tau)$ (cf.~\eqref{eq:SW-P}) 
does not depend on the precise colors in $\sigma$ and $\tau$, 
but on the ``edges of agreement'' $E(\sigma)$ and $E(\tau)$ 
(see \eqref{eq:Esigma}), 
which are invariant under global flips.
If we now consider the heat-bath dynamics with additional 
global flips, i.e.~in each step make one step using $\hb$ and 
then change the color of all vertices at once by a random 
increment, it is reasonable to conjecture that SW has also 
an (almost) larger spectral gap than this Markov chain.
In the following theorem we prove this statement, but 
in a different form, namely, we consider the original 
Markov chains under the condition that the color of the 
configurations is fixed at a single vertex. 
Because of symmetry we can assume that the fixed color equals 1.
This can be interpreted as ``boundary conditions'' for the 
neighbors of this single vertex.

For some fixed vertex $w\in V$ in the graph $G=(V,E)$ 
we denote by 
\[
\Lambda_w \,:=\, \bigl\{1,\dots,q\bigr\}^{V\setminus w}
\]
the set of all colorings of the vertices $V\setminus w$ 
and define a probability measure on $\Lambda_w$ by 
\begin{equation} \label{eq:Potts_w}
\pi^w(\bar\sigma) \,:=\, 
q\,\pi(\bar\sigma^1), 
\qquad \bar\sigma\in\Lambda_w, 
\end{equation}
with $\pi$ from \eqref{eq:Potts}, 
where $\bar\sigma^1\in\Op$ is a coloring of $V$ such that 
$\bar\sigma^1(u)=\bar\sigma(u)$, $u\neq w$, and 
$\bar\sigma^1(w)=1$. 
One may think of $\pi^w$ as the conditional probability measure 
on the colorings of the vertices $V\setminus w$ with respect to 
$\pi$ given that $w$ is colored 1.

It is not difficult to show (using ideas similar to those of the 
proof of Theorem~\ref{th:main-spin2} below) that the 
Markov chain on $\Lambda_w$ with transition matrix 
$\bar{P}(\bar\sigma,\bar\tau):= q \sw(\sigma,\tau)$, where 
$\sigma,\tau\in\Op$ and $\bar\sigma,\bar\tau\in\Lambda_w$ such that 
\[
\bar\sigma(v) \,:=\, (\sigma(v)-\sigma(w)\mod q)+1, \qquad v\in V\setminus w,
\] 
is reversible with respect to $\pi^w$ 
and has the same spectral gap as $\sw$.
Additionally, we define the heat-bath dynamics that is reversible 
with respect to $\pi^w$ similarly 
to \eqref{eq:P-HB} by 
\begin{equation}\label{eq:P-HB2}
\hb^w(\bar\sigma,\bar\tau) 
\;:=\; P_{{\rm HB},\beta,q}^{G,w,1}(\bar\sigma,\bar\tau) \;=\;
\frac{1}{\abs{V}-1}\sum_{v\in V\setminus w}\,
	\frac{\pi^w(\bar\tau)}
	{\sum_{l=1}^q\pi^w(\bar\sigma^{v,l})}\,
	\ind(\bar\tau\in\O^w_{\sigma,v}), 
\end{equation}
where $\O^w_{\bar\sigma,v}:=\{\bar\tau\in\Lambda_w:\, 
\bar\tau(u)=\bar\sigma(u), \forall u\neq v\}$.
The following theorem gives a comparison inequality between 
$\sw$ and $\hb^w$ that will be essential in utilizing results 
on the heat-bath dynamics for the Potts model with boundary 
conditions (cf.~\eqref{eq:Potts_boundary}) for the analysis 
of the SW dynamics.

\vspace{2mm}
\begin{theorem}\label{th:main-spin2}
Suppose that $\sw$ is the transition matrix of the 
Swendsen--Wang dynamics for the $q$-state Potts 
model at inverse temperature $\beta$ on a graph $G=(V,E)$. 
Furthermore, let $w\in V$ be any vertex and $\hb^w$ be as in 
\eqref{eq:P-HB2}. 
Then
\[
\lambda(\sw) \;\ge\; \tcsw\,\lambda(\hb^w) 
\]
with
\[
\tcsw \;=\; \tcsw(G,\beta,q) 
\;:=\; q^{-1}\left(q\,e^{2\beta}\right)^{-2\widetilde{\D}}, 
\]
where
\[
\widetilde{\D} \;:=\; \max_{u\in V\setminus w}\,\deg_G(u)
\] 
is the maximum degree over $V\setminus w$.
\end{theorem}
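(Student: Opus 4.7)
The plan is to mirror the three-step argument of Section~\ref{sec:3_proof} on the quotient state space $\Lambda_w$ obtained by freezing the color at $w$. Following the remark preceding the theorem, I would work with the chain $\bar P$ on $\Lambda_w$ defined by $\bar P(\bar\sigma,\bar\tau) := q\, \sw(\sigma,\tau)$, where $\sigma,\tau\in\Op$ are the extensions with $\sigma(w)=\tau(w)=1$. The global-flip invariance \eqref{eq:SW-flip} makes $\bar P$ a well-defined stochastic matrix reversible with respect to $\pi^w$, and (as asserted in the remark) $\lambda(\bar P) = \lambda(\sw)$. It therefore suffices to prove $\lambda(\bar P) \ge \tcsw\, \lambda(\hb^w)$.

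As in Section~\ref{sec:3_proof} I would introduce the auxiliary self-adjoint operator $Q^w := \hb^w\, \bar P\, \hb^w$ on $L_2(\pi^w)$. A verbatim repetition of Lemma~\ref{lemma:Q} --- write $Q^w - S_{\pi^w} = (\hb^w - S_{\pi^w})\,\bar P\,(\hb^w - S_{\pi^w})$, use submultiplicativity of the operator norm, and use $\norm{\bar P}_{\pi^w}\le 1$ --- yields $\lambda(Q^w) \ge \lambda(\hb^w)$. The remaining task is therefore to show $\lambda(Q^w) \le q\, a_3^{2\widetilde\D}\, \lambda(\bar P)$, with $a_3 = q\,e^{2\beta}-(q-1)e^{\beta}$ as in Lemma~\ref{lemma:P}; this combined with the previous inequality and the estimate $a_3 \le q\,e^{2\beta}$ yields the theorem.

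For this final comparison, I would expand
\[
Q^w(\bar\sigma_1,\bar\tau_1) \;=\; \sum_{\bar\sigma_2,\bar\tau_2} \hb^w(\bar\sigma_1,\bar\sigma_2)\,\bar P(\bar\sigma_2,\bar\tau_2)\,\hb^w(\bar\tau_2,\bar\tau_1),
\]
where by locality of $\hb^w$ the nonzero summands have $\bar\sigma_2$ (resp.~$\bar\tau_2$) differing from $\bar\sigma_1$ (resp.~$\bar\tau_1$) in at most one vertex of $V\setminus w$. The key point --- and the reason $\D$ is replaced by $\widetilde\D$ --- is that those two potentially differing vertices lie in $V\setminus w$ and hence have degree at most $\widetilde\D$; two applications of Lemma~\ref{lemma:P} to the extensions with color $1$ at $w$ therefore give $\bar P(\bar\sigma_2,\bar\tau_2) \le a_3^{2\widetilde\D}\, \bar P(\bar\sigma_1,\bar\tau_1)$. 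Pulling this ratio out of the sum and using $\sum_{\bar\sigma_2}\hb^w(\bar\sigma_1,\bar\sigma_2) = 1$ together with $\sum_{\bar\tau_2}\hb^w(\bar\tau_2,\bar\tau_1)\le q$ (the latter coming from $\sum_{l=1}^q \pi^w(\bar\tau_1^{v,l}) \ge \pi^w(\bar\tau_1)$, exactly as in Section~\ref{sec:3_proof}) yields the pointwise bound $Q^w(\bar\sigma_1,\bar\tau_1) \le q\,a_3^{2\widetilde\D}\,\bar P(\bar\sigma_1,\bar\tau_1)$. Since $Q^w$ and $\bar P$ share the stationary distribution $\pi^w$, Lemma~\ref{lemma:prelim_comparison} (with $A = q\,a_3^{2\widetilde\D}$ and $a=1$) closes the argument.

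The main substantive point is not the comparison itself --- which is essentially a relativized copy of Section~\ref{sec:3_proof} --- but rather the setup: the reduction $\lambda(\bar P)=\lambda(\sw)$ hinges on the color-shift symmetry \eqref{eq:SW-flip} in a way only sketched in the remark. Once this is in place, the mechanical improvement over Theorem~\ref{th:main-spin} is transparent, because $\hb^w$ never updates $w$ and so only vertices of degree at most $\widetilde\D$ ever enter the Lemma~\ref{lemma:P} bounds.
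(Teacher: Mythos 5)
Your proposal is correct and follows the same three-step skeleton as the paper's proof: an auxiliary chain of the form $\hb\cdot\mathrm{SW}\cdot\hb$, the submultiplicativity bound $\lambda(\text{aux})\ge\lambda(\hb^w)$, and the pointwise comparison via Lemma~\ref{lemma:P} at vertices of $V\setminus w$, which is exactly where $\D$ gets replaced by $\widetilde{\D}$. The one genuine structural difference is how the mismatch between the state spaces $\Op$ and $\Lambda_w$ is handled. You quotient first and run the whole argument on $\Lambda_w$, which forces you to invoke the identity $\lambda(\bar P)=\lambda(\sw)$ from the remark --- a fact the paper asserts but never proves. The paper's proof instead keeps the auxiliary chain on $\Op$, defining $Q=F_1\,\hb^w\,F_2\,\sw\,F_1\,\hb^w\,F_2$ with the flip operators $F_1,F_2=F_1^*$, and compares $Q$ directly with $\sw$; this only requires the one-sided contraction $\norm{F_i}\le1$ together with $\sw(\sigma,\tau)=\sw(\bar\sigma^1,\bar\tau^1)$, and never needs the full spectral-gap equality. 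Your route is perfectly viable, but to make it self-contained you should supply the direction of the equality you actually use, namely $\lambda(\sw)\ge\lambda(\bar P)$: writing $F_1(\sigma,\bar\tau)=\ind(\bar\tau=\bar\sigma)$ and $F_2=F_1^*$, the flip invariance \eqref{eq:SW-flip} gives $F_1F_2\,\sw\,F_1F_2=\sw$ and hence
\[
\sw-S_\pi \,=\, F_1\bigl(F_2(\sw-S_\pi)F_1\bigr)F_2 \,=\, F_1\bigl(\bar P-S_{\pi^w}\bigr)F_2,
\]
so that $\norm{\sw-S_\pi}_\pi\le\norm{\bar P-S_{\pi^w}}_{\pi^w}$. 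This is a two-line addition, but it is not the ``trivial'' contraction direction --- it genuinely uses the color-shift symmetry --- so it should not be left to the remark. With that supplied, your argument is complete and yields the same constant $\tcsw$.
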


Before we prove Theorem~\ref{th:main-spin2}, we present an 
application of it.
For this recall the definition of the two-dimensional 
square lattice $\Z_L^2=(V_{L,2},E_{L,2})$ of side length $L$ 
from Section~\ref{sec:2_known-results}, 
and that the boundary of $V_{L,2}$ is defined by 
$\partial V_{L,2} := \bigl\{v=(v_1,v_2)\in V_{L,2}:\, 
v_i\in\{1,L\} \text{ for some } i\in[2]\bigr\}$. 
Now we introduce a new auxiliary vertex $v^*$ and 
let $\Z_L^{2\dag}$ be the graph $\Z_{L-1}^2$ with additional 
vertex $v^*$ and edges between $v^*$ and all boundary vertices 
$u\in\partial V_{L-1,2}$.
That is $\Z_L^{2\dag}:=(V_{L,2}^\dag,E_{L,2}^\dag,\varphi)$ 
is the (multi-)graph (cf. Section~\ref{sec:2_models}) with 
vertex set $V_{L,2}^\dag=V_{L-1,2}\cup v^*$ and edge set 
$E_{L,2}^\dag$ such that
the set of endpoints of the edges in $E_{L,2}^\dag$ satisfies 
$\varphi(E_{L,2}^\dag)=
E_{L-1,2}\cup\{\{v^*,u\}:\,u\in\partial V_{L-1,2}\}$ 
and, 
for each of the vertices 
$(1,1), (1,L-1), (L-1,1), (L-1,L-1)\in V_{L,2}^\dag$, 
there are two parallel edges to $v^*$ \setcounter{figure}1
(see Figure~\ref{fig:square_only-dual}).
%
%
\begin{figure}[ht]
\begin{center}
\hspace*{5mm}
\scalebox{1}{
	\psset{xunit=1.0cm,yunit=1.0cm,algebraic=true,dotstyle=o,dotsize=3pt 0,linewidth=0.8pt,arrowsize=3pt 2,arrowinset=0.25}
	\begin{pspicture*}(-0.45,0.12)(5.95,4.36)
	\psline(1.5,1.5)(1.5,2.5)
	\psline(1.5,2.5)(2.5,2.5)
	\psline(2.5,2.5)(2.5,1.5)
	\psline(2.5,1.5)(1.5,1.5)
	\parametricplot{0.1852913062710569}{2.989280090214289}{1*1.34*cos(t)+0*1.34*sin(t)+2.83|0*1.34*cos(t)+1*1.34*sin(t)+2.3}
	\parametricplot{1.1350083349985625}{2.0596018113318753}{1*1.84*cos(t)+0*1.84*sin(t)+3.37|0*1.84*cos(t)+1*1.84*sin(t)+0.87}
	\parametricplot{4.6717617082515}{5.883637174045906}{1*1.71*cos(t)+0*1.71*sin(t)+2.57|0*1.71*cos(t)+1*1.71*sin(t)+3.21}
	\parametricplot{1.5028552028831879}{2.000187227182034}
		{1*0.86*cos(t)+0*0.86*sin(t)+1.44|-0.075*0.86*cos(t)+1*0.86*sin(t)+0.64}
	\parametricplot{2.099045279311494}{4.0571577644321395}
		{1*0.54*cos(t)+0*0.54*sin(t)+1.37|0*0.54*cos(t)+1*0.54*sin(t)+0.99}
	\parametricplot{-2.1993684311435224}{0.19067988888988038}{1*1.98*cos(t)+0*1.98*sin(t)+2.2|0*1.98*cos(t)+1*1.98*sin(t)+2.17}
	\parametricplot{2.8453007051607155}{5.540388001593068}{1*0.52*cos(t)+0*0.52*sin(t)+3|0*0.52*cos(t)+1*0.52*sin(t)+1.35}
	\parametricplot{5.428195851138397}{6.21778798060977}{1*2.24*cos(t)+0*2.24*sin(t)+1.91|0*2.24*cos(t)+1*2.24*sin(t)+2.69}
	\parametricplot{1.651943081227404}{3.288962910742752}{1*0.59*cos(t)+0*0.59*sin(t)+3.08|0*0.59*cos(t)+1*0.59*sin(t)+2.58}
	\parametricplot{0.523108668073518}{1.593136560077821}{1*1.25*cos(t)+0*1.25*sin(t)+3.06|0*1.25*cos(t)+1*1.25*sin(t)+1.92}
	\parametricplot{2.956350314060717}{4.7749714687606275}{1*0.85*cos(t)+0*0.85*sin(t)+2.34|0*0.85*cos(t)+1*0.85*sin(t)+1.34}
	\parametricplot{-1.5125031955649737}{0.0989999773941644}{1*1.87*cos(t)+0*1.87*sin(t)+2.28|0*1.87*cos(t)+1*1.87*sin(t)+2.36}
	\parametricplot{2.677755480453707}{4.9277297505381314}{1*0.54*cos(t)+0*0.54*sin(t)+1.39|0*0.54*cos(t)+1*0.54*sin(t)+3.02}
	\parametricplot{1.8843071408090328}{2.583159724740975}{1*1.8*cos(t)+0*1.8*sin(t)+2.44|0*1.8*cos(t)+1*1.8*sin(t)+2.31}
	\parametricplot{0.17396397227159288}{1.8107485743137344}{1*1.85*cos(t)+0*1.85*sin(t)+2.32|0*1.85*cos(t)+1*1.85*sin(t)+2.22}
	\begin{scriptsize}
	\psdots[dotsize=6pt 0,dotstyle=*](1.5,1.5)
	\psdots[dotsize=6pt 0,dotstyle=*](2.5,1.5)
	\psdots[dotsize=6pt 0,dotstyle=*](1.5,2.5)
	\psdots[dotsize=6pt 0,dotstyle=*](2.5,2.5)
	\psdots[dotsize=6pt 0,dotstyle=*](4.15,2.54)
	\end{scriptsize}
	\end{pspicture*}
}
\end{center}\vspace*{-2mm}
\caption{The graph $\Z_3^{2\dag}$}
\label{fig:square_only-dual}
\end{figure}
%
%
Now we can deduce the following directly from Theorem~\ref{th:P-HB_2d}.
\begin{corollary}\label{coro:P-SW_2d_boundary}
Let $\sw$ be the transition matrix of the Swendsen--Wang dynamics for 
the $q$-state Potts model on $\Z_L^{2\dag}$ at inverse temperature 
$\beta$. 
Let $n=|V^\dag_{L,2}|$. Then there exists a constant 
$c_\beta=c_\beta(q)>0$ 
such that
\[
\lambda(\sw) \;\ge\; \frac{c_\beta}{n} \qquad\;\;\, 
	\text{ for } \beta < \beta_c(q) 
\]
with $\beta_c(q)\,=\,\log(1+\sqrt{q})$. 
\end{corollary}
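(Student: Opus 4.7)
The plan is to apply Theorem~\ref{th:main-spin2} to $G=\Z_L^{2\dag}$ with distinguished vertex $w=v^*$, and then invoke the bound on $P_{\rm HB,1}$ from Theorem~\ref{th:P-HB_2d}. The first step is to identify $\pi^{v^*}$ with the $1$-boundary Potts measure $\pi^{\Z_{L-1}^2,1}_{\beta,q}$ from~\eqref{eq:Potts_boundary}. By construction, each non-corner boundary vertex $u\in\partial V_{L-1,2}$ is joined to $v^*$ by a single edge and each of the four corners by two parallel edges, while interior vertices are not joined to $v^*$. For any $\bar\sigma^1$ with $\bar\sigma^1(v^*)=1$, the edges of $E_{L,2}^\dag$ incident to $v^*$ contributing to $|E(\bar\sigma^1)|$ are precisely those $\{v^*,u\}$ with $\bar\sigma(u)=1$, counted with multiplicity. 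Since this multiplicity equals the external-neighbor count $d_{L-1,2}^{+}(u)$ on $\Z^2$, we reproduce exactly the factor $\prod_{u\in\partial V_{L-1,2}}\exp(\beta\, d_{L-1,2}^{+}(u)\,\ind(\bar\sigma(u)=1))$ from~\eqref{eq:Potts_boundary}. Hence $\pi^{v^*}=\pi^{\Z_{L-1}^2,1}_{\beta,q}$, and accordingly $\hb^{v^*}$ from~\eqref{eq:P-HB2} coincides with $P_{\rm HB,1}$ from~\eqref{eq:P-HB_boundary} on $\Z_{L-1}^2$.

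Next I would verify that $\tcsw(\Z_L^{2\dag},\beta,q)$ is bounded below uniformly in $L$, which reduces to bounding $\widetilde{\D}=\max_{u\in V_{L-1,2}}\deg_{\Z_L^{2\dag}}(u)$. An interior vertex has degree $4$; a non-corner boundary vertex has degree $3+1=4$; and a corner vertex has degree $2+2=4$. Thus $\widetilde{\D}=4$, so $\tcsw=q^{-1}(q e^{2\beta})^{-8}$ depends only on $\beta$ and $q$. This uniform degree bound is precisely the reason for doubling the corner edges in the construction of $\Z_L^{2\dag}$; without the doubling, corners would still have degree $4$, but the measure $\pi^{v^*}$ would no longer match~\eqref{eq:Potts_boundary}.

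Finally, Theorem~\ref{th:P-HB_2d} applied on $\Z_{L-1}^2$ yields a constant $c_\beta=c_\beta(q)>0$ with $\lambda(P_{\rm HB,1})\ge c_\beta/(L-1)^2$ for every $\beta<\beta_c(q)$. Combining with Theorem~\ref{th:main-spin2} and using $(L-1)^2\le n=|V_{L,2}^\dag|=(L-1)^2+1$, I obtain
\[
\lambda(\sw)\;\ge\;\tcsw\,\lambda(\hb^{v^*})\;\ge\;\tcsw\,\frac{c_\beta}{(L-1)^2}\;\ge\;\frac{\tcsw\, c_\beta}{n},
\]
as desired, after relabeling the constant. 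The only nontrivial step is the identification $\pi^{v^*}=\pi^{\Z_{L-1}^2,1}_{\beta,q}$; everything else is a direct combination of two earlier theorems.
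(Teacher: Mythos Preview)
Your proof is correct and follows exactly the paper's approach: apply Theorem~\ref{th:main-spin2} with $w=v^*$, identify $\hb^{v^*}$ with $P_{{\rm HB},1}$ on $\Z_{L-1}^2$, and invoke Theorem~\ref{th:P-HB_2d}. Your aside about the purpose of the corner doubling is slightly off---without the doubling a corner would have degree $2+1=3$ (not $4$), so $\widetilde\Delta=4$ either way; the doubling is needed solely so that $\pi^{v^*}$ matches the $1$-boundary measure~\eqref{eq:Potts_boundary}, not for the degree bound.
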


\begin{proof}
Let the auxiliary vertex $v^*$ be the vertex with fixed color 
from Theorem~\ref{th:main-spin2}. Then we see that 
$P_{{\rm HB}}^{v^*}$ from \eqref{eq:P-HB2} equals 
$P_{{\rm HB},1}$ from \eqref{eq:P-HB_boundary}.
Thus, the result follows from Theorem~\ref{th:P-HB_2d}.
\end{proof}

We finish this chapter with the proof of Theorem~\ref{th:main-spin2}.

\begin{proof}[Proof of Theorem~\ref{th:main-spin2}]
The proof is very similar to the proof of 
Theorem~\ref{th:main-spin}.
First, recall that for $\sigma\in\Op$, 
$\bar\sigma(v)=(\sigma(v)-\sigma(w)\mod q)+1$, $v\in V\setminus w$, 
with $\bar\sigma\in\Lambda_w$. 
Additionally, for $\bar\sigma\in\Lambda_w$, we denote by 
$\bar\sigma^1\in\Op$ the configuration with 
$\bar\sigma^1(v)=\bar\sigma(v)$, $v\neq w$, and $\bar\sigma^1(w)=1$.

We define, for $\sigma\in\O_{\rm P}$ and 
$\bar\tau\in\Lambda_w$, the ``flip'' 
transition matrices 
\[
F_1(\sigma,\bar\tau) \;:=\; \ind(\bar\tau\,=\,\bar\sigma)
\]
and
\[
F_2(\bar\tau,\sigma) \;:=\; 
\frac{1}{q}\,\sum_{l=0}^{q-1}\,\ind(\sigma=\bar\tau^1+l).
\]
(If we consider $F_1$ as an operator mapping from $L_2(\pi^w)$ 
to $L_2(\pi)$, then $F_2=F_1^*$.)\\
With $\pi$ from \eqref{eq:Potts} and $\pi^w$ from \eqref{eq:Potts_w}, 
it is easy to check that $\pi F_1=\pi^w$ and 
$\pi^w F_2=\pi$.
Following the same ideas as in Section~\ref{sec:3_proof}
with
\[
Q \;=\; F_1\,\hb^w\,F_2\,\sw\,
	F_1\,\hb^w\,F_2,
\]
which is reversible with respect to $\pi$, we get 
\[
\norm{ Q-S_\pi}_\pi \,\le\, 
\norm{ F_1\,\hb^w\,F_2-S_{\pi}}_{\pi}
\,=\, \norm{ F_1\,(\hb^w-S_{\pi^w})\,F_2}_{\pi}
\;\le\; \norm{\hb^{w}-S_{\pi^w}}_{\pi^w}.
\]
This proves $\lambda(Q)\ge\lambda(\hb^w)$. 
It remains to show that $\lambda(\sw)\ge\tcsw\,\lambda(Q)$. 
By the construction of the Swendsen--Wang dynamics we know that 
\[
\sw(\sigma,\tau) \;=\; \sw(\bar\sigma^1,\bar\tau^1)\qquad 
	\forall\sigma,\tau\in\O_{\rm P}.
\]
Hence we deduce with 
\[
\widetilde{c} \;:=\; 
\max_{\substack{\sigma_1,\sigma_2,\tau_1,\tau_2\in\Op\\ 
\bar\sigma_1^1\sim \bar\sigma_2^1,\,\bar\tau_1^1\sim \bar\tau_2^1}}\;
\frac{\sw(\sigma_1,\tau_1)}{\sw(\sigma_2,\tau_2)}
\]
(cf.~\eqref{eq:proof_main-spin}) 
that, for all $\sigma,\tau\in\Op$, 
\[\begin{split}
Q(\sigma,\tau) 
\;&=\; \sum_{\substack{\bar\sigma_1,\bar\tau_1\in\Lambda_w,\\
								\sigma_2,\tau_2\in\Op}}
	\hb^w(\bar\sigma,\bar\sigma_1)\,F_2(\bar\sigma_1,\sigma_2)\,
	\sw(\sigma_2,\tau_2)\,\hb^w(\bar\tau_2,\bar\tau_1)\,
	F_2(\bar\tau_1,\tau) \\
\;&=\; \sum_{\substack{\bar\sigma_1\in\Lambda_w,\\
								\tau_2\in\Op}}
	\hb^w(\bar\sigma,\bar\sigma_1)\,
	\sw(\bar\sigma_1^1,\bar\tau_2^1)\,\hb^w(\bar\tau_2,\bar\tau)\,
	F_2(\bar\tau,\tau) \\
&\le\; \widetilde{c}\,\sw(\bar\sigma^1,\bar\tau^1) \;
	\sum_{\bar\sigma_1\sim\bar\sigma} \hb^w(\bar\sigma,\bar\sigma_1)\;\cdot
	\frac{1}{q} \,
	\sum_{\tau_2:\bar\tau_2\sim\bar\tau} \hb^w(\bar\tau_2,\bar\tau) \\
&=\; \widetilde{c}\,\sw(\sigma,\tau) \;
	\sum_{\substack{\bar\tau_3\in\Lambda_w:\\
								\bar\tau_3\sim\bar\tau}} \hb^w(\bar\tau_3,\bar\tau)\\
\;&\le\; q\,\widetilde{c}\,\sw(\sigma,\tau).
\end{split}\]
With $a_3$ from Lemma~\ref{lemma:P} we see for 
$\sigma_1,\sigma_2,\tau_1,\tau_2\in\Op$ with 
$\bar\sigma_1^1\sim \bar\sigma_2^1$ and $\bar\tau_1^1\sim \bar\tau_2^1$ that
\[
\frac{\sw(\sigma_1,\tau_1)}{\sw(\sigma_2,\tau_2)}
\;\le\; a_3^{\widetilde{\Delta}}\,
	\frac{\sw(\sigma_2,\tau_1)}{\sw(\sigma_2,\tau_2)}
\;\le\; a_3^{2 \widetilde{\Delta}}\,
	\frac{\sw(\sigma_2,\tau_2)}{\sw(\sigma_2,\tau_2)} 
\;=\; a_3^{2 \widetilde{\D}}, 
\]
because $\deg_G(v)\le\widetilde\D$ for all $v\in V\setminus w$.
By the same ideas as in the proof of Theorem~\ref{th:main-spin} 
(and Lemma~\ref{lemma:prelim_comparison}) we conclude that 
\[
\lambda(Q) \;\le\; q\,\widetilde{c}\,\lambda(\sw)
\;\le\; q\,a_3^{2 \widetilde{\D}}\,\lambda(\sw)
\;\le\; q\,(q\,e^{2\beta})^{2 \widetilde{\D}}\,\lambda(\sw).
\]
This completes the proof.
\end{proof}


\chapter{Comparison with single-bond dynamics}\label{chap:bond}

The goal of this chapter is to present a comparison of 
Swendsen--Wang and single-bond dynamics for the random-cluster model. 
In fact, we prove that the spectral gaps of both Markov chains are 
equal up to a small polynomial in the number of edges of the 
underlying graph $G=(V,E)$, i.e.~up to $c \abs{E} \log\abs{E}$ for 
some $c<\infty$. 
In particular, this proves that rapid mixing of both Markov chains 
is equivalent.

In this chapter, we first give a representation 
of both Markov chains (or their transition matrices) on the joint 
(\jointname) model (see \eqref{eq:joint}) using the same 
``building blocks''. 
Then we provide some technical lemmas that yield estimates on 
the norm of products of operators between (not necessarily equal) 
Hilbert spaces.

Using these ingredients we obtain several new rapid mixing results 
for the single-bond dynamics, but we can also use 
Theorem~\ref{th:SB_linear-width} to get a result on Swendsen--Wang 
dynamics on graphs of bounded \emph{linear width}.
Additionally we adopt the result of Theorem~\ref{th:P-SW-HB_torus} 
to prove matching upper and lower bounds for the spectral gap 
of the single-bond dynamics on the $d$-dimensional torus 
(see \eqref{eq:def-torus}) at the critical temperature that 
show slow mixing if the number of colors is large enough.


\section{Common representation} \label{sec:4_repr}

We want to represent the Swendsen--Wang and the single-bond 
dynamics on the joint model, which consists of the 
product state space $\Oj:=\Op\times\Orc$ 
and the \jointname\ measure. 
This was done first in \cite{U3} and we follow the steps from 
that article. 
First recall that, if $G=(V,E)$, $p=1-e^{-\beta}$ and $q\in\N$ are 
fixed, we denote by $\mu$ (resp.~$\nu$) the random-cluster 
(resp.~\jointname) measure on $G$ with parameters $p$ and $q$ 
(see \eqref{eq:RC} and \eqref{eq:joint}) 
and that $L_2(\mu)$ (resp.~$L_2(\nu)$) is the Hilbert space that 
consists of $\R^{\Orc}$ (resp.~$\R^{\Oj}$) with the inner product 
$\l\cdot,\cdot\r_\mu$ (resp.~$\l\cdot,\cdot\r_\nu$).

We introduce the stochastic matrix $M$ that defines the mapping 
(by matrix multiplication) from the RC to the joint model by
\begin{equation} \label{eq:M}
M\bigl(B,(\sigma,A)\bigr) \;:=\; q^{-c(B)}\;\ind\bigl(A=B\bigr)\;
	\ind\bigl(B\subset E(\sigma)\bigr)
\end{equation}
for $B\in\Orc$ and $(\sigma,A)\in\Oj$.
Obviously, we find by definition of $\mu$ and $\nu$ that 
$M$ satisfies
$\mu M (\sigma,A)=\sum_{B\in\Orc}\mu(B)\, M(B,(\sigma,A))
	=\nu(\sigma,A)$.
(See the discussion after \eqref{eq:joint-cond} 
for a possible interpretation of that equality.)
As in \eqref{eq:map}, $M$ defines an operator that 
maps from $L_2(\nu)$ to $L_2(\mu)$ by 
\begin{equation} \label{eq:map2}
Mf(B) \;:=\; \sum_{(\sigma,A)\in\Oj}\,M\bigl(B,(\sigma,A)\bigr)\,
	f(\sigma,A), \qquad f\in L_2(\nu).
\end{equation}
Its adjoint operator $M^*$, i.e.~the operator that satisfies
\[
\l M^*g, f\r_\nu \;=\; \l g,Mf\r_\mu \qquad 
	\text{ for all }\; f\in L_2(\nu), g\in L_2(\mu), 
\]
can be given by the (stochastic) matrix
\[
M^*\bigl((\sigma,A),B\bigr) \;=\; \ind\bigl(A=B\bigr).
\]
One simple and useful property of the matrices $M$ and $M^*$ is 
that their product satisfies 
$M M^*(A,B)=\ind(A=B)$ for all $A,B\in\Orc$, or 
equivalently, the corresponding operators satisfy 
$M M^* = I$ with $If:=f$ for all $f\in L_2(\mu)$.

\begin{remark}
The matrices above will be used with the following 
interpretation in mind.
We want to construct a Markov chain for the RC model that 
is reversible with respect to~$\mu$. 
This could be done, starting in a RC configuration, 
by making a ``step'' with $M$ first, which leads to 
a configuration in the joint model 
(with an unchanged RC coordinate). 
Then make updates of the RC coordinate in the joint 
configuration that are reversible with respect to $\nu$.
Finally, forget about the coloring to obtain a new 
RC configuration.
At first sight this is no advantage, but we will see that 
it is possible in some cases to find a representation 
of Markov chains for the RC model by rather simple update 
rules on the joint model, while the original updates 
are difficult to handle.
\end{remark}
\vspace{2mm}

The following matrices provide the updates of the 
``RC coordinate'' in the joint model. 
For $(\sigma,A),(\tau,B)\in\O_{\rm J}$ and $e\in E$ with 
$e=\{e^{(1)},e^{(2)}\}$ let
\begin{equation} \label{eq:T-e}
T_e\bigl((\sigma,A),(\tau,B)\bigr) \;:=\; \ind\bigl(\sigma=\tau\bigr)\;
	\begin{cases}
	p, & B=A\cup e  \,\text{ and }\;  \sigma(e^{(1)})=\sigma(e^{(2)})\\
	1-p, & B=A\setminus e  \;\text{ and }\;  \sigma(e^{(1)})=\sigma(e^{(2)})\\
	1, & B=A\setminus e  \;\text{ and }\;  \sigma(e^{(1)})\ne\sigma(e^{(2)}).
	\end{cases}
\end{equation}

The lemma below
shows some interesting properties of the matrices from \eqref{eq:M} and 
\eqref{eq:T-e}. For example $\{T_e\}_{e\in E}$ is a family of commuting 
projections in $L_2(\nu)$. This will be important in the proof of 
the main result.

\begin{lemma} \label{lemma:repr-prop}
Let $M$, $M^*$ and $T_e$ be the matrices above.
Then 
\begin{enumerate}
	\renewcommand{\labelenumi}{\rm(\roman{enumi})}
	\item $M^*M$ and $T_e$ are self-adjoint in $L_2(\nu)$.
\vspace{1mm}
	\item $T_e T_e = T_e$ and $T_e T_{e'} = T_{e'} T_e$ 
				for all $e,e'\in E$.
\vspace{1mm}
	\item $\norm{T_{e}}_{\nu}=1$ and $\norm{M^*M}_{\nu}=1$.
\end{enumerate}
\end{lemma}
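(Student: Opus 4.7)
The plan is to verify each of the three items by unpacking the definitions of $M$ and $T_e$ and then invoking basic Hilbert-space facts about Markov operators and self-adjoint projections.

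For (i), self-adjointness of $M^*M$ is automatic from the definition of the adjoint, since $(M^*M)^* = M^*(M^*)^* = M^*M$. To see that $T_e$ is self-adjoint I would check that $T_e$ is reversible with respect to $\nu$, i.e.
\[
\nu(\sigma,A)\,T_e\bigl((\sigma,A),(\tau,B)\bigr)
\;=\; \nu(\tau,B)\,T_e\bigl((\tau,B),(\sigma,A)\bigr)
\]
for all $(\sigma,A),(\tau,B)\in\Oj$. Since $T_e$ is supported on pairs with $\tau=\sigma$ and $A\ominus B\subset\{e\}$, there are only two non-trivial cases (depending on whether $\sigma(e^{(1)})=\sigma(e^{(2)})$ or not). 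In both cases the identity reduces via \eqref{eq:RC} and \eqref{eq:joint} to the obvious equality $\nu(\sigma,A\cup e)/\nu(\sigma,A\setminus e) = p/(1-p)$ (when $e\subset E(\sigma)$), while if $\sigma(e^{(1)})\neq\sigma(e^{(2)})$ the state $(\sigma,A\cup e)$ has $\nu$-measure zero, so the asymmetric ``$1$'' entry of $T_e$ appears only on a $\nu$-null configuration and the identity holds trivially.

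For (ii), observe that the transition distribution $T_e((\sigma,A),\cdot)$ depends on $A$ only through $A\setminus e$, and in fact only through $\sigma$ at the endpoints of $e$; moreover it is supported on $\{(\sigma,A\cup e),(\sigma,A\setminus e)\}$. A one-line case analysis (one for each of the three branches in \eqref{eq:T-e}) then gives $T_e^2 = T_e$. For commutativity with $T_{e'}$, $e\neq e'$, note that neither operator touches the spin coordinate and each modifies only its own edge according to a rule that depends only on $\sigma$; hence, iterating, the joint distribution of the two updated edges factorises and is order-independent, which yields $T_e T_{e'} = T_{e'} T_e$.

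For (iii), the upper bound $\norm{T_e}_\nu\le 1$ is the standard fact that a stochastic matrix whose stationary distribution is $\nu$ acts as a contraction on $L_2(\nu)$ (Cauchy-Schwarz on rows plus $\nu T_e=\nu$). Since $T_e$ is a non-zero self-adjoint projection by (i)-(ii), its norm equals $1$; alternatively $T_e \mathbf{1} = \mathbf{1}$, giving the matching lower bound directly. For $\norm{M^*M}_\nu$ I would use $\norm{M^*M}_\nu = \norm{M}^2_{L_2(\nu)\to L_2(\mu)}$ together with the observation that $M$ is stochastic and its (left) action on measures sends $\mu$ to $\nu$; the same Cauchy-Schwarz argument then yields $\norm{M}\le 1$, while $M\mathbf{1}=\mathbf{1}$ and $M^*\mathbf{1}=\mathbf{1}$ give $(M^*M)\mathbf{1}=\mathbf{1}$ and hence $\norm{M^*M}_\nu\ge 1$.

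None of the steps is genuinely difficult; the only mild care is in (i), where one must check that the asymmetric entry $1$ in the third branch of \eqref{eq:T-e} does not spoil reversibility, and this is resolved by noting that the offending transitions occur only between $\nu$-null states.
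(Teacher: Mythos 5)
Your proof is correct and follows essentially the same route as the paper: self-adjointness of $T_e$ via detailed balance with respect to $\nu$ (the formula \eqref{eq:adjoint}), idempotence and commutativity from the fact that each $T_e$ resamples only its own edge according to a rule depending only on the unchanged data, and norm one from the projection/fixed-point properties. The only minor variation is in showing $\norm{M^*M}_\nu=1$, where the paper uses the identity $MM^*=I$ together with $\norm{M^*M}_\nu=\norm{MM^*}_\mu$, while you combine the stochastic-contraction bound $\norm{M}_{L_2(\nu)\to L_2(\mu)}\le1$ with $M^*M\mathbf{1}=\mathbf{1}$; both are equally elementary.
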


\begin{proof}
The first part of $(i)$ is obvious since 
$(M^*M)^*=M^* (M^*)^*=M^*M$ (see e.g.~\cite[Thm.~3.9-4]{Krey}). 
The second part can be checked easily using \eqref{eq:adjoint}. 
Part $(ii)$ comes from the fact that the transition 
probabilities depend only on the ``coordinate'' that will not 
be changed. 
For $(iii)$ note that $\norm{M^*M}_{\nu}=\norm{M M^*}_{\mu}$ 
(see \cite[Thm.~3.9-4]{Krey}), and $M M^*=I$ with $If:=f$, 
$f\in L_2(\mu)$. Hence, $\norm{M^*M}_{\nu}=\norm{I}_{\mu}=1$.
It remains to prove $\norm{T_{e}}_{\nu}=1$. 
By $T_e T_e = T_e$ and the self-adjointness of $T_e$ 
we obtain 
$\norm{T_{e}}_{\nu} \stackrel{(ii)}{=} \norm{T_{e}^2}_{\nu} 
\stackrel{(i)}{=} \norm{T_{e}}_{\nu}^2$. 
This implies $\norm{T_{e}}_{\nu}\in\{0,1\}$, but since 
$T_e g = g$, if $g$ is constant, $\norm{T_{e}}_{\nu}$ 
cannot be zero.
\end{proof}

We finish this section with a lemma that demonstrates the 
relation of $M$ and $T_e$ to the 
Swendsen--Wang and single-bond dynamics.
Recall that $\tsw$ and $\sb$ are their transition matrices 
(see \eqref{eq:SW-RC} and \eqref{eq:SB}).

\begin{lemma} \label{lemma:repr}
Let $M$, $M^*$ and $T_e$ be the matrices above.
Then
\begin{enumerate}
	\renewcommand{\labelenumi}{\rm(\roman{enumi})}
	\item $\tsw \,=\, M \left(\prod\limits_{e\in E} T_e\right) M^*$.
\vspace{1mm}
	\item $\sb 
			\,=\, \frac1{\abs{E}}\sum\limits_{e\in E}\,M\,T_e\,M^*
			\,=\, M\left(\frac1{\abs{E}}\sum\limits_{e\in E}\,T_e\right)\,M^*$.
\end{enumerate}
\end{lemma}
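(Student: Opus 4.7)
The plan is to verify both identities by direct matrix multiplication, comparing the resulting transition probabilities against the definitions~\eqref{eq:SW-RC} and~\eqref{eq:SB}. The underlying intuition is that $M$ samples a coloring $\sigma$ from $\nu(\,\cdot\mid A)$ (the coloring step of SW), each $T_e$ re-randomizes edge $e$ given the current coloring, and $M^*$ forgets the coloring; so both claimed representations are joint-space realizations of the respective dynamics.

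For part $(i)$ I would first obtain a closed form for $\prod_{e\in E} T_e$. Since the $T_e$'s pairwise commute (Lemma~\ref{lemma:repr-prop}(ii)) and each one leaves the Potts coordinate fixed while affecting only edge $e$ of the RC coordinate, a short induction on $\abs{E}$ gives
\[
\Bigl(\prod_{e\in E} T_e\Bigr)\bigl((\sigma,A),(\tau,B)\bigr)
\;=\; \ind(\sigma=\tau)\,\ind(B\subset E(\sigma))\,p^{\abs{B}}(1-p)^{\abs{E(\sigma)}-\abs{B}},
\]
which is independent of $A$. Plugging this into the definitions of $M$ and $M^*$ and using $p^{\abs{B}}(1-p)^{\abs{E(\sigma)}-\abs{B}}=\bigl(\frac{p}{1-p}\bigr)^{\abs{B}}(1-p)^{\abs{E(\sigma)}}$ reproduces exactly the formula for $\tsw$ in~\eqref{eq:SW-RC}.

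For part $(ii)$, by linearity it suffices to compute $(MT_eM^*)(A,C)$ for each fixed edge $e$ and match the result with $\abs{E}$ times the $e$-th summand in~\eqref{eq:SB}. Unfolding the matrix definitions yields
\[
(MT_eM^*)(A,C) \;=\; q^{-c(A)}\sum_{\sigma\in\Op}\ind\bigl(A\subset E(\sigma)\bigr)\,T_e\bigl((\sigma,A),(\sigma,C)\bigr),
\]
and since $T_e$ only alters edge $e$, the only $C$'s with a nonzero contribution are $C=A\cup e$ and $C=A\setminus e$. I would then split into three cases: $e\in A$; $e\notin A$ with $e^{(1)}\lconn{A}e^{(2)}$; and $e\notin A$ with $e^{(1)}\nconn{A}e^{(2)}$. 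In each case the number of valid $\sigma$ (i.e.~those with $A\subset E(\sigma)$) agreeing on the endpoints of $e$ equals either $q^{c(A)}$ or $q^{c(A)-1}$, and substituting these counts reproduces the transition probabilities listed in~\eqref{eq:SB}.

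The main obstacle will be the case analysis in part $(ii)$: the factor $1/q$ appearing in the ``disconnected'' transitions of $\sb$ comes precisely from the counting identity that, when $e^{(1)}$ and $e^{(2)}$ lie in distinct components of $(V,A)$, exactly a $1/q$-fraction of the valid $\sigma$'s satisfy $\sigma(e^{(1)})=\sigma(e^{(2)})$. One must also check that the diagonal entries $(MT_eM^*)(A,A)$ combine correctly when summed over $e$. None of this is deep, but it is the kind of bookkeeping where miscounts are easy.
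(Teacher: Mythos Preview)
Your proposal is correct and follows essentially the same route as the paper. Part~(i) is identical: compute the closed form for $\prod_{e\in E}T_e$ and sandwich it between $M$ and $M^*$, using $\ind(A\subset E(\sigma))\,\ind(B\subset E(\sigma))=\ind(A\cup B\subset E(\sigma))$ to recover~\eqref{eq:SW-RC}. For part~(ii) the paper avoids your three-case split by rewriting $T_e$ uniformly as $\ind(B=A\setminus e)+p\,\ind_e(\sigma)\bigl[\ind(B=A\cup e)-\ind(B=A\setminus e)\bigr]$ and then invoking the single counting identity $q^{-c(A)}\sum_{\sigma:A\subset E(\sigma)}\ind_e(\sigma)=\tfrac1q+\ind_e(A)(1-\tfrac1q)$, which packages exactly the fact you highlight; this is only an organizational difference, not a different argument.
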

\vspace{2mm}

From Lemma \ref{lemma:repr-prop}$(ii)$ we know that the order of multiplication 
in $(i)$ is not important.

\begin{proof}
For $(i)$ note that
\[
\biggl(\prod\limits_{e\in E} T_e\biggr)\bigl((\sigma,A),(\tau,B)\bigr) 
	\;=\; \ind\bigl(\sigma=\tau\bigr)\,\ind\bigl(B\subset E(\sigma)\bigr)\,
	p^{\abs{B}}(1-p)^{\abs{E(\sigma)}-\abs{B}}.
\]
Hence,
\[\begin{split}
M \left(\prod\limits_{e\in E} T_e\right) M^*\bigl(A,B\bigr) 
\;&=\; \sum_{\sigma\in\Op}\, M\bigl(A,(\sigma,A)\bigr)\,
	\biggl(\prod\limits_{e\in E} T_e\biggr)\bigl((\sigma,A),(\sigma,B)\bigr)\\
&=\; \sum_{\sigma\in\Op} q^{-c(A)}\,
		\underbrace{\ind\bigl(A\subset E(\sigma)\bigr)\,
				\ind\bigl(B\subset E(\sigma)\bigr)}_{
				=\mathds{1}(A\cup B\subset E(\sigma))}\,
		p^{\abs{B}}(1-p)^{\abs{E(\sigma)}-\abs{B}}\\
&=\; P_{\rm SW}(A,B).
\end{split}\]
For part~$(ii)$ we define 
$\ind_e(\sigma):=\ind\bigl(\sigma(e^{(1)})=\sigma(e^{(2)})\bigr)$ and 
$\ind_e(A):=\ind\bigl(e^{(1)}\stackrel{A}{\leftrightarrow}e^{(2)}\bigr)$ 
for $\sigma\in\O_{\rm P}$, $A\in\O_{\rm RC}$ and $e\in E$ with 
$e=\{e^{(1)},e^{(2)}\}$.
Now write 
\[
T_e\bigl((\sigma,A),(\sigma,B)\bigr) \;=\;
\ind\bigl(B=A\setminus e\bigr) + p\,\ind_e(\sigma)
\Bigl[\ind\bigl(B=A\cup e\bigr)
-\ind\bigl(B=A\setminus e\bigr)\Bigr]
\]
and note that $\abs{\{\sigma\in\Op:\, A\subset E(\sigma)\}}=q^{c(A)}$ 
(because the colorings have to be constant on each of the $c(A)$ 
components) and
\[
q^{-c(A)}\sum_{\sigma: A\subset E(\sigma)}\ind_e(\sigma) 
\;=\; \frac1q \,+\, \ind_e(A)\left(1-\frac1q\right).
\]
Hence,
\[\begin{split}
M\,T_e\,M^*\bigl(A,B\bigr) 
\;&=\; \sum_{\sigma} q^{-c(A)}\,\ind\bigl(A\subset E(\sigma)\bigr)\,
				T_e\bigl((\sigma,A),(\sigma,B)\bigr)\\
&=\;\ind\bigl(B=A\setminus e\bigr) \\
& \qquad\quad +\, p\,
		\Bigl[\ind\bigl(B=A\cup e\bigr)
		-\ind\bigl(B=A\setminus e\bigr)\Bigr]
		\cdot q^{-c(A)}\sum_{\sigma: A\subset E(\sigma)}\ind_e(\sigma)\\
&=\; \begin{cases}
	p, & B=A\cup e  \,\text{ and }\; e^{(1)}\stackrel{A}{\leftrightarrow} e^{(2)}\\
	1-p, & B=A\setminus e \,\text{ and }\; e^{(1)}\stackrel{A}{\leftrightarrow} e^{(2)}\\
	\frac{p}{q}, 
		& B=A\cup e  \,\text{ and }\; e^{(1)}\stackrel{A}{\nleftrightarrow} e^{(2)}\\
	1-\frac{p}{q}, 
		& B=A\setminus e  \,\text{ and }\; e^{(1)}\stackrel{A}{\nleftrightarrow} e^{(2)}.
	\end{cases}\\
\end{split}\]
Summing over $E$ and dividing by $\abs{E}$ shows equality to 
\eqref{eq:SB}.
\end{proof}

\begin{remark}\label{remark:SB-SW_pos-def}
We know from Lemma~\ref{lemma:repr-prop} that $T_e$, $e\in E$, 
is self-adjoint and idempotent. 
Hence, $T_e$ is positive, i.e.~$\l T_ef,f\r_\nu\ge0$ 
for all $f\in L_2(\nu)$ (see~\cite[Thms.~\mbox{9.5-1} \& \mbox{9.5-2}]{Krey}). 
In fact, all eigenvalues of $T_e$ are either 0 and 1.
Additionally, $\prod_{e\in E} T_e$ is positive, 
because it is the product of commuting and 
self-adjoint linear operators (see \cite[Thm.~9.3-1]{Krey}).
Consequently, 
\[
\left\l \tsw g,g\right\r_\mu 
\,=\, \left\l M \left(\prod\limits_{e\in E} T_e\right) M^* g, 
				g\right\r_\mu 
\,=\, \left\l \left(\prod\limits_{e\in E} T_e\right) M^* g, 
				M^*g\right\r_\nu 
\,\ge\, 0
\]
and
\[
\left\l \sb g,g\right\r_\mu 
\,=\, \frac1{\abs{E}}\sum\limits_{e\in E}\,
				\left\l M T_e\,M^*g,g\right\r_\mu 
\,=\, \frac1{\abs{E}}\sum\limits_{e\in E}\,
				\left\l T_e\,M^*g, M^*g\right\r_\nu \,\ge\, 0
\]
for all $g\in L_2(\mu)$. 
This shows that the transition matrices of 
SW and SB 
dynamics are positive, which implies that they have 
only non-negative eigenvalues (see \cite[Obs.~7.1.4]{HJ-matrix}).
\end{remark}

\vspace{2mm}


\section{Technical lemmas} \label{sec:4_technical}

In this section we provide some technical lemmas that will be 
necessary for the analysis. We state them in a general form, 
because we guess that they could be useful also in other settings.
First let us introduce the notation. 
Consider two Hilbert spaces $H_1$ and $H_2$ with the corresponding 
inner products $\l\cdot,\cdot\r_{H_1}$ and $\l\cdot,\cdot\r_{H_2}$. 
The norms in ${H_1}$ and ${H_2}$ are defined as usual as the square 
root of the inner product of a function with itself. 
Throughout this section we consider two bounded, 
linear operators, $R: {H_2}\to {H_1}$ and $T: {H_2}\to {H_2}$, 
such that 
\begin{itemize}
	\item $T$\  is self-adjoint, i.e.~$T=T^*$, and \vspace{1mm}
	\item $T$ is positive, i.e.~$\l Tg,g\r_{H_2}\ge0$ for all $g\in {H_2}$, 
\end{itemize}
We denote by $\Vert \cdot\Vert_{H_1}$ 
(resp.~$\Vert \cdot\Vert_{{H_2}\to {H_1}}$) the operator norms of 
operators mapping from ${H_1}$ to ${H_1}$ (resp.~${H_2}$ to ${H_1}$), 
i.e.
\[
\Vert R\Vert_{{H_2}\to {H_1}} \,:=\, 
	\max_{\Vert g\Vert_{H_2}=1} \Vert Rg\Vert_{H_1}
\]
and $\Vert \cdot\Vert_{H_1}$ as in \eqref{eq:norm}.
As is well-known, the adjoint operator of $R$, 
i.e.~$R^*: {H_1}\to {H_2}$ with $\l R^*f,g\r_{H_2}=\l f,R g\r_{H_1}$ 
for all $f\in {H_1}$ and $g\in {H_2}$, satisfies 
$\Vert R^*\Vert_{{H_1}\to {H_2}}=\Vert R\Vert_{{H_2}\to {H_1}}$  
(see e.g.~Kreyszig~\cite[Thm.~3.9-2]{Krey}).
Additionally note that self-adjointness of $T$ implies that 
$R T^k R^*$, $k\in\N$, are self-adjoint operators 
on $H_1$.

\begin{lemma} \label{lemma:tech_mon}
Let $T$ and $R$ be as above. Then
\[
\norm{R T^{k+1} R^*}_{{H_1}} \;\le\; \norm{T}_{{H_2}}\,\norm{R T^k R^*}_{{H_1}}
\; \quad \text{ for all } k\in\N.
\]
In particular, if $\norm{T}_{{H_2}}\le1$ this proves monotonicity in $k$.
\end{lemma}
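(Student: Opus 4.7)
The plan is to exploit the self-adjointness and positivity of $T$ to reduce the operator norm on $H_1$ to a quadratic form, and then apply the scalar inequality $T \le \norm{T}_{H_2}\, I$ in the quadratic-form sense.

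First I would observe that $R T^k R^*$ is self-adjoint on $H_1$ for every $k\ge0$ (since $T$, and hence $T^k$, is self-adjoint), so by a standard fact for self-adjoint operators
\[
\norm{R T^{k+1} R^*}_{H_1} \;=\;
	\sup_{\norm{f}_{H_1}=1}\, \bigl|\l R T^{k+1} R^* f,\, f\r_{H_1}\bigr|
\;=\; \sup_{\norm{f}_{H_1}=1}\, \bigl|\l T^{k+1} R^* f,\, R^* f\r_{H_2}\bigr|,
\]
where I also used the definition of the adjoint $R^*$. The same identity holds with $k$ in place of $k+1$.

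Next I would establish the operator inequality $T^{k+1} \le \norm{T}_{H_2}\, T^k$ on $H_2$. Since $T$ is bounded and self-adjoint we have $T \le \norm{T}_{H_2}\, I$ as a quadratic-form inequality. The key point is that $T^k$ is itself positive self-adjoint (being a power of a positive self-adjoint operator) and commutes with both sides, so multiplying by $T^k$ preserves the inequality, giving
\[
\l T^{k+1} g,\, g\r_{H_2} \;\le\; \norm{T}_{H_2}\, \l T^k g,\, g\r_{H_2}
	\qquad \text{for all } g\in H_2.
\]
(If one prefers to avoid this abstract step, the same conclusion follows by writing $T = S^2$ with $S = T^{1/2}$ self-adjoint, setting $h = S^k g$, and using $\l T h, h\r_{H_2}\le \norm{T}_{H_2}\, \norm{h}_{H_2}^2$ directly.)

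Applying this with $g = R^* f$ and using that the resulting quantity $\l T^k R^*f, R^*f\r_{H_2} = \l R T^k R^* f, f\r_{H_1}$ is nonnegative (so the absolute values in the first display can be dropped), I obtain
\[
\bigl|\l R T^{k+1} R^* f,\, f\r_{H_1}\bigr|
\;\le\; \norm{T}_{H_2}\, \bigl|\l R T^k R^* f,\, f\r_{H_1}\bigr|,
\]
and taking the supremum over $\norm{f}_{H_1}=1$ completes the proof. I do not expect a real obstacle here; the only minor subtlety is justifying the operator inequality $T^{k+1}\le \norm{T}_{H_2}\,T^k$, but this is immediate from commutativity of positive self-adjoint operators (or, equivalently, from inserting a square root of $T$).
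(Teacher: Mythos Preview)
Your proof is correct. The paper takes a slightly different (though closely related) route: it uses the positive square root $\widetilde{T}$ of $T$ to factor $R T^{k+1} R^* = (R\widetilde{T}^{k+1})(R\widetilde{T}^{k+1})^*$, invokes the identity $\norm{A A^*}_{H_1}=\norm{A}_{H_2\to H_1}^2$, and then applies submultiplicativity $\norm{R\widetilde{T}^{k+1}}\le \norm{R\widetilde{T}^k}\,\norm{\widetilde{T}}$ before reassembling. Your approach instead passes through the quadratic-form characterization of the norm of a self-adjoint operator and the operator inequality $T^{k+1}\le\norm{T}_{H_2}\,T^k$. Both arguments ultimately hinge on the existence of a positive square root (yours implicitly, in justifying that multiplication by the commuting positive operator $T^k$ preserves the inequality $T\le\norm{T}_{H_2}I$; the paper's explicitly, in the factorization). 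The paper's version is perhaps a touch more streamlined, while yours makes the role of positivity more transparent; neither has any real advantage over the other.
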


\begin{proof}
By the assumptions, $T$ has a unique positive square root $\widetilde{T}$, 
i.e.~$T=\widetilde{T} \widetilde{T}$, which is again self-adjoint (see 
e.g.~\cite[Th. 9.4-2]{Krey}). 
Using the fact that $\norm{A}_{H_2\to H_1}^2=\norm{A A^*}_{H_1}$ for every bounded 
linear operator $A:H_2\to H_1$, we obtain
\[\begin{split}
\norm{R T^{k+1} R^*}_{{H_1}}& \;=\; \norm{R \widetilde{T}^{2k+2} R^*}_{{H_1}}
\;=\; \norm{R \widetilde{T}^{k+1}}_{{H_2}\to {H_1}}^2 \\
\;&\le\; \norm{R \widetilde{T}^k}_{{H_2}\to {H_1}}^2 \norm{\widetilde{T}}_{{H_2}}^2 
\;=\; \norm{R \widetilde{T}^{2k} R^*}_{{H_1}}  \norm{T}_{{H_2}} \\
\;&=\; \norm{T}_{{H_2}}\,\norm{R T^k R^*}_{{H_1}}.
\end{split}\]
\end{proof}

\begin{lemma} \label{lemma:tech_in}
In the above setting let additionally 
$\norm{R}_{{H_2}\to {H_1}}^2 = \norm{R R^*}_{H_1} \le1$. Then 
\[
\norm{R T R^*}_{{H_1}}^{2^k} \;\le\; \norm{R T^{2^k} R^*}_{{H_1}}
\; \quad \text{ for all } k\in\N.
\]
\end{lemma}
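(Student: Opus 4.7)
My plan is to prove the inequality by induction on $k$, with the work concentrated in a single operator-algebraic manipulation that exploits the hypothesis $\norm{RR^*}_{H_1}\le 1$.

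I would first collect the standing ingredients. Since $T$ is self-adjoint and positive, every power $T^{2^k}$ is self-adjoint and positive, and hence $A_k := R T^{2^k} R^*$ is a self-adjoint positive operator on $H_1$ (positivity: $\l A_k f,f\r_{H_1} = \l T^{2^k} R^* f, R^* f\r_{H_2} \ge 0$). For any bounded self-adjoint operator $A$ one has $\norm{A^2} = \norm{A}^2$, which I shall apply to $A_k$. The hypothesis $\norm{RR^*}_{H_1}\le 1$ is equivalent, via the standard identity $\norm{RR^*} = \norm{R^*R}$ (see \cite[Thm.~3.9-4]{Krey}), to $\norm{R^*R}_{H_2}\le 1$. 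Since $R^*R$ is self-adjoint and positive, this is in turn equivalent to the operator inequality $R^*R \le I_{H_2}$; this is the key tool.

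The base case $k=0$ is trivial. For the inductive step, assuming $\norm{R T R^*}^{2^k} \le \norm{A_k}$, I would write
\[
\norm{R T R^*}^{2^{k+1}} \;=\; \bigl(\norm{R T R^*}^{2^k}\bigr)^2 \;\le\; \norm{A_k}^2 \;=\; \norm{A_k^2},
\]
the last equality by self-adjointness of $A_k$. Now comes the key step: expand $A_k^2 = R T^{2^k} (R^*R) T^{2^k} R^*$. Using $R^*R \le I_{H_2}$ together with the fact that conjugation by the self-adjoint operator $T^{2^k}$ preserves operator order, I obtain $T^{2^k}(R^*R)T^{2^k} \le T^{2^{k+1}}$, and sandwiching by $R$ and $R^*$ preserves the inequality, so $A_k^2 \le R T^{2^{k+1}} R^* = A_{k+1}$ in operator order. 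Both sides are positive, and norm-monotonicity on positive operators yields $\norm{A_k^2} \le \norm{A_{k+1}}$, completing the induction.

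There is no real obstacle beyond recognizing that the hypothesis $\norm{RR^*} \le 1$ is most conveniently wielded in its dual form $R^*R \le I_{H_2}$; once this is in hand, the induction rests only on the two standard facts that $\norm{A^2} = \norm{A}^2$ for self-adjoint $A$ and that operator-order inequalities between positive operators pass to operator-norm inequalities.
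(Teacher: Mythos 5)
Your proof is correct, and it follows the same inductive skeleton as the paper: start from $\norm{RTR^*}^{2^{k+1}}=\bigl(\norm{RTR^*}^{2^{k}}\bigr)^2$, apply the induction hypothesis, and then establish $\norm{R T^{2^{k}} R^*}^2\le\norm{R T^{2^{k+1}} R^*}$. The only divergence is in how that last inequality is obtained. The paper stays entirely at the level of norms: it factors $\norm{R T^{2^{k}} R^*}\le\norm{R T^{2^{k}}}_{H_2\to H_1}\,\norm{R^*}_{H_1\to H_2}$ by submultiplicativity and then recombines via the $C^*$-identity $\norm{A}_{H_2\to H_1}^2=\norm{AA^*}_{H_1}$ applied to $A=RT^{2^{k}}$, with $\norm{R^*}^2=\norm{RR^*}\le1$ absorbing the extra factor. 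You instead pass to the Loewner order: writing $A_k^2=RT^{2^{k}}(R^*R)T^{2^{k}}R^*$, using $R^*R\le I_{H_2}$ and the fact that conjugation preserves operator order to get $A_k^2\le A_{k+1}$, and concluding with norm-monotonicity on positive operators. Both routes are two lines long and use only standard facts; yours makes the role of the hypothesis $\norm{RR^*}\le1$ slightly more transparent (as $R^*R\le I$), while the paper's avoids any appeal to operator-order monotonicity and needs only $\norm{A}^2=\norm{AA^*}$, which it reuses elsewhere (e.g.\ in Lemma~\ref{lemma:tech_mon}). Either argument is acceptable.
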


\begin{proof}
The case $k=0$ is obvious. Now suppose the statement is correct for $k-1$; 
then
\[\begin{split}
\norm{R T R^*}_{{H_1}}^{2^k} \;&=\; \norm{R T R^*}_{{H_1}}^{2^{k-1}\, 2} 
\;\le\; \norm{R T^{2^{k-1}} R^*}_{{H_1}}^2 \\
&\le\; \norm{R T^{2^{k-1}}}_{{H_2}\to {H_1}}^2 \norm{R^*}_{{H_1}\to {H_2}}^2
\;=\; \norm{R T^{2^{k-1}} T^{2^{k-1}} R^*}_{{H_1}} \norm{R R^*}_{{H_2}} \\
&\le\; \norm{R T^{2^k} R^*}_{{H_1}},
\end{split}\]
which proves the statement for $k$. 
\end{proof}

The next corollary combines the statements of the last two lemmas to give 
a result similar to Lemma \ref{lemma:tech_in} for arbitrary exponents.

\begin{corollary} \label{coro:tech}
Additionally to the general assumptions of this section let 
$\norm{T}_{{H_2}}\le1$ and $\norm{R R^*}_{{H_1}}\le1$. Then
\[
\norm{R T R^*}_{{H_1}}^{2 k} \;\le\; \norm{R T^k R^*}_{{H_1}}
\; \quad \text{ for all } k\in\N.
\]
\end{corollary}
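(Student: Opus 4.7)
The plan is to combine Lemmas~\ref{lemma:tech_mon} and~\ref{lemma:tech_in} by squeezing the exponent $k$ between consecutive powers of $2$. The first preliminary observation I would record is that under the two hypotheses $\norm{T}_{H_2}\le 1$ and $\norm{RR^*}_{H_1}\le 1$ we have $\norm{RTR^*}_{H_1}\le 1$: indeed $\norm{RTR^*}_{H_1}\le \norm{R}_{H_2\to H_1}\,\norm{T}_{H_2}\,\norm{R^*}_{H_1\to H_2}=\norm{R}_{H_2\to H_1}^2\,\norm{T}_{H_2}=\norm{RR^*}_{H_1}\,\norm{T}_{H_2}\le 1$. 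This means that raising $\norm{RTR^*}_{H_1}$ to a larger positive exponent only decreases it, which will be the mechanism that lets us trade a power-of-two exponent for $2k$.

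Next I would choose $j:=\lceil\log_2 k\rceil$, so that $k\le 2^j\le 2k$ (the right-hand inequality uses $k\ge 1$). Applying Lemma~\ref{lemma:tech_in} at this $j$ yields
\[
\norm{RTR^*}_{H_1}^{2^j}\;\le\;\norm{RT^{2^j}R^*}_{H_1}.
\]
Since $\norm{T}_{H_2}\le 1$, Lemma~\ref{lemma:tech_mon} tells us that the sequence $m\mapsto\norm{RT^{m}R^*}_{H_1}$ is non-increasing in $m\in\N$. Applying this $2^j-k$ times (which is a non-negative integer, because $2^j\ge k$) gives
\[
\norm{RT^{2^j}R^*}_{H_1}\;\le\;\norm{RT^{k}R^*}_{H_1}.
\]

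Finally, using $\norm{RTR^*}_{H_1}\le 1$ together with $2k\ge 2^j$ we obtain $\norm{RTR^*}_{H_1}^{2k}\le\norm{RTR^*}_{H_1}^{2^j}$, and chaining the three inequalities together delivers
\[
\norm{RTR^*}_{H_1}^{2k}\;\le\;\norm{RTR^*}_{H_1}^{2^j}\;\le\;\norm{RT^{2^j}R^*}_{H_1}\;\le\;\norm{RT^{k}R^*}_{H_1},
\]
which is the claim. I do not anticipate a genuine obstacle; the only conceptual point is spotting that the exponent $2k$ on the left (rather than $k$) is exactly the slack needed to absorb rounding $k$ up to the next power of $2$, so that the power-of-two bound of Lemma~\ref{lemma:tech_in} and the monotonicity of Lemma~\ref{lemma:tech_mon} can be applied in tandem.
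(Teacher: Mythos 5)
Your proposal is correct and follows essentially the same route as the paper: choose $\ell=\lceil\log_2 k\rceil$ so that $k\le 2^\ell\le 2k$, apply Lemma~\ref{lemma:tech_in} at the exponent $2^\ell$, then use the monotonicity from Lemma~\ref{lemma:tech_mon} to pass from $2^\ell$ down to $k$, and absorb the rounding via $\norm{RTR^*}_{H_1}\le 1$. Your explicit verification that $\norm{RTR^*}_{H_1}\le 1$ follows from the two hypotheses is a small but welcome addition, since the paper simply asserts it "by assumption".
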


\begin{proof}
Let $\ell=\lceil\log_2 k \rceil$, so that $k\le2^\ell\le 2k$.
Since $\norm{R T R^*}_{{H_1}}\le1$ by assumption, we obtain 
\[
\norm{R T R^*}_{{H_1}}^{2 k} \;\le\; \norm{R T R^*}_{{H_1}}^{2^\ell} 
\;\stackrel{\text{\scriptsize \ref{lemma:tech_in}}}{\le}\; 
	\norm{R T^{2^\ell}  R^*}_{{H_1}}
\;\stackrel{\text{\scriptsize \ref{lemma:tech_mon}}}{\le}\; 
	\norm{R T^k  R^*}_{{H_1}}.
\]
\end{proof}

In the following section
we will apply these bounds for a specific 
choice of $R$ and $T$.

\vspace{2mm}


\section{The result} \label{sec:4_result}

We prove the following theorem (see \cite{U4,U3}).

\begin{theorem} \label{th:main-SB}
Let $\tsw$ $($resp.~$\sb$$)$ be the transition matrix of the 
Swendsen--Wang $($resp.~single-bond$)$ dynamics for the 
random-cluster model on a graph with $m \ge 3$ edges. Then 
\[
\lambda(\sb) \;\le\; \lambda(\tsw) \;\le\; 8 m \log m\; \lambda(\sb).
\]
\end{theorem}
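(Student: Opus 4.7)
The plan is to deduce the theorem from the common representation of Section~\ref{sec:4_repr} together with the norm inequalities of Section~\ref{sec:4_technical}. Writing $T = \frac{1}{m}\sum_{e\in E} T_e$, $\hat T = T - S_\nu$ and $\hat T_\infty = \prod_{e\in E} T_e - S_\nu$, Lemma~\ref{lemma:repr} combined with the identity $M S_\nu M^* = S_\mu$ (which follows from $M$ being the kernel coupling $\mu$ to $\nu$ and $\mu$ being the $\Orc$-marginal of $\nu$) yields
\[
\sb - S_\mu \,=\, M \hat T M^*, \qquad \tsw - S_\mu \,=\, M \hat T_\infty M^*,
\]
and both $\hat T$ and $\hat T_\infty$ are self-adjoint and positive on $L_2(\nu)$ by Lemma~\ref{lemma:repr-prop} (together with $T_e S_\nu = S_\nu = S_\nu T_e$, so that the stationary direction splits off cleanly).

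For the lower bound $\lambda(\sb) \le \lambda(\tsw)$ I would diagonalize the commuting family $\{T_e\}$ simultaneously and compare $\hat T_\infty$ and $\hat T$ spectrally. On each simultaneous eigenspace $V_S$ (for $S\subseteq E$, where $T_e$ acts as $1$ iff $e\in S$) the operator $\hat T$ equals $|S|/m$ on the orthogonal complement of constants, while $\hat T_\infty$ equals $1$ if $S=E$ and $0$ otherwise; in particular $\hat T_\infty\le \hat T$ as positive operators on $L_2(\nu)$. Conjugating by $M$ preserves the operator order, so $\tsw - S_\mu \le \sb - S_\mu$ as positive operators on $L_2(\mu)$, hence $\|\tsw-S_\mu\|_\mu \le \|\sb-S_\mu\|_\mu$, which is exactly $\lambda(\sb) \le \lambda(\tsw)$.

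For the upper bound I would apply Corollary~\ref{coro:tech} with $H_2=L_2(\nu)$, $H_1=L_2(\mu)$, $R=M$ and $T_0=\hat T$; the hypotheses $\|\hat T\|_\nu\le 1$ and $\|MM^*\|_\mu=1$ come from Lemma~\ref{lemma:repr-prop}. For every $k\in\N$ this gives
\[
\|\sb - S_\mu\|_\mu^{2k} \,=\, \|M\hat T M^*\|_\mu^{2k} \,\le\, \|M\hat T^k M^*\|_\mu.
\]
Because $T S_\nu = S_\nu$ and $S_\nu^2=S_\nu$, one has $\hat T^k = T^k - S_\nu$ for $k\ge1$; moreover $T^k - \prod_e T_e$ is a positive operator whose eigenvalue on $V_S$ is $(|S|/m)^k$ for $S\ne E$ and $0$ on $V_E$, so $\|T^k - \prod_e T_e\|_\nu \le ((m-1)/m)^k \le e^{-k/m}$. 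Inserting this and using the triangle inequality,
\[
\|M\hat T^k M^*\|_\mu \,\le\, \|\tsw-S_\mu\|_\mu + e^{-k/m},
\]
and so $(1-\lambda(\sb))^{2k} \le (1-\lambda(\tsw)) + e^{-k/m}$; Bernoulli's inequality then gives the master estimate
\[
\lambda(\tsw) \,\le\, 2k\,\lambda(\sb) + e^{-k/m}\qquad\text{for every }k\in\N.
\]

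The final step is to choose $k$ so the additive $e^{-k/m}$ is absorbed and $2k\le 8m\log m$. Taking $k=\lceil 4m\log m\rceil$ makes $e^{-k/m}\le m^{-4}$ and yields $\lambda(\tsw)\le 8m\log m\,\lambda(\sb) + m^{-4}$. The claimed clean bound then follows by splitting into cases: when $\lambda(\sb)\ge 1/(8m\log m)$ the desired inequality is automatic since the right-hand side $8m\log m\,\lambda(\sb)\ge 1\ge\lambda(\tsw)$, while for smaller $\lambda(\sb)$ one re-optimizes $k$ as a function of $\lambda(\sb)$ (essentially $k\asymp m\log(1/\lambda(\sb))$) so that the two terms in the master estimate balance. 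I expect this \textbf{absorption of the additive term} and the bookkeeping needed to land at exactly the constant $8$ to be the main technical obstacle; by contrast, all of the operator-theoretic work is driven cleanly by the commutation and idempotence of the $T_e$ together with the isometry property $MM^*=I$.
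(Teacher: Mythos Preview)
Your lower bound is fine and essentially equivalent to the paper's: you use the operator inequality $\hat T_\infty\le\hat T$ coming from the joint spectral decomposition of the commuting projections $T_e$, whereas the paper uses monotonicity of $k\mapsto\|RT^kR^*\|$ and the convergence $T^k\to\T$ (Lemma~\ref{lemma:norm1}). Both conclude $\|\tsw-S_\mu\|_\mu\le\|\sb-S_\mu\|_\mu$.

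The upper bound, however, has a real gap, and it is not merely bookkeeping. Your master estimate $\lambda(\tsw)\le 2k\lambda(\sb)+e^{-k/m}$ is correct, but it is \emph{too weak} to yield any bound of the form $\lambda(\tsw)\le C\,m\log m\,\lambda(\sb)$. Optimizing in $k$ gives at best $\lambda(\tsw)\lesssim m\log(1/\lambda(\sb))\,\lambda(\sb)$, and your case analysis does not rescue this: when $\lambda(\sb)$ is sub-polynomially small (which the theorem does not exclude, and which in fact occurs at criticality for large $q$, cf.~Theorem~\ref{th:RC-SW-SB_torus}), the factor $\log(1/\lambda(\sb))$ dominates $\log m$. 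The loss happens at the step where you bound $\|M(T^k-\T)M^*\|_\mu$ by $\|T^k-\T\|_\nu\le e^{-k/m}$ via the triangle inequality and submultiplicativity; this throws away the crucial fact that the ``error'' lives on directions where $\|R\,\cdot\,R^*\|$ is already controlled by~$1$ rather than by $\|R\T R^*\|$.

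The paper avoids this by Lemma~\ref{lemma:norm2}: expanding $T^k$ multinomially, the terms with all edges present contribute exactly $(1-\eps)\,R\T R^*$ with $\eps\le m(1-1/m)^k$, while the remaining terms $R\T_\alpha R^*$ are each bounded in norm by $1$, giving
\[
\|RT^kR^*\|_\mu \;\le\; (1-\eps)\,\|R\T R^*\|_\mu \,+\,\eps.
\]
The point is the \emph{multiplicative} $(1-\eps)$ in front of $\|R\T R^*\|_\mu$; rearranging now gives $\lambda(\tsw)\le\frac{2k}{1-\eps}\lambda(\sb)$ cleanly, with no residual additive term to absorb. In your framework this is the same as bounding $\|M\hat T^kM^*\|_\mu$ by $\frac{1}{m^k}\sum_\alpha\binom{k}{\alpha}\|M\T_\alpha M^*-S_\mu\|_\mu$ (using $\hat T^k=T^k-S_\nu$ and linearity) rather than by $\|\tsw-S_\mu\|_\mu+\|T^k-\T\|_\nu$. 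Replacing your triangle-inequality step by this multinomial grouping closes the gap and yields the constant $8$ after setting $\eps=\tfrac12$.
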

\vspace*{3mm}

This theorem shows that the Swendsen--Wang dynamics is rapidly mixing 
if and only if the single-bond dynamics is rapidly mixing, 
since the spectral gaps can differ only by a polynomial in the 
number of edges of the graph.
In Section~\ref{sec:4_app} we will state some of the results that 
follow from the (already discussed) results for the Swendsen--Wang 
dynamics.

The proof of Theorem~\ref{th:main-SB} is based on the bounds of 
the last section and a suitable choice of the 
involved operators.
Recall that we consider both dynamics on a graph $G=(V,E)$ with 
$m$ edges, i.e.~$m=\abs{E}$. 
Fix an arbitrary ordering $e_1,\dots,e_m$ of the edges $e\in E$. 
We set the Hilbert spaces from Section \ref{sec:4_technical} to 
\[
{H_1}:=L_2(\mu) \quad \text{ and }\quad  {H_2}:=L_2(\nu)
\] 
and define the operators 
\begin{equation*} 
T \;:=\; \frac{1}{m}\,\sum_{i=1}^m\, T_{e_i}
\quad
\mbox{and}\quad
\T \;:=\; \prod_{i=1}^m\, T_{e_i},
\end{equation*}
where the $T_e$, $e\in E$, are 
from the common representation in 
\eqref{eq:T-e}. 
Recall that $\sb=M T M^*$ and $\tsw=M \T M^*$ 
by Lemma~\ref{lemma:repr} with $M$ from \eqref{eq:M}.
Additionally we define
\begin{equation*} 
\T_\a \;:=\; \prod_{i=1}^m\, T_{e_i}^{\a_i}
\end{equation*}
for $\a\in\N^m$. By Lemma~\ref{lemma:repr-prop}(ii) we deduce for 
$\a, \g\in\N^m$ that 
$\T_\a=\T_\g$ if and only if $\{i:\,\a_i=0\}=\{i:\,\g_i=0\}$. 
Furthermore, $\T_\a=\T$ iff $\a_i>0$ for all 
$i=1,\dots,m$.  

To conclude the proof of Theorem~\ref{th:main-SB} we 
need two lemmas that are stated in the sense of 
Section~\ref{sec:4_technical}. 
Afterwards we will see how this implies the result. 
The first lemma follows from Lemma~\ref{lemma:tech_mon}.

\begin{lemma}\label{lemma:norm1}
Let $R:{H_2}\to {H_1}$ be a bounded linear operator. 
Then
\[
\norm{R \T R^*}_{H_1} \;\le\; \norm{R T^k R^*}_{H_1}
\; \quad \text{ for all } k\in\N.
\]
\end{lemma}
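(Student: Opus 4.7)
The plan is to establish the operator inequality $T^k \ge \T$ on $H_2 = L_2(\nu)$ and then transfer it to $H_1 = L_2(\mu)$ by sandwiching with $R$ and $R^*$; this is exactly in the spirit of Lemma~\ref{lemma:tech_mon}, whose proof strategy (peeling powers of a positive self-adjoint operator) is the model.

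First I would observe that $T_{e_1},\dots,T_{e_m}$ are commuting orthogonal projections on $H_2$ by Lemma~\ref{lemma:repr-prop}, hence $\T=\prod_{i=1}^m T_{e_i}$ is itself an orthogonal projection. Its range satisfies $\mathrm{range}(\T)=\bigcap_{i=1}^m \mathrm{range}(T_{e_i})$: the identity $T_{e_i}\T=\T$ (which follows from $T_{e_i}^2=T_{e_i}$ and commutativity) gives $\mathrm{range}(\T)\subseteq\mathrm{range}(T_{e_i})$ for every $i$, and conversely $T_{e_i}v=v$ for all $i$ implies $\T v=v$ by iterating.

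Next I would show $T^k\ge\T$ as self-adjoint operators on $H_2$. Since $T=\frac{1}{m}\sum_i T_{e_i}$ commutes with each $T_{e_i}$, it commutes with $\T$, making both $\mathrm{range}(\T)$ and $\ker(\T)$ invariant under $T$; together they give the orthogonal decomposition $H_2=\mathrm{range}(\T)\oplus\ker(\T)$. On $\mathrm{range}(\T)$ each $T_{e_i}$ acts as the identity (by the range-containment above), so $T|_{\mathrm{range}(\T)}=I$ and hence $T^k|_{\mathrm{range}(\T)}=I=\T|_{\mathrm{range}(\T)}$. On $\ker(\T)$ the operator $T$ is an average of positive semi-definite projections $T_{e_i}$, hence $T^k|_{\ker(\T)}\ge0=\T|_{\ker(\T)}$. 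Combining both pieces, $T^k-\T\ge0$ on all of $H_2$.

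Finally I would conjugate by $R$ and $R^*$. For any positive self-adjoint $S$ on $H_2$, the conjugate $RSR^*$ is positive and self-adjoint on $H_1$ because $\langle RSR^*y,y\rangle_{H_1}=\langle SR^*y,R^*y\rangle_{H_2}\ge0$. Applied to $S=T^k-\T$, this gives $RT^kR^*\ge R\T R^*\ge 0$ as operators on $H_1$. Since the operator norm of a positive self-adjoint operator equals the supremum of its quadratic form over the unit sphere, and this supremum is monotone in the operator order, I conclude $\norm{R\T R^*}_{H_1}\le\norm{RT^kR^*}_{H_1}$ for every $k\in\N$. The only subtle point is the range-containment among the commuting projections, which reduces cleanly to the identity $T_{e_i}\T=\T$; everything else is a standard manipulation of positive operators in the style already used in the tech lemmas.
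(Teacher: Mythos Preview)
Your argument is correct, but it takes a different route from the paper. The paper first invokes Lemma~\ref{lemma:tech_mon} to get monotonicity $\norm{R T^{\ell}R^*}_{H_1}\le\norm{R T^{k}R^*}_{H_1}$ for all $\ell\ge k$, and then passes to the limit $\ell\to\infty$, using that $T^{\ell}\to\T$ in norm (which follows from the simultaneous diagonalisation of the commuting projections). You instead prove the operator inequality $T^{k}\ge\T$ directly by splitting $H_2=\mathrm{range}(\T)\oplus\ker(\T)$, on which $T^{k}$ acts as the identity and as a positive operator respectively, and then conjugate. Your route is more self-contained: it avoids both the square-root trick of Lemma~\ref{lemma:tech_mon} and the limit argument, and it makes transparent why the inequality holds (the ``extra'' positivity of $T^{k}$ lives entirely on $\ker(\T)$). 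The paper's route, on the other hand, isolates the monotonicity step as a reusable tool and treats the identification of the limit as a separate, essentially spectral, fact. One small remark: your opening sentence says the approach is ``in the spirit of Lemma~\ref{lemma:tech_mon}'', but in fact you do not use that lemma or its proof idea at all; your argument stands on its own.
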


\begin{proof}
See Remark~\ref{remark:SB-SW_pos-def} for arguments that 
show positivity of $T$. 
Additionally, Lemma~\ref{lemma:repr-prop} leads to 
$\norm{T}_{H_2}\le\frac{1}{m}\,\sum_{i=1}^m\,\norm{T_{e_i}}_{H_2} 
=1$. 
Hence $T$ satisfies the assumptions of Lemma~\ref{lemma:tech_mon}. 
We obtain  
\[
\norm{R T^\ell R^*}_{{H_1}} \;\le\; \norm{R T^k R^*}_{{H_1}} 
\ \text{ for every }\  k\le\ell.
\]
Note that $\lim_{\ell\to\infty} T^\ell = \T$ (in norm topology) 
implies 
$\lim\limits_{\ell\to\infty}\norm{RT^\ell R^*}_{H_1}=\norm{R\T R^*}_{H_1}$, 
which yields the result.
\end{proof}

Lemma~\ref{lemma:norm1} (or \ref{lemma:tech_mon}) shows that, 
while $k$ approaches infinity, $\norm{R T^k R^*}_{H_1}$ 
monotonically approaches $\norm{R \T R^*}_{H_1}$. 
This suggests that, for $k$ large enough, these norms 
are close to each other.
The next lemma yields such a reverse inequality.

\vspace{2mm}

\begin{lemma} \label{lemma:norm2}
Let $k = \lceil m \log\frac{m}{\eps}\rceil$ and $R:{H_2}\to {H_1}$ be a bounded 
linear operator with $\norm{R R^*}_{H_1}\le1$. Then
\[
\norm{R T^k R^*}_{H_1} \;\le\; (1-\eps) \,\bigl\|R \T R^*\bigr\|_{H_1} \,+\, \eps.
\]
\end{lemma}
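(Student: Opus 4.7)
The plan is to exploit the fact that, by Lemma~\ref{lemma:repr-prop}, the $T_{e_i}$ are commuting self-adjoint projections on $H_2=L_2(\nu)$, so I can simultaneously diagonalize them to obtain a decomposition $H_2 = \bigoplus_{\a\in\{0,1\}^m} V_\a$ in which each $T_{e_i}$ acts on $V_\a$ by the scalar $\a_i$. Once this is in hand, $T=\frac{1}{m}\sum_i T_{e_i}$ acts on $V_\a$ by $|\a|/m$ (so $T^k$ by $(|\a|/m)^k$), while $\T = \prod_i T_{e_i}$ acts by $\ind(\a=\mathbf{1})$. In particular $T^k$ agrees with $\T$ on $V_\mathbf{1}$, while on $V_\mathbf{1}^\perp$ the operator norm of $T^k$ is at most $\delta := (1-1/m)^k \le e^{-k/m}$.

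The next step is to use self-adjointness and positivity of $RT^kR^*$ on $H_1$ to write its norm as $\sup_{\norm{g}_{H_1}=1} \l T^k R^* g, R^* g\r_\nu$. For such a unit $g$, I would set $h := R^* g\in H_2$ and decompose $h = h_\mathbf{1} + h_\perp$ along $V_\mathbf{1}\oplus V_\mathbf{1}^\perp$. Since $\T$ is the orthogonal projection onto $V_\mathbf{1}$, this immediately gives $\norm{h_\mathbf{1}}_\nu^2 = \l\T h, h\r_\nu = \l R\T R^* g, g\r_{H_1} \le \norm{R\T R^*}_{H_1}$ and $\norm{h}_\nu^2 = \l RR^*g, g\r_{H_1} \le \norm{RR^*}_{H_1} \le 1$.

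The key calculation then combines the norm bound for $T^k$ on $V_\mathbf{1}^\perp$ with orthogonality of the splitting (the cross terms $\l T^k h_\mathbf{1}, h_\perp\r_\nu$ vanish because $T^k h_\mathbf{1}=h_\mathbf{1}\in V_\mathbf{1}$):
\begin{equation*}
\l T^k h, h\r_\nu \;=\; \norm{h_\mathbf{1}}_\nu^2 + \l T^k h_\perp, h_\perp\r_\nu
\;\le\; \norm{h_\mathbf{1}}_\nu^2 + \delta\bigl(\norm{h}_\nu^2 - \norm{h_\mathbf{1}}_\nu^2\bigr)
\;=\; (1-\delta)\norm{h_\mathbf{1}}_\nu^2 + \delta\norm{h}_\nu^2.
\end{equation*}
Since $1-\delta \ge 0$, feeding in the two bounds from the previous paragraph and taking the supremum over unit $g$ will produce
\begin{equation*}
\norm{RT^kR^*}_{H_1} \;\le\; (1-\delta)\norm{R\T R^*}_{H_1} + \delta.
\end{equation*}

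To finish, I simply note that the choice $k=\lceil m\log(m/\eps)\rceil$ forces $\delta \le \eps/m \le \eps$, and a one-line convexity check (using $\norm{R\T R^*}_{H_1}\le 1$, so that the right-hand side above equals $\norm{R\T R^*}_{H_1} + \delta(1-\norm{R\T R^*}_{H_1})$ and is non-decreasing in $\delta$) shows that replacing $\delta$ by $\eps$ only weakens the inequality and yields the stated bound. The hard part will really just be identifying the right decomposition; once the joint diagonalization of the $T_{e_i}$ is exploited, everything else is routine bookkeeping with inner products.
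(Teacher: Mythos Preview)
Your argument is correct, and it takes a genuinely different route from the paper's proof. The paper expands $T^k=\bigl(\frac1m\sum_i T_{e_i}\bigr)^k$ by the multinomial theorem, observes that every term in which all $m$ operators appear collapses to $\T$ (using $T_e^2=T_e$ and commutativity), and bounds the total weight of the remaining ``bad'' terms by the union bound $Z_{m,k}/m^k\le m(1-\frac1m)^k$; the triangle inequality together with $\|R\T_\a R^*\|_{H_1}\le1$ then finishes. Your approach instead simultaneously diagonalizes the commuting projections $T_{e_i}$, which exhibits $\T$ as the orthogonal projection onto the joint $1$-eigenspace and shows that $T^k$ has operator norm at most $(1-\frac1m)^k$ on its orthogonal complement; the quadratic-form characterization of the norm of the positive operator $RT^kR^*$ then converts this directly into the desired inequality. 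Your spectral argument is a little more conceptual and in fact yields the sharper intermediate constant $\delta=(1-\frac1m)^k$ rather than the paper's $m(1-\frac1m)^k$ (so your proof would already go through with $k=\lceil m\log(1/\eps)\rceil$), while the paper's combinatorial approach avoids invoking the spectral theorem and stays entirely at the level of the algebraic identities in Lemma~\ref{lemma:repr-prop}.
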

\vspace{2mm}

\begin{proof}
Define 
the index sets $I_{m,k}:=\{\a\in\N^m: \sum_{i=1}^m \a_i =k\}$ and 
$I^1_{m,k}:=\{\a\in I_{m,k}:  \a_i>0, \;\forall i=1,\dots,m\}$. 
Let $I^0_{m,k}:=I_{m,k}\setminus I^1_{m,k}$ and denote by 
$\binom{k}{\a}$, for $\a\in I_{m,k}$, the multinomial coefficient, 
i.e. 
\[
\binom{k}{\a} \,=\, \frac{k!}{\a_1! \a_2! \cdots \a_m!}.
\]
Obviously (by the multinomial theorem \cite[eq.~(2.21)]{HHM}), 
\[
\sum_{\a\in I_{m,k}} \,\binom{k}{\a} \;=\; m^k
\] 
and 
\[\begin{split}
Z_{m,k} \;:=\; \sum_{\a\in I^0_{m,k}} \,\binom{k}{\a}
\;&\le\; \sum_{i=1}^m\,\sum_{\a\in I^0_{m,k}: \a_i=0} \,\binom{k}{\a}
\;=\; m\,\sum_{\g\in I_{m-1,k}} \,\binom{k}{\g}
\;=\; m(m-1)^k.
\end{split}\] 
We write
\[\begin{split}
T^k \;&=\; \left(\frac1m\sum_{i=1}^m T_{e_i}\right)^k 
\;=\; \frac{1}{m^k} \sum_{\a\in I_{m,k}}\,\binom{k}{\a}\, \T_\a \\
&=\; \frac{1}{m^k} \sum_{\a\in I^1_{m,k}}\,\binom{k}{\a}\, \T_\a 
		\;+\; \frac{1}{m^k} \sum_{\a\in I^0_{m,k}}\,\binom{k}{\a}\, \T_\a.
\end{split}\]
Note that we use for the second equality the fact that the $T_e$'s are commuting by 
Lemma~\ref{lemma:repr-prop}(ii). 
Since we know that $\T_\a=\T$ for every $\a\in I^1_{m,k}$ 
(note that $I^1_{m,k}=\varnothing$ for $k< m$) and 
$\norm{R \T_\a R^*}_{H_1}\le1$ for every $\a\in I_{m,k}$, we obtain
\[\begin{split}
\norm{R T^k R^*}_{H_1} 
\;&\le\; \frac{1}{m^k} \sum_{\a\in I^1_{m,k}}\,\binom{k}{\a}\, 
						\bigl\|R \T_\a R^*\bigr\|_{H_1}
	\;+\; \frac{1}{m^k} \sum_{\a\in I^0_{m,k}}\,\binom{k}{\a}\, 
						\bigl\|R \T_\a R^*\bigr\|_{H_1} \\
&\le\; \left(1-\frac{Z_{m,k}}{m^k}\right) \,\bigl\|R \T R^*\bigr\|_{H_1}  
	\;+\; \frac{Z_{m,k}}{m^k}.
\end{split}\]
Using $(1-a)c+a\le(1-b)c+b$ for $c\le1$ and $a\le b$, 
we find that
\[
\norm{R T^k R^*}_{H_1} \;\le\; 
	\left(1-m\left(1-\frac1m\right)^k\right)\, \bigl\|R \T R^*\bigr\|_{H_1}  
		\;+\; m\left(1-\frac1m\right)^k.
\]
Setting $k = \lceil m \log\frac{m}{\eps}\rceil$ yields the result.
\end{proof}

Now we are able to prove the comparison result for SW and SB dynamics. 
For this let $S_1(B,(\sigma,A)):=\nu(\sigma,A)$ for all 
$B\in\O_{\rm RC}$ and $(\sigma,A)\in\O_{\rm J}$, which defines 
an operator (by \eqref{eq:map2}) that maps from $H_2$ to $H_1$. 
The adjoint operator $S_1^*$ is then given by 
$S_1^*((\sigma,A),B):=\mu(B)$, and thus 
$S_1 S_1^*(A,B) = S_{\mu}(A,B) = \mu(B)$ for all $A,B\in\O_{\rm RC}$. 
For the proof we choose the operator
\[
R \;:=\; M - S_1
\]
with $M$ from \eqref{eq:M}.
This operator has some useful properties which can be deduced 
directly from the properties of $M$ and $S_1$.
First of all, note that the matrix corresponding to $S_1^*$ is 
constant in the first (\jointname) coordinate. 
This readily implies $M S_1^* = S_\mu$. 
But $S_\mu$ defines a self-adjoint operator on ${H_1}:=L_2(\mu)$, 
and hence $S_\mu=S_\mu^*=(M S_1^*)^*=S_1 M^*$.
It follows that 
\[
R R^* \,=\, (M-S_1)(M^*-S_1^*) \,=\, M M^* - S_\mu 
\,=\, I - S_\mu,
\] 
with $If:=f$ for all $f\in L_2(\mu)$, because 
$M M^*(A,B)=\ind(A=B)$. 
Since $(R R^*)^2 = (I - S_\mu)^2 = I-S_\mu = R R^*$, 
we deduce that $R$ satisfies $\norm{R R^*}_{H_1}\in\{0,1\}$ 
and thus the assumptions of Lemmas~\ref{lemma:norm1} and 
\ref{lemma:norm2}. 
Note that $\norm{R R^*}_{H_1}=1$ whenever $m>1$, since 
then there always exists a non-constant function $f\in H_1$ 
with $\l f,1\r_\mu=0$, i.e.~$(I-S_\mu)f=f$.

Additionally, we find from Lemma~\ref{lemma:repr} that 
\begin{align*}
\sb-S_\mu \;&=\; R T R^* \\[-5mm]
\intertext{\vspace*{-2mm} and}
\tsw-S_\mu \;&=\; R \T R^*.
\end{align*}

We finish the section with the conclusion of the proof.

\begin{proof}[Proof of Theorem~\ref{th:main-SB}]
Recall that by definition 
\[
\lambda(\sb) \;=\; 1-\norm{\sb-S_\mu}_{H_1}
\;=\; 1-\norm{R T R^*}_{H_1}
\]
and 
\[
\lambda(\tsw) \;=\; 1-\norm{\tsw-S_\mu}_{H_1}
\;=\; 1-\norm{R \T R^*}_{H_1}.
\]
Obviously, Lemma~\ref{lemma:norm1} (with $k=1$) implies 
the first inequality of Theorem~\ref{th:main-SB}.

Now let $k = \lceil m \log\frac{m}{\eps}\rceil$. 
Since we know from Lemma~\ref{lemma:norm2} that 
\[
\norm{R \T R^*}_{H_1} \,\ge\, 
\frac{1}{1-\eps}\left(\norm{R T^k R^*}_{H_1}-\eps\right)
\]
we obtain
\[\begin{split}
\lambda(\tsw) \;&=\; 1-\norm{R \T R^*}_{H_1} 
\;\stackrel{L.\text{\scriptsize \ref{lemma:norm2}}}{\le}\; 
		1-\frac{1}{1-\eps}\left(\norm{R T^k R^*}_{H_1}-\eps\right) \\
&=\; \frac{1}{1-\eps}\left(1-\norm{R T^k R^*}_{H_1}\right)
\;\stackrel{Coro.\text{\scriptsize \ref{coro:tech}}}{\le}\; 
		\frac{1}{1-\eps}\left(1-\norm{R T R^*}_{H_1}^{2k}\right) \\
&\le\; \frac{2k}{1-\eps}\left(1-\norm{R T R^*}_{H_1}\right) 
\;=\; \frac{2k}{1-\eps}\,\lambda(\sb), 
\end{split}\]
where the last inequality comes from $1-x^k\le k(1-x)$ for $x\in[0,1]$.
Setting $\eps=\frac12$, we obtain 
$\frac{2k}{1-\eps}=4k\le 8 m \log m$. This proves the statement.
\end{proof}

\vspace{2mm}


\section{Applications} \label{sec:4_app}

We present three applications of Theorem~\ref{th:main-SB}.
The first one concerns the Swendsen--Wang dynamics 
on graphs with bounded linear width 
and is based on a result of  Ge and {\v{S}}tefankovi{\v{c}} 
\cite{GeS} (see Theorem~\ref{th:SB_linear-width} above).
For this recall the definition of the \emph{linear width} 
of a graph $G=(V,E)$ as the smallest number $\ell$ such 
that there exists an ordering $e_1,\dots,e_{\abs{E}}$ 
of the edges with the property that for every $i\in[\abs{E}]$ 
there are at most $\ell$ vertices that have an adjacent edge 
in $\{e_1,\dots, e_i\}$ and in $\{e_{i+1},\dots, e_{\abs{E}}\}$.

\begin{corollary} \label{coro:SW-width}
Let $\tsw$ be the transition matrix 
of the Swendsen--Wang dynamics for the random-cluster model 
with parameters $p$ and $q\in\N$ 
on a graph $G=(V,E)$ with linear width bounded by $\ell$. 
Let $m:=\abs{E}$. Then
\begin{equation} \label{eq:coro_SW-width}
\lambda(\tsw)
\;\ge\; \frac{1}{2\, q^{\ell+1}}\ \frac{1}{m^2}.
\vspace{2mm}
\end{equation}
\end{corollary}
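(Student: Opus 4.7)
The plan is to combine two results that are already in hand. By Theorem~\ref{th:SB_linear-width} (Ge and {\v{S}}tefankovi{\v{c}}), the single-bond dynamics on any graph $G=(V,E)$ with linear width at most $\ell$ satisfies
\[
\lambda(\sb) \;\ge\; \frac{1}{2\, q^{\ell+1}}\,\frac{1}{m^2}.
\]
On the other hand, Theorem~\ref{th:main-SB} gives the chain of inequalities $\lambda(\sb) \le \lambda(\tsw) \le 8m\log m\,\lambda(\sb)$; in particular the lower comparison $\lambda(\tsw) \ge \lambda(\sb)$ is all that is needed here.

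Chaining these two bounds yields
\[
\lambda(\tsw) \;\ge\; \lambda(\sb) \;\ge\; \frac{1}{2\, q^{\ell+1}}\,\frac{1}{m^2},
\]
which is exactly \eqref{eq:coro_SW-width}. So the ``proof'' is really just the observation that Theorem~\ref{th:main-SB} transports the previously known polynomial lower bound on $\lambda(\sb)$ to one on $\lambda(\tsw)$; the $\log m$ factor in Theorem~\ref{th:main-SB} does not even enter, since we use only the easy (left-hand) direction of the equivalence.

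There is essentially no obstacle: all the work has already been done in Theorem~\ref{th:SB_linear-width} (whose proof goes via a canonical-paths/congestion argument on the Metropolis version of the local chain, compared with $\sb$ by Lemma~\ref{lemma:prelim_comparison}) and in Theorem~\ref{th:main-SB} (whose proof rests on the common representation of Section~\ref{sec:4_repr} and the norm inequalities of Section~\ref{sec:4_technical}). The only thing worth noting is that the hypothesis $q\in\N$ is inherited from the standing assumption on $\tsw$ used throughout Chapter~\ref{chap:bond}; the bound on $\lambda(\sb)$ itself would actually hold for all real $q\ge 1$ by Theorem~\ref{th:SB_linear-width}.
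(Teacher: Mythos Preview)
Your proof is correct and matches the paper's approach exactly: the paper's own proof is the single sentence ``The result follows from Theorem~\ref{th:main-SB} together with Theorem~\ref{th:SB_linear-width}.'' Your additional remarks about which direction of Theorem~\ref{th:main-SB} is used and where the $q\in\N$ hypothesis comes from are accurate and helpful elaborations on what the paper leaves implicit.
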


\begin{proof}
The result follows from Theorem \ref{th:main-SB} together with 
Theorem~\ref{th:SB_linear-width}.
\end{proof}

Note that Lemma~\ref{lemma:SW_P-RC} shows that 
the result holds true if we replace the random-cluster model by 
the corresponding Potts model at inverse temperature 
$\beta=-\log(1-p)$ and $\tsw$ by $\sw$.
\vspace{1mm}

Now we turn to the single-bond dynamics. 
The second result of this section, analogously to 
Theorems~\ref{th:P-HB_2d} and \ref{th:P-SW_2d_high}, 
describes rapid mixing on $\Z^2_L=(V_{L,2},E_{L,2})$, 
i.e.~on the two-dimensional square lattice of side length $L$, 
at high temperatures. 
\goodbreak

\begin{corollary} \label{coro:SB_2d_high}
Let $\sb$ be the transition matrix of the 
single-bond dynamics for the RC model 
on $\Z^2_L$ with parameters $p$ and $q\ge1$.
Let $m=2L(L-1)=\abs{E_{L,2}}$. 
Then there exist constants $c_p=c_p(q),c'>0$ and $C<\infty$ 
such that
\begin{align*}
\lambda(\sb) \;&\ge\; \frac{c_p}{m^2 \log m} \qquad\quad
	\text{ for } p < p_c(q) \hspace*{2cm}\\[-3mm]
\intertext{\vspace*{-2mm} and}
\lambda(\sb) \;&\ge\; c'\,m^{-C} \qquad\qquad 	
	\text{ for } q=2 \text{ and } p = p_c(2),
\end{align*}
where $p_c(q)\,=\,\frac{\sqrt{q}}{1+\sqrt{q}}$.
\end{corollary}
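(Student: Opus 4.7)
The plan is to chain together the comparison results proved earlier in the paper with the known bounds for the heat-bath dynamics on $\Z_L^2$. Concretely, for the single-bond dynamics on $\Z_L^2$ with parameters $p$ and $q$, set $\beta := -\log(1-p)$ so that $p = 1-e^{-\beta}$ is the parameter corresponding to the Potts model at inverse temperature $\beta$, and note that the condition $p<p_c(q)=\frac{\sqrt{q}}{1+\sqrt{q}}$ is equivalent to $\beta<\beta_c(q)=\log(1+\sqrt{q})$, while $p=p_c(2)$ corresponds to $\beta=\beta_c(2)$.

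First I would invoke Theorem~\ref{th:P-HB_2d} to obtain the lower bounds on $\lambda(\hb)$, namely $\lambda(\hb)\ge c_\beta/n$ for $\beta<\beta_c(q)$ and $\lambda(\hb)\ge n^{-C}$ for $q=2$, $\beta=\beta_c(2)$, with $n=L^2=|V_{L,2}|$. Then Theorem~\ref{th:main-spin} applied with maximum degree $\D=\D(\Z_L^2)\le 4$ yields
\[
\lambda(\sw) \;\ge\; \csw(4,\beta,q)\,\lambda(\hb),
\]
where $\csw(4,\beta,q)$ is a positive constant depending only on $\beta$ and $q$ (not on $L$). By Lemma~\ref{lemma:SW_P-RC}, $\lambda(\sw)=\lambda(\tsw)$ for the Swendsen--Wang dynamics on the random-cluster model with parameters $p$ and $q$. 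Finally, Theorem~\ref{th:main-SB} gives the reverse comparison
\[
\lambda(\sb) \;\ge\; \frac{1}{8\,m\log m}\,\lambda(\tsw).
\]

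Chaining these inequalities together, and using that $n=L^2$ and $m=2L(L-1)$ satisfy $n\le m+1\le 2n$ (so $n$ and $m$ are of the same order), I would obtain
\[
\lambda(\sb) \;\ge\; \frac{\csw(4,\beta,q)\,\lambda(\hb)}{8\,m\log m}
\;\ge\; \begin{cases} \dfrac{c_p}{m^2\log m}, & p<p_c(q),\\[2mm] c'\,m^{-C}, & q=2,\,p=p_c(2), \end{cases}
\]
with $c_p=c_p(q)>0$ absorbing $\csw$ and the $c_\beta$ from Theorem~\ref{th:P-HB_2d}, and with some (possibly enlarged) constant $C<\infty$ in the critical case, where the extra factors of $m$ and $\log m$ are absorbed by increasing the exponent. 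This is really a bookkeeping step; there is no serious obstacle, since every ingredient is already available. The only mild subtlety is to make sure the constant $\csw(4,\beta,q)$ from \eqref{eq:csw} is bounded away from zero uniformly for the relevant values of $\beta$ (which it is, since $\beta$ is fixed below or at $\beta_c(q)$), so that it can be swallowed into $c_p$ respectively $c'$.
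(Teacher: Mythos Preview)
Your proposal is correct and follows essentially the same route as the paper: the paper cites Theorem~\ref{th:P-SW_2d_high} (which is itself just Theorem~\ref{th:P-HB_2d} combined with Theorem~\ref{th:main-spin}) together with Lemma~\ref{lemma:SW_P-RC}, and then applies Theorem~\ref{th:main-SB}, which is exactly the chain you wrote out in full. Your closing remark about uniformity of $\csw$ over $\beta$ is unnecessary, since the constant $c_p$ in the statement is allowed to depend on $p$ (hence on $\beta$).
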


\begin{proof}
The bounds for the Swendsen--Wang (SW) dynamics for the 
Potts model from Theorem~\ref{th:P-SW_2d_high} and 
Lemma~\ref{lemma:SW_P-RC} show that SW is rapidly mixing 
in the desired range of $p$. 
Theorem~\ref{th:main-SB} leads to the specific bounds 
(since 
$\abs{V_{L,2}}\le\abs{E_{L,2}}=2L(L-1)\le2\abs{V_{L,2}}$). 
\end{proof}
\vspace{1mm}

In the next chapter we present techniques that allow us to 
relate the spectral gap of local Markov chains for the 
random-cluster model at high and low temperatures. 
In particular, we will extend the first bound of 
Corollary~\ref{coro:SB_2d_high} to all non-critical 
temperatures (see Theorem~\ref{th:SB_2d}).

The third and last result that we want to present here 
yields tight bounds on the spectral gap of the 
single-bond dynamics on the $d$-dimensional torus $\Zt_L^d$ 
of side length $L$ (see \eqref{eq:def-torus}), 
at the critical temperature. These bounds show slow mixing 
if $q$ is large enough.
This complements (and also uses) a result of 
Borgs, Chayes and Tetali~\cite{BCT} (see 
Theorem~\ref{th:P-SW-HB_torus}).

\vspace{2mm}

\begin{theorem}\label{th:RC-SW-SB_torus}
Let $\sb$ $($resp.~$\tsw$$)$ be the transition matrix of the single-bond 
$($resp. Swendsen--Wang$)$ dynamics for the random-cluster model  
on $\Zt_L^d$, $d\ge2$, with parameters $p$ and $q$. 
Then there exist constants $k_1,k_2<\infty$ 
and a constant $k_3>0$ 
$($depending all on $d$, $\beta$ and $q$$)$
such that, for $q$ and $L$ large enough, 
\begin{align*}
e^{-(k_1 + k_2 \beta) L^{d-1}} \,&\le\, \lambda(\sb) 
	\,\le\, e^{-k_3 \beta L^{d-1}} \\[-3mm]
\intertext{\vspace*{-2mm} and}
e^{-(k_1 + k_2 \beta) L^{d-1}} \,&\le\, \lambda(\tsw) 
	\,\le\, e^{-k_3 \beta L^{d-1}}
\end{align*}
if $p=1-e^{-\beta_c(d,q)}$ with $\beta_c(d,q)$ from 
\eqref{eq:critical}. 
The lower bounds hold for all $p$, $q$ and $L$. 
\end{theorem}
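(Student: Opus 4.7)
The plan is to derive Theorem~\ref{th:RC-SW-SB_torus} essentially as a direct corollary of three results already established in the paper: Theorem~\ref{th:P-SW-HB_torus} (the analogous bounds for the SW dynamics for the Potts model on the torus, due to Borgs, Chayes and Tetali), Lemma~\ref{lemma:SW_P-RC} (equality of the spectral gaps of SW for the Potts and for the random-cluster model), and Theorem~\ref{th:main-SB} (the polynomial comparison between $\tsw$ and $\sb$). The heavy lifting has already been done in Theorem~\ref{th:P-SW-HB_torus}; my task is merely to transport those bounds across the dictionary Potts~$\leftrightarrow$~RC and then from non-local to local RC dynamics.

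First I would dispose of the $\tsw$ bounds. By Lemma~\ref{lemma:SW_P-RC}, with $p = 1 - e^{-\beta}$ we have $\lambda(\tsw) = \lambda(\sw)$, where on the right $\sw$ is the SW dynamics for the $q$-state Potts model at inverse temperature $\beta$. Choosing $\beta = \beta_c(d,q)$, the upper and lower bounds on $\lambda(\sw)$ from Theorem~\ref{th:P-SW-HB_torus} therefore translate verbatim into upper and lower bounds on $\lambda(\tsw)$ with the same constants $k_1, k_2, k_3$. In particular, the lower bound carries over for all $p, q, L$ since this is asserted for $\sw$ in Theorem~\ref{th:P-SW-HB_torus}.

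Next I would handle $\sb$. For the upper bound, Theorem~\ref{th:main-SB} gives $\lambda(\sb) \le \lambda(\tsw)$, so the upper bound $\lambda(\sb) \le e^{-k_3 \beta L^{d-1}}$ follows at once (with the same $k_3$). For the lower bound, Theorem~\ref{th:main-SB} yields
\[
\lambda(\sb) \;\ge\; \frac{\lambda(\tsw)}{8 m \log m}.
\]
On $\Zt_L^d$ the number of edges is $m = d L^d$, so $8 m \log m \le C_d\, L^d \log L$ for a constant $C_d$ depending only on $d$. Since $d \ge 2$, we have $L^d \log L = o\bigl(e^{\eps L^{d-1}}\bigr)$ for any $\eps > 0$, so the polynomial factor may be absorbed into the exponent by increasing the constants $k_1, k_2$ slightly. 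This delivers
\[
\lambda(\sb) \;\ge\; e^{-(k_1' + k_2' \beta) L^{d-1}}
\]
for all $L$ large enough, with $k_1', k_2'$ again depending on $d,\beta,q$.

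The main obstacle is not really an obstacle at all, but rather a bookkeeping check: one has to verify that the qualifiers ``$q$ large enough'' and ``$L$ large enough'' inherited from Theorem~\ref{th:P-SW-HB_torus} remain compatible with the absorption of the $8 m \log m$ factor, and that the lower bounds indeed hold for \emph{all} $p, q, L$ as claimed (which they do because the corresponding lower bound in Theorem~\ref{th:P-SW-HB_torus} already has that full generality). Apart from this mechanical step, no new idea beyond the three cited results is needed.
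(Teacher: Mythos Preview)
Your proposal is correct and follows essentially the same approach as the paper: derive the $\tsw$ bounds from Theorem~\ref{th:P-SW-HB_torus} via Lemma~\ref{lemma:SW_P-RC}, then transfer them to $\sb$ using both inequalities of Theorem~\ref{th:main-SB}. You even spell out the absorption of the $8m\log m$ factor into the exponential constants, which the paper's one-line proof leaves implicit.
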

\vspace{1mm}

\begin{proof}
We obtain the second inequality immediately from 
Theorem~\ref{th:P-SW-HB_torus} and Lemma~\ref{lemma:SW_P-RC}. 
The result on the single-bond dynamics thus follows from 
Theorem~\ref{th:main-SB}.
\end{proof}
\vspace{1mm}

The original proof of the lower bound on $\lambda(\sb)$, as given in 
\cite{U3}, uses the bound of Theorem~\ref{th:SB_linear-width} 
together with a bound on the linear width of $\Zt_L^d$.

\begin{remark}
There are some more results on rapid mixing of the single-bond 
dynamics that follow directly from 
Theorem~\ref{th:main-SB}. These include rapid mixing 
on trees and cycles (cf.~Theorem~\ref{th:SW_tree-cycle}), 
on the complete graph (cf.~Theorem~\ref{th:SW_complete}) and 
on graphs with bounded degree if $p$ is small enough  
(cf.~Theorem~\ref{coro:SW_degree}). 
We do not state them here explicitly, 
because our main purpose is to prove rapid mixing of the
Swendsen--Wang dynamics.
\end{remark}


\chapter{Rapid mixing in two dimensions} \label{chap:2d}

This chapter is devoted to the study of the spectral gaps of 
Swendsen--Wang and single-bond dynamics if the underlying graph 
for the random-cluster model has a special structure.
Namely, we consider \emph{planar} graphs, i.e.~graphs that 
can be drawn in the plane without intersecting edges.
For each such graph it is possible to define a corresponding 
\emph{dual} graph and we will see that it is possible to 
prove bounds on the spectral gap (of both dynamics) on the 
original graph in terms of the spectral gap on the dual one, 
if the temperature is suitably changed.
In particular, we prove results on rapid mixing of the dynamics 
at low temperatures, where the previous techniques, 
which rely on rapid mixing of single-spin dynamics for the 
Potts model at the same temperature, do not apply.

The plan of this chapter is as follows.
In Section~\ref{sec:5_embeddings} we introduce the notion of 
representations of graphs in the plane and, consequently, 
the notion of planar graphs. This leads to the construction of 
dual graphs and we will see that there is a tight connection between 
the random-cluster model on a graph and its dual. 
In Section~\ref{sec:5_result} we use this connection to 
prove the main results of this chapter, i.e.~the 
comparison of the spectral gaps on a graph and on its dual graph.
This ends in a proof of rapid mixing of the considered 
dynamics on the two-dimensional square lattice at all 
non-critical temperatures, see Section~\ref{sec:5_square}.
Finally, in Section~\ref{sec:5_genus}, we present a slight 
generalization of the given results to graphs that can be drawn 
on a surface with bounded genus.


\section{Planar and dual graphs} \label{sec:5_embeddings}

The following introduction to embeddings of graphs in the plane 
and to the construction of dual graphs is presented 
according to Mohar and Thomassen~\cite{Mohar} and we refer 
to this monograph for a more comprehensive study of the topic.

Throughout this chapter we consider only connected graphs. 
Let a connected graph $G=(V,E,\varphi)$ be given and recall that 
$\varphi$ is the function that 
assigns to each edge $e\in E$ the set of its endvertices. 
For convenience, we will mostly omit $\varphi$ from the 
notation. 
We say that $G$ is a 
\emph{planar graph}\index{graph!planar}
if there is a 
\emph{representation of $G$ in the plane $\R^2$}.
That is, there exists a finite set $\Vt\subset\R^2$, 
a bijection $\kappa$ from $V$ to $\Vt$ 
and a set of 
\emph{simple curves} 
$\,\Et=\{\g_e:[0,1]\to\R^2:\, e\in E\}$ in $\R^2$ such that 
\vspace{1mm}
\begin{itemize}
	\item $\g_e(0)\in\Vt$ and $\g_e(1)\in\Vt$ for all $e\in E$,
		\vspace{1mm}
	\item for all $u,v\in V$ and $e\in E$ we have 
				\[
				\bigl\{\g_e(0), \g_e(1)\bigr\}
								=\bigl\{\kappa(u),\kappa(v)\bigr\}
				\;\;\Longleftrightarrow\;\; 
				\varphi(e)=\{u,v\} 
				\]
		and 
		\vspace{1mm}
	\item $\g_e\bigl((0,1)\bigr)\cap\g_f\bigl([0,1]\bigr)=\varnothing$ 
					for all $e\neq f\in E$.
\end{itemize}
Note that a curve $\g:[0,1]\to\R^2$ is always assumed to be 
continuous, and is called 
\emph{simple}\index{simple curve} 
if it does not cross itself except for its ends, 
i.e.~$\g(x)=\g(y)$ for $x\neq y$ implies $x,y\in\{0,1\}$.
The above assumptions mean respectively that the ends of all curves are 
elements of $\Vt$, that $\g_e$ connects the elements of $\Vt$ that 
correspond (by $\kappa$) to the endvertices of $e\in E$ and that 
the interior of each curve is disjoint from all other curves. 
The tuple $\Gt=(\!\Vt,\Et)$ is called a 
\emph{representation of $G$ in the plane}\index{graph!representation} 
and we assume henceforth that we fix for every planar graph $G$ 
a representation $\Gt$ in the plane. 
In the literature $\Gt$ is also called a \emph{plane graph}, 
\emph{planar embedding} or \emph{drawing} of $G$.
See e.g.~Figure~\ref{fig:graphs} for two examples of planar graphs 
(or, more precisely, their representations in the plane).

Clearly, being connected by a simple curve in a set $C\subset\R^2$ 
defines an equivalence relation on $C$, whose equivalence classes 
are called \emph{regions of $C$}.
A \emph{face of} $C$ is a region of $\R^2\setminus C$ and we call 
the faces of $\bigcup_{e\in E}\g_e\bigl([0,1]\bigr)$, i.e.~the faces 
of the union of the images of all curves of $\Gt$, the 
\emph{faces of $\Gt$}. 
Note 
that since we consider finite graphs, there is exactly one 
unbounded face. We call it the \emph{outer face of} $\Gt$.
In the following we need the number of faces of a representation 
$\Gt$ of a graph $G$, which we denote by $F(\Gt)$. 
One may think that this number depends on the choice of the 
representation, but the following lemma, which is known as 
\emph{Euler's $($polyhedral\/$)$ formula}, shows that this 
is not the case (see e.g.~\cite[Prop.~2.2.3]{Mohar}).

\begin{lemma}[Euler's formula]\label{lemma:euler}
Let $G=(V,E)$ be a connected planar graph and $\Gt$ be a 
representation of $G$ in the plane. Then
\[
2 \,=\, \abs{V} - \abs{E} + F(\Gt).
\]
\end{lemma}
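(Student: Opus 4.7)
The plan is to prove Euler's formula by induction on the number of edges $|E|$, exploiting the fact that every connected graph contains a spanning tree as its minimal connected spanning subgraph.

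For the base case, I would assume $|E| = |V|-1$, which forces $G$ to be a tree (any connected graph on $n$ vertices has at least $n-1$ edges, with equality iff it is a tree, as noted in Section~2.3). For a tree, any representation $\tilde{G}$ has $F(\tilde{G}) = 1$: the union of the simple arcs contains no closed subcurve, so $\R^2 \setminus \bigcup_{e} \gamma_e([0,1])$ is arcwise connected. The formula then reads $|V| - (|V|-1) + 1 = 2$, as required.

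For the inductive step, I would assume $|E| \ge |V|$, so that $G$ contains a cycle, and pick any edge $e$ lying on such a cycle. Let $G' = (V, E \setminus \{e\})$ with representation $\tilde{G}'$ obtained from $\tilde{G}$ by deleting the curve $\gamma_e$. Then $G'$ is still connected (the endpoints of $e$ remain connected through the rest of the cycle), so by the induction hypothesis, $2 = |V| - |E \setminus \{e\}| + F(\tilde{G}')$. The crucial geometric claim is that deleting $\gamma_e$ reduces the face count by exactly one, i.e.\ $F(\tilde{G}') = F(\tilde{G}) - 1$. Granting this, the inductive step closes: $|V| - (|E|-1) + (F(\tilde{G})-1) = |V| - |E| + F(\tilde{G}) = 2$.

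The main obstacle, and the step I would not actually grind through, is the topological claim $F(\tilde{G}') = F(\tilde{G}) - 1$. The intuitive picture is that $\gamma_e$, together with a path in $\tilde{G}'$ joining its endpoints (which exists because $G'$ is connected), forms a simple closed curve, and by the Jordan curve theorem this closed curve separates $\R^2$ into two regions; the two faces of $\tilde{G}$ bordering $\gamma_e$ therefore lie in different components, and erasing $\gamma_e$ fuses exactly these two faces while leaving all others intact. Making this rigorous requires the Jordan curve theorem and a careful argument that the two faces on either side of $\gamma_e$ are distinct in $\tilde{G}$ (because $\gamma_e$ is a proper sub-arc of a Jordan curve formed using the cycle) but become one in $\tilde{G}'$. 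I would simply cite this as a standard consequence of planar topology (see e.g.\ Mohar--Thomassen~\cite{Mohar}) rather than reprove the Jordan curve theorem here, and the remainder of the argument is then a two-line induction.
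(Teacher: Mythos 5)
Your induction is the classical proof of Euler's formula and it is correct in structure; note, however, that the paper itself does not prove this lemma at all --- it simply cites Mohar and Thomassen \cite{Mohar} (Prop.~2.2.3), so there is no in-paper argument to compare against. Your base case is immediate with the paper's own definition of a tree (a connected graph with $\abs{E}=\abs{V}-1$), provided one accepts the standard topological fact that the complement of an embedded forest in $\R^2$ is connected, i.e.\ $F=1$; and your inductive step correctly isolates the only genuinely topological content, namely that erasing an edge lying on a cycle merges exactly two faces, which you defer to the Jordan curve theorem. One small point worth flagging: the paper's graphs are multigraphs, so in the inductive step ``cycle'' must be read to include loops and pairs of parallel edges. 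For a loop the curve $\g_e$ is itself a simple closed curve and the Jordan-curve argument applies directly; for a parallel pair the closed curve is formed from the two parallel arcs. Neither case breaks your argument, but the phrase ``the endpoints of $e$ remain connected through the rest of the cycle'' silently assumes a cycle of length at least two, so a sentence covering the degenerate cases would make the step airtight. With that caveat, deferring the topological lemma to \cite{Mohar} is exactly the level of rigor the paper itself adopts.
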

 
This proves that the number of faces does not depend 
on the particular representation and thus we write 
$F(G)$ for the 
\emph{number of faces}\index{graph!number of faces} 
of any representation of $G$. 

In addition to the statement of Lemma~\ref{lemma:euler} for the 
whole graph $G$, we need a result on its spanning subgraphs. 
For this recall that we denote by $G_A=(V,A)$, $A\subset E$, 
the graph $G$ without the edges of $E\setminus A$ and that 
$c(G_A)$ (or simply $c(A)$) denotes the number of connected 
components of $G_A$. The representation of $G_A$ in the plane 
is fixed to be the representation of $G$ without the curves 
corresponding to the edges of $E\setminus A$ 
and we write $F(A)$ for the number of faces of the 
(fixed) representation of $G_A$.
The next equation is a simple corollary of Lemma~\ref{lemma:euler}.

\begin{corollary}\label{coro:euler}
Let $G$ be a planar graph and $G_A$, $A\subset E$, be a spanning 
subgraph of $G$. Then
\[
c(A) + 1 \,=\, \abs{V} - \abs{A} + F(A).
\]
\end{corollary}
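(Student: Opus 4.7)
The plan is to deduce this from Lemma~\ref{lemma:euler} by treating each connected component of $G_A$ separately. Write the components of $G_A$ as $C_i=(V_i,A_i)$, $i=1,\dots,c(A)$, each of which is itself a connected planar graph (its representation being the restriction of the fixed representation of $G$). Applying Lemma~\ref{lemma:euler} to $C_i$ gives $|V_i|-|A_i|+F_i=2$, where $F_i$ denotes the number of faces of the representation of $C_i$. Summing over $i$ and using $\sum_i|V_i|=|V|$ and $\sum_i|A_i|=|A|$ yields
\[
|V|-|A|+\sum_{i=1}^{c(A)}F_i \;=\; 2\,c(A).
\]

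The next step is to relate $\sum_i F_i$ to $F(A)$. When the $c(A)$ components are drawn jointly in the plane (as they are in the inherited representation), the unbounded face of each $C_i$ coincides with the single unbounded face of $G_A$, while the bounded faces of each $C_i$ remain distinct faces of $G_A$. Hence $F(A)=\sum_iF_i-(c(A)-1)$, and substituting into the displayed equation above gives $|V|-|A|+F(A)+c(A)-1=2\,c(A)$, i.e.~the claim.

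The main obstacle, such as it is, lies in justifying the identification of the outer faces of the components with a single outer face of $G_A$; this is intuitively clear but formally a consequence of the Jordan curve theorem and can be quoted from Mohar--Thomassen~\cite{Mohar}. An alternative route that sidesteps this point is induction on $k=|E\setminus A|$: the base case $k=0$ is exactly Lemma~\ref{lemma:euler} (since $G$ is connected, $c(E)=1$), and for the inductive step one deletes an edge $e\in A$ and splits into two cases. If $e$ is a bridge of $G_A$ then $c$ grows by $1$ and $F$ is unchanged (a bridge is incident to the same face on both sides), while if $e$ lies on a cycle then $c$ is unchanged and $F$ drops by $1$ (the two faces on either side of $e$ merge). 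A one-line computation in each case transfers the identity from $A\setminus e$ to $A$.
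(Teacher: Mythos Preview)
Your main argument is essentially identical to the paper's: apply Euler's formula to each connected component, sum, and then correct for the $c(A)-1$ overcounted outer faces. The paper phrases the correction as ``in the last sum we count the outer face $c(A)$ times'' and subtracts $c(A)-1$; you phrase it as $F(A)=\sum_i F_i-(c(A)-1)$, which is the same identity. Your remark that the outer-face identification needs a word of justification (and your offer of an inductive alternative) goes slightly beyond what the paper writes, but the core proof matches.
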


\begin{proof}
We know from Lemma~\ref{lemma:euler} that for each connected 
component $G_i=(V_i,E_i)$ of $G_A$, $i=1,\dots,c(A)$, we have 
$2 \,=\, \abs{V_i} - \abs{E_i} + F(G_i)$. 
Summing over $i$ leads to 
$2 c(A) \,=\, \abs{V} - \abs{A} + \sum_{i=1}^{c(A)} F(G_i)$, 
but in the last sum we count the outer face $c(A)$ times. 
Subtracting $c(A)-1$ on both sides yields the result.
\end{proof}
\vspace{1mm}

Now we turn to the definition of the 
\emph{dual graph}\index{graph!dual} 
of a graph $G$. 
For this consider the (fixed) representation in the plane 
$\Gt=(\Vt,\Et)$ 
of the connected graph $G=(V,E)$. We define the dual graph 
as the (unique) graph $G^\dag=(V^\dag, E^\dag)$ such that its 
representation in the plane $\Gt^\dag=(\Vt^\dag, \Et^\dag)$ 
satisfies: 
\begin{itemize}
	\item there is exactly one 
					$\tilde{v}^\dag\in\Vt^\dag$ in every face of $\Gt$, and
		\vspace{1mm}
	\item for every $e\in E$ there is exactly one simple curve 
					$\g_e^\dag\in E^\dag$ with endpoints in $\Vt^\dag$ 
					that intersects $\g_e\in\Et$, but not $\g_f$, $f\neq e\in E$.
\end{itemize}
Note that, for every $\g_e\in\Et$, the endpoints of the 
corresponding ``dual'' curve $\g_e^\dag$ are unique, 
since there is only one vertex in every face of 
$\Gt$ and a curve can separate at most two faces from each other 
(by Jordan's Curve Theorem~\cite{Mohar}).
We call the edge $e_\dag\in E^\dag$ in the dual graph that 
belongs to the dual curve $\g_e^\dag$ the 
\emph{dual edge of}\index{dual!edge} 
$e\in E$.

In general, the definition of the dual graph of $G$ depends on the 
representation of $G$ in the plane. 
Therefore recall that we fix such a representation whenever we 
fix the graph. 
This implies 
that the dual graph is well-defined, unique and that 
the dual of the dual graph is the original graph. 
(Here we presume that the representation in the plane of the 
dual graph is fixed to be the one constructed above.)

\begin{remark}\label{remark:3-connected}
Note that the dual graph of any planar graph is connected.
Therefore the connectivity of $G$ is necessary to ensure 
that the dual of the dual graph of $G$ is~$G$.
Additionally, it is known (see \cite[Thm.~2.6.7]{Mohar}) that if we 
restrict to 3-connected planar graphs, i.e.~planar graphs 
with at least four vertices such that every subgraph obtained 
by deleting two vertices is still connected, then the dual 
graph is unique and thus independent of the representation. 
\end{remark}

In view of the application to Markov chains for the random-cluster 
model we also define a 
\emph{dual configuration}\index{dual!configuration} 
$A^\dag\subset E^\dag$ on the dual graph $G^\dag$ for every 
random-cluster configuration $A\subset E$ on $G$ by the 
property that for all $e\in E$ 
(and corresponding $e_\dag\in E^\dag$)
\begin{equation}\label{eq:dual-conf}
e_\dag\in A^\dag \quad\iff\quad e\notin A.
\end{equation}
Thus, a dual edge is present in $A^\dag$ whenever the original one 
is absent in $A$. 
In particular, $\abs{E}=\abs{E^\dag}=\abs{A}+\abs{A^\dag}$ for 
every $A\subset E$.
See Figure~\ref{fig:dual} for an example of a graph and its dual 
together with a proper pair of RC configurations.

\setcounter{figure}2
\vspace{-4mm}
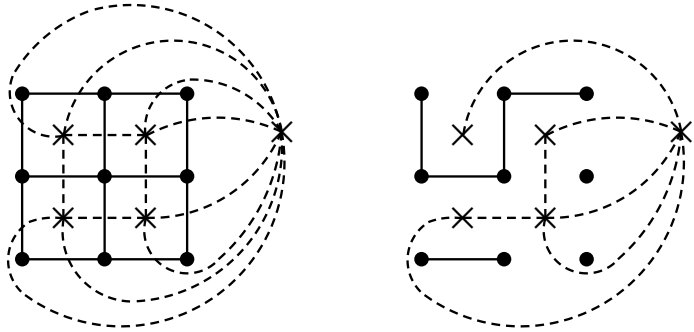
\begin{figure}[ht]
\begin{center}
\scalebox{1.1}{
	\psset{xunit=1.0cm,yunit=1.0cm,algebraic=true,dotstyle=*,dotsize=3pt 0,linewidth=0.8pt,arrowsize=3pt 2,arrowinset=0.25}
	\begin{pspicture*}(0,0)(4.5,4.5)
	\psline[linewidth=0.8pt](1,3)(2,3)
	\psline[linewidth=0.8pt](2,3)(3,3)
	\psline[linewidth=0.8pt](3,3)(3,2)
	\psline[linewidth=0.8pt](3,2)(3,1)
	\psline[linewidth=0.8pt](3,1)(2,1)
	\psline[linewidth=0.8pt](2,1)(2,2)
	\psline[linewidth=0.8pt](2,2)(3,2)
	\psline[linewidth=0.8pt](2,2)(2,3)
	\psline[linewidth=0.8pt](1,3)(1,2)
	\psline[linewidth=0.8pt](1,2)(2,2)
	\psline[linewidth=0.8pt](2,1)(1,1)
	\psline[linewidth=0.8pt](1,1)(1,2)
	\psline[linewidth=0.8pt,linestyle=dashed,dash=3pt 2pt](1.5,1.5)(1.5,2.5)
	\psline[linewidth=0.8pt,linestyle=dashed,dash=3pt 2pt](1.5,2.5)(2.5,2.5)
	\psline[linewidth=0.8pt,linestyle=dashed,dash=3pt 2pt](2.5,2.5)(2.5,1.5)
	\psline[linewidth=0.8pt,linestyle=dashed,dash=3pt 2pt](2.5,1.5)(1.5,1.5)
	\parametricplot[linewidth=0.8pt,linestyle=dashed,dash=3pt 2pt]{0.1852913062710569}{2.989280090214289}{1*1.34*cos(t)+0*1.34*sin(t)+2.83|0*1.34*cos(t)+1*1.34*sin(t)+2.3}
	\parametricplot[linewidth=0.8pt,linestyle=dashed,dash=3pt 2pt]{1.1350083349985625}{2.0596018113318753}{1*1.84*cos(t)+0*1.84*sin(t)+3.37|0*1.84*cos(t)+1*1.84*sin(t)+0.87}
	\parametricplot[linewidth=0.8pt,linestyle=dashed,dash=3pt 2pt]{4.6717617082515}{5.883637174045906}{1*1.71*cos(t)+0*1.71*sin(t)+2.57|0*1.71*cos(t)+1*1.71*sin(t)+3.21}
	\parametricplot[linewidth=0.8pt,linestyle=dashed,dash=3pt 2pt]{1.5028552028831879}{2.000187227182034}
		{1*0.86*cos(t)+0*0.86*sin(t)+1.44|-0.075*0.86*cos(t)+1*0.86*sin(t)+0.64}
	\parametricplot[linewidth=0.8pt,linestyle=dashed,dash=3pt 2pt]{2.249045279311494}{3.9271577644321395}
		{1*0.54*cos(t)+0*0.54*sin(t)+1.37|0*0.54*cos(t)+1*0.54*sin(t)+0.99}
	\parametricplot[linewidth=0.8pt,linestyle=dashed,dash=3pt 2pt]{-2.1993684311435224}{0.19067988888988038}{1*1.98*cos(t)+0*1.98*sin(t)+2.2|0*1.98*cos(t)+1*1.98*sin(t)+2.17}
	\parametricplot[linewidth=0.8pt,linestyle=dashed,dash=3pt 2pt]{2.8453007051607155}{5.420388001593068}{1*0.52*cos(t)+0*0.52*sin(t)+3|0*0.52*cos(t)+1*0.52*sin(t)+1.35}
	\parametricplot[linewidth=0.8pt,linestyle=dashed,dash=3pt 2pt]{5.428195851138397}{6.21778798060977}{1*2.24*cos(t)+0*2.24*sin(t)+1.91|0*2.24*cos(t)+1*2.24*sin(t)+2.69}
	\parametricplot[linewidth=0.8pt,linestyle=dashed,dash=3pt 2pt]{1.751943081227404}{3.288962910742752}{1*0.59*cos(t)+0*0.59*sin(t)+3.08|0*0.59*cos(t)+1*0.59*sin(t)+2.58}
	\parametricplot[linewidth=0.8pt,linestyle=dashed,dash=3pt 2pt]{0.523108668073518}{1.593136560077821}{1*1.25*cos(t)+0*1.25*sin(t)+3.06|0*1.25*cos(t)+1*1.25*sin(t)+1.92}
	\parametricplot[linewidth=0.8pt,linestyle=dashed,dash=3pt 2pt]{2.956350314060717}{4.6749714687606275}{1*0.85*cos(t)+0*0.85*sin(t)+2.34|0*0.85*cos(t)+1*0.85*sin(t)+1.34}
	\parametricplot[linewidth=0.8pt,linestyle=dashed,dash=3pt 2pt]{-1.5125031955649737}{0.0989999773941644}{1*1.87*cos(t)+0*1.87*sin(t)+2.28|0*1.87*cos(t)+1*1.87*sin(t)+2.36}
	\parametricplot[linewidth=0.8pt,linestyle=dashed,dash=3pt 2pt]{2.777755480453707}{4.9277297505381314}{1*0.54*cos(t)+0*0.54*sin(t)+1.39|0*0.54*cos(t)+1*0.54*sin(t)+3.02}
	\parametricplot[linewidth=0.8pt,linestyle=dashed,dash=3pt 2pt]{1.9343071408090328}{2.583159724740975}{1*1.8*cos(t)+0*1.8*sin(t)+2.44|0*1.8*cos(t)+1*1.8*sin(t)+2.31}
	\parametricplot[linewidth=0.8pt,linestyle=dashed,dash=3pt 2pt]{0.17396397227159288}{1.8107485743137344}{1*1.85*cos(t)+0*1.85*sin(t)+2.32|0*1.85*cos(t)+1*1.85*sin(t)+2.22}
	\psdots[dotsize=5pt 0](1,3)
	\psdots[dotsize=5pt 0](1,2)
	\psdots[dotsize=5pt 0](1,1)
	\psdots[dotsize=5pt 0](2,3)
	\psdots[dotsize=5pt 0](2,2)
	\psdots[dotsize=5pt 0](2,1)
	\psdots[dotsize=5pt 0](3,3)
	\psdots[dotsize=5pt 0](3,2)
	\psdots[dotsize=5pt 0](3,1)
	\psdots[dotsize=8pt 0,dotstyle=x](1.5,1.5)
	\psdots[dotsize=8pt 0,dotstyle=x](2.5,1.5)
	\psdots[dotsize=8pt 0,dotstyle=x](1.5,2.5)
	\psdots[dotsize=8pt 0,dotstyle=x](2.5,2.5)
	\psdots[dotsize=8pt 0,dotstyle=x](4.15,2.54)
	\end{pspicture*}
}
\scalebox{1.1}{
	\psset{xunit=1.0cm,yunit=1.0cm,algebraic=true,dotstyle=*,dotsize=3pt 0,linewidth=0.8pt,arrowsize=3pt 2,arrowinset=0.25}
	\begin{pspicture*}(0,0)(4.5,4.5)
	\psline[linewidth=0.8pt](2,3)(3,3)
	\psline[linewidth=0.8pt](2,2)(2,3)
	\psline[linewidth=0.8pt](1,3)(1,2)
	\psline[linewidth=0.8pt](1,2)(2,2)
	\psline[linewidth=0.8pt](2,1)(1,1)
	\psline[linewidth=0.8pt,linestyle=dashed,dash=3pt 2pt](2.5,2.5)(2.5,1.5)
	\psline[linewidth=0.8pt,linestyle=dashed,dash=3pt 2pt](2.5,1.5)(1.5,1.5)
	\parametricplot[linewidth=0.8pt,linestyle=dashed,dash=3pt 2pt]{0.1852913062710569}{2.989280090214289}{1*1.34*cos(t)+0*1.34*sin(t)+2.83|0*1.34*cos(t)+1*1.34*sin(t)+2.3}
	\parametricplot[linewidth=0.8pt,linestyle=dashed,dash=3pt 2pt]{1.1350083349985625}{2.0596018113318753}{1*1.84*cos(t)+0*1.84*sin(t)+3.37|0*1.84*cos(t)+1*1.84*sin(t)+0.87}\parametricplot[linewidth=0.8pt,linestyle=dashed,dash=3pt 2pt]{4.6717617082515}{5.883637174045906}{1*1.71*cos(t)+0*1.71*sin(t)+2.57|0*1.71*cos(t)+1*1.71*sin(t)+3.21}
	\parametricplot[linewidth=0.8pt,linestyle=dashed,dash=3pt 2pt]{1.5028552028831879}{2.000187227182034}
		{1*0.86*cos(t)+0*0.86*sin(t)+1.44|-0.075*0.86*cos(t)+1*0.86*sin(t)+0.64}
	\parametricplot[linewidth=0.8pt,linestyle=dashed,dash=3pt 2pt]{2.249045279311494}{3.9271577644321395}
		{1*0.54*cos(t)+0*0.54*sin(t)+1.37|0*0.54*cos(t)+1*0.54*sin(t)+0.99}
	\parametricplot[linewidth=0.8pt,linestyle=dashed,dash=3pt 2pt]{-2.1993684311435224}{0.19067988888988038}{1*1.98*cos(t)+0*1.98*sin(t)+2.2|0*1.98*cos(t)+1*1.98*sin(t)+2.17}
	\parametricplot[linewidth=0.8pt,linestyle=dashed,dash=3pt 2pt]{2.8453007051607155}{5.420388001593068}{1*0.52*cos(t)+0*0.52*sin(t)+3|0*0.52*cos(t)+1*0.52*sin(t)+1.35}
	\parametricplot[linewidth=0.8pt,linestyle=dashed,dash=3pt 2pt]{5.428195851138397}{6.21778798060977}{1*2.24*cos(t)+0*2.24*sin(t)+1.91|0*2.24*cos(t)+1*2.24*sin(t)+2.69}
	\psdots[dotsize=5pt 0](1,3)
	\psdots[dotsize=5pt 0](1,2)
	\psdots[dotsize=5pt 0](1,1)
	\psdots[dotsize=5pt 0](2,3)
	\psdots[dotsize=5pt 0](2,2)
	\psdots[dotsize=5pt 0](2,1)
	\psdots[dotsize=5pt 0](3,3)
	\psdots[dotsize=5pt 0](3,2)
	\psdots[dotsize=5pt 0](3,1)
	\psdots[dotsize=8pt 0,dotstyle=x](1.5,1.5)
	\psdots[dotsize=8pt 0,dotstyle=x](2.5,1.5)
	\psdots[dotsize=8pt 0,dotstyle=x](1.5,2.5)
	\psdots[dotsize=8pt 0,dotstyle=x](2.5,2.5)
	\psdots[dotsize=8pt 0,dotstyle=x](4.15,2.54)
	\end{pspicture*}
}
\end{center}
\vspace*{-2mm}
\caption[Dual graphs]{Left: The graph $\Z^2_3$ (dots and solid lines) 
		and its dual graph $\Z^{2\dag}_3$ (crosses and dashed lines). 
		Right: A RC configuration on $\Z^2_3$ and its dual configuration.}
\label{fig:dual}
\end{figure}


Recall that $\Gt_A$, $A\subset E$, is the representation of 
the spanning subgraph $G_A=(V,A)$ in the plane 
that is adopted from $\Gt$ by 
forgetting the curves corresponding to $E\setminus A$, and 
$c(A)$ is the number of connected components of $G_A$. 
With a slight abuse of notation we write 
$c(A^\dag)$, $A^\dag\subset E^\dag$, for the number of connected 
components of $G^\dag_{A^\dag}$.

It is easy to verify (cf.~Figure~\ref{fig:dual}) that 
every face of $\Gt_A$ contains a unique connected component of 
$G^\dag_{A^\dag}$ and vice versa.
Thus, we have $c(A^\dag)=F(A)$.
Using Euler's formula (see Corollary~\ref{coro:euler}) we obtain 
\begin{equation}\label{eq:dual-components}
\begin{split}
c(A) \,+\, 1 \;&=\; c(A^\dag) \,+\, \abs{V} \,-\, \abs{A}  \\
\;&=\; c(A^\dag) \,+\, \abs{V} \,-\, \abs{E} \,+\, \abs{A^\dag}.
\end{split}
\end{equation}
This equality leads immediately to the following lemma that 
shows the tight connection between the random-cluster measure 
on $G$ and $G^\dag$ (see e.g.~Grimmett~\cite[eq.~(6.4)]{G1}). 

\begin{lemma}\label{lemma:RC-dual}
Let $\mu_{p,q}^G$ be the random-cluster measure on a planar 
graph $G=(V,E)$ with parameters $p$ and $q$. 
Furthermore, let $G^\dag$ be the dual graph of $G$. 
Then
\[
\mu_{p,q}^G(A) \;=\; \mu_{p^*,q}^{G^\dag}(A^\dag) 
\qquad \text{ for all }\, A\subset E,
\]
where $p^*$ satisfies 
\begin{equation}\label{eq:dual-p}
\frac{p^*}{1-p^*} \;=\; \frac{q(1-p)}{p}.
\end{equation}
\end{lemma}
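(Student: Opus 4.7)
The plan is to use the explicit formula for the random-cluster measure from \eqref{eq:RC} together with the Euler-type identity \eqref{eq:dual-components}, which relates the number of connected components of $G_A$ and $G^\dag_{A^\dag}$. The strategy is to substitute the definitions, express everything on the dual side in terms of the primal quantities $|A|$ and $c(A)$, and verify that the resulting expression agrees with the primal random-cluster weight up to a multiplicative constant independent of $A$. Since both are probability measures on their respective state spaces and the map $A \mapsto A^\dag$ is a bijection between $2^E$ and $2^{E^\dag}$ (see \eqref{eq:dual-conf}), equality of the normalized measures will then follow automatically.

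Concretely, I would start from
\[
\mu^{G^\dag}_{p^*,q}(A^\dag) \;=\; \frac{1}{Z(G^\dag,\log\tfrac{1}{1-p^*},q)}\,
\left(\frac{p^*}{1-p^*}\right)^{|A^\dag|}\,q^{c(A^\dag)},
\]
substitute $p^*/(1-p^*) = q(1-p)/p$ from \eqref{eq:dual-p}, use $|A^\dag| = |E|-|A|$, and apply \eqref{eq:dual-components} in the form $c(A^\dag) = c(A) + |A| - |V| + 1$. A short calculation then yields
\[
\left(\frac{p^*}{1-p^*}\right)^{|A^\dag|} q^{c(A^\dag)}
\;=\; q^{|E|-|V|+1}\left(\frac{1-p}{p}\right)^{|E|}
\left(\frac{p}{1-p}\right)^{|A|} q^{c(A)},
\]
so the unnormalized dual weight at $A^\dag$ equals an $A$-independent factor times the unnormalized primal weight at $A$.

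Finally, summing both sides over all $A\subset E$ (equivalently, over $A^\dag\subset E^\dag$) produces the identity between partition functions
\[
Z(G^\dag,\tstyle\log\frac{1}{1-p^*},q)
\;=\; q^{|E|-|V|+1}\left(\frac{1-p}{p}\right)^{|E|}
Z(G,\tstyle\log\frac{1}{1-p},q),
\]
and dividing the pointwise identity by this relation gives the claim. The only mildly delicate ingredient is the correct bookkeeping of the ``$+1$'' in Euler's formula (which reflects that the dual graph has one vertex corresponding to the outer face of $G$); beyond that the proof is essentially a one-line substitution and should present no real obstacle.
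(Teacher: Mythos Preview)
Your proof is correct and follows essentially the same route as the paper's: both arguments combine the Euler identity \eqref{eq:dual-components} with the substitution \eqref{eq:dual-p} and $|A^\dag|=|E|-|A|$ to show that the unnormalized weights differ by an $A$-independent factor, then conclude by matching normalization constants. The only cosmetic difference is that the paper starts from the primal weight and rewrites it in dual variables, whereas you start from the dual weight and rewrite it in primal variables; the resulting constant $q^{|E|-|V|+1}\bigl((1-p)/p\bigr)^{|E|}$ agrees with the paper's $q^{1-|V|}\bigl(p^*/(1-p^*)\bigr)^{|E|}$ once one substitutes \eqref{eq:dual-p}.
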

\vspace{2mm}
\begin{proof}
Recall from \eqref{eq:RC} that
\[
\mu^G_{p,q}(A) \;=\; 
\frac1{\Ztt(G,p,q)}\,
\left(\frac{p}{1-p}\right)^{\abs{A}}\,q^{c(A)}, 
\]
where 
$\Ztt(G,p,q)=\sum_{A\subset E}(\frac{p}{1-p})^{\abs{A}}\,q^{c(A)}$ 
is the normalization constant.
With \eqref{eq:dual-components} and \eqref{eq:dual-p} we obtain
\[\begin{split}
\left(\frac{p}{1-p}\right)^{\abs{A}}\,q^{c(A)} 
\,&\stackrel{\eqref{eq:dual-components}}{=}\, 
	\left(\frac{p}{1-p}\right)^{\abs{A}}\,
		q^{c(A^\dag)\,+\, \abs{V}\,-\,\abs{E} \,+\,\abs{A^\dag}\,-\, 1} \\
\,&\stackrel{\eqref{eq:dual-p}}{=}\, 
	\left(\frac{q(1-p^*)}{p^*}\right)^{\abs{E}-\abs{A^\dag}}\,
		q^{c(A^\dag)\,+\, \abs{V}\,-\,\abs{E} \,+\,\abs{A^\dag}\,-\, 1} \\
\,&\;=\; q^{\abs{V} \,-\, 1}
	\left(\frac{p^*}{1-p^*}\right)^{\abs{A^\dag} \,-\, \abs{E}}\,
		q^{c(A^\dag)}.
\end{split}\]
Thus, with $\Ztt(G^\dag,p^*,q)=
	q^{1-\abs{V}}\bigl(p^*/(1-p^*)\bigr)^{\abs{E}}\,\Ztt(G,p,q)$ 
we get $\mu^G_{p,q}(A)=\mu^{G^\dag}_{p^*,q}(A^\dag)$ as desired. 
\end{proof}
\vspace{1mm}

The relation from \eqref{eq:dual-p} has a unique
\emph{self-dual point}, i.e.~a value of $p$ such that $p=p^*$.
This value is given by 
\begin{equation}\label{eq:self-dual-p}
p_{sd}(q) \;:=\; \frac{\sqrt{q}}{1+\sqrt{q}}.
\vspace{2mm}
\end{equation}
Note that $p_{sd}(q)$ equals $1-e^{\beta_c(q)}$, where $\beta_c(q)$ 
is the critical inverse temperature for the $q$-state Potts model 
on the two-dimensional square lattice 
(see Remark~\ref{remark:critical}).

\vspace{2mm}


\section{Dynamics on planar graphs} \label{sec:5_result}

We prove that the spectral gap of the 
Swendsen--Wang and the single-bond dynamics for the random-cluster 
model on a planar graph $G$ is bounded from above and from below 
by the spectral gap of the corresponding dynamics on the 
dual graph $G^\dag$ 
if we change the temperature parameter $p$ to $p^*$ from the last 
section.
%
Furthermore, we state a second corollary of Theorem~\ref{th:HB_degree} 
at the end of this section, that shows rapid mixing of the Swendsen--Wang 
dynamics for the Potts model on planar graphs if the 
inverse temperature is small or large enough 
(depending on the maximum degree). 
This is a modification of Corollary~\ref{coro:SW_degree} to 
planar graphs (cf.~Theorem~\ref{th:HB_degree}). 

For this fix a connected planar graph $G=(V,E)$ 
(together with its representation in the plane), some $p\in(0,1)$ 
and a natural number $q$. 
Let $G^\dag$ be the dual graph of $G$, and 
$p^*$ be the unique value satisfying \eqref{eq:dual-p}, 
i.e. $p^*=\frac{q(1-p)}{p+q(1-p)}$. 
We call the random-cluster model on $G^\dag$ with parameters 
$p^*$ and $q$ the 
\emph{dual model}\index{dual!model} 
and abbreviate $\mu_{p^*,q}^{G^\dag}$ to $\mu^\dag$. 
Additionally, given a Markov chain for the random-cluster model 
with transition matrix $\Pt$, we write $\Pt^\dag$ for the transition 
matrix of the Markov chain for the corresponding 
dual model.

The first result demonstrates the usefulness of the heat-bath 
dynamics for the random-cluster model, 
which is one of the two previously defined local Markov chains 
for this model (see~\eqref{eq:RC-HB-def} for the definition 
of its transition matrix $\thb$). 
It turns out that the spectral gap of $\thb$ equals the 
spectral gap of $\thb^\dag$. 
This is a simple consequence of Lemma~\ref{lemma:RC-dual} 
and is probably known, but we could not find a reference 
for it.

\vspace{3mm}

\begin{lemma} \label{lemma:RC-HB-dual}
Let $\thb$ $($resp.~$\thb^\dag$$)$ be the transition matrix of the 
heat-bath dynamics for the random-cluster model 
on a planar graph $G$ $($resp.~for the dual model\/$)$.
Then
\[
\lambda(\thb) \;=\; \lambda(\thb^\dag).
\]
\end{lemma}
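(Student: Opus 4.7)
The plan is to show that the map $A \mapsto A^\dag$ from $\Orc$ to the dual random-cluster state space $\Orc^\dag$ is an isomorphism of Markov chains between $\thb$ and $\thb^\dag$; since similar stochastic matrices share a spectrum, this will immediately yield $\lambda(\thb) = \lambda(\thb^\dag)$. The map is a bijection by construction, and the bijection on edges $e \mapsto e_\dag$ gives a natural identification of the update steps. By Lemma~\ref{lemma:RC-dual} the stationary measures already correspond: $\mu(A) = \mu^\dag(A^\dag)$.

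The crucial technical step is the following duality identity for connectivity: for every $A \subset E$ and every $e \in E$,
\[
e^{(1)} \xconn{A\setminus e} e^{(2)} \;\iff\; e_\dag^{(1)} \xnconn{A^\dag\setminus e_\dag} e_\dag^{(2)}.
\]
I would prove this by applying Corollary~\ref{coro:euler} to $B := A\setminus e$ and to $B\cup e$. The left-hand side is equivalent to saying that adding $e$ to $B$ creates a cycle, i.e.\ $c(B\cup e) = c(B)$, while Corollary~\ref{coro:euler} translates this precisely into $c((B\cup e)^\dag) = c(B^\dag) - 1$, which (since $(B\cup e)^\dag = B^\dag \setminus e_\dag$) is equivalent to $e_\dag$ being a bridge in $B^\dag$, i.e.\ $e_\dag^{(1)} \xnconn{B^\dag \setminus e_\dag} e_\dag^{(2)}$. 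A small but pleasant verification is that $B^\dag \setminus e_\dag = A^\dag \setminus e_\dag$ regardless of whether $e\in A$ or $e\notin A$, so the condition on the right matches the one appearing in the dual heat-bath definition~\eqref{eq:RC-HB}.

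Combining this with the formula $\frac{p^*}{1-p^*} = \frac{q(1-p)}{p}$ from \eqref{eq:dual-p}, I would then verify transition-by-transition that $\thb(A, A\cup e) = \thb^\dag(A^\dag, (A\cup e)^\dag) = \thb^\dag(A^\dag, A^\dag\setminus e_\dag)$. For example, in the primal ``connected'' branch the probability of adding $e$ is $p$, while in the dual ``disconnected'' branch the probability of removing $e_\dag$ (hence adding $e$ after dualising) is $\frac{q(1-p^*)}{p^*+q(1-p^*)}$, and plugging in \eqref{eq:dual-p} gives exactly $p$. The other branch is analogous. Since each edge is selected with probability $1/|E| = 1/|E^\dag|$ in both chains, the full transition matrices agree under the bijection, and the spectral gaps coincide.

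The main obstacle is the bookkeeping in the duality identity, particularly the set-theoretic care needed for $B^\dag \setminus e_\dag$ versus $A^\dag \setminus e_\dag$ in the two cases $e \in A$ and $e \notin A$; everything else is a routine arithmetic check and an appeal to the elementary fact that conjugation by a permutation preserves eigenvalues.
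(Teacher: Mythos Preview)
Your proposal is correct and reaches the same conclusion $\thb(A,B)=\thb^\dag(A^\dag,B^\dag)$, but by a different and somewhat longer route than the paper. The paper works directly from the measure-theoretic form \eqref{eq:RC-HB-def},
\[
\thb(A,B)=\frac{1}{|E|}\sum_{e\in E}\frac{\mu(B)}{\mu(A\cup e)+\mu(A\setminus e)}\,\ind(B\setminus e=A\setminus e),
\]
and then substitutes $\mu(\,\cdot\,)=\mu^\dag((\,\cdot\,)^\dag)$ from Lemma~\ref{lemma:RC-dual} together with $(A\cup e)^\dag=A^\dag\setminus e_\dag$, $(A\setminus e)^\dag=A^\dag\cup e_\dag$. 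Everything collapses to $\thb^\dag(A^\dag,B^\dag)$ in one line, with no connectivity analysis and no arithmetic in $p^*$ at all. Your approach instead uses the explicit case-split form \eqref{eq:RC-HB}, which forces you to prove the connectivity duality $e^{(1)}\xconn{A\setminus e}e^{(2)}\iff e_\dag^{(1)}\xnconn{A^\dag\setminus e_\dag}e_\dag^{(2)}$ and then match the four branches by hand via \eqref{eq:dual-p}. That works, and the connectivity duality is a nice standalone fact (and your Euler-formula argument for it is fine), but the paper's choice of the formula \eqref{eq:RC-HB-def} over \eqref{eq:RC-HB} is precisely what makes the connectivity step and the $p^*$-arithmetic disappear.
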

\vspace{2mm}

\begin{proof}
By definition we have for all $A,B\subset E$ that 
\[
\thb(A,B) \;=\; \frac1{\abs{E}}\,\sum_{e\in E}\,
\frac{\mu(B)}{\mu(A\cup e)+\mu(A\setminus e)}\;
\ind(B\setminus e = A\setminus e),
\]
see~\eqref{eq:RC-HB-def}. 
Using Lemma~\ref{lemma:RC-dual} and since 
$(A\cup e)^\dag=A^\dag\setminus e_\dag$ 
(resp. $(A\setminus e)^\dag=A^\dag\cup e_\dag$), 
by~\eqref{eq:dual-conf}, we obtain 
\[\begin{split}
\thb(A,B) \;&=\; \frac1{\abs{E}}\,\sum_{e\in E}\,
	\frac{\mu(B)}{\mu(A\cup e)+\mu(A\setminus e)}\;
	\ind(B\setminus e = A\setminus e) \\
&=\; \frac1{\abs{E}}\,\sum_{e\in E}\,
	\frac{\mu^\dag(B^\dag)}{\mu^\dag\bigl((A\cup e)^\dag\bigr)+
	\mu^\dag\bigl((A\setminus e)^\dag\bigr)}\;
	\ind\bigl( (B\setminus e)^\dag = (A\setminus e)^\dag \bigr) \\
&=\; \frac1{\abs{E^\dag}}\,\sum_{e_\dag\in E^\dag}\,
	\frac{\mu^\dag(B^\dag)}{\mu^\dag\bigl(A^\dag\setminus e_\dag\bigr)+
	\mu^\dag\bigl(A^\dag\cup e_\dag\bigr)}\;
	\ind(B^\dag\cup e_\dag = A^\dag\cup e_\dag) \\
&=\; \thb^\dag(A^\dag,B^\dag).
\end{split}\]
Note for the last equality that $B^\dag\cup e_\dag = A^\dag\cup e_\dag$ 
if and only if $B^\dag\setminus e_\dag = A^\dag\setminus e_\dag$. 
This obviously implies that the matrices $\thb$ and $\thb^\dag$ have 
the same eigenvalues and thus the same spectral gap.
\end{proof}
\vspace{2mm}

We immediately obtain the last ingredient for the proof 
of rapid mixing of Swendsen--Wang and single-bond dynamics 
on the two-dimensional square lattice at all non-critical 
temperatures (see Section~\ref{sec:5_square}).

\vspace{2mm}

\goodbreak

\begin{theorem} \label{th:SW-SB_dual}
Let $\tsw$ $($resp.~$\sb$$)$ be the transition matrix of the 
Swendsen--Wang $($resp.~single-bond\/$)$ dynamics for the 
random-cluster model on a planar graph $G$ with $m$ edges, 
and let $\tsw^\dag$ $($resp.~$\sb^\dag$$)$ be the SW $($resp.~SB$)$ 
dynamics for the dual model.
Then \vspace{1mm}
\begin{align*}
\lambda(\sb) \;&\le\; q \; \lambda(\sb^\dag) \\[-5mm]
\intertext{\vspace*{-4mm} and}
\lambda(\tsw) \;&\le\; 8q\, m \log m\; \lambda(\tsw^\dag).
\end{align*}
\end{theorem}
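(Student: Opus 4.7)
The plan is to chain together three already-established comparison results: the duality $\lambda(\thb) = \lambda(\thb^\dag)$ from Lemma~\ref{lemma:RC-HB-dual}, the single-bond/heat-bath comparison from Lemma~\ref{lemma:SB-HB}, and the Swendsen--Wang/single-bond comparison from Theorem~\ref{th:main-SB}. The first two ingredients alone will yield the single-bond inequality; sandwiching with Theorem~\ref{th:main-SB} then produces the SW inequality at the cost of the extra factor $8m\log m$.

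For the single-bond bound, I first apply Lemma~\ref{lemma:SB-HB} to the dual model (with parameters $p^*,q$) to obtain
\[
\lambda(\thb^\dag) \;\le\; \frac{1}{1-p^*(1-\tfrac{1}{q})}\,\lambda(\sb^\dag).
\]
A short calculation using $p^* = \frac{q(1-p)}{p+q(1-p)}$ from \eqref{eq:dual-p} gives $1 - p^*(1-\tfrac1q) = \tfrac{1}{p+q(1-p)}$, so the prefactor equals $p+q(1-p) = q - p(q-1) \le q$ for $q\ge 1$ and $p\in[0,1]$. Combining this with the upper bound $\lambda(\sb) \le \lambda(\thb)$ from Lemma~\ref{lemma:SB-HB} and the duality $\lambda(\thb)=\lambda(\thb^\dag)$ from Lemma~\ref{lemma:RC-HB-dual} yields
\[
\lambda(\sb) \;\le\; \lambda(\thb) \;=\; \lambda(\thb^\dag) \;\le\; q\,\lambda(\sb^\dag).
\]

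For the Swendsen--Wang bound I then sandwich between the SB spectral gaps on both sides. Since each edge of $G$ corresponds to exactly one dual edge, $G^\dag$ also has $m$ edges, and for $m\ge 3$ Theorem~\ref{th:main-SB} applies on both graphs. This gives
\[
\lambda(\tsw) \;\le\; 8m\log m \cdot \lambda(\sb) \;\le\; 8qm\log m \cdot \lambda(\sb^\dag) \;\le\; 8qm\log m \cdot \lambda(\tsw^\dag),
\]
using the upper bound on $\lambda(\tsw)$ and the lower bound $\lambda(\sb^\dag)\le\lambda(\tsw^\dag)$, both from Theorem~\ref{th:main-SB}.

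There is really no substantive obstacle: all the conceptual content sits in the three preceding results. The only computation to verify is the dual-parameter identity $1 - p^*(1-\tfrac1q) = \tfrac{1}{p+q(1-p)}$, which is elementary algebra from \eqref{eq:dual-p}; everything else is bookkeeping about which direction of each comparison inequality to invoke.
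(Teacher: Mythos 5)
Your proposal is correct and follows essentially the same route as the paper: combine $\lambda(\sb)\le\lambda(\thb)$ and the duality $\lambda(\thb)=\lambda(\thb^\dag)$ with the reverse SB/HB comparison on the dual model, bounding the prefactor $\bigl(1-p^*(1-\tfrac1q)\bigr)^{-1}$ by $q$ (the paper does this by noting the constant is maximal at $p^*=1$, you by the equivalent identity $1-p^*(1-\tfrac1q)=\tfrac{1}{p+q(1-p)}$), and then sandwich with Theorem~\ref{th:main-SB} for the Swendsen--Wang bound. No gaps.
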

\vspace{2mm}

\begin{proof}
With Lemmas~\ref{lemma:SB-HB} and~\ref{lemma:RC-HB-dual} 
we get 
\[
\lambda(\sb) \;\le\; \lambda(\thb) 
\;=\; \lambda(\thb^\dag) 
\;\le\; \left(1-p^* \left(1-\frac1q\right)\right)^{-1}\,
				\lambda(\sb^\dag).
\]
The constant on the right hand side is maximal for $p^*=1$. 
This proves the first statement of the theorem. 
For the second we additionally use Theorem~\ref{th:main-SB}. 
Thus, 
\[
\lambda(\tsw) \;\le\; 8 m \log m\; \lambda(\sb) 
\;\le\; 8 q m \log m\; \lambda(\sb^\dag) 
\;\le\; 8 q m \log m\; \lambda(\tsw^\dag),
\vspace{-2mm}
\]
as claimed.
\end{proof}

Prior to the application of this result to the 
two-dimensional square lattice in the next section, 
we present the above-mentioned result on 
rapid mixing of the Swendsen--Wang dynamics 
at sufficiently high and low temperatures if the underlying 
graph is planar and has bounded maximum degree.
This corollary relies on a result of Hayes \cite{Ha}. 

\vspace{1mm}

\begin{corollary} \label{coro:SW-planar}
The Swendsen--Wang dynamics for the random-cluster model
with parameters $p$ and $q$
on a planar, simple and connected graph $G$ with $m$ edges 
and maximum degree $\D\ge6$ satisfies
\[
\lambda(\tsw)\;\ge\;\frac{c(1-\eps)}{m}, \qquad \text{ if }\; 
p \,\le\, \frac{\eps}{3\sqrt{\D-3}},
\vspace{-1mm}
\] 
and \vspace{-2mm}
\[
\hspace{2mm}
\lambda(\tsw)\;\ge\;\frac{c(1-\eps)}{m^2 \log m}, \quad\;\;
\text{ if }\; 
p \,\ge\, 1- \frac{\eps}{q \D^\dag}, \;\;\;\,
\] 
for some $c=c(\D,p,q)>0$ and $\eps>0$, 
where $\D^\dag$ is the maximum degree of a dual graph of~$G$.
\end{corollary}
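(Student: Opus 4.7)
The plan is to prove the two bounds separately, with the high-temperature bound following directly from Theorem \ref{th:main-spin} combined with a planar refinement of Theorem \ref{th:HB_degree} due to Hayes, and the low-temperature bound following by duality from the high-temperature bound applied to $G^\dag$.

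For the high-temperature case, I would invoke a result of Hayes from \cite{Ha} which improves Corollary \ref{coro:P-HB_degree} on the class of simple planar graphs by bounding the principal eigenvalue $\|A_G\|$ in terms of $\sqrt{\Delta-3}$ rather than $\Delta$ (the restriction $\Delta\ge 6$ in the hypothesis is exactly what makes this improvement effective). Combining this with Theorem \ref{th:HB_degree} shows that $\lambda(\hb)\ge(1-\eps)/n$ whenever $\beta$ is of order $\eps/\sqrt{\Delta-3}$, and the inequality $p=1-e^{-\beta}\ge \beta/2$ (together with the factor of $3$ in the denominator) is chosen exactly to translate the hypothesis $p\le \eps/(3\sqrt{\Delta-3})$ into the required bound on $\beta$. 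Theorem \ref{th:main-spin} and Lemma \ref{lemma:SW_P-RC} then give
\[
\lambda(\tsw)\;=\;\lambda(\sw)\;\ge\;\tcsw\,\lambda(\hb)\;\ge\;\frac{\tcsw(1-\eps)}{n}\;\ge\;\frac{\tcsw(1-\eps)}{m},
\]
since $G$ is simple and connected so $n\le m+1$. This gives the stated bound with $c=\tcsw(\Delta,\beta,q)$.

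For the low-temperature case I use duality. Since the dual of $G^\dag$ equals $G$, applying Theorem \ref{th:SW-SB_dual} with $G$ replaced by $G^\dag$ (and noting that $G^\dag$ has the same number of edges $m$ as $G$) gives
\[
\lambda(\tsw^\dag)\;\le\;8q\,m\log m\;\lambda(\tsw),
\qquad\text{i.e.,}\qquad
\lambda(\tsw)\;\ge\;\frac{\lambda(\tsw^\dag)}{8q\,m\log m}.
\]
The dual model on $G^\dag$ has parameter $p^*=q(1-p)/(p+q(1-p))$, and under the hypothesis $p\ge 1-\eps/(q\Delta^\dag)$ we have $1-p\le \eps/(q\Delta^\dag)$ and hence $p^*\le q(1-p)\le \eps/\Delta^\dag$. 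For $\Delta^\dag$ large enough, this is at most $\eps'/(3\sqrt{\Delta^\dag-3})$ for a suitable $\eps'<1$, which is exactly the hypothesis of the high-temperature bound applied to the planar graph $G^\dag$ (of max degree $\Delta^\dag$). Applying that bound on $\lambda(\tsw^\dag)$ yields the claimed estimate on $\lambda(\tsw)$.

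The main technical obstacle is the precise form of Hayes's principal-eigenvalue bound for planar graphs: one needs the constants to line up so that $\|A_G\|$ is bounded by (a constant times) $\sqrt{\Delta-3}$ for simple planar graphs with $\Delta\ge 6$. A secondary nuisance is that $G^\dag$ need not be simple even when $G$ is (it may contain loops and multiple edges), so when applying the high-temperature bound to the dual one has to check that Hayes's planar estimate and the underlying comparison results extend to multigraphs in the mild sense required, or alternatively that the statement of the corollary only asserts control through $\Delta^\dag$ (which is well-defined in the multigraph setting).
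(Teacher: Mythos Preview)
Your high-temperature argument matches the paper's proof essentially exactly: invoke Hayes's bound $\|A_G\|\le\sqrt{3(\Delta-3)}$ for simple planar graphs (Cor.~17 in \cite{Ha}), plug into Theorem~\ref{th:HB_degree}, translate the hypothesis on $p$ into one on $\beta$, and finish with Theorem~\ref{th:main-spin}. One small quibble: the inequality $p\ge\beta/2$ you quote is not valid for all $\beta$; the paper instead uses $\beta\le p/(1-p)$, which holds for every $p\in[0,1)$ and makes the constants line up with the factor of $3$ in the denominator.

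The low-temperature case is where there is a genuine gap. You propose to apply the first part of this very corollary to $G^\dag$, but as you yourself flag, $G^\dag$ need not be simple, and Hayes's planar principal-eigenvalue bound genuinely requires simplicity (its proof uses Euler's formula to locate a low-degree vertex, which fails for multigraphs). Neither of your suggested workarounds succeeds: the bound does not extend to planar multigraphs in general, and knowing that $\Delta^\dag$ is well-defined does not by itself supply any spectral-gap estimate. The paper sidesteps this by \emph{not} using the planar refinement on the dual at all: it instead applies Corollary~\ref{coro:SW_degree} (the general bounded-degree result, valid for arbitrary graphs including multigraphs) to $G^\dag$, which only needs $p^*\le\eps/\Delta^\dag$, and this follows from $p^*\le q(1-p)$ together with the hypothesis $p\ge 1-\eps/(q\Delta^\dag)$. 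This is precisely why the second hypothesis involves $\Delta^\dag$ linearly rather than $\sqrt{\Delta^\dag-3}$; as the paper remarks after the proof, the stronger assumption is the price paid for not being able to assume the dual is simple.
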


\begin{proof}
From Theorem~\ref{th:HB_degree} together with the bound on the 
operator norm of the adjacency matrix of planar simple graphs 
from \cite[Cor.~17]{Ha} 
we obtain $\lambda(\hb)\ge (1-\eps)/n$, where $\hb$ is the 
transition matrix of the heat-bath dynamics for the Potts model 
on $G$, and $n$ is the number of vertices, 
if $\beta\le\eps/\sqrt{3(\D-3)}$.
Recall that $p=1-e^{-\beta}$, which implies $\beta\le p/(1-p)$; 
we deduce that the assumption on $p$ yields the desired bound 
on~$\beta$. By connectedness of $G$ we have $m\ge n-1$. Thus,  
Theorem~\ref{th:main-spin} completes the proof 
of the first inequality.

For the second we use 
$\lambda(\tsw)\ge 1/(8q m \log m)\, \lambda(\tsw^\dag)$ from 
Theorem~\ref{th:SW-SB_dual}, where 
$\tsw^\dag$ is the Swendsen--Wang dynamics for the 
dual model, i.e.~the RC model on the dual graph $G^\dag$ with 
parameters $p^*=\frac{q(1-p)}{p+q(1-p)}$ and $q\ge1$. 
Since $p^*\le q(1-p)$, 
it is enough to prove $\lambda(\tsw^\dag)\ge\tilde{c}(1-\eps)/m$ 
if $p^*\le\eps/\D^\dag$. 
But, because of $\beta\le p/(1-p)$ and $m\ge n-1$, this follows from 
Corollary~\ref{coro:SW_degree} and Lemma~\ref{lemma:SW_P-RC}.
\end{proof}
\vspace{1mm}

Note that the maximum degree of the dual graph corresponds to 
the maximum number of edges that are needed to surround a 
single face in a planar representation of the original graph. 
This quantity is sometimes called the \emph{maximum face-degree} of 
$G$ in the literature.

Unfortunately, in the second inequality of 
Corollary~\ref{coro:SW-planar} 
the lower bound on $p$ contains $\D^\dag$ and not its square root. 
The reason for this is that the proof of the bound on the 
principal eigenvalue of a planar graph requires the graph to be  
simple, see \cite[Cor.~17]{Ha}. 
(Note that although Euler's formula is valid also for non-simple 
graphs, one cannot deduce from it that there is a vertex 
of small degree.)
But since the proof of Corollary~\ref{coro:SW-planar} uses 
dual graphs, and we cannot guarantee in general that the 
dual graph is simple, we have to use the stronger assumption.
If we considered only 3-connected graphs (see 
Remark~\ref{remark:3-connected}), then it is known that 
the dual graph is simple. In this case one could improve 
the lower bound on $p$ to $1-\eps/(3q\sqrt{\D^\dag-3})$ 
under the additional assumption $\D^\dag\ge6$.

\vspace{2mm}


\section{Rapid mixing on the square lattice} \label{sec:5_square}

The goal of this section is to present an application 
of Theorem~\ref{th:SW-SB_dual} to the random-cluster model 
on the two-dimensional square lattice. 
We prove that the Swendsen--Wang and single-bond dynamics 
are rapidly mixing for each $q\in\N$ if the parameter 
$p$ satisfies $p\neq\frac{\sqrt{q}}{1+\sqrt{q}}$. 
Translated to the $q$-state Potts model this shows that the 
Swendsen--Wang dynamics is rapidly mixing at all 
non-critical temperatures, i.e.~at all $\beta\neq\beta_c(q)$ 
(see Remark~\ref{remark:critical}).

This is done by a successive application of the results of the 
previous chapters.
Especially, we need Theorem~\ref{th:P-SW_2d_high}, 
Corollary~\ref{coro:P-SW_2d_boundary}, Theorem~\ref{th:main-SB} 
and Theorem~\ref{th:SW-SB_dual}. 
Note that, since we proved only comparison results between 
different Markov chains, the results on rapid mixing 
rely ultimately on the known results on the mixing properties 
of the heat-bath dynamics for the Potts model (see 
Section~\ref{sec:2_known-results}).

First of all, recall the definition of $\Z_L^2=(V_{L,2},E_{L,2})$, 
i.e.~the two-dimensional square lattice of side length $L$, 
from Section~\ref{sec:2_known-results} and of 
$\Z_L^{2\dag}$ from Section~\ref{sec:3_general}.
It is easy to see that $\Z_L^{2\dag}$ is indeed 
the dual graph of $\Z_L^{2}$ (see Figure~\ref{fig:dual}).
Hence, we obtain the following result.

\vspace{1mm}

\begin{theorem} \label{th:SW_square}
Let $\tsw$ be the transition matrix of the 
Swendsen--Wang dynamics for the random-cluster model 
on $\Z^2_L$ with parameters $p$ and $q$. 
Let $m=2L(L-1)=\abs{E_{L,2}}$. 
Then there exist constants $c_p=c_p(q)$, $c'>0$ and $C<\infty$ 
such that 
\begin{itemize}
\item\quad $\displaystyle \lambda(\tsw) \;\ge\; \frac{c_p }{m}$ 
			\qquad\qquad\quad for $p < p_c(q)$, \vspace{1mm}
\item\quad $\displaystyle \lambda(\tsw) \;\ge\; \frac{c_p}{m^2 \log m}$ 
			\qquad for $p > p_c(q)$, \vspace{1mm}
\item\quad $\displaystyle \lambda(\tsw) \;\ge\; c' m^{-C}$ 
			\qquad\quad for $q=2$ and $p = p_c(2)$,
\end{itemize}
where $p_c(q)\,=\,\frac{\sqrt{q}}{1+\sqrt{q}}$.
\end{theorem}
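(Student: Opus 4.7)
The plan is to combine the three main comparison theorems already established in the paper with the known bounds on the heat-bath dynamics for the Potts model. The overall strategy uses Lemma~\ref{lemma:SW_P-RC} to transfer between the RC and Potts Swendsen--Wang chains (since they have the same spectral gap), Theorem~\ref{th:main-spin} to transfer from heat-bath to Swendsen--Wang in the high-temperature (small $\beta$) regime, and the duality statement Theorem~\ref{th:SW-SB_dual} to handle the low-temperature (large $p$) regime by mapping it onto a high-temperature problem on the dual graph $\Z_L^{2\dagger}$ introduced in Section~\ref{sec:3_general}.

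For $p < p_c(q)$, I would invoke Theorem~\ref{th:P-SW_2d_high}, which already gives $\lambda(\sw)\ge c_\beta/n$ for the Potts Swendsen--Wang dynamics at all $\beta<\beta_c(q)$ via Theorem~\ref{th:main-spin} applied to the heat-bath bound of Theorem~\ref{th:P-HB_2d}. By Lemma~\ref{lemma:SW_P-RC} this identity $\lambda(\sw)=\lambda(\tsw)$ transports the bound to the RC Swendsen--Wang chain, and since $n\le m+1\le 2n$ on $\Z_L^2$, the $1/m$ bound follows. The critical case $q=2$, $p=p_c(2)$ is handled the same way: by Theorem~\ref{th:P-SW_2d_high} one has $\lambda(\sw)\ge c'n^{-C}$ at $\beta_c(2)=\log(1+\sqrt{2})$ (which corresponds exactly to $p_c(2)=\sqrt{2}/(1+\sqrt{2})$), and Lemma~\ref{lemma:SW_P-RC} plus $n\asymp m$ finishes the job.

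The interesting case is $p>p_c(q)$, where no direct heat-bath bound for the Potts model is available (the Potts HB is expected to be exponentially slow there). Here I would use duality. The dual of $\Z_L^2$ is precisely the graph $\Z_L^{2\dagger}$, and the dual parameter $p^* = q(1-p)/(p+q(1-p))$ satisfies $p^*<p_c(q)$ exactly when $p>p_c(q)$, since $p_c(q)$ is the self-dual point. Applying Theorem~\ref{th:SW-SB_dual} to the dual graph $\Z_L^{2\dagger}$ and using that its own dual is $\Z_L^2$, one obtains the reverse-direction bound $\lambda(\tsw) \ge \lambda(\tsw^\dagger)/(8q\,m^\dagger\log m^\dagger)$, where $\tsw^\dagger$ is the RC Swendsen--Wang chain on $\Z_L^{2\dagger}$ at parameters $(p^*,q)$ and $m^\dagger\asymp m$. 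Then Lemma~\ref{lemma:SW_P-RC} identifies $\lambda(\tsw^\dagger)$ with the spectral gap of the Potts SW chain on $\Z_L^{2\dagger}$ at inverse temperature $\beta^* = -\log(1-p^*)<\beta_c(q)$, which in turn is bounded below by $c_{\beta^*}/n$ thanks to Corollary~\ref{coro:P-SW_2d_boundary} (this is precisely why Theorem~\ref{th:main-spin2}, the boundary-condition refinement, was needed—because $\Z_L^{2\dagger}$ has one vertex of degree $4L-4$, so the naive Theorem~\ref{th:main-spin} with $\D=4L-4$ would give a useless constant $\csw$). Chaining these inequalities yields $\lambda(\tsw)\ge c_p/(m^2\log m)$ as required; the main obstacle, and the reason duality rather than a direct argument is used, is precisely that the bad vertex of $\Z_L^{2\dagger}$ must be absorbed into a ``boundary condition'' so that the spin-dynamics comparison produces a constant depending only on $q$ and $p$, not on $L$.
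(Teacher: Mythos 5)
Your proposal is correct and follows essentially the same route as the paper: the subcritical and critical cases via Theorem~\ref{th:P-SW_2d_high} and Lemma~\ref{lemma:SW_P-RC}, and the supercritical case via duality, applying Theorem~\ref{th:SW-SB_dual} on the dual graph $\Z_L^{2\dag}$ together with Corollary~\ref{coro:P-SW_2d_boundary} and Lemma~\ref{lemma:SW_P-RC}. Your remark on why the boundary-condition refinement (Theorem~\ref{th:main-spin2}) is indispensable — the auxiliary vertex $v^*$ of $\Z_L^{2\dag}$ has degree of order $L$, which would ruin the constant in Theorem~\ref{th:main-spin} — is exactly the point the paper's construction is designed to address.
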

\vspace{1mm}

\begin{proof}
The first and the last inequality follow from 
Theorem~\ref{th:P-SW_2d_high}, Lemma~\ref{lemma:SW_P-RC} and the 
fact that $\abs{V_{L,2}}\le m=2L(L-1)\le2\abs{V_{L,2}}$. 
For the second let $\tsw^\dag$ be the Swendsen--Wang dynamics on 
$\Z_L^{2\dag}$ with parameters $p^*=\frac{q(1-p)}{p+q(1-p)}$ and $q$.
Corollary~\ref{coro:P-SW_2d_boundary} 
(together with Lemma~\ref{lemma:SW_P-RC}) 
implies that $\lambda(\tsw^\dag) \ge c_{p^*}/{m}$ for some 
$c_{p^*}<\infty$, if $p^*<p_c(q)$. 
(Recall that $p_c(q)=1-e^{-\beta_c(q)}$.)
But since $p^*<p_c(q)$ if and only if $p>p_c(q)$, we can 
deduce the desired bound from Theorem~\ref{th:SW-SB_dual}.
\end{proof}
\vspace{1mm}

An immediate consequence is the following.

\begin{theorem} \label{th:SW_square2}
Let $\sw$ be the transition matrix of the 
Swendsen--Wang dynamics for the $q$-state Potts model 
on $\Z^2_L$ at inverse temperature $\beta$. Let $n=L^2$. 
Then there exist constants $c_\beta=c_\beta(q)$, $c'>0$ and $C<\infty$ 
such that 
\begin{itemize}
\item\quad $\displaystyle \lambda(\sw) \;\ge\; \frac{c_\beta}{n}$ 
			\qquad\qquad\quad for $\beta < \beta_c(q)$, \vspace{1mm}
\item\quad $\displaystyle \lambda(\sw) \;\ge\; \frac{c_\beta}{n^2 \log n}$ 
			\qquad\; for $\beta > \beta_c(q)$, \vspace{1mm}
\item\quad $\displaystyle \lambda(\sw) \;\ge\; c' n^{-C}$ 
			\qquad\quad\, for $q=2$ and $\beta = \beta_c(2)$,
\end{itemize}
where $\beta_c(q)\,=\,\log(1+\sqrt{q})$.
\end{theorem}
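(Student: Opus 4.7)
The plan is to derive Theorem~\ref{th:SW_square2} as a direct corollary of the random-cluster version Theorem~\ref{th:SW_square} together with Lemma~\ref{lemma:SW_P-RC}. There is no real obstacle here; the argument is essentially bookkeeping, translating between the Potts and random-cluster parametrizations and between the vertex count $n = L^2$ and the edge count $m = 2L(L-1)$.

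First, I would invoke Lemma~\ref{lemma:SW_P-RC}, which gives $\lambda(\sw) = \lambda(\tsw)$, where $\tsw$ denotes the Swendsen--Wang dynamics for the random-cluster model on $\Z_L^2$ with parameters $p = 1 - e^{-\beta}$ and $q$. Thus every lower bound on $\lambda(\tsw)$ immediately transfers to $\lambda(\sw)$.

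Next, I would apply Theorem~\ref{th:SW_square} to $\lambda(\tsw)$. The translation between the critical parameters is straightforward: from $\beta_c(q) = \log(1+\sqrt q)$ we get
\[
p_c(q) \,=\, 1 - e^{-\beta_c(q)} \,=\, 1 - \frac{1}{1+\sqrt q} \,=\, \frac{\sqrt q}{1+\sqrt q},
\]
and since $p = 1-e^{-\beta}$ is strictly increasing in $\beta$, the three regimes $\beta < \beta_c(q)$, $\beta > \beta_c(q)$, and $\beta = \beta_c(q)$ correspond exactly to $p < p_c(q)$, $p > p_c(q)$, and $p = p_c(q)$, respectively. Theorem~\ref{th:SW_square} therefore yields the three stated bounds with $n$ replaced by $m$.

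Finally, I would use $m = 2L(L-1)$ and $n = L^2$, so that $n \le m \le 2n$ for $L \ge 2$. This lets me absorb the discrepancy between $m$ and $n$ into the constants: $c_p/m \ge (c_p/2)/n$, $c_p/(m^2 \log m) \ge c_p'/(n^2 \log n)$ for a suitable $c_p' > 0$, and $c' m^{-C} \ge c'' n^{-C}$ for an appropriate $c'' > 0$. Setting $c_\beta$ equal to the resulting constant (which depends on $\beta$ only through $p$ and hence through $\beta$) completes the proof.
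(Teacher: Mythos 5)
Your proposal is correct and follows exactly the paper's route: the paper's proof is the one-liner ``Apply Lemma~\ref{lemma:SW_P-RC} to Theorem~\ref{th:SW_square} and note that $n\le\abs{E_{L,2}}\le2n$,'' and your write-up simply spells out the same parameter translation $p_c(q)=1-e^{-\beta_c(q)}$ and the absorption of the $m$-versus-$n$ discrepancy into the constants. Nothing is missing.
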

\vspace{2mm}

\begin{proof}
Apply Lemma~\ref{lemma:SW_P-RC} to Theorem~\ref{th:SW_square} 
and note, again, that 
$n\le\abs{E_{L,2}}\le2n$.
\end{proof}
\vspace{1mm}


The bounds as given in Theorem~\ref{th:SW_square2} are 
certainly not optimal. 
We conjecture $\lambda(\sw)$ to be bounded below by a 
constant for~$\beta\neq\beta_c(q)$.
However, this seems to be the first polynomial bound for the 
Swendsen--Wang dynamics in this regime.

We also obtain the following for the single-bond dynamics. 

\vspace{1mm}

\begin{theorem} \label{th:SB_2d}
Let $\sb$ be the transition matrix of the 
single-bond dynamics for the RC model 
on $\Z^2_L$ with parameters $p$ and $q$. Let $m=2L(L-1)=\abs{E_{L,2}}$. 
Then there exist constants $c_p=c_p(q)$, $c'>0$ and $C<\infty$ such that 
\begin{itemize}
\item\quad $\displaystyle \lambda(\sb) \;\ge\; \frac{c_p}{m^2 \log m}$ 
			\qquad for $p \neq p_c(q)$, \vspace{2mm}
\item\quad $\displaystyle \lambda(\sb) \;\ge\; c' m^{-C}$ 
			\qquad\quad for $q=2$ and $p = p_c(2)$,
\end{itemize}
where $p_c(q)\,=\,\frac{\sqrt{q}}{1+\sqrt{q}}$.
\end{theorem}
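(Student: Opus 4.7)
The plan is to assemble the three inequalities from results already in hand, using Corollary~\ref{coro:SB_2d_high} for the high-temperature and critical cases, and planar duality (Theorem~\ref{th:SW-SB_dual}) to handle the low-temperature regime $p>p_c(q)$. For $p<p_c(q)$ and for the critical pair $q=2$, $p=p_c(2)$, the claimed bounds are already exactly the content of Corollary~\ref{coro:SB_2d_high}, so nothing further is needed.

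For $p>p_c(q)$ I would work on the dual graph. Let $\sw^\dag$, $\tsw^\dag$ and $\sb^\dag$ denote the Swendsen--Wang dynamics for the Potts model, for the RC model, and the single-bond dynamics, respectively, on $\Z_L^{2\dag}$ at the dual parameter $p^*=q(1-p)/(p+q(1-p))$ (with $q$ unchanged). Because $p_c(q)=\sqrt{q}/(1+\sqrt{q})$ is the self-dual point, the condition $p>p_c(q)$ is equivalent to $p^*<p_c(q)$, so on the dual graph we are in the high-temperature regime treated by the boundary-condition result Corollary~\ref{coro:P-SW_2d_boundary}. Applying that corollary gives $\lambda(\sw^\dag)\ge c_{p^*}/n^\dag$, where $n^\dag=|V_{L,2}^\dag|$; Lemma~\ref{lemma:SW_P-RC} transfers this to the same lower bound for $\lambda(\tsw^\dag)$, and then Theorem~\ref{th:main-SB} applied on $\Z_L^{2\dag}$ with $m^\dag$ edges yields
\[
\lambda(\sb^\dag)\;\ge\;\frac{\lambda(\tsw^\dag)}{8\,m^\dag\log m^\dag}\;\ge\;\frac{c_{p^*}}{8\,n^\dag\,m^\dag\log m^\dag}.
\]
Euler's formula gives $n^\dag=(L-1)^2+1$ and $m^\dag=m$, so $n^\dag m^\dag\log m^\dag\le c''\,m^2\log m$, hence $\lambda(\sb^\dag)\ge c_p/(c'''\,m^2\log m)$.

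It remains to transfer this lower bound from $\sb^\dag$ to $\sb$. Since the dual of the dual of $\Z_L^2$ is (the 3-connected graph) $\Z_L^2$ itself, applying Theorem~\ref{th:SW-SB_dual} to the dual RC model gives $\lambda(\sb^\dag)\le q\,\lambda(\sb)$, so $\lambda(\sb)\ge\lambda(\sb^\dag)/q\ge c_p/(m^2\log m)$, as claimed.

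There is no real technical obstacle here; the only conceptual point worth flagging is \emph{why} the detour through $\sb^\dag$ on $\Z_L^{2\dag}$ is necessary in the low-temperature case. Combining the $p>p_c(q)$ part of Theorem~\ref{th:SW_square} with Theorem~\ref{th:main-SB} directly on $\Z_L^2$ would give only $\lambda(\sb)\ge c_p/(m^3(\log m)^2)$, losing an extra factor of $m\log m$. The saving comes from the fact that the duality inequality of Theorem~\ref{th:SW-SB_dual} is far tighter for single-bond dynamics ($\lambda(\sb)\le q\,\lambda(\sb^\dag)$, losing only the constant $q$) than for Swendsen--Wang ($\lambda(\tsw)\le 8qm\log m\,\lambda(\tsw^\dag)$), so one should cross to the dual graph at the single-bond level rather than at the Swendsen--Wang level.
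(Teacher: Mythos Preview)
Your argument is correct and follows essentially the same route as the paper's (terse) proof: Corollary~\ref{coro:SB_2d_high} for $p\le p_c(q)$, and for $p>p_c(q)$ the chain Corollary~\ref{coro:P-SW_2d_boundary} $\to$ Lemma~\ref{lemma:SW_P-RC} $\to$ Theorem~\ref{th:main-SB} on the dual graph, followed by the single-bond duality inequality of Theorem~\ref{th:SW-SB_dual} to return to $\Z_L^2$. Your closing remark about why one should cross to the dual at the single-bond level rather than the Swendsen--Wang level is exactly the point, and explains why the paper's proof does not simply combine Theorem~\ref{th:SW_square} with Theorem~\ref{th:main-SB}.

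One small correction: the parenthetical ``the 3-connected graph $\Z_L^2$'' is false --- a corner vertex of $\Z_L^2$ has degree~2, so removing its two neighbours disconnects it. Fortunately you do not need 3-connectedness: the paper already records (just before Remark~\ref{remark:3-connected}) that for any \emph{connected} planar graph, with the representations fixed as described, the dual of the dual is the primal graph. That is all you need to apply Theorem~\ref{th:SW-SB_dual} with $G=\Z_L^{2\dag}$ and conclude $\lambda(\sb^\dag)\le q\,\lambda(\sb)$.
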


\begin{proof}
The result for $p\le p_c(q)$ was already given in 
Corollary~\ref{coro:SB_2d_high}. For the result for 
$p>p_c(q)$ take the bound of Corollary~\ref{coro:P-SW_2d_boundary} 
and apply Lemma~\ref{lemma:SW_P-RC} and Theorem~\ref{th:main-SB}.
\end{proof}

\vspace{2mm}


\section{Graphs of higher genus} \label{sec:5_genus}

In this final section we give a brief description of embeddings 
of graphs into surfaces, i.e.~into two-dimensional 
topological manifolds, and show that it is possible to extend 
the results of Section~\ref{sec:5_result} to this case. 
For convenience, we only consider connected, orientable and 
closed surfaces, which we simply call \emph{surfaces} in what follows.
The plan of this section is to define the \emph{genus} 
of a surface (or a graph) and then, analogously to 
Section~\ref{sec:5_embeddings}, to define representations of 
graphs into surfaces. Then we give a formulation of Euler's formula, 
define dual graphs and show the consequence 
for the random-cluster measure. 

For this, let $\G_1$ and $\G_2$ be two surfaces and define 
their \emph{connected sum} $\G_1\sqcup\G_2$ as the surface 
that is obtained by cutting out a small disc from each of 
the surfaces $\G_1$ and $\G_2$ and gluing them together 
along the boundaries of the resulting holes. 
(See e.g.~Giblin~\cite{Giblin} for a more formal definition.) 
It is known that the surface $\G_1\sqcup\G_2$ does not 
depend (up to homeomorphisms) on the choice of the discs that 
are cut out from the primal surfaces (see \cite[Prop.~2.17]{Giblin}). 
We consider the surfaces $\S_h$, $h\ge0$, which are 
given by $\S_{k+1}:=\S_k\sqcup\S_1$, where 
$\S_0$ is the two-dimensional sphere and $\S_1$ is the torus.
That is, $\S_h$ is the surface that can be obtained from a sphere 
by adding $h$ handles to it. We call $\S_h$ the (orientable) 
\emph{surface of genus $h$}. 
For example, the sphere is a surface of genus $0$ and the torus a 
surface of genus $1$. 
Note that each connected, closed and orientable surface is 
homeomorphic to precisely 
one of the surfaces $\S_h$, $h\ge0$ (see \cite[Thm.~3.1.3]{Mohar}).

Let $G=(V,E)$ be a connected graph. 
In the light of Section~\ref{sec:5_embeddings} we say 
that $G$ has a \emph{representation in $\S_h$}, 
namely $\Gt=(\Vt,\Et)$, 
if there exists a finite set $\Vt\subset\S_h$, 
a bijection $\kappa$ from $V$ to $\Vt$ 
and a set of simple, continuous curves
$\,\Et=\{\g_e:[0,1]\to\S_h:\, e\in E\}$ in $\S_h$ 
with the same three properties as given at the beginning of 
Section~\ref{sec:5_embeddings}. 
Here, we consider planar graphs as graphs 
that admit a representation in the sphere~$\S_0$.

The \emph{genus}\index{genus} 
$\genus(G)$ of the graph $G$ 
is defined to be the smallest integer $h$ such that 
$G$ has a representation in $\S_h$, and we call a 
representation of $G$ in $\S_{\genus(G)}$ a 
\emph{minimum genus representation}. 
For example, the graph $\Zt_L^2$, $L\ge3$, 
i.e.~the two-dimensional torus 
(see~\eqref{eq:def-torus}) has genus~1.

\begin{remark}
Every finite graph can be represented in $\S_h$ 
if $h$ is sufficiently large. For this consider the 
complete graph $K_n$, whose genus satisfies
$\genus(K_n)=\lceil{(n-3)(n-4)}/{12}\rceil$ 
(see \cite{Ringel-Youngs}), 
and observe that 
every graph on $n$ vertices can be represented in 
the same surface than $K_n$ (since they are subgraphs). 
Below we will see that there are constants involved 
in the results that depend exponentially on the 
genus. Therefore, the forthcoming results are only useful in 
cases where 
the genus is at most logarithmic in the size of the graph.
\end{remark}
 
Recall that $F(\Gt)$ denotes the number of faces 
of the representation $\Gt$.
It turns out that there is a version of 
Euler's formula also in this case 
(see \cite[eq.~(3.7)]{Mohar}). 

\begin{lemma}[Euler's formula II]\label{lemma:euler2}
Let $G=(V,E)$ be a connected graph and $\Gt$ be a 
minimum genus 
representation of $G$. Then
\[
2 - 2\genus(G) \,=\, \abs{V} - \abs{E} + F(\Gt).
\]
\end{lemma}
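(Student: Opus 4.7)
The plan is to prove this via the topological Euler characteristic, using the fact that a minimum genus representation gives a CW-decomposition of $\S_{\genus(G)}$ whose $0$-, $1$-, and $2$-cells are precisely the vertices, (interiors of) edges, and faces of $\tilde{G}$. First I would establish the purely topological identity $\chi(\S_h) = 2 - 2h$. Since $\S_{k+1} := \S_k \sqcup \S_1$ is formed from $\S_k$ and $\S_1$ by removing an open disk from each (each contributing $-1$ to $\chi$) and identifying the resulting boundary circles (which are themselves of Euler characteristic $0$), one obtains the recursion $\chi(\S_{k+1}) = \chi(\S_k) + \chi(\S_1) - 2$, and together with $\chi(\S_0) = 2$ and $\chi(\S_1) = 0$ this yields $\chi(\S_h) = 2-2h$ by induction on $h$.

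Next I would invoke the standard fact that whenever a graph $\tilde{G} = (\tilde{V}, \tilde{E})$ is represented in a surface in such a way that every face is homeomorphic to an open disk (a so-called \emph{2-cell embedding}), the vertices, open edges and open faces constitute a CW-decomposition of the surface, and hence
\[
\abs{V} - \abs{E} + F(\Gt) \;=\; \chi(\S_{\genus(G)}) \;=\; 2 - 2\genus(G).
\]
So everything reduces to showing that a minimum genus representation is automatically a 2-cell embedding.

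The main obstacle is this last step. The standard argument goes by contradiction: if some face $f$ of $\tilde{G}$ is not homeomorphic to a disk, then $f$ is not simply connected and therefore contains an essential (non-contractible in $\S_h$) simple closed curve $\g$ disjoint from the image of $\tilde{G}$. Cutting $\S_h$ along $\g$ and capping off the one or two resulting boundary circles with disks produces a new closed orientable surface $\S'$ of strictly smaller genus into which $G$ still embeds (since $\tilde{G}$ was entirely on the side of $\g$ one keeps, or is unaffected by the cut). This contradicts the minimality of $\genus(G)$. The verification that such an essential $\g$ exists in any non-disk face, and that cutting and capping really decreases the genus, is the technical heart; once it is granted, the arithmetic in the lemma is immediate from the two ingredients above. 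Since this statement is used as a classical result from Mohar--Thomassen~\cite{Mohar}, I would simply cite their treatment rather than redo the surface-topology in detail.
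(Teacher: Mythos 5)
Your proposal is correct and matches the paper's treatment: the paper gives no proof of this lemma at all, simply citing Mohar and Thomassen \cite[eq.~(3.7)]{Mohar}, which is exactly where your argument also terminates. Your sketch of what lies behind that citation --- the computation $\chi(\S_h)=2-2h$ via the connected-sum recursion, the Euler characteristic of a CW-decomposition, and the reduction to Youngs' theorem that a minimum genus representation of a \emph{connected} graph is a 2-cell embedding (the genuinely nontrivial step, which your cut-and-cap argument correctly identifies) --- is the standard and correct route.
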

\vspace{1mm}

Following the ideas of Section~\ref{sec:5_embeddings} 
we fix a minimum genus representation $\Gt$ of $G$ and 
fix the representation of the spanning subgraphs 
$G_A=(V,A)$, $A\subset E$, to be $\Gt$ without the 
curves corresponding to $E\setminus A$. 
We write $F(A)$ for the number of faces in this 
representation of $G_A$.
%
Note that a subgraph could be of smaller genus, 
e.g.~$G_\varnothing=(V,\varnothing)$ is obviously planar. 
However, we equip subgraphs of $G$ with a representation 
in $\S_{\genus(G)}$. 
%
We obtain the following corollary, which is the basis 
of the remaining results.

\begin{corollary}\label{coro:euler2}
Let $G$ be a connected graph and 
$G_A$, $A\subset E$, be a spanning 
subgraph of $G$. Then
\[
2-2\genus(G) 
\;\le\; \abs{V} - \abs{A} + F(A) - c(A) + 1 
\;\le\; 2.
\]
\end{corollary}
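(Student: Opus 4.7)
The plan is to define $Q(A) := \abs{V} - \abs{A} + F(A) - c(A) + 1$ and prove both inequalities by tracking how $Q$ evolves as edges are deleted from $E$: I will show that $Q$ is monotone non-decreasing under single-edge deletion, and that it attains $2 - 2\genus(G)$ at $A = E$ and $2$ at $A = \varnothing$.

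First I would evaluate the extremes. Since $G$ is connected, the fixed minimum-genus representation $\Gt$ is cellular (a standard result for minimum-genus embeddings of connected graphs), so Lemma~\ref{lemma:euler2} gives $\abs{V} - \abs{E} + F(E) = 2 - 2\genus(G)$; combined with $c(E) = 1$, this yields $Q(E) = 2 - 2\genus(G)$. At the other extreme, $G_\varnothing$ has $\abs{V}$ components, and the complement of a finite point set in the connected surface $\S_{\genus(G)}$ is still connected, so $F(\varnothing) = 1$ and $Q(\varnothing) = \abs{V} - 0 + 1 - \abs{V} + 1 = 2$. Next I would analyze the deletion of a single edge $e \in A$ to obtain $A' = A \setminus e$: the shift in $-\abs{A}$ contributes $+1$ to $Q(A') - Q(A)$, and a three-way case analysis handles the rest. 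If $e$ is a bridge in $G_A$, both sides of $e$ lie in the same face (since $e$ lies on no cycle), so $F$ is unchanged while $c$ increases by $1$, giving net change $0$. If $e$ is a non-bridge whose two sides lie in distinct faces, deletion merges those faces and $c$ is unchanged, again giving net change $0$. If $e$ is a non-bridge with the same face on both sides---possible only on surfaces of positive genus, for instance a non-separating loop---both $F$ and $c$ are unchanged, yielding net change $+1$.

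Hence $Q$ is monotone non-decreasing under edge deletion. Starting from $A = E$ and deleting the edges of $E \setminus A$ one at a time gives $Q(A) \ge Q(E) = 2 - 2\genus(G)$; continuing to delete the edges of $A$ gives $2 = Q(\varnothing) \ge Q(A)$, establishing both inequalities of the corollary. The main obstacle is justifying the face-count behavior in the three cases of the deletion analysis: the ``bridge implies same face on both sides'' claim and the invariance of $F$ in the ``non-bridge, same face'' case are standard folklore for embedded graphs, but would need a short local face-tracing argument around $e$, distinguishing whether the two local half-edges of $e$ belong to the same face boundary walk or to distinct ones.
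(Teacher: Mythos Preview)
Your proposal is correct and takes essentially the same approach as the paper: both arguments establish the inequalities by showing that the middle quantity is monotone under single-edge operations and then evaluating it at the extremes $A=E$ and $A=\varnothing$. The paper phrases the monotonicity as ``the term cannot increase when replacing $A$ by $A\cup e$'' and proves it by the single observation that a new face forces $e$ to lie on a cycle (hence its endpoints were already connected), whereas you phrase it as non-decrease under deletion with an explicit three-case analysis; these are the same argument read in opposite directions.
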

\vspace{1mm}

\begin{proof} 
We prove that the term in the middle cannot increase if we replace 
$A$ by $A\cup e$, i.e.
$\abs{V} - \abs{A\cup e} + F(A\cup e) - c(A\cup e) + 1
\,\le\, \abs{V} - \abs{A} + F(A) - c(A) + 1$.
This proves the claim since 
the first and second inequalities of the statement become  
equalities for $A=E$ and $A=\varnothing$, respectively. 
Thus, it is enough to prove 
$F(A\cup e)-F(A)\le c(A\cup e)-c(A)+1$ for all $A\subset E$. 
The right hand side of this inequality equals 1 if 
the endpoints of $e$ are connected in $(V,A)$, 
i.e.~$e^{(1)}\xconn{A}e^{(2)}$, and 0 otherwise. 
Consequently, we want to verify 
$F(A\cup e)-F(A)\le \ind(e^{(1)}\xconn{A}e^{(2)})$, that is, 
if adding $e$ to $A$ induces a new face, then 
the endpoints of $e$ are connected in $G_A$.
But this is clearly true, because if a new face is induced by 
$e$, then $e$ is contained in a ``cycle'' in $G_{A\cup e}$, 
which implies that the endpoints are already connected in $G_A$.
\end{proof}

Given a graph $G=(V,E)$ and a corresponding minimum genus 
representation $\Gt=(\Vt,\Et)$ we define the \emph{dual graph} 
$G^\dag=(V^\dag,E^\dag)$ 
just as in Section~\ref{sec:5_embeddings}. 
That is, $G^\dag$ is the unique graph that has a representation 
in $\S_{\genus(G)}$ with a single vertex in every face of $\Gt$ 
and, for every curve $\g_e\in\Et$, a curve that intersects only 
$\g_e$ and connects the vertices (or the vertex) on both sides of 
$\g_e$. We write $e_\dag$ for the edge that corresponds to this 
``dual'' curve.

A \emph{dual configuration} $A^\dag\subset E^\dag$ of $A\subset E$ 
is given by
\[
e_\dag\in A^\dag \quad\iff\quad e\notin A
\]
(cf.~\eqref{eq:dual-conf}).
It is again easy to convince oneself that every face of the 
representation of $\Gt_A$ (from $\Gt$ adopted) contains 
a connected component of $G^\dag_{A^\dag}$, 
i.e.~$F(A)=c(A^\dag)$. Thus, 
\begin{equation} \label{eq:dual-components2}
c(A^\dag) + \abs{A^\dag} - 1
\;\le\; c(A) - \abs{V} + \abs{E}
\;\le\; c(A^\dag) + \abs{A^\dag} - 1 +2\genus(G).
\end{equation}
This implies an analogous result to Lemma~\ref{lemma:RC-dual} 
and, eventually, the following theorem. 
Recall that the \emph{dual model} to the random-cluster model 
on $G$ with parameters $p$ and $q$ is the RC model on $G^\dag$ 
with parameters $p^*$ and $q$, where $p^*=\frac{q(1-p)}{p+q(1-p)}$. 

\vspace{1mm}

\begin{theorem} \label{th:SW-SB_dual2}
Let $\thb$, $\sb$ and $\tsw$ 
$($resp.~$\thb^\dag$, $\sb^\dag$ and $\tsw^\dag$$)$ be the 
transition matrices of the 
heat-bath, single-bond and Swendsen--Wang dynamics for the 
random-cluster model on a graph $G$ $($resp.~for the dual model\/$)$.
Then
\[
\lambda(\thb) \;\le\; q^{8\genus(G)}\,\lambda(\thb^\dag) \qquad\qquad\quad
\]
and hence,
\begin{align*}
\lambda(\sb) \;&\le\; q^{8\genus(G)+1} \; \lambda(\sb^\dag) \\[-3mm]
\intertext{\vspace*{-2mm}and}
\lambda(\tsw) \;&\le\; 8q^{8\genus(G)+1}\, m \log m\; \lambda(\tsw^\dag).
\end{align*}
\end{theorem}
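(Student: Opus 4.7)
The plan is to parallel the proof of Theorem~\ref{th:SW-SB_dual}, replacing the exact duality of the random-cluster measure (Lemma~\ref{lemma:RC-dual}) and of the heat-bath chain (Lemma~\ref{lemma:RC-HB-dual}) by approximate versions whose multiplicative error is controlled by Corollary~\ref{coro:euler2}. Once the inequality for $\thb$ is in hand, the bounds for $\sb$ and $\tsw$ are obtained exactly as in the planar case, via Lemma~\ref{lemma:SB-HB} and Theorem~\ref{th:main-SB}.

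First I would establish a higher-genus analogue of Lemma~\ref{lemma:RC-dual}. By Corollary~\ref{coro:euler2} one may write $c(A) = \abs{V} - \abs{A} + c(A^\dag) - 1 + \delta(A)$ for some integer $\delta(A)\in[0,2\genus(G)]$. Running the same substitution $p/(1-p)=q(1-p^*)/p^*$ as in the planar proof, the unnormalized weights satisfy $w_G(A) = C\,q^{\delta(A)}\,w_{G^\dag}(A^\dag)$ for a constant $C=C(G,p,q)$, so summing gives $\Ztt(G,p,q)/\Ztt(G^\dag,p^*,q)\in[C,Cq^{2\genus(G)}]$. Dividing by the respective partition functions then yields the two-sided bound
\[
q^{-2\genus(G)} \;\le\; \frac{\mu_{p,q}^G(A)}{\mu_{p^*,q}^{G^\dag}(A^\dag)} \;\le\; q^{2\genus(G)}, \qquad A\subset E.
\]

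Next I would feed this into the definition \eqref{eq:RC-HB-def} of $\thb$. Applying the bound separately to the numerator $\mu(B)$ and to the denominator $\mu(A\cup e)+\mu(A\setminus e)$, and using $\abs{E}=\abs{E^\dag}$ together with $(A\cup e)^\dag=A^\dag\setminus e_\dag$, the exact equality of the planar case relaxes to
\[
q^{-4\genus(G)}\,\thb^\dag(A^\dag,B^\dag) \;\le\; \thb(A,B) \;\le\; q^{4\genus(G)}\,\thb^\dag(A^\dag,B^\dag)
\]
for all $A,B\subset E$. Pulling $\thb^\dag$ back to $\Orc$ via the bijection $A\mapsto A^\dag$ (which preserves spectral gap by similarity) and applying Lemma~\ref{lemma:prelim_comparison} with $A=q^{4\genus(G)}\cdot q^{2\genus(G)}=q^{6\genus(G)}$ and $a=q^{-2\genus(G)}$ yields $\lambda(\thb)\le q^{8\genus(G)}\,\lambda(\thb^\dag)$. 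The inequality for $\sb$ follows by sandwiching this bound between two applications of Lemma~\ref{lemma:SB-HB}, losing only the factor $1/(1-p^*(1-1/q))\le q$; the inequality for $\tsw$ follows by composing the $\sb$ bound with the upper half of Theorem~\ref{th:main-SB} for $\lambda(\tsw)$ and the lower half $\lambda(\sb^\dag)\le\lambda(\tsw^\dag)$ for $\lambda(\tsw^\dag)$.

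The main obstacle I expect is careful book-keeping of the $q$-exponents: tracking where the factor $2\genus(G)$ appears in the normalization constant, and how the errors in the numerator and denominator of $\thb$ compound when entering Lemma~\ref{lemma:prelim_comparison}. Beyond this accounting, the structural ingredients — Euler's formula for higher genus, the $\mu$–$\mu^\dag$ correspondence, and the three comparison lemmas of Chapters~\ref{chap:spin} and~\ref{chap:bond} — are precisely those of the planar case, so no genuinely new idea should be required.
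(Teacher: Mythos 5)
Your proposal is correct and follows essentially the same route as the paper: the two-sided bound $q^{-2\genus(G)}\le\mu_{p,q}^G(A)/\mu_{p^*,q}^{G^\dag}(A^\dag)\le q^{2\genus(G)}$ from the higher-genus Euler formula, the resulting $q^{4\genus(G)}$ distortion of $\thb$, Lemma~\ref{lemma:prelim_comparison} to get the exponent $8\genus(G)$, and then Lemma~\ref{lemma:SB-HB} and Theorem~\ref{th:main-SB} for the other two inequalities. Your exponent book-keeping ($A=q^{6\genus(G)}$, $a=q^{-2\genus(G)}$) matches the paper's computation exactly.
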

\vspace{3mm}

\begin{proof}
We only prove the first inequality. 
The other two follow in the same way as Theorem~\ref{th:SW-SB_dual} 
follows from Lemma~\ref{lemma:RC-HB-dual}. 
An easy computation (similar to the proof of 
Lemma~\ref{lemma:RC-HB-dual}) shows
\[
q^{-2\genus(G)}\,\mu_{p,q}^G(A) \;\le\; 
\mu_{p^*,q}^{G^\dag}(A^\dag) 
\;\le\; q^{2\genus(G)}\,\mu_{p,q}^G(A)
\]
for all $A\subset E$. 
By the definition of $\thb$ (see \eqref{eq:RC-HB-def}), this implies 
\vspace{1mm}
\[
\thb(A,B) \;\le\; q^{4\genus(G)}\,\thb^\dag(A^\dag,B^\dag).
\vspace{1mm}
\]
But these two inequalities yield 
$\lambda(\thb) \le q^{8\genus(G)}\lambda(\thb^\dag)$ (see 
Lemma~\ref{lemma:prelim_comparison}).
\end{proof}
\vspace{3mm}

Theorem~\ref{th:SW-SB_dual2} can be used in the same way 
as Theorem~\ref{th:SW-SB_dual} to prove results 
on the mixing properties of the Markov chains involved 
in specific cases. 
For example, a result, similar to Corollary~\ref{coro:SW-planar}, 
on graphs of bounded degree 
can be proven by using Theorem~\ref{th:HB_degree} and 
a bound on the principal eigenvalue of graphs with bounded 
genus (see e.g.~\cite{Dvorak-Mohar}).
Another application is that lower bounds on the spectral gap 
of the Markov chains for the Potts model 
on the two-dimensional square lattice with periodic boundary 
condition (i.e.~on the two-dimensional torus $\Zt_L^2$) 
at high temperatures 
can be translated to lower bounds at low temperatures 
(cf.~Section~\ref{sec:5_square}).
We omit the details.

\newpage
\thispagestyle{plain}

\def\arx#1#2{\href{http://arxiv.org/abs/#1}{#2}}
\def\xbibitem#1#2, \doi#3.{\bibitem{#1}\leavevmode\llap{\href{http://dx.doi.org/#3}{\phantom{[\theenumiv]}}\kern\labelsep}\ignorespaces#2.}
\optrue


\newpage
\thispagestyle{plain}
\printindex


\end{document}